\documentclass[oneside]{amsart}

\usepackage[T1]{fontenc}

\usepackage{amsmath,amsthm,amsfonts,amssymb,latexsym, mathrsfs}

\makeatletter

\theoremstyle{plain}
\newtheorem{thm}{Theorem}[section]

\numberwithin{equation}{section} 

\numberwithin{figure}{section} 

\theoremstyle{plain}
\newtheorem{cor}[thm]{Corollary} 

\theoremstyle{definition}
\newtheorem{defn}[thm]{Definition}

\theoremstyle{plain}
\newtheorem{lem}[thm]{Lemma} 

\theoremstyle{plain}
\newtheorem{prop}[thm]{Proposition} 

\theoremstyle{plain}
\newtheorem{fact}[thm]{Fact}

\theoremstyle{plain}

\theoremstyle{plain}

\newtheorem{rem}[thm]{Remark}

\newtheorem{example}[thm]{Example}

\newtheorem{ques}[thm]{Question}

\newtheorem{obse}[thm]{Observation}

\newtheorem{theo}[thm]{Theorem}

\newtheorem{rema}[thm]{Remark}

\newtheorem{nota}[thm]{Notation}

\ifx\pdfoutput\@undefined\usepackage[usenames,dvips]{color}

\else\usepackage[usenames,dvipsnames]{color}

\IfFileExists{pdfcolmk.sty}{\usepackage{pdfcolmk}}{}

\fi

\makeatletter

\makeatletter
\def\moverlay{\mathpalette\mov@rlay}
\def\mov@rlay#1#2{\leavevmode\vtop{%
   \baselineskip\z@skip \lineskiplimit-\maxdimen
   \ialign{\hfil$\m@th#1##$\hfil\cr#2\crcr}}}
\newcommand{\charfusion}[3][\mathord]{
    #1{\ifx#1\mathop\vphantom{#2}\fi
        \mathpalette\mov@rlay{#2\cr#3}
      }
    \ifx#1\mathop\expandafter\displaylimits\fi}
\makeatother

\newcommand{\cupdot}{\charfusion[\mathbin]{\cup}{\cdot}}

\usepackage{amssymb}

\usepackage{amsmath}

\def \<{\langle}
\def \>{\rangle}

\usepackage{amsfonts}

\usepackage{amscd}

\usepackage{latexsym}

\usepackage{epsfig}

\makeatother

\def\Ind#1#2{#1\setbox0=\hbox{$#1x$}\kern\wd0\hbox to 0pt{\hss$#1\mid$\hss}
\lower.9\ht0\hbox to 0pt{\hss$#1\smile$\hss}\kern\wd0}
\def\ind{\mathop{\mathpalette\Ind{}}}
\def\Notind#1#2{#1\setbox0=\hbox{$#1x$}\kern\wd0\hbox to 0pt{\mathchardef
\nn=12854\hss$#1\nn$\kern1.4\wd0\hss}\hbox to
0pt{\hss$#1\mid$\hss}\lower.9\ht0 \hbox to
0pt{\hss$#1\smile$\hss}\kern\wd0}
\def\nind{\mathop{\mathpalette\Notind{}}}

\newcommand{\bcl}{\operatorname{bcl}}

\newcommand{\acl}{\operatorname{acl}}
\newcommand{\ecl}{\operatorname{ecl}}
\newcommand{\spa}{\operatorname{span}}

\newcommand{\tp}{\operatorname{tp}}

\newcommand{\qftp}{\operatorname{qftp}}
\newcommand{\Lstp}{\operatorname{Lstp}}

\newcommand{\cl}{\operatorname{cl}}

\newcommand{\scl}{\operatorname{scl}}

\begin{document}

\title{Dense-codense expansions of quasiminimal pregeometry structures}

 \date{\today}

\author{Alexander Berenstein}
\address{Universidad de los Andes,
Cra 1 No 18A-12, Edificio H, Bogot\'{a}, Colombia}
 \urladdr{www.matematicas.uniandes.edu.co/\textasciitilde aberenst}

\author{Evgueni Vassiliev}
\address{Grenfell Campus, Memorial University of Newfoundland, Corner Brook, NL A2H 6P9, Canada}
\email{yvasilyev@grenfell.mun.ca}

\keywords{Unary predicate expansions, geometry, quasiminimal classes, stability, independence relations}
\subjclass[2020]{03C45, 03C48}
\thanks{The authors would like to thank Tapani Hyttinen, Jonathan Kirby and Andr\'es Villaveces for valuable feedback. The first author would like to thank Memorial University and NSERC for support during an academic visit in Spring 2025 and Universidad de los Andes for supporting a STAI in Spring 2025.}

\begin{abstract}
We study expansions of quasiminimal pregeometry structures with a dense codense unary predicate and their relation with the complexity properties of the pregeometry of the underlying  structure. We consider beautiful pairs as well as $H$-structures. We show each of these expansions can be axiomatized with a single $\mathcal{L}_{\omega_1 \omega}(Q)$-sentence and that both expansions are $\omega$-stable. For $H$-structures we provide a natural notion of independence in the expansion and when the underlying structure is modular, we also provide a natural notion of independence for beautiful pairs. Then we relate the complexity of the pregeometry to properties of the expansions.

\end{abstract}

\pagestyle{plain}

\maketitle

\section{Introduction}

Quasiminimal classes of structures (see for example \cite{BHHKK, Ki}) forms a good generalization of strongly minimal theories, they have a closure operator defining a pregeometry and they have a unique generic type, but instead of being an elementary class, they can be described by an  $\mathcal{L}_{\omega_1 \omega}(Q)$-sentence (see for example Theorem 2.3 in \cite{BHHKK}). 
Another natural setting that generalizes strongly minimal theories are the geometric theories, those complete first order theories where the algebraic closure has the exchange property and that eliminate the quantifier $\exists^{\infty}$. Besides strongly minimal theories, this family includes $SU$-rank one theories as well as dense o-minimal theories and it is a reasonable setting for studying the complexity of pregeometries (see for example \cite{BeVaLP,BeVaw}). 

Generalizing ideas from the strongly minimal setting to quasiminimal classes has been very fruitful. For example, in \cite{HK} the authors show how to generalize the group configuration when the underlying quasiminimal pregeometry structure has a non-trivial pregeometry, they study plane curves and even recover some form of canonical bases; a study of local modularity is carried out in \cite{H05} and some of the consequences are presented in \cite{HK}. In this paper we construct an example (see Example \ref{vector-space}) of an non-trivial locally modular quasiminimal pregeometry structure that cannot be interpreted as a strongly minimal theory, showing that the setting includes interesting new families of locally modular structures. Namely, we consider an infinite-dimensional projective geometry 
over an infinite division ring $\mathbb{F}$, i.e., the associated geometry $(PG(V),\cl^*)$ of an $\mathbb{F}$-vector space $V$, with the $n$-ary relations $R_{\vec \lambda}$ indicating that for some representatives $v_1,\ldots, v_n\in V$ of the given points of the geometry the linear combination with coefficients given by $\vec \lambda=(\lambda_1,\ldots,\lambda_n)$ we have $\lambda_1v_1+\ldots+\lambda_nv_n=0$.

Similarly, a lot of attention has been paid to generalizing ideas from the strongly minimal setting to the geometric one, in particular how to characterize linearity \cite{BeVaw, v2003}. A key tool in this process has been the use of expansions, mostly beautiful pairs \cite{BeVaLP} and $H$-structures \cite{BeVa}.

The study of elementary pairs (and similar unary predicate expansions) goes back to the classical work of Robinson \cite{Ro} on the theory of real closed fields with a predicate for a dense real closed subfield.  
In \cite{Ma}, Macintyre had isolated a key property that generalized the density condition in such pairs: given a pair $(M,P)$, any infinite definable subset of  $M$  has a nonempty intersection with $P(M)$. Poizat \cite{Po} introduced a well-behaved class of elementary pairs of stable structures, the beautiful pairs, that produces a complete stable theory $T_P$ provided the base theory has non-finite cover property (nfcp); in this case, $T_P$ also has nfcp. 

Further connections of the properties of elementary pairs of a stable theory $T$ and stability-theoretic complexity of $T$ were explored by Bouscaren \cite{Bous}.  Buechler \cite{Bu} used beautiful pairs of strongly minimal structures to relate the complexity of the pregeometry associated to $\acl$ in models of $T$ to the U-rank of $T_P$. Generalizing the work of Robinson \cite{Ro}, van den Dries \cite{LvdD} studied dense pairs of o-minimal
theories that expand the theory of ordered abelian groups, showed that the theory of dense pairs is complete and
gave a description of definable sets.

In \cite{v2003}, the second named author introduced the notion of a generic pair of SU-rank 1 structures, resulting in a complete supersimple theory of pairs $T_P$, and used Buechler's approach to characterize linearity of $T$ in terms of the SU-rank of $T_P$ and various model-theoretic properties of these elementary pairs \cite{v2003, v2010}. The notions of beautiful pairs of stable structures and generic pairs of SU-rank 1 structures were generalized to the class of simple theories by Ben-Yaacov, Pillay and the second named author \cite{BPV}, resulting in the notion of lovely pairs. In \cite{BeVaLP}, we introduced a common generalization (which we called lovely, or dense-codense, pairs) of generic pairs of SU-rank 1 structures and dense pairs of o-minimal structures to the class of geometric structures. Such pairs satisfy the density property in the sense of Macintyre and the codensity (extension) property which requires that any infinite $A$-definable subset of $M$ has a realization in $M\backslash \acl(A\cup P(M))$. 

Lovely pairs share some properties with other similar "tame" unary expansions such as the algebraically and real closed fields expanded with a multiplicative subgroup having Mann property explored by van den Dries and Gunaydin \cite{vdDG}. Thorn-forking in such pairs in the real closed case was studied by the first author, Ealy and Gunaydin \cite{BEG}. 
In \cite{BeVa}, we introduced the notion of an H-structure, an expansion of a geometric structure with a dense-codense $\acl$-independent subset, generalizing the notion of an expansion of an o-minimal structure with a dense independent subset introduced by  Dolich, Miller and Steinhorn \cite{DMS}. Some well-behaved dense-codense
expansions of geometric structures "intermediate" between lovely pairs and H-structures were explored in \cite{BdEV, BeVaMult}. 

Pair constructions had many applications. In \cite{BPV}, lovely pairs were used to introduce and characterize the weak non-finite cover property, the appropriate generalization of the finite cover property in the setting of simple theories.
Lovely/generic pairs of geometric structures were used to study and characterize the appropriate notion of linear SU-rank 1/geometric structure in \cite{v2003,BeVaw}. Dense-codense expansions also serve as a source of examples in neostability theory \cite{BeKi, DK}.

In this paper, we will first find reasonable analogs of beautiful pairs and H-structures, when the underlying model is a quasiminimal pregeometry structure instead of a geometric structure. 
We show that both expansions in the setting of quasiminimal pregeometry structures is tame: they can be axiomatized with a single $\mathcal{L}_{\omega_1 \omega}(Q)$-sentence, both expansions are $\omega$-stable and we give a characterization of types in the new language. We also prove a natural generalization of superstability for both expansions, namely, types in the extended language do not split over a finite set. For $H$-structures, we provide a natural notion of independence in the expansion and show that it satisfies the usual properties for an independence relation, including stationarity. Similarly, we introduce a natural notion of independence in beautiful pairs of modular quasiminimal classes and prove the usual properties associated to an independence relation, including stationarity.  We define a notion of rank based on Lascar splitting, which we denote by $U_{Lsp}$. This will allow us to prove some results tying the complexity of the geometry with the ranks in the pairs, namely, we recover information of the complexity of pregeometry of the underlying class (desintegrated, locally modular, non-locally modular) generalizing ideas from \cite{Bu, v2003, v2010} using pairs. The main new contribution in this last part of the paper is that we found new proofs for these results that \emph{do not rely} on the use of canonical bases. In particular, in section \ref{sec:ranksgeometry} we prove:\\

\noindent \textbf{Theorem \ref{ranksHstruct}}
 Let $(M,H)$ be an $H$-structure associated to a quasiminimal class $\mathcal{C}$. Then $(M,\cl)$ is trivial iff $U_{Lsp}((M,H))= 1$ and $(M,\cl)$ is not trivial iff $U_{Lsp}((M,H))\geq \omega$. \\

\noindent \textbf{Theorem \ref{ranksbeautifulpairs}} Let $(M,P)$ be a beautiful pair associated to a quasiminimal class $\mathcal{C}$. Assume $(M,\cl)$ is trivial, then $U_{Lsp}((M,P))= 1$. Assume $(M,\cl)$ is locally modular non-trivial, then $U_{Lsp}((M,P))= 2$.\\

Finally, generalizing the results (see \cite{v2003}) of the second named author of this paper, we characterize local modularity in terms of properties of the corresponding beautiful pairs. As in previous work of the two authors on geometric structures, the condition of being weakly $1$-based (see Definition \ref{defn:w1b}) plays the role of $1$-basedness from strongly minimal theories.\\

\noindent \textbf{Theorem \ref{linearity}}
The following are equivalent for  a quasiminimal class $\mathcal{C}$:
\begin{enumerate}
    \item[(i)] $\mathcal{C}$ is weakly one-based.
    \item[(ii)] $\mathcal{C}$ is locally modular.
    \item[(iii)] In any sufficiently large beautiful pair $(M,P)$ associated to $\mathcal{C}$ we have $\bcl_P=\cl$.
    \item[(iv)] In any sufficiently large beautiful pair $(M,P)$ associated to $\mathcal{C}$ the localization $\cl(-\cup P(M))$ is modular.
    \item[(v)] Whenever $M,N\in \mathcal{C}$ and $(M,P)\subseteq (N,P)$ are beautiful pairs we have $(M,P)\preceq (N,P)$.
\end{enumerate}

This paper is organized as follows. In section \ref{quasiminimal:basics} we recall some facts from \cite{BHHKK, Ki} about quasiminimal pregeometry structures and their associated quasiminimal classes. In section \ref{sec:linearity} we introduce local modularity and weak 1- basedness (see definition) for quasiminimal pregeometry structures. In section \ref{sec:dense-codense}
we study $H$-structures and beautiful pairs of quasiminimal pregeometry structures and prove their main properties, such as axiomatizability in $\mathcal{L}_{\omega_1 \omega}(Q)$ and superstabilty. Section \ref{sec:ranks} deals with studying natural notions of independence in these expansions and their connections with non-Lascar splitting. Finally in section \ref{sec:ranksgeometry} we study ranks associated in these expansions and prove Theorem \ref{linearity}. 


\section{Preliminaries:  quasiminimal classes}\label{quasiminimal:basics}

In this section we review quasiminimal classes, we will use as background for quasiminimal classes the papers \cite{BHHKK, Ki} and assume the reader is familiar with the main results in the subject, for example we will use the fact that a quasimininal class is excellent \cite{BHHKK}. The main example we have in mind is complex exponentiation, one of the representative cases behind Zilber's work on the subject (see \cite{Zi}), but we will use repeatedly more accessible examples from \cite{Ki} to clarify our notions. For connections between quasiminimality and first order stability theory see \cite{Pillay-Tanovic}.

Let $M$ be a structure in a countable language $\mathcal{L}$ equipped with a pregeometry $\cl$. We write $\tp$ for the quantifier free type of a tuple. We assume that $M$
is a \emph{quasiminimal pregeometry structure} in the sense of \cite{BHHKK}. 
These axioms imply that we identify, for $\vec a\in M^n$, $\tp(\vec a)$ with 
$\tp_{\omega_1 \omega}(\vec a)$ and we will use any of the two expressions whenever we need them.
We will list below the axioms for quasiminimal pregeometry structures from \cite{BHHKK} as they will be crucial in this paper and we will need them for future reference. 

QM1. The pregeometry is determined by the language. That is, if $\tp(a,\vec b) =
\tp(a',\vec b')$ then $a \in \cl(\vec b)$ if and only if $a' \in \cl(\vec b')$.

QM2. $M$ is infinite-dimensional with respect to $\cl$.

QM3. (Countable closure property) If $A \subseteq M$ is finite then $\cl(A)$ is countable.

QM4. (Uniqueness of the generic type) Suppose that $A,A'\subseteq M$ are countable
closed subsets, enumerated such
that $\tp(A) =\tp(A')$. If $a \in M \setminus A$ and $a' \in M \setminus A'$ then $\tp(A, a) = \tp(A', a')$ (with respect to the same
enumerations for $A$ and $A'$).

QM5. ($\aleph_0$-homogeneity over closed sets and the empty set)
Let $A,A'\subseteq M$ be countable closed subsets or empty, enumerated such
that $\tp(A) =\tp(A')$, and let $\vec b$, $\vec b '$ be finite tuples from $M$ such that
$\tp(A,\vec b) = tp(A',\vec b   ')$, and let $a \in \cl(A,\vec{b})$. Then there is $a' \in M$ such
that $\tp(A,\vec{b}, a) = \tp(A',\vec b', a')$.

To emphasize that $M$ is a quasiminimal pregeometry structure we may write $(M,\cl)$ or even $(M,\cl_{M})$ instead of just $M$. We also need the following notions from \cite{BHHKK}. Given $(M_1,\cl_{M_1})$ and $(M_2,\cl_{M_2})$ both weakly quasiminimal pregeometry $\mathcal{L}$-structures, we
say that an $\mathcal{L}$-embedding $\theta :M_1 \to M_2$ is \emph{closed embedding} if for each $A \subseteq M_1$
we have $\theta(\cl_{M_1} (A)) = \cl_{M_2}(\theta(A))$. If this is the case, $\theta(M_1)$ is closed in $M_2$ with
respect to $\cl_{M_2}$. We write $M_1 \preceq M_2$ when the inclusion is a closed embedding.

Given a quasiminimal pregeometry structure $M$, we let $\mathcal{C}$ be the smallest class of $\mathcal{L}$-structures which contain $M$ and it is closed under the following operations: isomorphisms,  closed substructures, and under taking unions of chains of closed embeddings. We call any such class $\mathcal{C}$ a \emph{quasiminimal class}. We say $M$ is a \emph{weakly quasiminimal pregeometry structure} if it satisfies all the axioms except possibly $QM2$. So, for example, if we take the class $\mathcal{C}$ of all algebraically closed fields of characteristic zero in the language of rings, those fields with infinite trascendence degree are quasiminimal pregeometry structures, while those of finite trascendence degree are weakly quasiminimal pregeometry structures.

We will need the main result from \cite{BHHKK}:

\begin{fact}\label{Defclass} (Theorem 2.3 \cite{BHHKK}) If $\mathcal{C}$ is a quasiminimal class then every structure $A\in \mathcal{C}$ is a weakly quasiminimal pregeometry structure, and up to isomorphism there is
exactly one structure in $\mathcal{C}$ of each cardinal dimension. Furthermore, $\mathcal{C}$ is the class of models of an $\mathcal{L}_{\omega_1 \omega}(Q)$-sentence.
\end{fact}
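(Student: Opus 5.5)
The plan follows the structure of the argument in \cite{BHHKK}. There are three parts: every member of $\mathcal{C}$ is a weakly quasiminimal pregeometry structure, categoricity in each dimension, and axiomatizability.

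\textbf{Step 1: members of $\mathcal{C}$ are weakly quasiminimal pregeometry structures.} I would argue by induction along the construction of $\mathcal{C}$ from $M$.
\begin{enumerate}
\item Closed substructures $N \subseteq M$ inherit the pregeometry $\cl_N = \cl_M \restriction N$. Since $N$ is closed, $\cl_M(A) \subseteq N$ for every $A \subseteq N$.
\item For QM1, QM3 and QM4 the induced closure is still determined by quantifier-free types, so these axioms pass down immediately.
\item QM5 needs slightly more care. If $a \in \cl(A, \vec b)$ with everything inside $N$, QM5 in $M$ produces a witness $a'$. The witness lies in $\cl(A', \vec b') \subseteq N$ by QM1 and closedness.
\item For unions of chains of closed embeddings, one defines $\cl$ on the union as the union of the closures. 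This is well defined because the embeddings are closed. Each axiom involves only countably many elements, so it is witnessed at some stage of the chain.
\end{enumerate}

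\textbf{Step 2: uniqueness in each cardinal dimension.} Take $N_1, N_2 \in \mathcal{C}$ of the same dimension with bases $B_1, B_2$ and a bijection $f : B_1 \to B_2$. I want to extend $f$ to an isomorphism.
\begin{enumerate}
\item In the countable case, run a back-and-forth over finite tuples. QM4 handles adding a generic (independent) element. QM5 handles adding an element in the closure of what has been built. This needs the $\aleph_0$-homogeneity over the empty set and over countable closed sets.
\item For uncountable dimension, build the isomorphism on the closures $\cl(B_1^0)$ of all finite subsets $B_1^0 \subseteq B_1$ simultaneously, compatibly, and extend through the union.
\item The key ingredient is \emph{excellence}: a compatible system of partial isomorphisms on the closures of the proper subsets of an independent $n$-set must amalgamate to the closure of the whole set. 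In \cite{BHHKK} this is proved via QM4 and QM5 applied to countable closed sets, together with the observation that $\cl(\bigcup_{i} \cl(X_i))$ is countable.
\item Given excellence, induction on $|B_1^0|$ builds a directed system of isomorphisms whose union is the required map $N_1 \cong N_2$.
\end{enumerate}

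\textbf{Step 3: axiomatizability.} Let $\psi$ be the Scott sentence of a countable-dimensional model $M_{\aleph_0}$ (a closed substructure of $M$), which is an $\mathcal{L}_{\omega_1\omega}$-sentence. Add the following $\mathcal{L}_{\omega_1\omega}(Q)$ conjuncts.
\begin{enumerate}
\item For each finite tuple-type $p(\vec x)$ realized in $M$, a sentence saying that the set of $y$ with $y \in \cl(\vec x)$ is countable. Here $y \in \cl(\vec x)$ is expressed as a countable disjunction of the types of pairs in which $y$ is dependent on $\vec x$, which is legitimate by QM1. This conjunct encodes QM3.
\item Sentences saying that every finitely generated closure is a model of the appropriate countable pieces. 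This lets one show that a model $N$ of the full sentence is the directed union of its countable closed substructures $\cl(X)$, each isomorphic to a closed substructure of $M_{\aleph_0}$.
\end{enumerate}
Then $N$ is obtained by closed embeddings from such substructures, so $N \in \mathcal{C}$ by the closure properties of $\mathcal{C}$. Conversely, each member of $\mathcal{C}$ satisfies the sentence by Step 1.

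\textbf{Main obstacle.} I expect the excellence/amalgamation step in Step 2 to be the hardest. QM4 and QM5 only give homogeneity over countable closed sets, so the extension must be carried out coherently across the whole cube of subsets. I would first attempt the $n = 2$ case directly using QM4, and then set up the induction on $n$ as in Zilber's original argument.
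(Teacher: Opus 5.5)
The paper gives no proof of this Fact. It is quoted from \cite{BHHKK}, so the comparison has to be with that source, and measured against it your proposal is missing its central idea. All of Step 2 rests on the amalgamation property you call excellence, and you give no mechanism for it. Zilber's categoricity argument \emph{assumes} excellence as an axiom, so ``setting up the induction as in Zilber's original argument'' is circular. Deriving excellence from QM4 and QM5 over countable closed sets is precisely the new content of \cite{BHHKK}.

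Your formulation is also a uniqueness statement: it says coherent partial isomorphisms on the pieces $\cl(Y)$, $Y\subsetneq X$, automatically amalgamate. That is stronger than Zilber's axiom, which starts from a map already known to be a partial embedding on the whole union $\bigcup_{Y\subsetneq X}\cl(Y)$. Even the case $n=2$ does not follow directly from QM4. Take $A=\cl(Za)$, $B=\cl(Zb)$ and compatible $f_A,f_B$. QM4 and QM5 only let you extend $f_A\cup\{b\mapsto f_B(b)\}$ to some $g$ on $\cl(Zab)$. Then $g|_{B}$ differs from $f_B$ by an automorphism $\sigma$ of $\cl(Z'b')$ fixing $Z'b'$. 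To absorb $\sigma$ you need $\sigma\cup\mathrm{id}_{\cl(Z'a')}$ to be a partial embedding, which is stationarity of types over the countable closed set $Z'$. In this paper that property appears only downstream of the theorem you are proving (Fact~\ref{stationarityinC}). This is the missing ingredient, not a routine check.

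Two further steps fail as written.

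\emph{Step 1(4).} The claim that each axiom ``is witnessed at some stage'' is false for chains of countable cofinality, since a countable closed $A\subseteq\bigcup_n N_n$ need not lie in any $N_n$. QM4 survives: reduce to the sets $\cl(\vec c)$ with $\vec c\subseteq A$ finite, which do lie in stages. QM5 does not survive. The stage-wise witnesses for the sets $\cl(\vec c_k)$ form a decreasing sequence of nonempty subsets of the countable set $\cl(\vec c'\vec b')$, and that intersection can be empty. Ruling this out again needs a stationarity or non-splitting argument. Relatedly, QM4 and QM5 are statements inside a single structure, so your countable back-and-forth between two different members of $\mathcal{C}$ first requires placing them in a common structure.

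\emph{Step 3.} The Scott sentence of $M_{\aleph_0}$ fails in every finite-dimensional member of $\mathcal{C}$, for example $\cl(\emptyset)$. A countable model of it is isomorphic to $M_{\aleph_0}$, and isomorphisms preserve dimension because $\cl$ is given by the formulas $\pi_n$. So ``each member satisfies the sentence by Step 1'' is wrong. You need instead:
\begin{enumerate}
\item the countable disjunction of the Scott sentences of the countable members of each dimension $\le\aleph_0$;
\item a back-and-forth system, coming from Step 2 rather than Step 1, to show that uncountable members satisfy it;
\item an induction on cardinality to show that a directed union of countable closed pieces is a union of a \emph{chain} of members of $\mathcal{C}$.
\end{enumerate}
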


\begin{nota}
Given a quasiminimal class    $\mathcal{C}$, sometimes we will write $\psi_{\mathcal{C}}$ for the sentence given in Fact \ref{Defclass}. 
\end{nota}

We will also need the following result that gives a definable control to the pregeometry:

\begin{fact}\label{definability-density} (Lemma 5.3 in \cite{Ki} $\mathcal{L}_{\omega_1 \omega}$-definability of the pregeometry). Let $\mathcal{C}$ be a quasiminimal excellent class. For each $n \in \mathbb{N}$ there is a quantifier-free $\mathcal{L}_{\omega_1,\omega}$-formula
$\pi_n(x, y_1,\dots,y_n)$ such that for each $M \in \mathcal{C}$ and each $a, b_1,\dots, b_n \in M$, we
have $a \in \cl_M(b_1,\dots,b_n)$ iff $M \models \pi_n(a,b_1,\dots,b_n)$    
\end{fact}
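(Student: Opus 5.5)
The statement is Fact \ref{definability-density}. For each $n$ we must produce a quantifier-free $\mathcal{L}_{\omega_1\omega}$-formula $\pi_n(x,\vec y)$ defining the relation $a\in\cl_M(\vec b)$ uniformly in every $M\in\mathcal{C}$. The plan is to take $\pi_n$ to be the disjunction of the isolating formulas of all quantifier-free types of tuples $(a,\vec b)$ with $a\in\cl(\vec b)$. Since quantifier-free types coincide with $\mathcal{L}_{\omega_1\omega}$-types here, such a disjunction is a legitimate formula provided only countably many types occur.

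\textbf{Step 1: the types are well defined.} Fix the unique model $\mathbb{M}\in\mathcal{C}$ of dimension $\aleph_1$, or any large model. By Fact \ref{Defclass}, every $M\in\mathcal{C}$ closedly embeds into a sufficiently large member of $\mathcal{C}$. Closed embeddings preserve both quantifier-free types and $\cl$. It therefore suffices to work in one big model $\mathbb{M}$.

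\textbf{Step 2: each type is a single formula.} By QM1, membership $a\in\cl(\vec b)$ depends only on $\tp(a,\vec b)$. Each quantifier-free type of an $(n+1)$-tuple is a countable conjunction of quantifier-free first-order formulas, and the language is countable, so each type $p(x,\vec y)$ is expressed by one quantifier-free $\mathcal{L}_{\omega_1\omega}$-formula $\chi_p=\bigwedge p$.

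\textbf{Step 3: countably many types.} Let $S$ be the set of types $\tp(a,\vec b)$ realized in $\mathbb{M}$ with $a\in\cl(\vec b)$. We show $S$ is countable in two parts.
\begin{enumerate}
\item[(a)] The types of $n$-tuples $\vec b$ are countably many. Pick a $\cl$-basis of $\vec b$. By QM4 and QM5 applied over the empty set, with induction on dimension, there is a unique type of each independent tuple, namely iterated generic types. By QM3, the closure of that basis is countable, and by QM5 every tuple inside that closure is matched in any copy. So, up to the countably many choices of a tuple inside a fixed countable closed set, the type of $\vec b$ is determined.
\item[(b)] Over a fixed $\vec b$, the elements of $\cl(\vec b)$ are countable by QM3. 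They therefore give countably many types $\tp(a,\vec b)$.
\end{enumerate}
Then set $\pi_n=\bigvee_{p\in S}\chi_p$, a countable disjunction.

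\textbf{Step 4: correctness.} If $a\in\cl(\vec b)$, then $\tp(a,\vec b)\in S$, so $\pi_n$ holds. Conversely, if $\pi_n(a,\vec b)$ holds, then $\tp(a,\vec b)=\tp(a',\vec b')$ for some $a'\in\cl(\vec b')$, and QM1 gives $a\in\cl(\vec b)$. For arbitrary $M\in\mathcal{C}$, transfer along a closed embedding into $\mathbb{M}$ as in Step 1.

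\textbf{Main obstacle.} The hard step is Step 3(a), the countability of types of finite tuples. It needs QM5 over the empty set rather than over closed sets, together with a careful induction building the tuple inside the countable closure of a generic basis. Excellence, which is available from \cite{BHHKK}, ensures the uniqueness statements extend to the models of smaller dimension needed in Step 1.
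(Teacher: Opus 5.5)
Your argument is correct and is the standard proof behind the cited Lemma 5.3 of Kirby; the paper itself only quotes the result and gives no proof. The argument runs as follows: by QM3--QM5 only countably many quantifier-free $(n+1)$-types are realized, each is a single countable conjunction, $\pi_n$ is the disjunction of those whose realizations satisfy $x\in\cl(\vec y)$, and QM1 gives correctness. The one slip is in Step 1, since a fixed $\mathbb{M}$ of dimension $\aleph_1$ cannot receive models of larger dimension. Either embed only the countable closed substructure $\cl_M(a\vec b)$ into $\mathbb{M}$, or observe that your Step 3(a), applied to $(n+1)$-tuples, shows every such type is already realized inside $\cl(\vec e_0)$ for one fixed finite independent tuple $\vec e_0$, so $S$ does not depend on the choice of big model; this also makes Step 3(b) redundant.
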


As stated earlier, given the results from \cite{BHHKK}, the hypothesis of excellence above is redundant. 

In the next section we will draw some analogies with strongly minimal theories. To do so, we need a good notion of independence.

\begin{nota}
 Let $\mathcal{C}$ be a quasiminimal class, let $M\in \mathcal{C}$ and let $\vec a\in M^n$.
 By $\dim(\vec a)$ we mean the dimension of the tuple $\vec a$ with respect to the pregeometry $(M,\cl)$. For $B\subset M$, we write $\dim(\vec a/B)$ for the dimension with respect to the pregeometry $(M,\cl_B)$, where $\cl_B$ is the localization of the pregeometry on the set $B$.
 For $B,C\subset M$ we write $\vec a\ind_C B$ if $\dim(\vec a/B\cup C)=\dim(\vec a/C)$, and $A\ind_CB$ if $\vec a\ind_C B$ for every finite tuple $\vec a$ in $A$. Sometimes, to empahsize that this notion of independence comes from the underlying pregeometry, we may refer to this notion of independence as $\cl$-independence and may write $\ind^{\cl}$ instead of $\ind$.
 \end{nota}

The notion of independence described above satisfies the usual properties of algebraic independence in a strongly minimal theory. One of the key properties is stationarity of types of tuples over models. It is stated in \cite{AHKK} and proved in \cite{Ka} for Lascar non-splitting in a framework that generalizes that of quasiminimal classes. The reader may want to check \cite[section 4]{Ka} where the author explains that the independence relation associated to Lascar non-splitting coincides with $\cl$-independence in $\mathcal{C}$. We state the fact for reference.


\begin{fact}\label{stationarityinC}
   The notion of independence $\ind$ satisfies stationarity over models in $\mathcal{C}$. 
\end{fact}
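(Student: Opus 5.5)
The most economical route is to quote \cite[section 4]{Ka}. There, Lascar non-splitting over models is shown to be stationary in a framework containing quasiminimal classes, and to coincide with $\cl$-independence. Stationarity of $\ind$ then transfers directly. I would nevertheless sketch a direct argument from the axioms QM1--QM5 and excellence, as follows.

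\textbf{Setup and reduction to countable sets.} Work inside a large $N\in\mathcal{C}$ with $M\preceq N$. Let $\vec a,\vec a'\in N$ satisfy $\tp(\vec a/M)=\tp(\vec a'/M)$, and let $B\supseteq M$ with $\vec a\ind_M B$ and $\vec a'\ind_M B$. We must show $\tp(\vec a/B)=\tp(\vec a'/B)$. Since types are quantifier-free, it suffices to treat an arbitrary finite $\vec b\subseteq B$. Reorder $\vec a=(a_1,\dots,a_n)$ so that $a_1,\dots,a_k$ is a $\cl$-basis of $\vec a$ over $M$, each $a_j$ with $j>k$ lies in $\cl(\vec m,a_1,\dots,a_k)$ for a finite $\vec m\subseteq M$, and this holds as witnessed by the formula $\pi$ of Fact~\ref{definability-density}. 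By QM1 and equality of types over $M$, $\vec a'$ has the same dependence pattern with the same witness $\vec m$. Independence gives that $a_1,\dots,a_k$ (and likewise $a'_1,\dots,a'_k$) is $\cl$-independent over $\cl(B)$. Now take countable closed sets $M_0=\cl(\vec m\,\vec c)\subseteq M$, for suitable finite $\vec c\subseteq M$, and $A_0=\cl(M_0\vec b)\subseteq\cl(B)$.

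\textbf{Step 1: the independent part.} Build an isomorphism $\sigma$, fixing $A_0$ pointwise, with $\sigma(a_i)=a'_i$ for $i\le k$, one element at a time. Since $a_1\notin\cl(A_0)$ and $a'_1\notin\cl(A_0)$, QM4 gives $\tp(A_0,a_1)=\tp(A_0,a'_1)$. A back-and-forth using QM5 over countable closed sets then extends this to an isomorphism $\cl(A_0a_1)\to\cl(A_0a'_1)$; this is possible because QM3 makes both closures countable. Next apply QM4 again with $A=\cl(A_0a_1)$, $A'=\cl(A_0a'_1)$ and the element $a_2$, and iterate. The result is $\sigma:\cl(A_0a_{\le k})\cong\cl(A_0a'_{\le k})$ over $A_0$.

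\textbf{Step 2: the dependent coordinates (main obstacle).} The remaining issue is that $\sigma(a_j)$ need not equal $a'_j$ for $j>k$. The map $\sigma$ was chosen freely inside the closure, and there may be several realizations of $\tp(a_j/\vec m a'_{\le k})$. To fix this, I would first construct an isomorphism $\tau:\cl(M_0a_{\le k})\to\cl(M_0a'_{\le k})$ over $M_0$ that does send $\vec a$ to $\vec a'$. This is possible because $\tp(\vec a/M)=\tp(\vec a'/M)$, so the QM5 back-and-forth can be started from the enumeration $\vec a\mapsto\vec a'$ rather than from $a_{\le k}$ alone. Next, I would use that $\cl(M_0a_{\le k})$ and $A_0$ are $\cl$-independent over the closed set $M_0$. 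I would then invoke the uniqueness of free amalgams of countable closed sets over a closed base, which follows from excellence \cite{BHHKK}. This lets me glue $\tau$ with the identity on $A_0$ to an isomorphism $\cl(A_0a_{\le k})\to\cl(A_0a'_{\le k})$. That isomorphism fixes $\vec b$ and sends $\vec a$ to $\vec a'$, which yields $\tp(\vec a\vec b)=\tp(\vec a'\vec b)$.

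This gluing over an independent system is where the real work lies, and it is exactly where the cited result of \cite{Ka} would be used if the direct excellence argument became too delicate.
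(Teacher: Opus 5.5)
Your primary route, quoting \cite[section 4]{Ka}, is exactly what the paper does. The paper states the Fact without proof, citing \cite{AHKK} and \cite{Ka}. Equivalently, one can use Kangas' stationarity of Lascar strong types (Lemma 53 there) together with the fact that over a model types and Lascar strong types coincide.

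The supplementary direct sketch, however, is not an independent proof. The reduction to a countable closed base $M_0$ and a countable closed parameter set $A_0$ is fine. Step~1 is correct but never used, since you discard $\sigma$.

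The gap is the gluing in Step~2. By QM5, whenever $\tp(\vec x/M_0)=\tp(\vec x'/M_0)$ there is an isomorphism $\tau:\cl(M_0\vec x)\to\cl(M_0\vec x')$ over $M_0$ sending $\vec x$ to $\vec x'$. So, by finite character, your claim that $\tau\cup\mathrm{id}_{A_0}$ is a partial isomorphism whenever both closures are $\cl$-independent from $A_0$ over $M_0$ is literally the Fact itself for the countable model $M_0$ and the set $A_0$. The sketch therefore reduces the statement to its own countable instance.

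Excellence does not close this gap. It says that the type of a finite tuple in the closure of the union of an independent system is, \emph{over} that union, determined by its restriction to a finite subset. It does not say that a union of isomorphisms between the pieces of two independent systems is a partial embedding, which is what your gluing needs.

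The difficulty is already visible in the simplest case. Take $a\notin\cl(A_0)$ and $c,c'\in\cl(M_0a)$ with $\tp(c/M_0a)=\tp(c'/M_0a)$. Nothing in QM1--QM5 or in excellence tells you directly that $\tp(c/A_0a)=\tp(c'/A_0a)$. This is exactly stationarity of $\tp(ac/M_0)$. Getting it requires the splitting analysis: one shows that $\cl$-independence over a model rules out Lascar splitting, and that the resulting non-splitting extension is unique. That analysis is the content of the cited work. So, as you anticipate in your last sentence, the citation is the only complete argument in your proposal.
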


We will use a refined version of the previous result for Lascar strong types. We will assume the reader is familiar with Lascar strong types in the quasiminimal setting, the reader may want to check \cite{Ka}.

\begin{fact}[Stationarity, Lemma 53 \cite{Ka}] If $A \subseteq B\subseteq M$, $\vec a,\vec b\in M$, $\vec a \ind_A B$, $\vec b \ind_A B$ and $Lstp(\vec a/A) =
Lstp(\vec b/A)$, then
$Lstp(\vec a/B)=Lstp(\vec b/B)$.
\end{fact}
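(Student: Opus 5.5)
Throughout, $\mathbb{M}$ denotes a large monster model of $\mathcal{C}$, and Lascar strong type equality over a set $X$ is the transitive closure of the relation ``$\vec u$ and $\vec v$ begin a (countable) sequence that is indiscernible over $X$''; equivalently, $\vec u$ and $\vec v$ are conjugate by an automorphism fixing a model containing $X$, iterated finitely often. The plan is to reduce the statement to Fact \ref{stationarityinC} (stationarity over models) by passing through a model containing $A$.

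First choose $N\in\mathcal{C}$, $N\preceq \mathbb{M}$, with $A\subseteq N$. The key point is to choose $N$ so that, in addition, $\vec a\vec b\ind_A N B$. Since $\vec a,\vec b$ are finite, this can be arranged by taking $N=\cl(A\cup D)$ for a set $D$ that is $\cl$-independent over $A\cup B\cup\vec a\vec b$ and of large dimension. Here QM2 supplies enough generic elements in the monster, and the closed sets are exactly the models by Fact \ref{Defclass}. By transitivity of $\cl$-independence, $\vec a\ind_N NB$ and $\vec b\ind_N NB$.

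Next, show that $\operatorname{Lstp}(\vec a/A)=\operatorname{Lstp}(\vec b/A)$ together with this genericity yields $\tp(\vec a/N)=\tp(\vec b/N)$, possibly after replacing $N$ by a suitable conjugate model. The Lascar strong type hypothesis gives a finite chain $\vec a=\vec c_0,\dots,\vec c_k=\vec b$ in which consecutive elements have the same type over some model $N_i\supseteq A$. One works along this chain, using the generic choice of $D$ and Fact \ref{stationarityinC} at each step to transport everything to the single model $N$. This step is the main obstacle. It must be organized as an induction on $k$, proving the stronger statement that $\operatorname{Lstp}$ over $A$ is preserved under independent extension at each step; the inductive hypothesis is then applied to the intermediate tuples $\vec c_i$, after first moving them so that they are independent from $B$ over $A$, which is possible by extension. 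In the quasiminimal setting this is essentially the content of \cite[Lemma 53]{Ka}. Rather than reprove all of it, I would follow Kangas' argument: first prove the one-step case, where $\vec a,\vec b$ have the same type over a model $N\supseteq A$, and then iterate.

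In the one-step case, $\vec a\ind_A B$ and $\vec b\ind_A B$, and after moving $N$ by an automorphism fixing $A\vec a\vec b$ we may assume $N\ind_A B\vec a\vec b$. Then $\vec a\ind_N NB$ and $\vec b\ind_N NB$, so Fact \ref{stationarityinC} gives $\tp(\vec a/NB)=\tp(\vec b/NB)$. Extending $NB$ to a model $N'\supseteq NB$ (again via Fact \ref{Defclass}) and using stationarity once more gives types equal over a model containing $B$. Hence $\operatorname{Lstp}(\vec a/B)=\operatorname{Lstp}(\vec b/B)$, which completes the one-step case, and the inductive chain argument yields the general statement.
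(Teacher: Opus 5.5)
\textbf{There is a genuine gap.} The step you flag as the main obstacle is never established, and the parts you do argue contain errors. The paper gives no proof of this Fact; it only cites Kangas' Lemma~53. So deferring the key step to ``Kangas' argument'' is circular. Passing from $\Lstp(\vec a/A)=\Lstp(\vec b/A)$ to $\tp(\vec a/N)=\tp(\vec b/N)$, for a model $N\supseteq A$ with $\vec a,\vec b\ind_A N$, is just the special case $B=N$ of the statement.

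Your one-step argument does not fill this in. There, $N$ is the model witnessing $\tp(\vec a/N)=\tp(\vec b/N)$. Any automorphism $\sigma$ fixing $A\vec a\vec b$ satisfies $\dim(\vec a\vec b/\sigma N)=\dim(\vec a\vec b/N)$, so it can produce $\sigma N\ind_A\vec a\vec b$ only if $N\ind_A\vec a\vec b$ already held. That typically fails: take $\vec a=(a_1,a_2)$ and $\vec b=(b_1,a_2)$ with $a_2\in N\setminus\cl(A)$ and $a_1,b_1$ independent generics over $N$. Hence the conclusions $\vec a\ind_N NB$ and $\vec b\ind_N NB$ are unjustified in that step.

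The inductive scheme is also problematic. Moving the intermediate tuples $\vec c_i$ to be independent from $B$ over $A$ destroys the relations $\tp(\vec c_{i-1}/N_i)=\tp(\vec c_i/N_i)$ unless the whole chain is moved together. In any case it is not what is needed.

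A minor point: $\vec a\vec b\ind_A NB$ does not follow from your choice of $D$, since $\vec a\vec b\ind_A B$ is not assumed. Only $\vec a\ind_A NB$ and $\vec b\ind_A NB$ hold, which is all you actually use.

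\textbf{The fix} is to swap the roles: keep the chain models fixed and move the auxiliary model instead.
\begin{enumerate}
\item Let $\vec a=\vec c_0,\dots,\vec c_k=\vec b$ and small models $N_1,\dots,N_k\supseteq A$ satisfy $\tp(\vec c_{i-1}/N_i)=\tp(\vec c_i/N_i)$.
\item Choose $D$ $\cl$-independent over $A\cup B\cup\bigcup_i(\vec c_i\cup N_i)$, and put $N=\cl(A\cup D)$.
\item Since $A\subseteq N_i$ and $D\ind_{N_i}\vec c_{i-1}\vec c_i$, you get $\vec c_{i-1}\ind_{N_i}N_iN$ and $\vec c_i\ind_{N_i}N_iN$.
\item Stationarity over $N_i$ (Fact \ref{stationarityinC}) then gives $\tp(\vec c_{i-1}/N)=\tp(\vec c_i/N)$ for each $i$. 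Chaining these, $\tp(\vec a/N)=\tp(\vec b/N)$.
\item Your reduction now finishes. From $\vec a\ind_A B$ and $\vec a\ind_{AB}D$ you get $\vec a\ind_N NB$, and likewise for $\vec b$.
\item Stationarity over $N$ yields $\tp(\vec a/\cl(NB))=\tp(\vec b/\cl(NB))$. Equality of types over a model containing $B$ gives $\Lstp(\vec a/B)=\Lstp(\vec b/B)$.
\end{enumerate}
No induction on $k$ is needed, and the intermediate $\vec c_i$ never need to be independent from $B$. The only outside inputs are the chain-of-models description of Lascar strong types (with small models), and the fact that types over a model containing $B$ refine Lascar strong types over $B$.
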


\section{Linearity in quasiminimal geometry structures}\label{sec:linearity}

In this section we introduce a notion of weak $1$-basedness and we prove that it is equivalent to local modularity. Locally modular quasiminimal pregeometry structures were studied previously in \cite{H05,HK}. We start with the definitions:

\begin{defn} 
Let $(M,\cl)$ be a quasiminimal pregeometry structure. We say that $(M,\cl)$ is modular if for all tuples $\vec a,\vec b\in M$ we have $\vec a\ind^{\cl}_{\cl(\vec a)\cap \cl(\vec b)} \vec b$. 

Let $\mathcal{C}$ be a quasiminimal class. We say $\mathcal{C}$ is \emph{modular} if for all $M\in \mathcal{C}$, the structure $(M,\cl)$ is modular.
\end{defn}

\begin{obse}
  Since a quasiminimal class $\mathcal{C}$ is categorical in every cardinal and the definition of modularity only dependns on finite tuples, $\mathcal{C}$ is modular if and only if for some
  $M\in \mathcal{C}$, the structure $(M,\cl)$ is modular.
\end{obse}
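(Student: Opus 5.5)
The Observation has two directions, and only one needs work. If $\mathcal{C}$ is modular, then by definition every $M\in\mathcal{C}$ is modular, so in particular some $M$ is. For the converse, I fix some $M_0\in\mathcal{C}$ with $(M_0,\cl)$ modular and an arbitrary $N\in\mathcal{C}$. I also fix finite tuples $\vec a,\vec b$ from $N$, and I want to show $\vec a\ind^{\cl}_{\cl(\vec a)\cap\cl(\vec b)}\vec b$ in $N$.

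\textbf{Step 1: pass to a common structure.} By Fact \ref{Defclass}, $\mathcal{C}$ has exactly one structure of each cardinal dimension up to isomorphism. Let $\kappa=\max(\dim N,\dim M_0,\aleph_0)$ and let $U\in\mathcal{C}$ have dimension $\kappa$. I then closed-embed both $N$ and $M_0$ into $U$. For this I use the construction of \cite{BHHKK}: a structure of smaller dimension is isomorphic to the closure of a subset of a basis of the larger one, and that closure is a closed substructure lying in $\mathcal{C}$ with the right dimension. Closed embeddings preserve $\cl$, since $\theta(\cl_{M_1}(A))=\cl_{M_2}(\theta(A))$. Hence dimensions computed inside $N$ and inside $U$ agree, and so do intersections of closures of finite sets. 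So modularity for finite tuples transfers both up to $U$ and down to any closed substructure. Since $N$ is (isomorphic to) a closed substructure of $U$, it suffices to show that $U$ is modular.

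\textbf{Step 2: modularity of $U$ from modularity of $M_0$.} Take finite $\vec a,\vec b$ in $U$. The set $\cl(\vec a\vec b)$ is countable by QM3 and closed, and it has finite dimension $d$. Since $M_0$ sits closed in $U$ and has dimension at least $d$ (if $\dim M_0<d$, I first enlarge $M_0$ to a bigger structure of $\mathcal{C}$), I choose a basis $\vec e$ of $\cl(\vec a\vec b)$ and a tuple $\vec e'$ in $M_0$ with $\tp(\vec e)=\tp(\vec e')$. This is possible because both are independent tuples of length $d$, so QM4 applied iteratively starting from $\cl(\emptyset)$ gives equality of types. Then QM5 (homogeneity over the empty set) extends the map to $\vec a\vec b\mapsto\vec a'\vec b'$ with $\vec a'\vec b'\in\cl(\vec e')\subseteq M_0$. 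By QM1, quantifier-free types determine membership in $\cl$, and Fact \ref{definability-density} gives this uniformly via the formulas $\pi_n$. Therefore $\dim$ of every subtuple, and the dimension of $\cl(\vec a)\cap\cl(\vec b)$, are read off from the type.

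\textbf{Main obstacle.} The hard step is transferring $\dim(\cl(\vec a)\cap\cl(\vec b))$, because it is not literally a formula in $\vec a\vec b$. I would handle it by writing
\[
\dim(\cl(\vec a)\cap\cl(\vec b))=\max\{k:\ \exists\,\vec c\ \text{of length } k \text{ independent with } \vec c\subseteq\cl(\vec a)\cap\cl(\vec b)\}.
\]
Any such $\vec c$ lies in $\cl(\vec a\vec b)$, which the type-preserving correspondence carries over: by QM5 each witness $\vec c$ for $\vec a\vec b$ has a counterpart $\vec c'$ for $\vec a'\vec b'$, and conversely. Modularity is equivalent to the equation
\[
\dim(\vec a)+\dim(\vec b)=\dim(\vec a\vec b)+\dim(\cl(\vec a)\cap\cl(\vec b)).
\]
This equation holds for $\vec a'\vec b'$ in $M_0$, so it holds for $\vec a\vec b$ in $U$, and hence in $N$.
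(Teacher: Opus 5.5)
\textbf{The gap.} The step that fails is the parenthetical ``if $\dim M_0<d$, I first enlarge $M_0$ to a bigger structure of $\mathcal{C}$''. Modularity does not pass from $M_0$ to larger members of $\mathcal{C}$. Once you enlarge, the new structure is no longer known to be modular, and the transfer in Step~2 collapses.

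In that case the statement is actually false. Every pregeometry of dimension at most $2$ is modular: in your dimension identity, the only nontrivial case is $\dim(\vec a)=\dim(\vec b)=1$ with $\cl(\vec a)\neq\cl(\vec b)$, and then two distinct closures of dimension $1$ meet in $\cl(\emptyset)$. The paper notes that the class of algebraically closed fields of characteristic $0$ contains the fields of finite transcendence degree. So the field of transcendence degree $2$ is a modular member of a non-modular class. To see non-modularity, take a generic point $(x,y)$ on a generic line $y=mx+k$. Then $\dim(x,y)+\dim(m,k)=4$, while $\dim(x,y,m,k)+\dim(\cl(x,y)\cap\cl(m,k))=3+0$.

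The witness $M$ must therefore be infinite-dimensional. This is what the paper intends, since modularity of $(M,\cl)$ is defined only for quasiminimal pregeometry structures, which satisfy QM2. With that hypothesis $d<\dim M_0$ holds automatically. Delete the parenthetical and state the hypothesis explicitly.

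\textbf{The rest of your argument.} It is correct and is a fleshed-out version of the paper's one-line justification (categoricity plus finite character). Your reformulation of modularity as $\dim(\vec a)+\dim(\vec b)=\dim(\vec a\vec b)+\dim(\cl(\vec a)\cap\cl(\vec b))$ is right. So is your transfer of $\dim(\cl(\vec a)\cap\cl(\vec b))$ via QM5 and QM1.

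One smaller point: ``QM4 applied iteratively'' is not immediate. After one step you only know $\tp(\cl(\emptyset)e_1)=\tp(\cl(\emptyset)e_1')$. To reapply QM4 you need matching enumerations of the closed sets $\cl(e_1)$ and $\cl(e_1')$, which takes a back-and-forth using QM3 and QM5. Uniqueness in each dimension packages exactly this, and it also makes your Step~1 unnecessary. For any independent $d$-tuple $\vec e'$ in $M_0$, the sets $\cl_N(\vec a\vec b)$ and $\cl_{M_0}(\vec e')$ are members of $\mathcal{C}$ of dimension $d$, hence isomorphic. Both are closed substructures, and $\cl$ is given by the quantifier-free formulas $\pi_n$. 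Hence the isomorphism preserves ambient closures, their intersections, and dimensions.
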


We recall the notion of projectivity and some equivalences. 

\begin{defn}(see Defn. 4.2.2 \cite{Bu})
    Let $(X,\cl)$ be a pregeometry. We say  projective if $(X,\cl)$ is nontrivial and for all $a, b \in X$ and $C\subseteq  X$ such that $a\in \cl(C\cup \{b\})$, there is a $c \in \cl(C)$ such that $a\in \cl(\{c, b\})$.
\end{defn}

\begin{fact} (\cite[Lemma 4.2.1]{Bu})
    A pregeometry is projective if and only if it is nontrivial and modular.
\end{fact}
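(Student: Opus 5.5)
We prove the two directions separately. Throughout, we use the form of modularity that says for finite-dimensional closed $A,B$
\[
\dim(A)+\dim(B)=\dim(A\cup B)+\dim(A\cap B).
\]
This is equivalent to $\vec a\ind_{\cl(\vec a)\cap\cl(\vec b)}\vec b$ for finite tuples.

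\textbf{Modular and nontrivial $\Rightarrow$ projective.} Let $a\in\cl(C\cup\{b\})$. By finite character we may take $C$ finite.
\begin{itemize}
\item[(i)] If $a\in\cl(C)$, take $c=a$.
\item[(ii)] If $a\in\cl(b)$, take any $c\in\cl(C)$. I expect only a degenerate edge case here: $\cl(C)=\emptyset$, which can only happen when $C=\emptyset$ and $\cl(\emptyset)=\emptyset$. I would handle it by the usual convention for this definition.
\item[(iii)] Otherwise $a\notin\cl(C)$ and $a\notin\cl(b)$. Then also $b\notin\cl(C)$, since otherwise $a\in\cl(C)$. Apply modularity to $\cl(a,b)$ and $\cl(C)$:
\[
\dim(\cl(ab)\cap\cl(C))=2+\dim(C)-(\dim(C)+1)=1.
\]
So pick $c\in\cl(C)\cap\cl(a,b)$ with $c\notin\cl(\emptyset)$. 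Then $c\notin\cl(b)$: otherwise exchange gives $b\in\cl(c)\subseteq\cl(C)$, a contradiction. Since $c\in\cl(a,b)\setminus\cl(b)$, exchange gives $a\in\cl(b,c)$.
\end{itemize}

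\textbf{Projective $\Rightarrow$ nontrivial and modular.} Nontriviality is part of the definition, so only modularity needs proof.

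First I prove an iterated form of projectivity. If $x\in\cl(C\cup Y)$ with $Y=\{y_1,\dots,y_m\}$ finite, then there is $c\in\cl(C)$ with $x\in\cl(\{c\}\cup Y)$. The proof is by induction on $m$. Write $Y=Y'\cup\{y_m\}$. Projectivity applied to $C\cup Y'$ and $y_m$ gives $e\in\cl(C\cup Y')$ with $x\in\cl(e,y_m)$. The induction hypothesis applied to $e$ gives $c\in\cl(C)$ with $e\in\cl(\{c\}\cup Y')$. Hence $x\in\cl(\{c\}\cup Y)$.

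Next I show that for finite-dimensional closed $A,B$ we have $\dim(A\cap B)\ge\dim A+\dim B-\dim(A\cup B)$. The reverse inequality holds in any pregeometry by submodularity. I argue by induction on $\dim A$, with the case $\dim A=\dim\cl(\emptyset)$ trivial. Write $A=\cl(A'\cup\{a\})$ with $A'$ closed, $\dim A'=\dim A-1$ and $a\notin A'$. There are two cases.
\begin{itemize}
\item[(1)] If $a\notin\cl(A'\cup B)$, then $\dim(A\cup B)=\dim(A'\cup B)+1$. Since $A\cap B\supseteq A'\cap B$, the inequality passes from $A'$ to $A$.
\item[(2)] If $a\in\cl(A'\cup B)$, the dimension of the union stays the same, so I need one new element of $A\cap B$. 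Apply the lemma with $C=B$, $x=a$ and $Y$ a basis of $A'$. This yields $c\in B$ with $a\in\cl(\{c\}\cup A')$. Since $a\notin A'$, exchange gives $c\in\cl(A'\cup\{a\})\setminus A'=A\setminus A'$. Thus $c\in(A\cap B)\setminus A'$, and $\dim(A\cap B)\ge\dim(A'\cap B)+1$.
\end{itemize}
In both cases the inequality propagates from $A'$ to $A$.

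The main step is the iterated projectivity lemma and its use in case (2), which produces the new point of $A\cap B$. The remaining steps are routine exchange-and-dimension bookkeeping.
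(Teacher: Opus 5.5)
Your proof is correct. The paper gives no argument of its own for this statement; it simply quotes it from Buechler. Your route is the standard one: the dimension count $\dim(\cl(ab)\cap\cl(C))=1$ for modular $\Rightarrow$ projective, and for the converse, iterated projectivity combined with induction on $\dim A$ to get $\dim(A\cap B)\ge\dim A+\dim B-\dim(A\cup B)$.

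The degenerate case you flagged is a genuine defect of the definition as literally written in the paper, not of your argument. If $C=\emptyset$ and $\cl(\emptyset)=\emptyset$, then taking $a=b$ violates the condition in every nonempty pregeometry with empty $\cl(\emptyset)$. This includes the modular nontrivial geometry $PG(V)$ that the paper itself considers, where $\cl^*(\emptyset)=\emptyset$.

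So the definition has to be read with $C\neq\emptyset$. Under that reading your case (ii) goes through with any $c\in C$. In the converse you only apply projectivity with parameter set $B\cup Y'$. That set is nonempty unless $B=\cl(\emptyset)=\emptyset$, and in that situation case (2) cannot occur, since $a\in\cl(A'\cup B)=A'$ would contradict $a\notin A'$.
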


Let $e\not \in \cl(\emptyset)$ and consider $(M,cl_e)$, the new quasiminimal pregeometry structure where we interpret $\cl_e(C)=\cl(C\cup \{e\})$. We call it the \emph{localization of $(M,cl)$ in $\{e\}$}.

\begin{defn} Let $(M,\cl)$ be a quasiminimal pregeometry structure. We say that $(M,\cl)$ is locally modular if for any $e\not \in \cl(\emptyset)$, the quasiminimal pregeometry structure $(M,cl_e)$ is modular.
\end{defn}

\begin{defn}\label{defn:w1b} Let $(M,\cl)$ be a quasiminimal pregeometry structure.
We say $(M,\cl)$ is \emph{weakly 1-based} if for any $A\subset M$ and $\vec c\in M^n$ and for any (some) $\vec d\in M^n$ satisfying $\Lstp(\vec d/A)=\Lstp(\vec c/A)$ and $\vec d\ind_A \vec c$  we have $\vec c\ind_{\vec d} A$.
\end{defn}

\begin{obse}
Note that in the previous definition, under the same assumptions, $\vec d\ind_A \vec c$ also implies $\vec d\ind_{\vec c} A$.
\end{obse}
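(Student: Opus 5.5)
This is a symmetry statement for $\ind$ combined with monotonicity, so the plan is to reduce it to the dimension additivity of the pregeometry. Here $\ind$ is defined through $\dim$, and for finite tuples and arbitrary base sets, $\cl$-independence is symmetric. Concretely, for finite $\vec x,\vec y$ and a set $C$ we have additivity,
$$\dim(\vec x\vec y/C)=\dim(\vec x/C\vec y)+\dim(\vec y/C)=\dim(\vec y/C\vec x)+\dim(\vec x/C),$$
so $\vec x\ind_C\vec y$ iff $\vec y\ind_C\vec x$. The main work is to turn "$\vec d\ind_A\vec c$" into "$\vec d\ind_{\vec c}A$", where the roles of the base and the independent set are switched. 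For this I will use both independence relations available: the hypothesis $\vec d\ind_A\vec c$, and the weak $1$-basedness conclusion $\vec c\ind_{\vec d}A$.

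First, from $\vec d\ind_A\vec c$ we get $\dim(\vec d/A\vec c)=\dim(\vec d/A)$. Since $\Lstp(\vec d/A)=\Lstp(\vec c/A)$, the tuples have the same type over $A$, so by QM1 $\dim(\vec d/A)=\dim(\vec c/A)$. Second, by weak $1$-basedness (the hypothesis of the observation), $\dim(\vec c/A\vec d)=\dim(\vec c/\vec d)$.

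Now compute $\dim(\vec c\vec d/A)$ in two ways. First,
$$\dim(\vec c\vec d/A)=\dim(\vec d/A\vec c)+\dim(\vec c/A)=2\dim(\vec c/A).$$
Second,
$$\dim(\vec c\vec d/A)=\dim(\vec c/A\vec d)+\dim(\vec d/A)=\dim(\vec c/\vec d)+\dim(\vec c/A).$$
Comparing gives $\dim(\vec c/\vec d)=\dim(\vec c/A)$.

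The goal is $\dim(\vec d/A\vec c)=\dim(\vec d/\vec c)$. Using additivity over $\emptyset$, $\dim(\vec d/\vec c)=\dim(\vec c\vec d)-\dim(\vec c)$ and $\dim(\vec c/\vec d)=\dim(\vec c\vec d)-\dim(\vec d)$. Since $\dim(\vec c)=\dim(\vec d)$ (same type), these are equal, so $\dim(\vec d/\vec c)=\dim(\vec c/\vec d)=\dim(\vec c/A)=\dim(\vec d/A)=\dim(\vec d/A\vec c)$. This is exactly $\vec d\ind_{\vec c}A$ for finite tuples of $A$ added to the base. By the definition of $\ind$ via finite subtuples and monotonicity, this extends to arbitrary $A$.

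The only delicate step is the mismatch between a finite and an infinite $A$. If $A$ is infinite, $\dim(\cdot/A)$ is still finite for the finite tuples $\vec c,\vec d$, so the additivity computations above remain valid in the localization $\cl_A$. I would remark that all quantities involved are finite, so no cardinal arithmetic issues arise.
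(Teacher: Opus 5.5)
Your proof is correct, but it takes a different route from the one the observation relies on. The paper gives no argument because the intended one is a relabelling. The hypotheses are symmetric in $\vec c$ and $\vec d$: equality of Lascar strong types over $A$ is symmetric, and $\vec d\ind_A\vec c$ gives $\vec c\ind_A\vec d$ by symmetry of $\cl$-independence. So applying the definition of weak $1$-basedness with the roles of $\vec c$ and $\vec d$ exchanged yields $\vec d\ind_{\vec c}A$ directly.

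You instead use only the one given instance $\vec c\ind_{\vec d}A$ and transport it by dimension counting. Stripped down, your computation says the following. Since $\vec c\ind_A\vec d$, the condition $\vec c\ind_{\vec d}A$ is equivalent to $\dim(\vec c/\vec d)=\dim(\vec c/A)$. Since $\vec d\ind_A\vec c$, the condition $\vec d\ind_{\vec c}A$ is equivalent to $\dim(\vec d/\vec c)=\dim(\vec d/A)$. These two equalities coincide because $\dim(\vec c)=\dim(\vec d)$ and $\dim(\vec c/A)=\dim(\vec d/A)$.

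The relabelling argument is a one-liner, but it needs the ``for any $\vec d$'' form of the definition, so that $\vec c$ may serve as the witness for $\vec d$. Your computation proves something slightly stronger and independent of weak $1$-basedness: for any $\vec c,\vec d$ with $\tp(\vec c/A)=\tp(\vec d/A)$ and $\vec d\ind_A\vec c$, the two conclusions are equivalent.

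Your final paragraph about passing from finite subsets of $A$ is unnecessary. The computation is already done over the whole set $A$, and $\vec d\ind_{\vec c}A$ is by definition just $\dim(\vec d/A\vec c)=\dim(\vec d/\vec c)$.
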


\begin{theo}\label{w1bimplieslocmod}
Assume $(M,\cl)$ is \emph{weakly 1-based} and let
$e\in M$ be generic (so $e\not \in cl(\emptyset)$). Then the localization $(M,cl_e)$ is a quasiminimal modular structure.   
\end{theo}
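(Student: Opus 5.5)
The plan is to check modularity of $(M,\cl_e)$ through Buechler's projectivity criterion (the Fact quoted from \cite[Lemma 4.2.1]{Bu}), or, if the localization happens to be trivial, to note that it is modular anyway. Concretely, I will show the following. Let $C\subseteq M$ and let $a,b\in M$ with $a\in\cl(C\cup\{b,e\})$. Then there is $c\in\cl(C\cup\{e\})$ with $a\in\cl(\{c,b,e\})$. By finite character of $\cl$ (QM3 and the pregeometry axioms) I may take $C$ finite. Replacing $C$ by $A:=\cl(C\cup\{e\})$, which is countable and closed, puts me in the setting where QM4, QM5 and the stationarity Facts apply. The degenerate cases $a\in A$ (take $c=a$) and $b\in\cl(A\cup\{a\})$ with $a\notin A$ are handled directly, so the main case is $\dim(ab/A)=1$ with $a,b\notin A$.

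For the main case I apply weak 1-basedness to the pair $\vec c=(a,b)$ over $A$. Pick $\vec d=(a',b')$ with $\Lstp(\vec d/A)=\Lstp(ab/A)$ and $\vec d\ind_A ab$. Such $\vec d$ exists by extension of Lascar strong types, using the stationarity Fact \cite[Lemma 53]{Ka} and working inside a large model of $\mathcal C$. Definition~\ref{defn:w1b} then gives $ab\ind_{a'b'}A$, and the Observation after it gives $a'b'\ind_{ab}A$. Counting dimensions:
\[
\dim(a'b'/A)=1,\qquad \dim(aba'b'/A)=2,
\]
and $ab\ind_{a'b'}A$ gives $\dim(ab/a'b')=\dim(ab/Aa'b')=1$. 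Hence $\dim(aba'b')=\dim(a'b')+1$.

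From these equalities I want to extract a single element witnessing projectivity. The idea is to use $e$ to pin $a'b'$ down. The pair $a'b'$ has the same Lascar strong type as $ab$ over $A\ni e$, and $a'\in\cl(A\cup\{b'\})$ because $a\in\cl(A\cup\{b\})$. Now choose $\vec d$ more cleverly: using QM4 (uniqueness of the generic type), $e$ is generic, and QM5, I realize $\vec d$ so that $b'$ is interchangeable with a generic element related to $e$. The goal of this choice is to get $\dim(a'b'/e)\le 1$, or at least to make the element $c$ be forced into $\cl(\{a',b',e\})\cap A$.

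The intended $c$ is then obtained from $ab\ind_{a'b'}A$. Since $a\in\cl(Ab)$, this independence should give $a\in\cl(\{a',b',b\}\cup(A\cap\cl(a'b'e)))$. The dimension count above says that $\cl(a'b'e)\cap A$ contributes at most one new dimension, and that is exactly the element $c$.

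The main obstacle is this last step. In a non-modular pregeometry, dimension equalities do not by themselves produce a point of $A\cap\cl(a'b'e)$; they only say that $A$ and $ab$ are independent over $a'b'$. The role of the generic $e$ is precisely to cancel the extra degree of freedom of $\vec d$. My first attempt will be to take $\vec d$ with $b'$ realizing over $A$ the same type that $b$ has, but with $e$ playing the part of $b'$ up to interalgebraicity. Then $\cl(a'b'e)$ is effectively $\cl(a',e)$, and the dimension bound yields $c$. Should that fail, I fall back to a two-step version: first prove modularity over a closed set containing $e$ for a pair of single points, then bootstrap to arbitrary tuples by induction on $\dim$, using transitivity of $\ind$.
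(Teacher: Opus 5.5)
\textbf{There is a genuine gap, at exactly the step you flag, and that step is the whole content of the theorem.} Your reduction matches the paper's. You prove projectivity of $\cl_e$ and pass to the main case. You take $a'b'$ with $\Lstp(a'b'/A)=\Lstp(ab/A)$ and $a'b'\ind_A ab$. One application of weak 1-basedness then gives $ab\ind_{a'b'}A$, hence $a\in\cl(a'b'b)$.

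At that point you only know that the ``curve'' $\tp(ab/A)$ is already defined over the outside parameter $a'b'$; no point of $A$ has been produced. Your claim that ``$\cl(a'b'e)\cap A$ contributes at most one new dimension, and that is exactly the element $c$'' is unsupported. Getting a point in an intersection of closures from a dimension count is precisely what modularity would give, so the step is circular. Even if some $c\in\cl(a'b'e)\cap A$ existed, nothing you have links it to $a\in\cl(cbe)$. The displayed membership $a\in\cl(\{a',b',b\}\cup(A\cap\cl(a'b'e)))$ carries no information, since already $a\in\cl(a'b'b)$.

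Your proposed repair cannot work. Since $e\in A$, and $\Lstp(b'/A)=\Lstp(b/A)$ with $b\notin A$ forces $b'\notin A\supseteq\cl(e)$, the elements $b'$ and $e$ are never interalgebraic. The fallback induction on tuples does not touch the single-point case, which is where the difficulty lies.

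The missing idea is to move $b$, not $b'$, onto $e$ while keeping the parameter $a'b'$ fixed, and then to apply weak 1-basedness again over the base $a'b'$.
\begin{enumerate}
\item Since $e\in A$ and $e\notin\cl(ab)$, QM1 gives $e\notin\cl(a'b')$. Also $b\notin\cl(Aa'b')$.
\item Take a countable model $N$ containing $a',b'$ with $\{b,e\}\ind N$. By QM4, $\tp(b/N)=\tp(e/N)$, and by QM5 there is $d$ with $\tp(ed/N)=\tp(ba/N)$.
\item Then $d\in\cl(ea'b')$, because $a\in\cl(a'b'b)$. Moreover $ab$ and $de$ have the same Lascar strong type over $a'b'$ and are independent over it.
\item Weak 1-basedness over $a'b'$ gives $de\ind_{ab}a'b'$. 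Hence $d\in\cl(eab)\cap\cl(ea'b')\subseteq\cl(Aab)\cap\cl(Aa'b')=A$, using $ab\ind_A a'b'$.
\item It also gives $ab\ind_{de}a'b'$. Together with $a\in\cl(a'b'b)$ and $b\notin\cl(ea'b')$, this yields $a\in\cl(bde)$, so $c=d$ works.
\end{enumerate}

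A smaller point: your degenerate cases are misstated. The case ``$b\in\cl(A\cup\{a\})$ with $a\notin A$'' is just the main case, by exchange. The case $a\in\cl(be)$ (take $c=e$) must be split off instead. The swap above needs $e\notin\cl(ab)$ and $b\notin\cl(e)$, i.e.\ $\{a,b,e\}$ independent.
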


\begin{proof}
    
We modify the argument in pages 144-145 from Buechler's book \cite{BueBook}, the original argument goes back to work of Zil'ber. The argument that appears in \cite{BueBook} uses canonical bases of plane curves, we find a way around them using instead that $(M,\cl)$ is weakly 1-based. Let $e\not \in \cl(\emptyset)$ and consider $(M,cl_e)$, the new quasiminimal pregeometry structure where we interpret $\cl_e(C)=\cl(C\cup \{e\})$. It is enough to show projectivity. Let $B\subset M$ be finite and let $a,b\in M$ with 
$a\in \cl(b,e,B)$. Our goal is to find $d\in \cl(B\cup\{e\})$ with 
$a\in \cl(b,e,d)$.

If $b\in \cl(B,e)$ or $a\in \cl(b,e)$ the result follows. It remains to consider the case where the set 
$\{a,b,e\}$
is independent.

Let $(a',b')\models \Lstp(a,b/B\cup\{e\})$ with $a'b'\ind_{B\cup\{e\}} ab$. Then by weak $1$-basedness
$ab\ind_{a'b'} B\cup\{e\}$. In particular, $a\in \cl(a'b'b)$.
Both $b$ and $e$ are generic over the set $\{a'b'\}$, so $Lstp(b/a'b')=Lstp(e/a'b')$. We can choose a countable model $N$ with 
$a'b'\in N$ such that $\{e,b\}\ind N$, and by uniqueness of generic types 
$tp(b/N)=tp(e/N)$.
Let $f$
be a partial automorphism taking $Nb$ to $Ne$ and extend it (using QM5) to a map sending $Nba$ to $Ned$ for some $d\in M$. Note that $Lstp(a,b/N)=Lstp(d,e/N)$
and that $ab\ind_{N}de$.
In particular, since $b\in \cl(aa'b')$, we also have $d\in \cl(ea'b')$.
Note that $\dim(ab/N)=\dim(ab/a'b')=1=\dim(ed/a'b')=\dim(ed/N)$ and thus $Lstp(ab/a'b')=Lstp(de/a'b')$ and $ab\ind_{a'b'}de$.

Using again weak $1$-basedness we have $a'b'\ind_{ab}de$, so we also get $d\in \cl(eab)$. 
Since $d\in \cl(eab)$ and $d\in \cl(ea'b')$, and by construction $a'b'\ind_{B\cup\{e\}} ab$, we also get $d\in \cl(eB)$. It remains to prove 
$a\in \cl(b,e,d)$.

Using weak 1-basedness one more time, $ab\ind_{de}a'b'$. This implies that $a\in \cl(b,e,d)$, which is what we wanted.
\end{proof}

Now we prove the converse of the previous result.

\begin{theo}\label{locmodimpliesw1b}
Assume $(M,\cl)$ is quasiminimal pregeometry structure and that for any
$e\in M$ generic (so $e\not \in cl(\emptyset)$) the localization $(M,cl_e)$ is a quasiminimal modular structure. Then $(M,\cl)$ is weakly $1$-based.    
\end{theo}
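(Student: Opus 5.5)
The plan is to verify the condition of Definition \ref{defn:w1b} directly. Fix $A\subset M$, a tuple $\vec c$, and a tuple $\vec d$ with $\Lstp(\vec d/A)=\Lstp(\vec c/A)$ and $\vec d\ind_A\vec c$. We must show $\vec c\ind_{\vec d}A$, i.e.\ $\dim(\vec c/\vec d)=\dim(\vec c/A\vec d)$.

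Since dimensions are witnessed by finite subsets, we may work inside a model containing $A\vec c\vec d$ that has infinite dimension over it (QM2). There we pick an element $e$ generic over $A\vec c\vec d$, so $e\ind A\vec c\vec d$ and $e\notin\cl(\emptyset)$. By hypothesis $(M,\cl_e)$ is modular. The Lascar strong type equality should survive adding $e$. Indeed, $\vec c\ind_A e$ and $\vec d\ind_A e$ hold because $e$ is generic over everything, so the stationarity fact for Lascar strong types (Lemma 53 of \cite{Ka}) gives $\Lstp(\vec c/Ae)=\Lstp(\vec d/Ae)$. Also $\vec d\ind_{Ae}\vec c$ follows from $\vec d\ind_A\vec c$ together with the genericity of $e$.

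Now apply modularity of $\cl_e$ to $\vec c$ and (a finite part of) $A$. Set $X=\cl(\vec c e)\cap\cl(Ae)$; modularity of $\cl_e$ gives $\vec c\ind_{Xe}A$. The key claim is $X\subseteq\cl(\vec d e)$. For this I would use that elements of $\cl(Ae)$ have only countably many conjugates over $Ae$ (QM3). Hence every such element is fixed by Lascar strong automorphisms over $Ae$, so $\Lstp(\vec c/Ae)=\Lstp(\vec d/Ae)$ implies $\tp(\vec c/\cl(Ae))=\tp(\vec d/\cl(Ae))$. By QM1 this transfers $x\in\cl(\vec c e)$ to $x\in\cl(\vec d e)$ for every $x\in X$. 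I expect this step, that Lascar strong type over $Ae$ determines the type over $\cl(Ae)$ in the quasiminimal setting, to be the main obstacle. I would argue it via the bounded-orbit characterization of Lascar strong types from \cite{Ka}.

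Granting $X\subseteq\cl(\vec d e)$, the dimension computation is short:
\[
\dim(\vec c/Ae)=\dim(\vec c/Xe)\ \ge\ \dim(\vec c/\vec d e)\ \ge\ \dim(\vec c/A\vec d e)=\dim(\vec c/Ae).
\]
The first equality is modularity, the first inequality uses $X\subseteq\cl(\vec d e)$, and the last equality is $\vec d\ind_{Ae}\vec c$. Therefore $\vec c\ind_{\vec d e}A$.

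Finally we remove $e$. Since $e\ind A\vec c\vec d$, additivity of dimension gives $\dim(\vec c/\vec d e)=\dim(\vec c\vec d)+1-\dim(\vec d)-1=\dim(\vec c/\vec d)$, and similarly $\dim(\vec c/A\vec d e)=\dim(\vec c/A\vec d)$. Hence $\vec c\ind_{\vec d}A$, which is weak 1-basedness.
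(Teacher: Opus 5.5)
Your proposal is correct and follows essentially the same route as the paper. The paper also takes $e$ generic, transfers $\Lstp$ to $Ae$ by stationarity, uses modularity of $\cl_e$ together with $\cl(Ae)\cap\cl(e\vec c)=\cl(Ae)\cap\cl(e\vec d)$ and $\vec c\ind_{Ae}\vec d$, and then discards $e$ by genericity. The one difference is the step you flagged as the main obstacle: the paper gets $\tp(\vec c/\cl(Ae))=\tp(\vec d/\cl(Ae))$ by picking a model $N\supseteq Ae$ with $\vec c\vec d\ind_{Ae}N$ and applying Lascar stationarity again, so the types agree over $N\supseteq\cl(Ae)$; this avoids your bounded-orbit argument, although that argument is also sound.
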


\begin{proof}
Let $A\subseteq M$ and let $\vec c,\vec d\in M^n$ be such that $\Lstp(\vec d/A)=\Lstp(\vec c/A)$ and $\vec d\ind_A \vec c$. We need to show $\vec c \ind_{\vec d} A$.
Choose $e\in M$ generic over $A,\vec c,\vec d$. Then $\vec d\ind_{A} e$, $\vec c\ind_{A} e$ and by stationarity $\Lstp(\vec d/eA)=\Lstp(\vec c/eA)$.
Choose a small model $N\subset M$ such that $Ae\subseteq N$ and $\vec c \vec d\ind_{Ae} N$. Using stationarity $\Lstp(\vec d/N)=\Lstp(\vec c/N)$. 
We also have $\Lstp(e\vec d/N)=\Lstp(e\vec c/N)$ as well as $\Lstp(e\vec d/\cl(eA))=\Lstp(e\vec c/\cl(eA))$. In particular, 
$\cl(eA)\cap \cl(e\vec c)=\cl(eA)\cap \cl(e\vec d)$. 

Since $(M,\cl_{e})$ is a modular structure, we have $e \vec c \ind_{\cl(Ae)\cap \cl(e\vec c)} Ae$ and by the previous conclusion
$e \vec c \ind_{\cl(Ae)\cap \cl(e\vec d)} Ae$. Since $e\vec c\ind_{Ae}e\vec d$ we get  $e \vec c \ind_{\cl(e\vec d)} Ae\vec d$. Since $e$ is generic over $A,\vec c,\vec d$ we get
 $\dim_{cl}(\vec c/e\vec d)=\dim_{\cl}(\vec c/\vec d)$ and thus $\vec c \ind_{\vec d} A$ as desired.
\end{proof}

The previous proof also works if we localize in any finite set. The proof is the same one as above. 

\begin{lem}\label{locmodsetimpliesw1b}
Assume $(M,\cl)$ is quasiminimal pregeometry structure and that for some finite set $B\subset M$ the localization $(M,cl_B)$ is a quasiminimal modular structure. Then $(M,\cl)$ is weakly $1$-based.  
\end{lem}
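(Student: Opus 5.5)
We follow the proof of Theorem \ref{locmodimpliesw1b}, replacing the single generic element $e$ by the finite set $B$. The one new point is that $B$ is fixed in advance and is not generic over the data, so we first move the data into a position where $B$ behaves like a generic set.

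Let $A\subseteq M$ and $\vec c,\vec d\in M^n$ with $\Lstp(\vec d/A)=\Lstp(\vec c/A)$ and $\vec d\ind_A\vec c$. We must show $\vec c\ind_{\vec d}A$. The statement is invariant under automorphisms, and we may also shrink $A$ to a countable set over which $\vec c$ and $\vec d$ keep the same dimensions. So we may replace $(A,\vec c,\vec d)$ by an automorphic copy $(A',\vec c',\vec d')$ with $A'\vec c'\vec d'\ind B$. Such a copy exists: take a countable model $N_0\supseteq \cl(B)$ and realize the type of $A\vec c\vec d$ over $\cl(\emptyset)$ independently from $N_0$, using QM4, QM5 and the homogeneity of the monster model in $\mathcal{C}$. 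Lascar strong types are preserved by automorphisms, so the hypotheses transfer to the copy. From now on we assume $A\vec c\vec d\ind B$.

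Next, exactly as before, we use stationarity (Lemma 53 of \cite{Ka}). From $\vec c\ind_A B$, $\vec d\ind_A B$ and $\Lstp(\vec c/A)=\Lstp(\vec d/A)$ we get $\Lstp(\vec c/AB)=\Lstp(\vec d/AB)$, and hence $\Lstp(B\vec c/\cl(AB))=\Lstp(B\vec d/\cl(AB))$. In particular $\cl(AB)\cap\cl(B\vec c)=\cl(AB)\cap\cl(B\vec d)$. Modularity of $(M,\cl_B)$ gives $B\vec c\ind_{\cl(AB)\cap\cl(B\vec c)}AB$, and therefore $B\vec c\ind_{\cl(AB)\cap\cl(B\vec d)}AB$.

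We also need $\vec c\ind_{AB}\vec d$. This follows from $\vec d\ind_A\vec c$ together with $A\vec c\vec d\ind B$, by transitivity and symmetry of $\cl$-independence. Combining, we get $B\vec c\ind_{\cl(B\vec d)}AB\vec d$, that is, $\dim(\vec c/AB\vec d)=\dim(\vec c/B\vec d)$. Since $\vec c\vec d A\ind B$, adding $B$ changes neither of these dimensions, so $\dim(\vec c/A\vec d)=\dim(\vec c/\vec d)$, which is $\vec c\ind_{\vec d}A$.

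The main obstacle is the first step: moving $A\vec c\vec d$ to a copy independent from $B$ while keeping the Lascar strong type data. The paper's results only give homogeneity over countable closed sets, so I would justify this step by working inside a large monster model of $\mathcal{C}$, using excellence and the homogeneity results of \cite{BHHKK,Ka}.
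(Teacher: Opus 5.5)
Your proof is correct and is essentially the paper's argument: the paper simply says to rerun the proof of Theorem \ref{locmodimpliesw1b} with the finite set $B$ in place of the generic element $e$. Your preliminary step, replacing $(A,\vec c,\vec d)$ by an automorphic copy with $A\vec c\vec d\ind B$, makes explicit the genericity of $B$ over the data, which the paper's ``the proof is the same'' silently assumes even though $B$ is fixed in advance. Also, you may shrink $A$ to a \emph{finite} set rather than a countable one, since equality of Lascar strong types passes to subsets and all the relevant dimensions are witnessed finitely. Then this step needs homogeneity only for finite tuples, and your worry about countable non-closed sets disappears.
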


We obtain an analogous result to Lemma 4.2.4 \cite{Bu}, namely, a quasiminimal pregeometry structure is weakly $1$-based if and only if it is locally modular iff for some small set $B\subset M$ the localization $(M,cl_B)$ is a quasiminimal modular structure.

We will now give an example of a nontrivial linear quasiminimal pregeometry structure; the point of the example is to show that there are interesting objects that can not be interpreted as a strongly minimal theory but that can be studied as a quasiminimal pregeometry structure.

\begin{example}\label{vector-space} \rm 
Let $T=Th(V,+,0,\lambda\cdot)_{\lambda\in \mathbb{F}}$ be the theory of an infinite-dimensional vector space over a countably \emph{infinite} division ring $\mathbb{F}$. Let $(PG(V),\cl^*)$ be the associated projective geometry. It is known (see \cite{Pibook}, Ch.V, Proposition 2.2)  that there does not exist a strongly minimal structure whose natural pregeometry is a projective geometry over an infinite division ring.  However, as we show below,  we can represent  $(PG(V),\cl^*)$ as a quasiminimal pregeometry structure. For a nonzero $u\in V$, we denote by $u^*$ the line spanned by  $u$ as the element of $PG(V)$. And we write $u^*\in\cl^*(v_1^*,\ldots, v_n^*)$ whenever $u=\alpha_1 v_1+\dots +\alpha_n v_n$ for some $\alpha_i\in\mathbb{F}$. We will construct a natural language coming from the underlying vector space that makes an infinite dimensional projective geometry over any countable division ring into a quasiminimal structure.

For any $n\ge 1$ and $\vec\lambda=(\lambda_1,\ldots,\lambda_n)\in\mathbb{F}^n$ let the relation $R_{\vec \lambda}(x_1,\ldots,x_n)$ on $PG(V)$ be defined as follows: 

$R_{\vec \lambda}(u_1^*,\ldots,u_n^*)\iff$ for some $v_1,\ldots,v_n\in V$ such that $u_i^*=v_i^*$ for all $1\le i\le n$ we have $\lambda_1v_1+\ldots+\lambda_nv_n=0$. 

Note that for this particular representation, we can see that the structure is not strongly minimal. Indeed, for any $u^*\neq v^*$, $R_{(1,1,-1)}(u^*,v^*,x)$ defines the infinite co-infinite set $\cl^*(u^*,v^*)\backslash\{u^*,v^*\}$.

We claim that $(PG(V),\cl^*,R_{\vec\lambda})_{\vec\lambda\in \mathbb{F}^n,n\ge 1}$ is a linear quasiminimal pregeometry structure. Conditions (QM1-QM4) are clear and (QM5) over empty set is also clear, linearity follows from modularity of $\cl^*$.

We prove s stronger form of (QM5): we show homogeneity over a countable set $A$ which may not be closed.

By modularity, whenever $w^*\in\cl^*(u^*,v_1^*,\ldots, v_n^*)$, there exists $v^*\in\cl^*(v_1^*,\ldots,v_n^*)$ such that $w^*\in\cl^*(u^*,v^*)$. Thus, it suffices to show: 
whenever $A=(a_1^*,a_2^*,\ldots)$, $C=(c_1^*,c_2^*,\ldots)$ are two countable tuples of distinct elements in $PG(V)$ such that
$\qftp^*(A)=\qftp^*(C)$, and $b=\alpha _1a_1+\alpha_2a_2$, $b^*\not\in A$ (the case when $b^*\in A$ is trivial), there exists 
 $d^*\in\cl^*(c^*_1,c^*_2)$ such that $\qftp^*(Cd^*)=\qftp^*(Ab^*)$.   Note that $\alpha_1,\alpha_2\neq 0$. We will consider two cases. 

 \textbf{Case (1):}  $a_1^*\not\in\cl^*(a_2^*,a_3^*,\ldots)=\cl^*(A\backslash\{a_1^*\})$.
 Take any $d^*\in\cl^*(c_1^*,c_2^*)$ such that $d^*\neq c_1^*, c_2^*$.
 Then we claim that for any $n\geq 1$, $$\qftp^*(b^*,a_1^*, a_2^*,\ldots, a_n)=\qftp^*(d^*, c_1^*, c_2^*,\ldots, c_n^*).$$ Since $\qftp^*(a_1^*, a_2^*,\ldots, a_n)=\qftp^*(c_1^*, c_2^*,\ldots, c_n^*)$, replacing, if needed, the representatives $c_i$ of the classes $c_i^*$, we may assume $\qftp(a_1, a_2,\ldots, a_n)=\qftp(c_1,c_2,\ldots, c_n)$. Let $\beta_1,\beta_2\in\mathbb{F}\backslash\{0\}$  be such that  $d=\beta_1 c_1+\beta_2 c_2$. Note that since $\beta_1,\beta_2\neq 0$ we do have $d^*\in\cl^*(c_1^*,c_2^*)\setminus (\cl^*(c_1^*)\cup \cl^*(c_2^*))$. By assumption, $a_1^*\not\in\cl^*(a_2^*,\ldots, a_n^*)$, so $a_1\not\in\cl(a_2,\ldots, a_n)$. Let $a_1'=\frac{\alpha_1\beta_2}{\alpha_2\beta_1}a_1$. Since generic types in $V$ are unique we get $$\qftp(a_1',a_2,\ldots,a_n)=\qftp(a_1,a_2,\ldots, a_n)=\qftp(c_1,c_2,\ldots,c_n).$$
 
 Moreover, we have  $\qftp(\beta_1a'_1+\beta_2a_2,a_1',a_2,\ldots,a_n)=\qftp(\beta_1 c_1+\beta_2 c_2,c_1,c_2,\ldots,c_n)=\qftp(d,c_1,c_2,\ldots,c_n)$.

 Note that $\beta_1a_1'+\beta_2a_2=\frac{\beta_2}{\alpha_2}(\alpha_1a_1+\alpha_2a_2)=\frac{\beta_2}{\alpha_2}b$, so $(\beta_1a_1'+\beta_2a_2)^*= b^*$. This shows that for every $n\geq 1$ $$\qftp^*(b^*,a_1^*,a_2^*,\ldots, a_n^*)=\qftp^*(d^*,c_1^*,c_2^*,\ldots, c_n^*),$$ and hence, also $\qftp^*(Ab^*)=\qftp^*(Cd^*)$.

 \textbf{Case (2)}: $a_1^*\in\cl^*(a_2^*,a_3^*,\ldots)=\cl^*(A\backslash\{a_1^*\})$.
Reordering $A$ if needed, we may assume that for some $m>2$, $\{a_1^*,a_2^*,a_3^*,\dots, a_m^*\}$ is a minimal dependent set (a {\it circuit}, in the matroid-theoretic terminology, or a $\cl^*$-$m$-gon in the sense of \cite[Defn 3.20]{BeVa}). Thus, the representatives $a_1,\dots,a_{m-1}$ of the classes $a_1^*,\ldots,a_{m-1}^*$ are linearly independent  and $a_m=\beta_1a_1+\ldots +\beta_{m-1}a_{m-1}$ for some $\beta_i\in\mathbb{F}\backslash\{0\}$. Since $\qftp^*(A)=\qftp^*(C)$, by choosing the appropriate representatives for $c_1^*,\dots,c_m^*$ we may assume that $\qftp(a_1,\ldots,a_m)=\qftp(c_1,\ldots, c_m)$. 

Thus, we also have $c_m=\beta_1c_1+\ldots +\beta_{m-1}c_{m-1}$. Let $d=\alpha _1c_1+\alpha_2c_2$ then we have $\qftp(b,a_1, a_2,\ldots, a_m)=\qftp(d, c_1, c_2,\ldots, c_m)$.

\textbf{Claim} For any $n>m$, $\qftp^*(b^*,a_1^*, a_2^*,\ldots, a_n^*)=\qftp^*(d^*, c_1^*, c_2^*,\ldots, c_n^*)$. 

First, since $\qftp^*(a_1^*, a_2^*,\ldots, a_n^*)=\qftp^*( c_1^*, c_2^*,\ldots, c_n^*)$, we have $\qftp(a_1,\ldots, a_n)=\qftp(c_1',\ldots,c_n')$ 
where $c_i'=\gamma_i c_i$ for some nonzero $\gamma_i\in \mathbb{F}$. 

Then $c_m'=\beta_1c_1'+\ldots +\beta_{m-1}c_{m-1}'$. Let $d'=\alpha _1c_1'+\alpha_2c_2'$.
Then $$\qftp(b,a_1,\ldots,a_n)=\qftp(d',c_1',\ldots,c_n').$$
It suffices to show then that $d'^*=d^*$. We have:

$$c_m'=\gamma_m c_m=\sum_{i=1}^{m-1}\beta_ic_i'=\sum_{i=1}^{m-1}\beta_i\gamma_i c_i.$$

Then $$\gamma_m \sum_{i=1}^{m-1}\beta_ic_i=\sum_{i=1}^{m-1}\beta_i\gamma_i c_i.$$ By the independence of $c_1,\dots,c_{m-1}$, it follows that $\gamma_i=\gamma_m$ for all $1\leq i\leq m-1$. Thus, using the fact that $d=\alpha _1c_1+\alpha_2c_2$ we get $d'=\alpha_1 c_1'+\alpha_2 c_2'=\alpha_1 \gamma_m c_1+\alpha_2\gamma_m c_2=\gamma_m d$, and therefore, $d'^*=d^*$, as we wanted.

\end{example}

Let us finish this section with some references in this subject. Locally modular non-trivial quasiminimal pregeometry structures have been studied in \cite{HK,Ka21}. For example, a group can be interpreted in $(M,\cl)$ (see Remark 3.24 \cite{HK} and Theorem 3.9, Lemma 3.23 \cite{Ka21}) using a generalization of imaginary sorts by adding classes for all invariant equivalence relations. In this paper we will take a different approach and avoid the use of such sorts, precisely the contributions of this section were new proofs that avoid the use of generalized imaginaries.

\section{H-structures and beautiful pairs of quasiminimal classes}\label{sec:dense-codense}
In this section we introduce two  dense-codense expansions of a quasiminimal pregeometry structure: $H$-structures and beautiful pairs. We show each of these expanions can be axiomatized with a single $\mathcal{L}_{\omega_1 \omega}(Q)$-sentence and that the resulting class is $\omega$-stable. We start by defining dense-codense expansions, similar to how it is done in the geometric setting \cite{BeVa, BeVaLP}.

\begin{defn}
    
Let $P$ be a new unary predicate that does not belong to $\mathcal{L}$, let $M\in \mathcal{C}$ and let $\mathcal{L}_P=\mathcal{L}\cup\{P\}$. We say that the expansion $(M, P)$ is \emph{dense-codense} if:\\
(D1) (Density/coheir property) If $b_1,\dots,b_n \in M$ and $q \in S_1(\cl(b_1,\dots,b_n))$ is the unique
non-algebraic type, there is $a \in P(M)$ such that $a\models q$.\\
(D2) (Extension/codensity property) If $b_1,\dots,b_n \in M$ and $q \in S_1(\cl(b_1,\dots,b_n))$ is the unique
non-algebraic type, there is $a \in M$, $a\models q$ and $a\not \in  \cl(b_1,\dots,b_n \cup P(M ))$.
\end{defn}

\begin{obse}
    Note that the expansion $(M, P)$ is \emph{dense-codense} if
    and only if $\dim(P(M))=\infty$ (this is the density property) and $\dim(M/P(M))=\infty$ (this is the codensity property)
\end{obse}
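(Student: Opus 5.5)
The equivalence reduces to one point: by QM4, realizing the unique non-algebraic type $q\in S_1(\cl(b_1,\dots,b_n))$ is the same as lying outside $\cl(b_1,\dots,b_n)$. So each of (D1) and (D2) is just the statement that a certain set is not contained in the closure of a finite set, and the two directions become routine dimension counting.

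\emph{The preliminary observation.} Fix $b_1,\dots,b_n\in M$ and put $A=\cl(b_1,\dots,b_n)$. By QM3, $A$ is countable, and it is closed. Apply QM4 with $A'=A$ and the identity enumeration. It says that any two elements of $M\setminus A$ have the same type over $A$. This common type is the unique non-algebraic type $q$, and no element of $A$ realizes it. Hence for $a\in M$ we have $a\models q$ iff $a\notin\cl(b_1,\dots,b_n)$.

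\emph{Density.} Suppose $\dim(P(M))$ is infinite. Then $P(M)\not\subseteq\cl(b_1,\dots,b_n)$, since the latter has dimension at most $n$. So there is $a\in P(M)\setminus\cl(\vec b)$, and by the observation $a\models q$; this is (D1). Conversely, assume (D1). Starting from the empty tuple, apply (D1) repeatedly to the tuple chosen so far. This gives $a_1,a_2,\dots\in P(M)$ with $a_{k+1}\notin\cl(a_1,\dots,a_k)$. These elements are $\cl$-independent, so $\dim(P(M))$ is infinite.

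\emph{Codensity.} Suppose $\dim(M/P(M))$ is infinite. Then $\cl(b_1,\dots,b_n\cup P(M))\ne M$, because adding $n$ elements to $P(M)$ raises the dimension over $P(M)$ by at most $n$. Pick $a\notin\cl(\vec b\cup P(M))$. In particular $a\notin\cl(\vec b)$, so $a\models q$, and (D2) holds. Conversely, assume (D2). Iterating it on $a_1,\dots,a_k$ gives $a_{k+1}\notin\cl(a_1,\dots,a_k\cup P(M))$. Thus $\{a_k\}_k$ is independent over $P(M)$, and $\dim(M/P(M))$ is infinite.

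The only step needing care is the observation itself. It requires applying QM4 over the set $\cl(\vec b)$, which must be countable and closed; QM3 guarantees this. Everything else is standard exchange and dimension counting in the pregeometry.
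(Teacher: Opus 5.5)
Your proof is correct and follows exactly the reasoning the paper leaves implicit. By QM3 and QM4, realizing the unique non-algebraic type over $\cl(b_1,\dots,b_n)$ just means lying outside $\cl(b_1,\dots,b_n)$, which is the same identification the paper uses when it axiomatizes (D1) and (D2) via $\neg\pi_n$ in Proposition~\ref{density-codensity}; both equivalences then reduce to routine dimension counting, and if (D1)/(D2) require $n\ge 1$, you can seed your iterations with an arbitrary element instead of the empty tuple.
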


Note that axiom (D1) implies that if $(M,P)$ is a dense-codense pair, then $M$ satisfies axiom (QM2) and $M$ is a quasiminimal pregeometry structure. It is also important to point out that in first order geometric theories the geometry is controlled definably using elimination of the quantifier $\exists^{\infty}$. The first order arguments (see \cite{BeVa,BeVaLP}) about dense-codense expansions that use elimination of $\exists^{\infty}$ 
will be replaced in quasiminimal classes by arguments that involve Fact \ref{definability-density}. The next proposition is a good example of this idea:

\begin{prop}\label{density-codensity}
Let $\mathcal{C}$ be a quasiminimal class. Then there is an $\mathcal{L_P}_{\omega_1 \omega}$-sentence $\phi_{\mathcal{C},P}$ such that for any $M\in \mathcal{C}$, $(M,P)\models \phi_{\mathcal{C},P}$ if and only if $(M,P)$ is a dense-codense pair.
\end{prop}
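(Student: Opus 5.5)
We will use the reformulation in the Observation following the definition: $(M,P)$ is dense-codense iff $\dim(P(M))=\infty$ and $\dim(M/P(M))=\infty$. So it suffices to write two $\mathcal{L_P}_{\omega_1\omega}$-sentences, one for each condition, and take $\phi_{\mathcal{C},P}$ to be their conjunction. The only tool needed is Fact \ref{definability-density}, which gives, for each $n$, a quantifier-free $\mathcal{L}_{\omega_1\omega}$-formula $\pi_n(x,y_1,\dots,y_n)$ defining $x\in\cl(y_1,\dots,y_n)$ uniformly in every $M\in\mathcal{C}$. It replaces the elimination of $\exists^\infty$ used in the first order setting.

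\emph{Density.} For each $n$ let $\mathrm{Ind}_n(y_1,\dots,y_n)$ be the formula $\bigwedge_{i\le n}\neg\pi_{n-1}(y_i,\vec y_{\neq i})$, with $\pi_0$ read as a formula for $\cl(\emptyset)$; it expresses that $y_1,\dots,y_n$ are $\cl$-independent. We set
\[
\delta_P:=\bigwedge_{n\ge 1}\exists y_1\dots y_n\Big(\bigwedge_{i\le n}P(y_i)\wedge \mathrm{Ind}_n(\vec y)\Big).
\]
Then $(M,P)\models\delta_P$ iff $P(M)$ contains independent sets of every finite size, i.e.\ $\dim(P(M))$ is infinite.

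\emph{Codensity.} We must express $\dim(M/P(M))=\infty$, which is the step needing care, since $\cl(P(M))$ is not given by a single finitary formula. Because $\cl$ is finitary, $a\notin\cl(\vec b\cup P(M))$ iff for every $m$ and every $p_1,\dots,p_m\in P(M)$ we have $\neg\pi_{n+m-1}(a,\vec b,\vec p)$; this is expressed by $\bigwedge_m\forall p_1\dots p_m(\bigwedge_j P(p_j)\to\neg\pi_{n+m-1}(x,\vec y,\vec p))$, with the convention that $\pi_k$ applied to fewer variables is handled by repeating variables. Call this formula $\theta_n(x,\vec y)$. We then put
\[
\kappa_P:=\bigwedge_{n\ge 0}\exists y_1\dots y_n\,\exists x\Big(\bigwedge_{i\le n}\theta_{i-1}(y_i,y_1,\dots,y_{i-1})\wedge\theta_n(x,\vec y)\Big).
\]
Alternatively, the simpler $\bigwedge_n\exists\vec y\bigwedge_i\theta_{n-1}(y_i,\vec y_{\neq i})$ also works. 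Either way, this says there are arbitrarily large finite sets independent over $P(M)$, i.e.\ $\dim(M/P(M))=\infty$.

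The main point to check is that these formulas are genuinely in $\mathcal{L_P}_{\omega_1\omega}$: there are countably many conjunctions and finitely many quantifiers in each conjunct, and the $\pi_n$ are $\mathcal{L}_{\omega_1\omega}$. We must also check that correctness holds in every $M\in\mathcal{C}$, including finite-dimensional ones, which is exactly what the uniformity in Fact \ref{definability-density} gives. Finally, $\phi_{\mathcal{C},P}:=\delta_P\wedge\kappa_P$. If one wants to recover (D1) and (D2) literally rather than through the Observation, one notes that $a\notin\cl(\vec b)$ already determines the unique non-algebraic type over $\cl(\vec b)$ by QM4. So the existence of $a\in P(M)$ with $\neg\pi_n(a,\vec b)$ for all $\vec b$ (an $\forall\vec y\exists x$ clause) is equivalent to (D1), and similarly with $\theta_n$ for (D2).
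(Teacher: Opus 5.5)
Your proof is correct and is essentially the paper's. The key step is identical: using Fact \ref{definability-density}, you express $a\notin\cl(\vec b\cup P(M))$ as $\bigwedge_{m}\forall c_1\in P\dots\forall c_m\in P\,\neg\pi_{n+m}(a,\vec b,\vec c)$; the only difference is that you encode the dimension conditions of the Observation as existential sentences asserting arbitrarily large independent sets, while the paper writes (D1) and (D2) directly as $\forall\vec b\,\exists a$ sentences (the variant you mention at the end). One small indexing slip: $\theta_n(x,\vec y)$ has $n$ parameters $\vec y$ plus $m$ from $P$, so the formula inside should be $\pi_{n+m}$, not $\pi_{n+m-1}$.
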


\begin{proof}
 By Fact \ref{definability-density},
for each $n \in \mathbb{N}$ there is a $\mathcal{L}_{\omega_1,\omega}$-formula
$\pi_n(x, y_1,\dots,y_n)$ such that for all $M \in \mathcal{C}$ and all $a, b_1,\dots, b_n \in M$, we
have $a \in \cl_M(b_1,\dots,b_n)$ iff $M \models \pi_n(a,b_1,\dots,b_n)$. 

The density property is axiomatized through the sentence:

$$\bigwedge_{n\geq 1} \Big{(}\forall b_1 \dots \forall b_n \exists a\in P 
\neg \pi_n(a,b_1,\dots,b_n)\Big{)}$$

and the codensity property through the sentence:

$$\bigwedge_{n\geq 1}  \Big{(}\forall b_1 \dots \forall b_n  \exists a \Big{[}\bigwedge_{m\geq 1}\forall c_1\in P \dots \forall c_m\in P
\neg \pi_{n+m}(a,b_1,\dots,b_n,c_1,\dots,c_m)
\Big{]}\Big{)}$$

both sentences and their conjunction $\phi_{\mathcal{C},P}$ are expressible in the language ${\mathcal{L}_P}_{\omega_1 \omega}$ as desired. 
\end{proof}

As we noted earlier, the class of dense-codense expansions only includes those members of $\mathcal{C}$ with infinite dimension with respect to the closure operator $\cl$. This is analogous to what happens in the first order geometric case, the axioms presented in \cite{BeVa,BeVaLP} imply that if $(M,P)$ is an $H$-structure (resp. lovely pair) of geometric theories, then $M$ is $\aleph_0$-saturated.

\begin{cor}\label{density-codensitygeneral}
The class of dense-codense expansions of $\mathcal{C}$ is axiomatizable by a single $\mathcal{L}_{\omega_1 \omega}(Q)$-sentence. 
 \end{cor}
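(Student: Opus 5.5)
The corollary follows by combining Fact \ref{Defclass} with Proposition \ref{density-codensity}. Let $\psi_{\mathcal{C}}$ be the $\mathcal{L}_{\omega_1 \omega}(Q)$-sentence from Fact \ref{Defclass}, so that an $\mathcal{L}$-structure $M$ lies in $\mathcal{C}$ iff $M\models \psi_{\mathcal{C}}$. Let $\phi_{\mathcal{C},P}$ be the ${\mathcal{L}_P}_{\omega_1 \omega}$-sentence from Proposition \ref{density-codensity}. The candidate axiomatization is the conjunction $\psi_{\mathcal{C}}\wedge \phi_{\mathcal{C},P}$, viewed as an ${\mathcal{L}_P}_{\omega_1 \omega}(Q)$-sentence.

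First, I would note that $\psi_{\mathcal{C}}$ does not mention $P$. Hence for any $\mathcal{L}_P$-structure $(M,P)$ we have $(M,P)\models\psi_{\mathcal{C}}$ iff the reduct $M\models\psi_{\mathcal{C}}$, that is, iff $M\in\mathcal{C}$. This is the routine reduct property of satisfaction, which holds equally for the quantifier $Q$, since $Q$ is interpreted on the universe and the universe is unchanged.

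Second, given $M\in\mathcal{C}$, Proposition \ref{density-codensity} says $(M,P)\models\phi_{\mathcal{C},P}$ iff $(M,P)$ is dense-codense. Combining the two steps, $(M,P)\models \psi_{\mathcal{C}}\wedge\phi_{\mathcal{C},P}$ iff $M\in\mathcal{C}$ and $(M,P)$ is a dense-codense expansion. This is exactly the class in question.

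The only point needing care is that the formulas $\pi_n$ used in $\phi_{\mathcal{C},P}$ (from Fact \ref{definability-density}) define $\cl$ correctly only on members of $\mathcal{C}$. Outside $\mathcal{C}$ the sentence $\phi_{\mathcal{C},P}$ may have unintended models. The conjunct $\psi_{\mathcal{C}}$ excludes those models, so conjoining it is what makes the equivalence in Proposition \ref{density-codensity} applicable. No real obstacle remains.
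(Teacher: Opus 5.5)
Your proposal is correct and follows the paper's argument: the axiomatization is the conjunction $\psi_{\mathcal{C}}\wedge\phi_{\mathcal{C},P}$ of the sentence from Fact~\ref{Defclass} and the sentence from Proposition~\ref{density-codensity}. You also make explicit two points the paper leaves implicit: $\psi_{\mathcal{C}}$ is evaluated on the $\mathcal{L}$-reduct, and the conjunct $\psi_{\mathcal{C}}$ is needed because the formulas $\pi_n$ define $\cl$ correctly only on members of $\mathcal{C}$.
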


\begin{proof}
By Fact \ref{Defclass} there is a single $\mathcal{L}_{\omega_1 \omega}(Q)$-sentence $\psi_{\mathcal{C}}$ whose models is the class $\mathcal{C}$. By Proposition \ref{density-codensity} there is another $\mathcal{L}_{\omega_1 \omega}$-sentence $\phi_{\mathcal{C},P}$ saying that $P$ is dense/codense. 
\end{proof}

The expansions by dense-codense predicates do not define a complete theory. For example, when defining an $H$-structure, we also require that the elements in $P(M)$ are independent in the sense of the pregeometry (see \cite{BeVa}); when defining a beautiful pair, we require that $P(M)$ is a member of $\mathcal{C}$.

\begin{defn}
Let $M\in \mathcal{C}$ and let $(M, P)$ be a dense-codense expansion. We say that $(M, P)$ is an \emph{$H$-structure} if, in addition, we have the following property:

(DH) ($P$ is cl-independent) If $b_1,\dots,b_n \in P(M)$ are different, then they are $cl$-independent.

We say that $(M, P)$ is a \emph{beautiful pair} if we also have the following property:

(DP) $P(M)\in \mathcal{C}$ and $P(M)\preceq M$.
\end{defn}

\begin{rema}\label{closure-of-H} Let $M\in \mathcal{C}$ and consider an $H$-structure $(M,H)$.
Then $P(M)=\cl(H(M))$ is an elementary substructure of $M$ which is still dense-codense, and, therefore, $(M,P)$ is a beautiful pair. Additionally, $(M,P)$ can be viewed as a $\mathcal{L}_{\omega_1\omega}$-reduct of $(M,H)$.
    
\end{rema}

\begin{prop}\label{H-structures-fo}
If $\mathcal{C}$ is a quasiminimal class, then the class of $H$-structures is also axiomatizable in the language ${\mathcal{L}_P}_{\omega_1 \omega}(Q)$ and so is the class of beautiful pairs.
\end{prop}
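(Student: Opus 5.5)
The plan is to write each class as the conjunction of three pieces: the sentence $\psi_{\mathcal{C}}$ from Fact \ref{Defclass}, the sentence $\phi_{\mathcal{C},P}$ from Proposition \ref{density-codensity}, and one more sentence expressing (DH) or (DP). The first two are already available in ${\mathcal{L}_P}_{\omega_1\omega}(Q)$, since $\psi_{\mathcal{C}}$ is an $\mathcal{L}$-sentence and $\mathcal{L}\subseteq\mathcal{L}_P$. So all the work is in the extra condition.

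\emph{$H$-structures.} I would express (DH) with the quantifier-free formulas $\pi_n$ of Fact \ref{definability-density}. The axiom is
$$\bigwedge_{n\geq 1}\forall x\,\forall y_1\dots\forall y_n\Big(\big(P(x)\wedge\bigwedge_{i} P(y_i)\wedge\bigwedge_i x\neq y_i\big)\rightarrow\neg\pi_n(x,y_1,\dots,y_n)\Big).$$
It says that no element of $P$ lies in the closure of finitely many other elements of $P$, which is exactly $\cl$-independence of $P(M)$. This is an $\mathcal{L}_{P,\omega_1\omega}$-sentence. Its conjunction with $\psi_{\mathcal{C}}$ and $\phi_{\mathcal{C},P}$ therefore axiomatizes the $H$-structures.

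\emph{Beautiful pairs.} I would translate (DP) into two conditions that can be checked inside $M$.
\begin{enumerate}
\item[(a)] $P(M)$ is $\cl$-closed in $M$:
$$\bigwedge_n\forall x\,\forall y_1\dots\forall y_n\Big(\big(\bigwedge_i P(y_i)\wedge\pi_n(x,\vec y)\big)\rightarrow P(x)\Big).$$
\item[(b)] $P(M)$ is an $\mathcal{L}$-substructure: closure of $P$ under the function symbols and constants, which is a first-order condition.
\end{enumerate}
Then I would argue that (a) and (b) together are equivalent to (DP) for $M\in\mathcal{C}$. The class $\mathcal{C}$ is closed under closed substructures, so a closed substructure $P(M)$ of $M$ belongs to $\mathcal{C}$. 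The inclusion is then a closed embedding, because $\cl_{P(M)}$ is the restriction of $\cl_M$; this holds since by QM1 and Fact \ref{definability-density} the pregeometry on any member of $\mathcal{C}$ is computed by the same quantifier-free formulas $\pi_n$. Conversely, $P(M)\preceq M$ forces $P(M)$ to be a closed substructure.

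The main obstacle I expect is making sure that "closed substructure" in the definition of $\mathcal{C}$ agrees with "$\cl_M$-closed $\mathcal{L}$-substructure". If instead one wanted to relativize $\psi_{\mathcal{C}}$ to $P$ directly, the difficulty would be relativizing the $Q$-quantifier, which is only harmless because a relativized $Q$ still reads "uncountably many in $P$". I would avoid this by using the closure-under-closed-substructures argument above. If needed, I would still add the relativization $\psi_{\mathcal{C}}^{P}$, which remains an ${\mathcal{L}_P}_{\omega_1\omega}(Q)$-sentence.
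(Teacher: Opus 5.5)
Your argument is correct. The $H$-structure part matches the paper's: you conjoin $\psi_{\mathcal{C}}$, $\phi_{\mathcal{C},P}$ and a countable conjunction saying that no element of $P$ satisfies $\pi_n$ over $n$ other elements of $P$.

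For beautiful pairs your route differs from the paper's.

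\textbf{The paper's route.} It adds the relativization of $\psi_{\mathcal{C}}$ to $P$ in order to force $P(M)\in\mathcal{C}$. It also adds the sentence $\varphi_c$, which is your condition (a). It then proves in a Claim that the inclusion is a closed embedding, by transferring the quantifier-free formulas $\pi_n$ between $P(M)$ and $M$.

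\textbf{Your route.} You get $P(M)\in\mathcal{C}$ without any relativization. Conditions (a) and (b) make $P(M)$ a closed substructure of $M$, and $\mathcal{C}$ is closed under closed substructures by definition. You then use the same $\pi_n$-transfer for the closed embedding.

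\textbf{What your version buys.}
\begin{itemize}
\item The extra axiom is a plain ${\mathcal{L}_P}_{\omega_1\omega}$-sentence; the quantifier $Q$ enters only through $\psi_{\mathcal{C}}$ applied to $M$ itself.
\item It shows that, over $\psi_{\mathcal{C}}$, the condition $P(M)\in\mathcal{C}$ already follows from $\varphi_c$. So the paper's relativized copy of $\psi_{\mathcal{C}}$ is redundant.
\item It makes explicit that $P$ must be closed under the function symbols. The paper leaves this implicit when it writes $P(M)\models\psi_{\mathcal{C}}$.
\end{itemize}
In fact (b) is itself automatic once (a) holds. Suppose $f(\vec a)\notin\cl(\vec a)$ for some function symbol $f$. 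QM2 gives another $b'\notin\cl(\vec a)$ with $b'\neq f(\vec a)$. QM4 then gives $\tp(\cl(\vec a),f(\vec a))=\tp(\cl(\vec a),b')$, which forces $f(\vec a)=b'$, a contradiction. Hence $\cl$-closed sets are substructures. Keeping (b) does no harm.

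\textbf{What the paper's version buys.} It uses only the fact that $\mathcal{C}$ is the model class of $\psi_{\mathcal{C}}$, and not closure of $\mathcal{C}$ under closed substructures of arbitrary members. That would matter if one had only the axiomatization and not the generating definition of the class.
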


\begin{proof} By Corollary \ref{density-codensitygeneral} there is a single $\mathcal{L_P}_{\omega_1 \omega}(Q)$-sentence $\varphi$ whose models form the class of dense-codense expansions associated to $\mathcal{C}$. Such a pair is an $H$-structure if it satisfies the additional ${\mathcal{L}_P}_{\omega_1 \omega}$-sentence: 
$$\bigwedge_{n\geq 1}\big{(}\forall b_1\in P\dots\forall b_{n+1} \in P \big{[}\bigwedge_{i<j\leq n+1}b_i\neq b_j \implies \neg \pi_{n}(b_1,\dots,b_n,b_{n+1})\big{]}\big{)}$$

Similarly, we can say that a dense-codense expansion is a beautiful pair by first adding to $\varphi$ the ${\mathcal{L}_P}_{\omega_1 \omega}(Q)$-sentence stating that $P(M)\models \psi_{\mathcal{C}}$.
We can say $P(M)$ is a \emph{closed subset} of $M$ using the ${\mathcal{L}_P}_{\omega_1 \omega}$-sentence $\varphi_{c}$: 
$$\bigwedge_{n\geq 1}\big{(}\forall b_1 \dots \forall b_n \in P \forall x \big{[}\pi_{n}(x,b_1,\dots,b_n) \implies x\in P \big{]}\big{)}$$
The sentence $\varphi_c$ says that taking the closure in the sense of $M$ of a $n$-tuple from $P(M)$ does not add elements outside $P(M)$.

\textbf{Claim} Assume $(M,P)$ is a dense-codense pair satisfying that $P(M)\in \mathcal{C}$ and the sentence $\varphi_c$. Then the embedding of $P(M)$ inside $M$ is a \emph{closed embedding} and thus $P(M)\preceq M$.

Let $A\subset P(M)$ and assume that $A$ is closed inside the structure $P(M)$, that is $A=\cl_{P(M)}(A)$.
Let $b\in \cl_M(A)$ with $b\in M$, then $b\in \cl(a_1,\dots,a_n)$ for some $a_1,\dots,a_n\in A$. Then by Fact \ref{definability-density} we have $M\models \pi_{n}(b,a_1,\dots,a_n)$ and $\pi_n(y,x_1,\dots,x_n)$ is quantifier free. Since $P(M)\models \varphi_c$ we have $b\in P(M)$. Since $P(M)$ is substructure of $M$ and $\pi_n(y,x_1,\dots,x_n)$ is quantifier free, we have $P(M)\models \pi_{n}(b,a_1,\dots,a_n)$ and since $P(M)$ belongs to $\mathcal{C}$ by Fact  \ref{definability-density}, we also have $b\in \cl_{P(M)}(A)=A$.

 \end{proof}

 Our goal is to study tameness properties in the class of $H$-structures and in the class of beautiful pairs associated to $\mathcal{C}$. In particular, we want to understand reasonable analogues to properties that hold for $H$-structures (respectively beautiful pairs) associated to first order geometric structures \cite{BeVa, BeVaLP}, like characterizing types in the extended language and proving that the expansion is stable. Before we continue, we need to fix some notation for the rest of this section:

 \begin{nota}

 If $(M,H)$ is an $H$-structure associated to $\mathcal{C}$, we write $\tp_H(\vec a)$ for its  $\mathcal{L_H}_{\omega_1\omega}$-type in the extended language. 

 
 Similarly, if $(M,P)$ is a beautiful pair associated to $\mathcal{C}$, we write $\tp_P(\vec a)$ for its  $\mathcal{L_P}_{\omega_1\omega}$-type in the extended language. 


 If $B\subseteq M$ and $(M,H)$ is a dense-codense expansion,
 we write $H(B)$ for $B\cap H(M)$. If $(M,H)$ is a dense-codense expansion, we say $\vec a\in M^n$ is \emph{$H$-independent} if $\dim_{\cl}(\vec a/H(\vec a))=\dim_{\cl}(\vec a/H(M))$.  Furthermore, if $C\subseteq M$ and $C$ is $H$-independent, we say $\vec a\in M^n$ is \emph{$H$-independent over $C$} if $\dim_{\cl}(\vec a/CH(\vec a))=\dim_{\cl}(\vec a/CH(M))$. 
 \end{nota}
 
We first study types and some homogeneity properties in the expansions. The argument is written for $H$-structures, but the same argument holds for beautiful pairs.
  The result is an analogue of Lemma 2.12 \cite{BeVa}, it also shows that our structures are $\aleph_0$-homogeneous, a reasonable analogy to axioms QM4 and QM5 above.

\begin{prop}\label{H-typesomega1omega}
Let $(M,H)$ and $(N,H)$ be $H$-structures associated to $\mathcal{C}$ and let 
$\vec a\in M$, $\vec a'\in N$ be $H$-independent finite tuples such that
$\tp(\vec a, H(\vec a))=\tp(\vec a', H(\vec a'))$.  Then
$tp_{H}(\vec a)=\tp_{H}(\vec a')$. Furthermore, for any $b\in M$ there is $b' \in N$ such that 
$tp_{H}(\vec ab)=\tp_{H}(\vec a'b')$.  
 \end{prop}

\begin{proof}
 Write $\vec a=\vec a_0 \vec a_1 \vec h$, where $\vec a_0$ is
independent over $H(M)$, $\vec h=H(\vec a)\in H(M)$ and $\vec a_1\in cl (\vec
a_0\vec h)$. Similarly write $\vec a'=\vec a_0' \vec a_1'\vec h'$.

To prove both assumptions, it suffices to show that for any $b\in M$ there are $\vec h_1\in H(M)$, $\vec h_1'\in H(N)$ and $b'\in N$ such that $\vec a\vec h_1
b$ and $\vec a'\vec h_1' b'$ are each $H$-independent, $\tp(\vec
a_0 \vec a_1 \vec h \vec h_1b)=\tp(\vec a_0' \vec a_1' \vec h'
\vec h_1'b')$, and $b\in H(M)$ iff $b'\in H(N)$.

\noindent

Case 1: $b\in \cl(\vec a)$. There is an $\mathcal{L}_{\omega_1 \omega}$-elementary partial map from $M$ to $N$ sending $\vec a$ to $\vec a'$ that can be extended (using axiom QM5) to a map from 
$\vec ab$ to $\vec a'b'$. Clearly $\tp(\vec ab)=\tp(\vec a'b')$ and both tuples remain $H$-independent. Assume now that $b\in H$. Since $\vec a$ is $H$-independent we must have $b\in \vec a$ and thus $b'\in \vec a'$, finally $\tp(\vec a, H(\vec a))=\tp(\vec a', H(\vec a'))$ implies that $b'\in H$.

Case 2: $b\not \in \cl(\vec a)$ and $b\in H$. Then $b$ belongs to $H$ and satisfies the unique generic $\mathcal{L}_{\omega_1\omega}$-type over $\vec a$. Since $(N,H)$ is an $H$-structure, it satisfies the density property and there is $b'\in H(N)$ 
such that $N\models \neg \pi_n(b',\vec a')$ and thus the map sending $\vec a b$ to $\vec a'b'$ is an $\mathcal{L}_{\omega_1 \omega}$-elementary map and both tuples remain $H$-independent.

Case 3: $b\in \cl(\vec ah_1\dots h_k)$ for some $h_1\dots h_k\in H(M)$. We may assume that $h_1\dots h_k$ are $\cl$-independent over $\vec a$. Apply first case 2 to find $h_1'\dots h_k'\in H(M)$ with 
$\tp(\vec a,h_1\dots h_k)=\tp(\vec a', h_1'\dots h_k')$
and note that the tuples remain $H$-independent. Now find the desired $b'$ applying case 1. 

Case 4: $b\not \in \cl(\vec ah_1\dots h_k)$ for all $h_1\dots h_k\in H(M)$ and all $k\geq 1$. Since $(N,H)$ is an $H$-structure, it satisfies the codensity property so there is $b'\in N\setminus \cl(\vec a'H(N))$. The map sending $\vec a b$ to $\vec a'b'$ is an $\mathcal{L}_{\omega_1 \omega}$-elementary map and the tuples remain $H$-independent.
\end{proof}

\begin{cor}\label{completasstheory}
    Any two $H$-structures (resp. beautiful pairs) associated to the same class $\mathcal{C}$ have the same $\mathcal{L}_{\omega_1 \omega}$-theory.
\end{cor}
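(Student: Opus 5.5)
The plan is to get Corollary \ref{completasstheory} from the back-and-forth content of Proposition \ref{H-typesomega1omega}, applied to the empty tuple. Let $(M,H)$ and $(N,H)$ be $H$-structures associated to $\mathcal{C}$. I take $\vec a$ and $\vec a'$ to be the empty tuples. These are trivially $H$-independent: $\dim_{\cl}(\emptyset/H(\emptyset))=0=\dim_{\cl}(\emptyset/H(M))$. The hypothesis $\tp(\emptyset)=\tp(\emptyset)$ holds vacuously.

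One point needs checking. Both structures come from the same class $\mathcal{C}$, so the quantifier-free $\mathcal{L}_{\omega_1\omega}$-types of the empty tuple agree. Concretely, any quantifier-free sentence true in $M$ is true in $N$, since $\cl(\emptyset)$ embeds identically by QM5 over the empty set together with Fact \ref{Defclass}. Proposition \ref{H-typesomega1omega} then gives $\tp_H(\emptyset)$ in $(M,H)$ equal to $\tp_H(\emptyset)$ in $(N,H)$, i.e. the structures satisfy the same $\mathcal{L_H}_{\omega_1\omega}$-sentences.

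To make this rigorous, rather than citing the proposition as a black box, I make the back-and-forth explicit. Let $I$ be the set of pairs $(\vec a,\vec a')$ of $H$-independent finite tuples with $\tp(\vec a,H(\vec a))=\tp(\vec a',H(\vec a'))$ and matching membership in $H$. By the \emph{Furthermore} clause of Proposition \ref{H-typesomega1omega} (and its proof, Cases 1--4), $I$ has the forth property. By symmetry it also has the back property. Since the pair of empty tuples lies in $I$, the family $I$ is a nonempty back-and-forth system between $(M,H)$ and $(N,H)$, and the structures are $\mathcal{L_H}_{\infty\omega}$-equivalent (Karp's theorem). In particular they have the same $\mathcal{L_H}_{\omega_1\omega}$-theory.

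One detail to watch is that the proof's extension step actually produces pairs again in $I$. After adding $b$ together with the auxiliary $\vec h_1$, the new tuples are $H$-independent and their $\mathcal{L}$-types, with $H$-parts marked, agree; hence restricting to $\vec a b$ still gives a matched configuration. By including these auxiliary tuples in $I$, we may close $I$ under the extensions the proof constructs.

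For beautiful pairs the same argument applies, with $P$-independence (dimension over $P(\vec a)$ versus over $P(M)$) in place of $H$-independence. Case 2 uses density, Case 4 uses codensity, and Case 3 uses QM5 inside $P(M)$, which is legitimate since $P(M)\preceq M$.

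The main obstacle is this beautiful-pair version of Case 3. It needs $b\in\cl(\vec a\,\vec p)$ with $\vec p\subseteq P$ chosen independent over $\vec a$ modulo $P(\vec a)$, matched on the $N$ side in such a way that the type of $\vec p$ over $P(\vec a)$ is realized in $P(N)$. I would handle this using QM4/QM5 for $P(N)\in\mathcal{C}$, applied over the countable closed set $\cl(P(\vec a'))\cap P(N)$.
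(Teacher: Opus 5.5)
Your proposal is correct and is essentially the paper's argument. The paper simply cites Proposition \ref{H-typesomega1omega}, read as a back-and-forth starting from the empty tuple; your explicit Karp-style system, the check that the empty tuples have the same type in $M$ and $N$, the closure under the auxiliary tuples $\vec h_1$, and the beautiful-pair Case 3 (not a real obstacle, since a minimal $\vec p\subseteq P(M)$ with $b\in\cl(\vec a\vec p)$ is $\cl$-independent over $\vec a$, so density in $(N,P)$ supplies $\vec p'$ as in the $H$-case, and $b'\in P(N)$ iff $b'\in\cl(P(\vec a')\vec p')$) only spell out what the paper leaves implicit.
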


\begin{proof}
The uniqueness of the $\mathcal{L}_{\omega_1 \omega}$-theory follows from Proposition \ref{H-typesomega1omega}. Furthermore, by Proposition \ref{H-structures-fo} the class can be described by a single $\mathcal{L}_{\omega_1 \omega}(Q)$-sentence.
\end{proof}

\begin{cor}
Recall that we assumed in the beginning of the section that the class $\mathcal{C}$ has quantifier elimination, meaning the quantifier free $\mathcal{L}_{\omega_1\omega}$-formulas isolate the type (part of the assumptions from \cite{BHHKK}). It follows that the type of an $H$-independent tuple is determined by its quantifier free $\mathcal{L}_{\omega_1\omega}$-formulas and selecting the collection of elements that belong to $H$. But, in general, we will require the use of existential formulas (and their negations) to extend a tuple to an $H$-independent tuple to determine the type. See example \ref{equiv-classes} below
for an explicit construction that shows the expansion does not eliminate quantifiers.

\end{cor}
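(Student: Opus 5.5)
The statement is a corollary of Proposition \ref{H-typesomega1omega}, so the plan is to extract it from that proposition together with the fact that $\mathcal{C}$ itself has quantifier elimination. Let $(M,H)$ be an $H$-structure and $\vec a\in M$ an $H$-independent tuple. First split $\vec a$ as in the proof of Proposition \ref{H-typesomega1omega}. Then apply the first conclusion of that proposition with $(N,H)=(M,H)$, or with any other $H$-structure. It says that $\tp_H(\vec a)$ is determined by $\tp(\vec a,H(\vec a))$, that is, by the $\mathcal{L}_{\omega_1\omega}$-type of $\vec a$ together with the information of which coordinates lie in $H$.

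Next, because quantifier-free $\mathcal{L}_{\omega_1\omega}$-formulas isolate types in $\mathcal{C}$ (the assumption from \cite{BHHKK}, reflected in QM4 and QM5), $\tp(\vec a, H(\vec a))$ is determined by $\qftp(\vec a)$. Adding the atomic $\mathcal{L}_H$-formulas $H(x_i)$ and their negations therefore gives a quantifier-free $\mathcal{L}_{H,\omega_1\omega}$ description of $\tp_H(\vec a)$ for $H$-independent $\vec a$.

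For an arbitrary tuple $\vec b$, use the definability of the pregeometry from Fact \ref{definability-density} and the finite character of $\cl$. There are $h_1,\dots,h_k\in H(M)$ such that $\vec b\vec h$ is $H$-independent, by choosing $\vec h$ minimal with $\dim(\vec b/\vec h)=\dim(\vec b/H(M))$. Then $\tp_H(\vec b)$ is determined by the quantifier-free type of some such extension. Concretely, it is determined by formulas of the form $\exists \vec y\,(\bigwedge_i H(y_i)\wedge \theta(\vec x,\vec y))$ with $\theta$ quantifier-free, together with the negations of formulas stating that $\vec b$ has smaller $\cl$-dimension over further elements of $H$. This needs existential formulas and their negations, which is exactly what the statement says.

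The main obstacle is checking that being $H$-independent is itself captured by such existential and negated-existential formulas. This is needed so that two tuples agreeing on them admit $H$-independent extensions with equal $\mathcal{L}_{\omega_1\omega}$-types. I would verify this using the formulas $\pi_n$, since $\dim(\vec b/\vec h)$ is expressible by a Boolean combination of the $\pi_n$. The claim that quantifiers are genuinely needed is not part of this plan; it is deferred to Example \ref{equiv-classes}.
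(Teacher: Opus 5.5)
Your proposal is correct and follows the same route as the paper. The paper gives no separate argument: the statement is read off from Proposition \ref{H-typesomega1omega} together with the quantifier-elimination assumption on $\mathcal{C}$, and the failure of quantifier elimination is deferred to Example \ref{equiv-classes}. Your sketch fills in the existential part correctly: take an existential witness $\vec y\in H$ realizing the quantifier-free type of an $H$-independent extension, and use negated existentials to force $\dim(\vec b/H(M))$; together these make the witnessed extension $H$-independent.
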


\begin{obse}
By Proposition \ref{H-typesomega1omega}, the syntactic $\mathcal{L}_H$-types in the new language isolate the Galois type in the class of $H$-structures (resp. beautiful pairs) if we work in a sufficiently "saturated" model.
\end{obse}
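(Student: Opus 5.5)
The direction ``same Galois type implies same syntactic type'' is immediate: automorphisms of, and closed embeddings between, $H$-structures preserve all $\mathcal{L_H}_{\omega_1\omega}$-formulas. So the content of the observation is the converse. Fix a large $H$-structure $(\mathbb{M},H)$ that is homogeneous in the sense that every partial map between ``small'' $H$-independent closed subsets preserving $\tp$ and membership in $H$ extends to an automorphism. Given finite tuples $\vec a,\vec a'$ in $\mathbb{M}$ with $\tp_H(\vec a)=\tp_H(\vec a')$, I plan to produce an automorphism of $(\mathbb{M},H)$ sending $\vec a$ to $\vec a'$.

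\textbf{Step 1 (reduce to $H$-independent tuples).} Since $\dim(\vec a/H(\mathbb{M}))$ is finite, there are finitely many $\vec h\in H(\mathbb{M})$, $\cl$-independent over $\vec a$, such that $\vec a\vec h$ is $H$-independent. The conditions ``$\vec h\in H$'', ``$\vec h$ is $\cl$-independent over $\vec a$'' and the full quantifier-free $\mathcal{L}_{\omega_1\omega}$-type of $\vec a\vec h$ are all expressible using the formulas $\pi_n$ of Fact \ref{definability-density}. Maximality of $\dim(\vec a\vec h/H(\vec a\vec h))$ is expressible as well: for the relevant finite $n$, a formula saying ``there do not exist $\vec h''\in H$ increasing the $\cl$-dimension drop''. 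Let $\Phi(\vec a,\vec y)$ be this $\mathcal{L_H}_{\omega_1\omega}$-formula. Then $\exists\vec y\,\Phi(\vec a,\vec y)$ lies in $\tp_H(\vec a)$, hence in $\tp_H(\vec a')$. This yields $\vec h'$ with $\vec a'\vec h'$ $H$-independent and $\tp(\vec a\vec h,H(\vec a\vec h))=\tp(\vec a'\vec h',H(\vec a'\vec h'))$.

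\textbf{Step 2 (back-and-forth).} By Proposition \ref{H-typesomega1omega}, the family of finite partial maps between $H$-independent tuples that preserve $\tp$ and the $H$-pattern has the extension property in both directions. Given such a map and a new element $b$ on either side, the proof of that proposition adds finitely many elements of $H$ and then $b$, keeping the tuples $H$-independent. When $(\mathbb{M},H)$ is countable, a standard back-and-forth along enumerations of both sides extends $\vec a\vec h\mapsto\vec a'\vec h'$ to an automorphism of $(\mathbb{M},H)$.

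\textbf{Step 3 (uncountable models) and the main obstacle.} For uncountable $\mathbb{M}$, I would first choose a countable closed $M_0\preceq\mathbb{M}$ containing $\vec a\vec h\vec a'\vec h'$. I would close it off countably many times so that $(M_0,H(M_0))$ is itself an $H$-structure: density and codensity are witnessed inside $M_0$, and $M_0$ is $H$-independent in $\mathbb{M}$, meaning $\dim(\vec c/H(M_0))=\dim(\vec c/H(\mathbb{M}))$ for all finite $\vec c\subseteq M_0$. This uses QM3 and Fact \ref{definability-density}. Step 2 then gives an automorphism $\sigma_0$ of $(M_0,H)$ sending $\vec a$ to $\vec a'$. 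It remains to extend $\sigma_0$ to $\mathbb{M}$, by a transfinite back-and-forth over $H$-independent closed subsets, using at each stage QM4/QM5 for the $\mathcal{L}$-part and the case analysis of Proposition \ref{H-typesomega1omega} for the $H$-part. The delicate point, which I expect to be the main obstacle, is the limit and uncountable stages. Proposition \ref{H-typesomega1omega} is only stated over finite tuples. Over countable $H$-independent closed bases one needs QM4/QM5 together with the fact that an $H$-independent base $C$ makes ``$b\in\cl(CH)$'' versus ``$b\notin\cl(CH)$'' the only relevant dichotomy. This is exactly where the hypothesis ``sufficiently saturated'' is used: I would take $\mathbb{M}$ to be a model in which these countable-base extension steps are always realizable, rather than proving it for arbitrary $H$-structures.
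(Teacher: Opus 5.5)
Your argument follows the paper's route. The paper justifies this observation only by pointing to the back-and-forth system of Proposition \ref{H-typesomega1omega}. Your Step~1 is the existential-formula extension to an $H$-independent tuple mentioned in the corollary after Corollary \ref{completasstheory}. Three corrections follow.

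(a) In Step~1 the clause ``$\vec h$ is $\cl$-independent over $\vec a$'' must be dropped. It gives $\vec a\ind_{H(\vec a)}\vec h$, hence $\dim(\vec a/H(\vec a)\vec h)=\dim(\vec a/H(\vec a))$, so $\vec a\vec h$ is $H$-independent only if $\vec a$ already was. For instance, for $a=h_1+h_2$ in Example \ref{Torsionfreegroups} you need $\vec h=h_1h_2$, and $h_1\in\cl(ah_2)$. Take $\vec h$ to enumerate $HB(\vec a)\setminus\vec a$ instead. Also write $H$-independence of $\vec a\vec y$ as a countable conjunction over all lengths of the witnesses $\vec h''$, since their length is not bounded: think of $h_1+\dots+h_m$.

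(b) In the easy direction, closed $\mathcal{L}_H$-embeddings need not preserve $\tp_H$. In the same example, a suitable closed subspace containing $h_1+h_2$ but neither $h_1$ nor $h_2$ is an $H$-structure in which $h_1+h_2\notin\cl(H)$. By Propositions \ref{H_independentsubstructures} and \ref{H_independentsubstructurescharomega1}, a closed embedding preserves $\tp_H$ exactly when its image is $H$-independent. For orbits in a fixed model only automorphisms matter, so this does not affect your main argument.

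(c) The obstacle in Step~3 is not real, so for finite tuples over $\emptyset$ you do not need to delegate it to a saturation hypothesis. Since $M_0$ is $H$-independent and $H(\mathbb M)$ is $\cl$-independent, $I=H(\mathbb M)\setminus H(M_0)$ is $\cl$-independent over $M_0$. Let $J$ be a basis of $\mathbb M$ over $\cl(M_0 I)$. Then $\sigma_0\cup\mathrm{id}_{I\cup J}$ preserves types of finite tuples, by iterating QM4 and QM5. Now apply the categoricity theorem behind Fact \ref{Defclass} (type-preserving bijections between bases extend to isomorphisms), after naming the countable set $M_0$ by constants. This extends the map to an $\mathcal L$-automorphism of $\mathbb M$, which carries $H(\mathbb M)=H(M_0)\cup I$ onto itself.
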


\begin{ques}\label{omega1omegQcase}
Proposition \ref{H-typesomega1omega} provides a back and forth argument that shows how to determine the $\mathcal{L}_{H\omega_1\omega}$-type in the expansions. Since both the original class and the associated class of $H$-structures are described by $\mathcal{L}_{\omega_1 \omega}(Q)$-sentences, it could be a natural question to study the $\mathcal{L}_{H \omega_1 \omega}(Q)$-types using the tools from \cite{Ca}. More specifically: let $(M,H)$ and $(N,H)$ be $H$-structures associated to $\mathcal{C}$ and let 
$\vec a\in M$, $\vec a'\in N$ be $H$-independent finite tuples such that
$\tp(\vec a, H(\vec a))=\tp(\vec a', H(\vec a'))$. Assume furthermore that the structures involved have large dimensions, that is, $\dim(H(M))\geq 2^{\aleph_0}$, $\dim(M/H(M))\geq 2^{\aleph_0}$ and $\dim(N(M))\geq 2^{\aleph_0}$, $\dim(N/H(N))\geq 2^{\aleph_0}$. Does it follow that 
$tp_{\mathcal{L}_{H\omega_1\omega}(Q)}(\vec a)=\tp_{\mathcal{L}_{H\omega_1\omega}(Q)}(\vec a')$?

Note that, using the $Q$ quantifier, we can work with an $\mathcal{L}_{H\omega_1\omega}(Q)$ axiomatizable subclass of pairs $(M,H)$ where $H(M)$ has dimension bigger than $\aleph_0$ by adding the sentence $Q(H)$ to the axioms. Is there an analogous $\mathcal{L}_{H\omega_1\omega}(Q)$sentence that guarantees that $\dim(M/H(M))>\aleph_0$?
\end{ques}

In order to reproduce the ideas from \cite{BeVa} in the setting of $H$-structures we will also need the notion of an $H$-basis. We provide the definition, the proof of existence and uniqueness is essentially the same one given in \cite{BeVa}.

\begin{defn} Let $(M,H)$ be an $H$-structure, let $B \subseteq M$ be closed and $H$-independent and let $\vec a \in M$. We write $HB(\vec a/B)$ for the smallest finite subset $H_0\subset H(M)$ such that $\dim(\vec a/BH(M))=\dim(\vec a/BH_0)$ and we call it the \emph{$H$-basis of $\vec a$ over $B$}.
For a general set $A$, we write $HB(A)$ for the smallest set $H_0\subseteq H$ (smallest under containment) such that $\dim(\vec a/H_0)=\dim(\vec a/H(M))$ for all finite tuples $\vec a\in A$.
In general, if $B$ is not closed or $H$-independent and $\vec a \in M$, we write $HB(\vec a/B)$ for $HB(\vec a/\cl(B\cup HB(B)))$. 
\end{defn}

An $H$-basis of a tuple $\vec a$ over a closed set $B$ provides a "projection" of the tuple into the smallest part of $H$ that interacts with the tuple. A key property of H-basis is additivity:

\begin{prop}\label{H-basis-additivity}
Let $(M,H)$ be an $H$-structure, let $C \subseteq M$ be closed and $H$-independent and let $\vec a,\vec b \in M$. Then
$$HB(\vec a\vec b/C)=HB(\vec a/C)\cup HB(\vec b/C\vec a)$$
Furthermore, the union above is a \emph{disjoint} union.
\end{prop}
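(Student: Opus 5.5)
Write $H_1=HB(\vec a/C)$ and $H_2=HB(\vec b/C\vec a)$. By the general definition, $H_2=HB(\vec b/\cl(C\vec a\cup HB(C\vec a)))$. Since $C$ is closed and $H$-independent, $HB(C)=\emptyset$ in the relevant sense, and $HB(C\vec a)$ should be exactly $H_1$. So the set over which $H_2$ is computed is $D=\cl(C\vec a H_1)$.

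The plan is first to check that $D$ is $H$-independent, so that $H_2$ is computed over a closed $H$-independent set. Then I would show that $H_1\cup H_2$ witnesses $\dim(\vec a\vec b/CH(M))$, show that it is the smallest such set, and finally prove disjointness.

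\textbf{Step 1: $D$ is closed and $H$-independent.} By minimality of $H_1$, we have $\dim(\vec a/CH(M))=\dim(\vec a/CH_1)$, so $\vec a\ind_{CH_1}H(M)$. Since $C$ is $H$-independent, $C\ind_{H(C)}H(M)$. Transitivity then gives $C\vec a H_1\ind_{H(C)H_1}H(M)$, hence $D\ind_{H(D)}H(M)$. The point to verify is that $H(D)$ equals $H(C)\cup H_1$: any $h\in H(D)\setminus (H(C)\cup H_1)$ would lie in $\cl(C\vec aH_1)$, contradicting the independence just established, since $H$ is $\cl$-independent by (DH). In particular $H_2\cap D=\emptyset$. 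Here $H_2$ is the smallest finite $H_0\subseteq H(M)$ with $\dim(\vec b/DH(M))=\dim(\vec b/DH_0)$.

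\textbf{Step 2: $H_1\cup H_2$ works.} By additivity of dimension,
$$\dim(\vec a\vec b/CH(M))=\dim(\vec a/CH(M))+\dim(\vec b/C\vec aH(M)).$$
The first term equals $\dim(\vec a/CH_1)$. For the second, $\cl(C\vec aH(M))=\cl(DH(M))$ because $H_1\subseteq H(M)$, so the second term equals $\dim(\vec b/DH_2)=\dim(\vec b/C\vec aH_1H_2)$. Summing,
$$\dim(\vec a\vec b/CH(M))=\dim(\vec a/CH_1H_2)+\dim(\vec b/C\vec aH_1H_2),$$
where the first summand is unchanged because $\dim(\vec a/CH_1)$ is already the minimal value $\dim(\vec a/CH(M))$. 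This sum is $\dim(\vec a\vec b/CH_1H_2)$. Hence $HB(\vec a\vec b/C)\subseteq H_1\cup H_2$, once we know that $H$-bases exist and are unique, i.e.\ that the minimal set is the intersection of all witnessing sets. That existence and uniqueness is the \cite{BeVa} argument: if $H'$ and $H''$ both witness, then so does $H'\cap H''$, using the $\cl$-independence of $H$ and modularity of the localization at independent sets.

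\textbf{Step 3: minimality.} Let $H_0=HB(\vec a\vec b/C)$. Since $H_0$ witnesses the full dimension for $\vec a\vec b$, and dimension drops for $\vec a$ and for $\vec b$ separately can only be weaker than the drop for $\vec a\vec b$, the following must hold:
\begin{itemize}
\item $\dim(\vec a/CH_0)=\dim(\vec a/CH(M))$, so $H_1\subseteq H_0$;
\item $\dim(\vec b/C\vec aH_0)=\dim(\vec b/C\vec aH(M))$.
\end{itemize}
Because $H_1\subseteq H_0$, the second equality says $H_0$ witnesses the dimension of $\vec b$ over $D$, so $H_2\subseteq H_0$. This step, and the intersection property underlying uniqueness of $H$-bases, is where the genuine work lies. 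I expect it to be the main obstacle: translating these inequalities into containment of minimal sets relies on uniqueness of $H$-bases, which rests on the $\cl$-independence of $H$.

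\textbf{Step 4: disjointness.} Disjointness follows from Step 1, since $H_1\subseteq D$ and $H_2\cap D=\emptyset$.
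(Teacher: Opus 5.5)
Your proposal is correct and follows essentially the paper's route: both containments come from additivity of dimension plus minimality (uniqueness) of $H$-bases, and disjointness comes from the fact that elements of $H_2$ lying in the base $D=\cl(C\vec a H_1)$ are redundant and so are excluded by minimality. Your Step 1, verifying that $D$ is closed and $H$-independent, is a check the paper leaves implicit; note, however, that with the paper's definition $HB(C)=H(C)$ and $HB(C\vec a)=H(C)\cup H_1$ (not $\emptyset$ and $H_1$), though this yields the same $D$, and the intersection property behind uniqueness of $H$-bases needs only that $H(M)\setminus H(C)$ is $\cl$-independent over $C$ plus additivity of dimension, not any modularity.
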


\begin{proof} We start by proving the equality, we first show $HB(\vec a/C)\cup HB(\vec b/\vec a C)\subseteq HB(\vec a\vec b/C)$.
Let $H_0=HB(\vec a\vec b/C)$. By definition we have $\vec a \vec b\ind_{CH_0} H(M)$. Then we get $\vec a \ind_{CH_0} H(M)$ and $\vec b\ind_{CH_0\vec a} H(M)$. From the last part we get $\vec b\ind_{CH_0\vec aHB(\vec a/C)} H(M)$. By minimality of $H$-basis we get $HB(\vec a/C)\subseteq H_0$ and $HB(\vec b/\vec a C)\subseteq H_0$ and thus $HB(\vec a/C)\cup HB(\vec b/\vec a C)\subseteq H_0$.

To prove the other containment, let $H_1=HB(\vec a/C)$ and $H_2=HB(\vec b/CH_1\vec a)$. As before, by definition we have $\vec a \ind_{CH_1} H(M)$ and 
$\vec b \ind_{C\vec a H_1\cup H_2} H(M)$. Then we get $\vec a \ind_{CH_1\cup H_2} H(M)$ and thus
$\vec a\vec b \ind_{CH_1\cup H_2} H(M)$. By minimality of $H$-basis we get $H_1\cup H_2\supseteq H_0$.

Now we show that the union is disjoint. By definition, we have $HB(\vec b/C\vec a)=HB(\vec b/C\vec aHB(C\vec a))$ and by additivity $HB(C\vec a)=H(C)\cup HB(\vec a/C)$. We claim that by \emph{minimality} of an $H$-basis, $HB(\vec b/C\vec a)$ must be disjoint from $HB(\vec a/C)\cup H(C)$.

Assume not and let $H_3=HB(C\vec a)\cap HB(\vec b/C\vec aHB(C\vec a))$ and define $H_2'=HB(\vec b/C\vec aHB(C\vec a))\setminus H_3=H_2\setminus H_3$. Then $H_2'\subsetneq HB(\vec b/C\vec aHB(C\vec a))=H_2$ and since, as sets, we have $C\vec aHB(\vec a/C)H_2'=C\vec aHB(\vec a/C)H_2$, we get $\vec b\ind_{C\vec aHB(\vec a/C)H_2'} H(M)$, a contradiction to the minimality of $H_2$.
\end{proof}


\begin{prop}\label{H-basis-definability}
Let $(M,H)$ be an $H$-structure, let $B \subseteq M$ be closed and $H$-independent and let $\vec a \in M$.
Then there is an $\mathcal{L}_{\omega_1\omega}$-formula $\varphi(\vec y)$ with parameters in $B\vec a$ such that an enumeration of $HB(\vec a/B)$ realizes $\varphi(\vec y)$ and the other realizations of the formula are permutations of $HB(\vec a/B)$.
\end{prop}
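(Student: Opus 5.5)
Write $HB(\vec a/B)=\{h_1,\dots,h_k\}$ and $\vec h=(h_1,\dots,h_k)$. We may assume $B$ is countable: $\dim(\vec a/BH(M))$ is witnessed by finitely many elements of $B$, so we can replace $B$ by the closure of a finite subset, which is countable by QM3. Let $d=\dim(\vec a/BH(M))=\dim(\vec a/B\vec h)$. The formula will say that $\vec y$ is a tuple of distinct elements of $H$ that realizes the same $\cl$-configuration over $B\vec a$ as $\vec h$, and that no proper subtuple of $\vec y$ does the job. Concretely, let $\varphi(\vec y)$ be the conjunction of the following clauses.

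\begin{enumerate}
\item[(a)] $\bigwedge_i H(y_i)$ and $\bigwedge_{i<j} y_i\neq y_j$.
\item[(b)] A formula $\delta_d(\vec a,\vec y)$ saying $\dim(\vec a/B\vec y)=d$. Fix a subtuple $\vec a^*$ of $\vec a$ with $|\vec a^*|=d$ that is independent over $B\vec h$. Using the formulas $\pi_n$ from Fact \ref{definability-density}, $\delta_d$ is the conjunction of two parts. First, $\bigwedge_{a\in\vec a}\bigvee_{\vec b\subseteq B}\pi(a,\vec a^*,\vec y,\vec b)$, a countable disjunction because $B$ is countable, which says that every coordinate of $\vec a$ lies in $\cl(\vec a^*\vec y B)$. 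Second, for each coordinate $a_j^*$ of $\vec a^*$, the countable conjunction $\bigwedge_{\vec b\subseteq B}\neg\pi(a^*_j,\vec a^*\setminus a^*_j,\vec y,\vec b)$, which says $\vec a^*$ is independent over $B\vec y$.
\item[(c)] For each proper subtuple $\vec y'\subsetneq\vec y$, the negation of the analogous formula saying $\dim(\vec a/B\vec y')=d$.
\end{enumerate}

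All parameters lie in $B\vec a$, and $\varphi$ is an $\mathcal{L}_{H\omega_1\omega}$-formula.

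Clearly $\vec h$ satisfies (a) and (b). It satisfies (c) by the minimality in the definition of $HB$: if a proper subtuple gave dimension $d$, that smaller set would witness $\dim(\vec a/BH(M))=\dim(\vec a/B\vec h')$, contradicting minimality.

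For uniqueness, suppose $\vec y=\vec h'\models\varphi$. Then $\vec h'$ is a set of $k$ distinct elements of $H(M)$ with $\dim(\vec a/B\vec h')=d=\dim(\vec a/BH(M))$, and it is minimal with this property. The key step is to show that $HB(\vec a/B)$ is the \emph{unique} minimal such subset of $H$, i.e.\ that any $H_1\subseteq H(M)$ with $\vec a\ind_{BH_1}H(M)$ contains $\vec h$. We use the exchange-based argument for uniqueness of $H$-bases from \cite{BeVa}. Let $H_0=\vec h$ and $H_1=\vec h'$; both satisfy $\vec a\ind_{BH_i}H(M)$. We claim $\vec a\ind_{B(H_0\cap H_1)}H(M)$.

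To see this, since $B$ is closed and $H$-independent and $H$ is $\cl$-independent, $BH(M)$ is independent over $B$. Then
\[\dim(\vec a/B(H_0\cap H_1)) - \dim(\vec a/BH(M))\]
can be computed via modularity of the free pregeometry structure on an independent set over $B$, i.e.\ using that $\cl(BH_0)\cap\cl(BH_1)=\cl(B(H_0\cap H_1))$ for $H$-independent $B$. Concretely, $\vec a\ind_{BH_0}BH_1$ together with $\vec a\ind_{BH_1}BH_0$, over independent sets, forces independence over $B(H_0\cap H_1)$, by a dimension count on $\dim(\vec a H_0H_1/B)$. This gives $H_0\subseteq H_1$ by minimality of $H_0$, and since $|H_1|=k$, $H_1=H_0$ as sets. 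So realizations of $\varphi$ are exactly the enumerations of $HB(\vec a/B)$.

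The main obstacle is this intersection step. The dimension count must be checked carefully: it uses that $B$ is closed and $H$-independent, so that $H(M)$ is independent over $B$. This is exactly why the statement is restricted to such $B$.
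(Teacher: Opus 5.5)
Your core argument is correct, but it is organized differently from the paper's, and one preliminary step is wrong as written.

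\textbf{Comparison with the paper.} The paper fixes a finite $\vec b\subseteq B$ with $\vec a_2\in\cl(\vec a_1\vec b H_0)$, where $\vec a_1$ is a basis of $\vec a$ over $BH(M)$. Its $\varphi(\vec y)$ says only that $\vec y$ is a $k$-tuple of distinct elements of $H$ with $\vec a_2\in\cl(\vec a_1\vec b\vec y)$. Since $\vec y\subseteq H(M)$ already forces $\dim(\vec a/B\vec y)\geq\dim(\vec a/BH(M))$, this says that $\vec y$ is a witness. So the second half of your clause (b) and all of clause (c) are superfluous; your uniqueness argument never uses (c). Dropping them also removes the countable disjunctions and conjunctions over all finite tuples of $B$, the infinitely many parameters, and the need for $B$ to be countable.

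Conversely, the paper's last step (``by minimality of $H$-basis, the elements of $H_0$ belong to $\vec h'$'') tacitly uses that $HB(\vec a/B)$ is the \emph{least} witness under containment, which the paper defers to \cite{BeVa}. Your key step proves exactly this, so your version is more self-contained. The operative reason is the dimension count, not modularity or the identity $\cl(BH_0)\cap\cl(BH_1)=\cl(B(H_0\cap H_1))$. Put $Z=H_0\cap H_1$. You use $\vec a\ind_{BH_1}H_0$, $\vec a\ind_{BH_0}H_1$, and $H_0\ind_{BZ}H_1$; the last is the only place where $H$-independence of $B$ enters. Together they give $\dim(H_0H_1/BZ\vec a)\geq\dim(H_0/BH_1\vec a)+\dim(H_1/BH_0\vec a)=\dim(H_0/BZ)+\dim(H_1/BH_0)=\dim(H_0H_1/BZ)$. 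Hence $\dim(\vec a/BZ)=\dim(\vec a/BH(M))$, and this holds in any pregeometry.

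\textbf{The flawed reduction.} Choosing a finite $B_0\subseteq B$ merely with $\dim(\vec a/B_0H(M))=\dim(\vec a/BH(M))$ and replacing $B$ by $\cl(B_0)$ can change the $H$-basis and destroy $H$-independence. Your key step then runs over a set that need not be $H$-independent.

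Concretely, work in the $\mathbb{Q}$-vector-space $H$-structure of Example~\ref{Torsionfreegroups}. Take $h_1,h_2,h_3\in H(M)$, $B=\cl(h_1,h_2)$ and $a=h_1+h_3$, so $HB(a/B)=\{h_3\}$. Then $B_0=\{h_1+h_2\}$ satisfies your condition, since both dimensions are $0$. However:
\begin{itemize}
\item $\cl(B_0)$ meets $H(M)$ trivially but is not independent from $H(M)$, so it is not $H$-independent;
\item $a\notin\cl(B_0h_3)$, so your $\vec h=(h_3)$ fails clause (b);
\item over $\cl(B_0)$, both $\{h_1,h_3\}$ and $\{h_2,h_3\}$ are minimal witnesses.
\end{itemize}

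The repair is easy. Choose $B_0$ to contain a finite $\vec b$ with $\vec a\subseteq\cl(\vec a^*\vec b\vec h)$, and run your key step over the original $B$; this works because any realization over $\cl(B_0)$ is also a witness over $B$. Alternatively, drop the reduction altogether and use the paper's finite-parameter formula.
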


\begin{proof}
Write $\vec a=\vec a_1\vec a_2$ with $\vec a_1$ $\cl$-independent from $BH(M)$ and $\vec a_2\in \cl(\vec bH_0\vec a_1)$, where $H_0=HB(\vec a/B)$ and $\vec b$ is a finite tuple in $B$. Let $\vec h=(h_1,\dots,h_k)$ be an enumeration of $H_0$ and let $\vec y$ a collection of $k$ variables. Let $s_1=|\vec a_1|$, $s_2=|\vec b|$ and $s=s_1+s_2+k$. We may also enumerate $\vec a_2=(a_{21},\dots,a_{2r})$ 
and notice that for each $j\leq r$, $a_{2j}\in \cl(\vec a_1,\vec b,H_0)$, so $\pi_{s}(a_{2j},\vec a_1,\vec b,\vec h)$ holds. Now let $\varphi(\vec y)$ be the formula:

$$(\bigwedge_{i<j\leq k} y_i\neq y_j) \wedge \bigwedge_{i\leq k} (y_i\in H) \wedge \bigwedge_{j\leq r} \pi_{s}(a_{2j},\vec a_1,\vec b,\vec y)$$

The tuple $\vec h=(h_1,\dots,h_k)$ satisfies the formula: the elements are all different, they belong to $H(M)$ and $\vec a_2\in \cl(\vec a_1,\vec b,\vec h)$.

Now assume $\vec h'$ is another realization of $\varphi(\vec y)$. Then the elements of $\vec h'$ are different, they belong to $H(M)$ and $\vec a_2\in \cl(\vec a_1,\vec b,\vec h')$. By minimality of $H$-basis, we have that the elements in $H_0$ belong to $\vec h'$ and since $|\vec h|=|\vec h'|$, one tuple is a permutation of the other.
\end{proof}

Let us consider some examples of quasiminimal classes from \cite[Examples 1.4]{Ki} and characterize the corresponding $H$-structures and beautiful pairs. We will also study $H$-basis in these examples. We start with an example of a disintegrated geometry:

\begin{example}\label{equiv-classes}
 Let $\mathcal{L}=\{E\}$ and consider the class $\mathcal{C}$ whose objects consist of infinite sets and $E$ is interpreted as an equivalence relation with infinitely many classes, all of whose blocks have size $\aleph_0$. The closure of a subset $A$ of a model $M$ in $\mathcal{C}$ is the union of the blocks meeting $A$. Given $\vec a\in M$,
 the generic $1$-type over $\vec a$ corresponds to the type of an element that does not belong to a block that meets $\vec a$. The density property says that $H(M)$ meets infinitely many blocks, the codensity property says that $H(M)$ does not meet infinitely many blocks. Being an $H$-structure says that $H(M)$ is dense-codense and that $H(M)$ intersects each block in at most a single point. A beautiful pair is a dense-codense expansion such that $P(M)$ contains each block that it intersects. Note that if $P(M)$ only contains infinitely many points from each block that it intersects
(instead of containing all points), then $P(M)$ is just a \emph{substructure} of $M$ instead of a \emph{closed substructure}.

 Let us consider again $H$-structures. Given $\vec a\in M$ write $\vec a=\vec a_1\vec a_2$ so that the blocks meeting $\vec a_1$ do not intersect $H(M)$ and the blocks meeting $\vec a_2$ intersect $H(M)$. For each of the blocks that intersect $\vec a_2$, let $\vec h\in H(M)$ be the unique elements in $H(M)$ belonging to the blocks. Then $\vec a_2\in \cl(\vec h)$ and $\vec h=HB(\vec a)$. Note that if $b_1,b_2\in M$ are such that $b_1,b_2\not \in H(M)$, the block of $b_1$ intersects $H(M)$ and the block of $b_2$ does not intersect $H(M)$,   then $qftp_H(b_1)=qftp_H(b_2)$ but 
 $tp_H(b_1)\neq tp_H(b_2)$ as they satisfy different existential formulas.
\end{example}


Now we consider an example with an underlying strongly minimal theory, where both $\cl$ and $H$ can be recovered from the associated algebraic closure and the corresponding theory of $H$-structures.

\begin{example}\label{ACF-Naturals}
 Let $\mathcal{L}_{rings}\cup \{Z\}$ be the language of rings together with a new unary predicate. Consider the class $\mathcal{C}$ whose objects consist of 
 algebraically closed fields of characteristic zero and the predicate $Z$ is interpreted as the integers. The operator $\cl$ is interpreted as field-theoretic algebraic closure. Since $ACF_0$ is strongly minimal, it is a geometric first order theory and we can consider the associated theory $T^{ind}$ of $H$-structures \cite{BeVa} and the associated theory of beautiful pairs $T^P$. For each $M\models ACF_0$ with infinite dimension, we build an $H$-structure by interpreting $H(M)$ as a infinite collection of independent transcendentals such that $dim(M/H(M))=\infty$. Similarly, we build a beautiful pair by interpreting $P(M)=acl(H(M))$ where $H(M)$ is as above.
 We can view each of these $H$-structures 
 as associated structures of the class $\mathcal{C}$ by interpreting $Z$ as the integers, similarly, we get a beautiful pair associated structures of the class $\mathcal{C}$ by interpreting $Z$ in each model of the beautiful pair as the integers. For each such an $H$-structure, the notion of $H$-basis is the same from both perspectives.
\end{example}

\begin{prop}\label{H_independentsubstructures}
Let $(M,H)$ be an $H$-structure (resp. beautiful pair) and let $(N,H) \preceq_{\mathcal{L}_{H_{\omega_1, \omega}}} (M,H)$ be an ${\mathcal{L}_H}_{\omega_1, \omega}$-substructure. Then $N$ is independent from $H(M)$ over $H(N)$.
\end{prop}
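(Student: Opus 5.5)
We argue by contradiction, reducing the failure of independence to a single finite witness and then transferring an existential $\mathcal{L}_{H\omega_1\omega}$-formula from $(M,H)$ down to $(N,H)$. By finite character of $\ind^{\cl}$, it suffices to show that for every finite tuple $\vec a\in N$ we have $\dim(\vec a/H(N))=\dim(\vec a/H(M))$. Suppose this fails for some $\vec a\in N$. Then there are $h_1,\dots,h_k\in H(M)$ with $\dim(\vec a/H(N)\vec h)<\dim(\vec a/H(N))$. Choose a finite $\vec g\in H(N)$ witnessing $\dim(\vec a/H(N))=\dim(\vec a/\vec g)$. We may assume $\vec h$ is $\cl$-independent over $N$; elements of $\vec h$ lying in $\cl(N)=N$ are already in $H(N)$ by closedness of $N$, since $N\preceq M$ as an $\mathcal{L}$-structure is closed. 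Shrinking further, we get some $a\in \vec a$, with $\vec a=\vec a'a$, such that
\[
a\in \cl(\vec a'\vec g\vec h)\setminus \cl(\vec a'\vec g H(N)).
\]

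We next express this situation by a formula. By Fact \ref{definability-density}, the $\mathcal{L}_{H\omega_1\omega}$-formula
\[
\exists y_1\dots\exists y_k\Big(\bigwedge_{i\le k}H(y_i)\wedge \pi_{m}(a,\vec a',\vec g,\vec y)\Big),
\]
with $m=|\vec a'|+|\vec g|+k$ and parameters $\vec a'\vec g a\in N$, holds in $(M,H)$. Since $(N,H)\preceq_{\mathcal{L}_{H\omega_1\omega}}(M,H)$, it also holds in $(N,H)$. This gives $\vec h'\in H(N)$ with $a\in\cl(\vec a'\vec g\vec h')$, using that $\pi_m$ is quantifier free and that closure in $N$ agrees with closure in $M$. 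But then $a\in\cl(\vec a'\vec gH(N))$, a contradiction.

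For beautiful pairs the same argument applies with $H$ replaced by $P$. There one has $P(N)\subseteq P(M)$, and a witness $\vec p\in P(M)$ is pulled back to $\vec p'\in P(N)$ by the same existential formula. Alternatively, one can pass through Remark \ref{closure-of-H}.

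The main point to check is the reduction to a single element $a$ with a finite witness. We must ensure that $\dim$ drops for some coordinate when $\vec h$ is added over $\vec a'\vec gH(N)$, and not only over $\vec a'\vec g$. This follows from a standard exchange-type computation: pick $\vec a'$ a $\cl$-basis-extending subtuple so that the first coordinate of $\vec a$ over which the dimension decreases serves as $a$. Finiteness comes from QM3-style finite character of $\cl$. A second point is that $N$ is $\cl$-closed in $M$, which holds because the ambient $\mathcal{L}$-reduct is a closed embedding, or alternatively follows from the same transfer using $\pi_n$.
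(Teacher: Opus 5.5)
Your argument is correct and is essentially the paper's: in both, the key step is pulling the existential formula $\exists \vec y\,(\bigwedge_i H(y_i)\wedge \pi_n(c,\ldots,\vec y))$ down from $(M,H)$ to $(N,H)$. The paper does this directly, writing $\vec a=\vec a_1\vec a_2$ with $\vec a_1$ $\cl$-independent over $H(M)$ and $\vec a_2\in\cl(\vec a_1\vec h)$ and transferring all $\pi_n(c,\vec a_1,\vec y)$, $c\in\vec a_2$, at once, which spares you the contradiction and the single-element exchange bookkeeping.

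Your side reductions are unnecessary. Making $\vec h$ independent over $N$ is not justified as stated, and the argument does not need the closedness of $N$; it only uses $H(N)=H(M)\cap N$ and the fact that $\pi_m$ is quantifier free.
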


\begin{proof}
Let $(N,H) \preceq_{\mathcal{L}_{H_{\omega_1, \omega}}} (M,H)$ be an ${\mathcal{L}_H}_{\omega_1, \omega}$-substructure. 

Let $\vec a\in N$ and note that we have $\dim(\vec a/H(M))\leq \dim(\vec a/H(N))$. It remains to prove the other inequality. Let $\vec h\in H(M)$ be a finite tuple such that $\dim(\vec a/H(M))=\dim(\vec a/\vec h)$. We may write $\vec a=\vec a_1\vec a_2$, with $\vec a_1$ independent from $H(M)$ and 
$\vec a_2\in \cl_M(\vec a_1\vec h)$. Then for each $c\in \vec a_2$ we have $(M,H)\models \pi_n(c,\vec a_1\vec h)$ with $n=|\vec a_1 \vec h|$. As $(N,H) \preceq_{\mathcal{L}_{H_{\omega_1, \omega}}} (M,H)$, we have
$(N,H)\models \exists \vec h'\in H \wedge_{c\in \vec a_2} \pi_n(c,\vec a_1\vec h')$, and this gives the desired result.
\end{proof}

\begin{nota}
From now on, whenever $(N,H)\subseteq (M,H)$ are $H$-structures (respectively beautiful pairs) we write $(N,H) \preceq (M,H)$ whenever $(N,H) \preceq_{\mathcal{L}_{H_{\omega_1, \omega}}}(M,H)$.
\end{nota}

The converse of Proposition \ref{H_independentsubstructures} provides a characterization of when we have $(N,H) \preceq(M,H)$.

\begin{prop}\label{H_independentsubstructurescharomega1}
Let $(M,H)$ be an $H$-structure (resp. beautiful pair) and let $(N,H) \subseteq (M,H)$ be an $H$-substructure. Assume that $N$ is closed in $M$ (so $N\preceq M$) and that $N\ind_{H(N)} H(M)$.
Then $(N,H) \preceq(M,H)$.
\end{prop}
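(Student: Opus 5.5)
The plan is to use the back-and-forth machinery of Proposition \ref{H-typesomega1omega}, run between the two structures $(N,H)$ and $(M,H)$. Recall that $\mathcal{L}_{\omega_1\omega}$-elementarity of a substructure is equivalent to the following: for every finite tuple $\vec a\in N$, $\tp_H^{N}(\vec a)=\tp_H^{M}(\vec a)$. This holds because formulas are evaluated via the type, and the inclusion then preserves all $\mathcal{L}_{H\,\omega_1\omega}$-formulas.

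The key step is to show that every finite tuple $\vec a\in N$ extends inside $N$ to a tuple $\vec a\vec h$, with $\vec h\in H(N)$, that is $H$-independent simultaneously in $(N,H)$ and in $(M,H)$.
\begin{itemize}
\item Take $\vec h=HB(\vec a)$ computed in $(N,H)$, so that $\dim(\vec a/\vec h)=\dim(\vec a/H(N))$.
\item By the hypothesis $N\ind_{H(N)}H(M)$ we have $\dim(\vec a/H(N))=\dim(\vec a/H(M))$. Hence $\dim(\vec a\vec h/H(\vec a\vec h))=\dim(\vec a\vec h/H(M))$, so $\vec a\vec h$ is $H$-independent in $M$, and likewise in $N$.
\item Since $H(N)=N\cap H(M)$, the $H$-part of the tuple is the same whether computed in $N$ or in $M$.
\item Since $N$ is a closed substructure of $M$ and the class has quantifier elimination, $\tp^N(\vec a\vec h, H(\vec a\vec h))=\tp^M(\vec a\vec h,H(\vec a\vec h))$. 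This uses the quantifier-free $\mathcal{L}_{\omega_1\omega}$ description of types, or closed embedding plus Fact \ref{Defclass}.
\end{itemize}

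Next, apply Proposition \ref{H-typesomega1omega} with the two $H$-structures $(N,H)$ and $(M,H)$ and the identity correspondence $\vec a\vec h\mapsto\vec a\vec h$. This gives $\tp_H^N(\vec a\vec h)=\tp_H^M(\vec a\vec h)$, and restricting to $\vec a$ gives the desired equality of types. For this we need $(N,H)$ itself to be an $H$-structure, i.e. dense-codense, and I will treat that as part of the hypothesis "$H$-substructure".

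If that interpretation is too strong, I would instead verify the Tarski–Vaught-style step directly. Given a formula $\exists x\,\varphi(x,\vec a)$ true in $M$ with witness $b$, the four-case extension argument of Proposition \ref{H-typesomega1omega} should produce $b'\in N$. Case 2 uses density in $N$ and Case 4 uses codensity in $N$; both need $N$ to be dense-codense. Cases 1 and 3 use QM5 inside $N$, which relies on $N\in\mathcal{C}$ because $N$ is closed in $M$.

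The beautiful pair case is handled the same way. There $H$-independence is replaced by $\vec a\ind_{P(\vec a)}P(M)$, which is obtained by adjoining a finite piece of $P(N)$. The relevant hypothesis is then $N\ind_{P(N)}P(M)$, and the fact that $P(N)$ is closed lets the analogue of the $H$-basis be taken inside $P(N)$.

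The main obstacle is justifying that the back-and-forth of Proposition \ref{H-typesomega1omega} genuinely yields equality of full $\mathcal{L}_{H\,\omega_1\omega}$-types across two different structures, rather than just within one. Concretely, the family of partial maps between $H$-independent tuples with equal $\tp(\vec a,H(\vec a))$ must form a back-and-forth system in both directions. The "forth" direction from $M$ into $N$ is exactly where density and codensity of $N$ are needed.
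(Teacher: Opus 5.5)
Your proposal is correct and follows essentially the same route as the paper. The common steps are: extend the tuple inside $N$ to one that is $H$-independent in $(N,H)$; use $N\ind_{H(N)}H(M)$ to see that it stays $H$-independent in $(M,H)$, with the same $\mathcal{L}$-type and the same $H$-coordinates; then run the back-and-forth of Proposition~\ref{H-typesomega1omega} between $(N,H)$ and $(M,H)$. Your point that $(N,H)$ must itself be dense-codense is exactly what the paper uses implicitly when it says ``following the proof of Proposition~\ref{H-typesomega1omega}'', and it is genuinely needed, since density and codensity are expressible by ${\mathcal{L}_H}_{\omega_1\omega}$-sentences.
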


\begin{proof}
Let $\vec a\in N$ be a finite tuple and assume $\vec a$ is $H$-independent in $N$. So we may write $\vec a=\vec a_1 \vec a_2 \vec a_3$ with $\vec a_1$ an independent tuple independent from $H(N)$, $\vec a_2\in H(N)$ are independent elements and $\vec a_3\in \cl_N(\vec a_1\vec a_2)$. Since $N\preceq M$, the tuple $\vec a_1 \vec a_2$ remains independent in $M$ and $\vec a_3\in \cl_M(\vec a_1\vec a_2)$.

\textbf{Claim} The tuple $\vec a$ is $H$-independent in the structure $M$.

Since $N\ind_{H(N)} H(M)$ we get $\vec a\ind_{H(N)} H(M)$. We also have $\vec a\ind_{\vec a_2}H(N)$ so can conclude $\vec a\ind_{\vec a_2}H(M)$.

Also, since $(N,H) \subseteq (M,H)$ we have $H(M)\cap \vec a =H(N)\cap \vec a$. Thus the tuple $\vec a$ is $H$-independent in both structures and $\tp_N(\vec a,H(\vec a))=\tp_M(\vec a,H(\vec a))$, so following the proof of Proposition \ref{H-typesomega1omega} we get $(N,H,\vec a) \preceq_{\mathcal{L}_{H_{\omega_1, \omega}}}(M,H,\vec a)$.

\end{proof}


\begin{ques}
Recall Question \ref{omega1omegQcase}, where we asked what happened if types in the class of pairs where $\mathcal{L}_{\omega_1 \omega}(Q)$-types, instead of being $\mathcal{L}_{\omega_1 \omega}$. A related question was also posed by Jonathan Kirby to the authors: which class of pairs does one obtain if pair embeddings are $\mathcal{L}_{\omega_1 \omega}(Q)$-embeddings?   
\end{ques}

We continue now by counting types in the expansion. We will prove the class of $H$-structures (resp. beautiful pairs) associated to $\mathcal{C}$ is $\omega$-stable.

\begin{prop}\label{stability}
Let $(M,H)$ be an $H$-structure (resp. beautiful pairs), let $B \subseteq M$ be closed, $H$-independent and countable.
Then there is at most countably many types over $B$ realized in $(M,H)$.
\end{prop}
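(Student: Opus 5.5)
Fix a finite tuple $\vec a\in M$; the goal is to code $\tp_H(\vec a/B)$ by countably many pieces of data. The plan is to pass to the $H$-independent extension $\vec a\,HB(\vec a/B)$ and then count types of such extended tuples using Proposition \ref{H-typesomega1omega} relativized to $B$. Concretely, let $\vec h=HB(\vec a/B)$, a finite tuple from $H(M)$. By the definition of $H$-basis and additivity (Proposition \ref{H-basis-additivity}), $B\vec a\vec h$ is $H$-independent. Proposition \ref{H-basis-definability} shows that $\vec h$ is determined up to permutation by an $\mathcal{L}_{\omega_1\omega}$-formula over $B\vec a$. Hence $\tp_H(\vec a/B)$ is determined by $\tp_H(\vec a\vec h/B)$, and it suffices to count the latter.

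Next I would prove the relative version of Proposition \ref{H-typesomega1omega}. Suppose $\vec c=\vec a\vec h$ and $\vec c'$ are tuples with $B\vec c$ and $B\vec c'$ both $H$-independent, $\tp(\vec c, H(\vec c)/B)=\tp(\vec c', H(\vec c')/B)$ (the $\mathcal{L}_{\omega_1\omega}$-type over $B$), and the same coordinates lying in $H$. Then $\tp_H(\vec c/B)=\tp_H(\vec c'/B)$. To see this, run the same four-case back-and-forth. Enumerate $B$ as $b_0,b_1,\dots$; for each finite $B_0\subseteq B$ the tuples $B_0\vec c$ and $B_0\vec c'$ are $H$-independent with equal types together with their $H$-parts. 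Case 1 uses QM5 over the closed countable set $B$, Case 2 uses density, and Case 4 uses codensity. Since $B$ is closed and $H$-independent, QM4 and QM5 apply over $B$ directly, and the back-and-forth yields an $\mathcal{L}_{H\omega_1\omega}$-elementary map fixing $B$.

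It then remains to count pure $\mathcal{L}_{\omega_1\omega}$-types over $B$ of finite tuples, decorated with the finite datum of which coordinates lie in $H$. By quantifier elimination in $\mathcal{C}$ and QM4/QM5 over the countable closed set $B$, the $\mathcal{L}$-type of $\vec c$ over $B$ is built coordinate by coordinate. Each new coordinate is either the generic type over $\cl(B\cup\{\text{previous}\})$ (unique by QM4) or lies in the countable set $\cl(B\cup\{\text{previous}\})$ (by QM3), and is determined by which element realizes it (QM5). So by induction on length there are only countably many types of $n$-tuples over $B$: at each step the number of choices is at most $1+\aleph_0$. Countably many lengths and finitely many $H$-patterns then give countably many $\tp_H$ over $B$.

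The main obstacle I expect is the relative back-and-forth over the infinite set $B$. One must check that extending by elements of $H$ or by codense elements keeps $B\vec c$ $H$-independent, and that Case 2's density witness can be chosen generic over all of $\cl(B\vec c)$ rather than over a finite subtuple. Density is only stated over finitely many parameters, so I would first try to handle this with the $\omega_1\omega$-definability from Fact \ref{definability-density}. If that is not enough, I would reduce to finite $B_0\subseteq B$ and assemble the types as a union over an increasing chain of finite subsets of $B$.
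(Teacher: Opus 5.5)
Your proposal is correct and is essentially the paper's argument. The paper treats only 1-types, splitting into the cases $a\in H(M)$, $a\in\cl(BH(M))$ and $a\notin\cl(BH(M))$; its middle case is exactly your passage to the $H$-independent tuple $a\,HB(a/B)$ followed by counting via uniqueness of the generic type and countability of closures (QM3, QM4), which you run uniformly for $n$-tuples.

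The obstacle you anticipate is not real, and your fallback is the right fix. Since $\mathcal{L}_{\omega_1\omega}$-formulas have finitely many free variables, it suffices to compare $\tp_H(\vec b\vec c)$ with $\tp_H(\vec b\vec c')$ for finite $\vec b\subseteq B$. After adjoining finitely many elements of $H(B)$ to $\vec b$, both tuples become $H$-independent, so Proposition \ref{H-typesomega1omega} applies verbatim with no back-and-forth over the infinite set $B$. Note that not every finite $B_0\subseteq B$ makes $B_0\vec c$ $H$-independent, only cofinally many do, so your claim "for each finite $B_0$" should be weakened accordingly.
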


\begin{proof} We do the proof for $H$-structures, it is easy to modify the proof to deal with beautiful pairs.
Since $B$ is countable, there are countably many types realized in $B$. Now let $a\not \in B$. First consider $\tp(a/B)$, the type in the language $\mathcal{L}_{\omega_1 \omega}$. Since $\mathcal{C}$ is quasiminimal and $B=cl(B)$, there is a single choice for this generic type. Now we apply Proposition \ref{H-typesomega1omega}
and consider different cases:

If we have $a\in H(M)$, then $\tp_H(a/B)$ is determined by the fact that $a$ is generic over $B$ and $a\in H(M)$. There is a single type with this property.

Now assume $a\in \cl(H(M)B)$ and let $h_1,\dots,h_n\in H(M)$ be a minimal set (the $H$-basis of $a$ over $B$) such that $a\in \cl(Bh_1,\dots,h_n)$. For each $n$, there is a single $n$-type of the form $\tp(h_1,\dots,h_n/B)$
and since the closure of a countable set is countable (see axiom $QM3$), there are countably many options for the type $\tp(b/Bh_1,\dots,h_n)$. Thus there are countably many options for $\tp_H(b/B)$.

Finally, if we have $a\not \in \cl(BH(M))$, then $\tp_H(a/B)$ is determined by the fact that $a$ is generic over $B$ and $a$ is $\cl$-independent from $H(M)B$. There is a single type with this property.

This proves that there are at most countably many types over $B$ and the extended class of $H$-structures is $\omega$-stable.
\end{proof}

Now we want to show that the expansion preserves some reasonable analogue of superstability. Recall that for first order theories, an expansion to an $H$-structure preserves the stability spectrum and in particular preserves superstability \cite{BeVa}. In the current setting we need the notion of splitting and Lascar splitting.
As we did earlier, we will assume the reader is familiar with Lascar strong types (see for example \cite{Ka}).

\begin{defn}
Let $M$ be a model, let $B \subseteq M$ and $a \in M$. We say that
$\tp(a/B)$ \emph{splits} over a finite $A \subseteq B$ if there are finite tuples $c$ and $d$ in $B$ with
$\tp(c/A) = \tp(d/A)$ but
$\tp(c/A \cup a) \neq \tp( d/A \cup a)$.

We say that $\tp(a/B)$ \emph{Lascar splits} over $A$, if there
are $\vec b, \vec c\in B$ such that $\Lstp(\vec b/A)=\Lstp(\vec c/A)$ but $\tp(a\vec b/A)\neq \tp(a\vec c/A)$.

Similarly $\tp_H(a/B)$ \emph{splits} (resp Lascar splits) over a finite $A \subseteq B$ if there are finite tuples $c$ and $d$ in $B$ with
$\tp_H(c/A) = \tp_H(d/A)$  (resp. $\Lstp_H(c/A) = \Lstp_H(d/A)$) but
$\tp_H(c/A \cup a) \neq \tp_H( d/A \cup a)$.
\end{defn}

Clearly, if $\vec b,\vec c$ are tuples such that $\Lstp(\vec b/A)=\Lstp(\vec c/A)$ we also have $tp(\vec b/A)=tp(\vec c/A)$. If $A\in \mathcal{C}$, that is, if $A$ is a model, the converse holds. In general, types and Lascar strong types are different and the notions of splitting and Lascar splitting also disagree; let us consider an example.

\begin{example} Consider the setting from Example \ref{equiv-classes}. Fix $a\in M$ and let $b,c\in M$ with $aEc$, $aEc$ but all elements in the set $\{a,b,c\}$ are different. Then $\tp(b/a)=\tp(c/a)$, but $\Lstp(b/a)\neq \Lstp(c/a)$ as any model $N$ that contains the set $\{a\}$ also contains both $b,c$
and $\tp(b/N)\neq \tp(c/N)$. 

Now let $A=\{a'\in M: a'Ea\}$. Then $\tp(b/A)$ splits over $a$ since $\tp(b/a)=\tp(c/a)$ but $\tp(b,b/a)\neq \tp(b,c/a)$. But $\Lstp(b/A)$ does not split over $a$, whenever $\vec d_1,\vec d_2\in A$ are such that $\Lstp(\vec d_1/a)=\Lstp(\vec d_2/a)$ then $\vec d_1=\vec d_2$ and thus $\tp(b,\vec d_1/a)=\tp(b,\vec d_2/a)$.   
\end{example}

We will use the following notion from \cite[Definition 29]{Ka}.

\begin{defn}
For a tuple $\vec a\in M^n$ and a model $N\in \mathcal{C}$, we write $U_{Lsp}(\tp(\vec a/N))\geq n+1$
(respectively $U_{Lsp}(\tp_H(\vec a/N))\geq n+1$, 
where $(N,H)$ is an $H$-structure)
if there is $N_1\succeq N$ in $\mathcal{C}$ (resp. $(N_1,H)\succeq (N,H)$) such that $\tp(\vec a/N_1)$ Lascar splits over $N$ (resp. $\tp_H(\vec a/N)$ Lascar splits over $N_1$) and 
$U_{Lsp}(\tp(\vec a/N_1))\geq n$ (resp. $U_{Lsp}(\tp_H(\vec a/N_1))\geq n$). We write $U_{Lsp}(\tp(\vec a/N))\geq \omega$ (resp. $U_{Lsp}(\tp_H(\vec a/N))\geq \omega$) when $U_{Lsp}(\tp(\vec a/N))\geq n$ for all $n$
(resp. $U_{Lsp}(\tp_H(\vec a/N))\geq n$ for all $n$).

For an H-structure $(M,H)$, we write $U_{Lsp}((M,H))=1$ if for all $a\in M$ (singleton) and all submodels $(N,H)\preceq (M,H)$ we have $U_{Lsp}(\tp_H(a/N))\leq 1$. We write $U_{Lsp}((M,H))\geq\omega$ if there is $a\in M$ (singleton) and a submodel $(N,H)\preceq (M,H)$ such that $U_{Lsp}(\tp_H(a/N))\geq \omega$.

The same definitions can be applied to the class of beautiful pairs of structures in $\mathcal{C}$ instead of $H$-structures.
\end{defn}

Note that the definition above we could have used the word splitting instead of Lascar splitting as the parameters we are working with are \emph{models}. The definition we presented for the $U_{Lsp}$-ranks can be extended to work over sets as we now explain. Following the ideas from \cite{Ka}, we can define, for $A\subseteq M$ finite and for 
$\vec a\in M$, $U_{Lsp}(\tp(\vec a/A))=\max \{U_{Lsp}(\tp(\vec a/N)):A\subseteq N, N\in \mathcal{C}\}$ and for $A\subseteq M$ arbitrary, we define $U_{Lsp}(\tp(\vec a/A))=\min\{U_{Lsp}(\tp(\vec a/B)):B\subseteq A$ finite$\}$. In this paper we will deal mostly with $U_{Lsp}$-rank of tuples over models.

\begin{obse}
    It is proved in \cite[section 4]{Ka} that in a quasiminimal class the notion of independence coming from the absence of Lascar splitting agrees with the notion of independence coming from the pregeometry $\cl$. Moreover, working inside $\mathcal{C}$ a quasiminimal class, for $\vec a\in M^n$ and a set $B$, 
$U_{Lsp}(\tp(\vec a/B))=\dim_{\cl}(\vec a/B)$. This is why we deal in this paper mostly with Lascar splitting.
\end{obse}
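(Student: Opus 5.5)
The plan is to reduce everything to a single equivalence and then compute the rank by induction on dimension. The equivalence is: for models $N\preceq N_1$ in $\mathcal{C}$ (inside a large homogeneous $M\in\mathcal{C}$) and a finite tuple $\vec a$,
$$\tp(\vec a/N_1) \text{ Lascar splits over } N \iff \vec a\nind^{\cl}_N N_1 .$$
Since $N$ is a model, Lascar strong types over $N$ coincide with types over $N$. So splitting and Lascar splitting over $N$ agree, and I work with ordinary types throughout.

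For the backward direction ($\Leftarrow$), I take a finite $\vec b\in N_1$ with $\dim(\vec a/N\vec b)<\dim(\vec a/N)$. I then choose $\vec c\models\tp(\vec b/N)$ with $\vec c\ind_N \vec a\vec b$, for instance by QM4/QM5 applied over a countable model containing the relevant parameters. If necessary I enlarge $N_1$ to contain $\vec c$; this is harmless because the rank definition lets us pick $N_1$. Now $\dim(\vec a/N\vec c)=\dim(\vec a/N)>\dim(\vec a/N\vec b)$. By QM1 (via the formulas $\pi_n$ of Fact \ref{definability-density}) this gives $\tp(\vec a\vec b/N)\neq\tp(\vec a\vec c/N)$, which is a splitting over $N$.

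For the forward direction ($\Rightarrow$), suppose $\vec a\ind_N N_1$ and let $\vec b,\vec c\in N_1$ have the same type over $N$. Then $\vec a\ind_N \vec b$ and $\vec a\ind_N\vec c$. Let $\sigma$ be an automorphism (or partial elementary map, extended by QM5) fixing $N$ with $\sigma(\vec b)=\vec c$. Then $\sigma(\vec a)\ind_N \vec c$, and since $\tp(\sigma(\vec a)/N)=\tp(\vec a/N)$, stationarity over models (Fact \ref{stationarityinC}) gives $\tp(\sigma(\vec a)/N\vec c)=\tp(\vec a/N\vec c)$. Hence $\tp(\vec a\vec b/N)=\tp(\sigma(\vec a)\vec c/N)=\tp(\vec a\vec c/N)$, so there is no splitting. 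I expect this step to be the main obstacle. It requires care that the homogeneity needed to obtain $\sigma$ is only available over countable closed sets, so I would first reduce to countable $N$ and $N\vec b\vec c$ by downward L\"owenheim--Skolem within $\mathcal{C}$, using QM3.

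With the equivalence in hand, the inequality $U_{Lsp}(\tp(\vec a/N))\le\dim(\vec a/N)$ follows by induction. Each Lascar splitting step $N\preceq N_1$ forces $\dim(\vec a/N_1)\le\dim(\vec a/N)-1$, and dimensions are finite. For the reverse inequality I also induct on $\dim(\vec a/N)=k>0$. Pick a coordinate $a_i$ of $\vec a$ with $a_i\notin\cl(N)$, and pick $e$ with $a_i\in\cl(Ne)\setminus\cl(N)$; I can simply take $e=a_i$. Let $N_1\succeq N$ be a model containing $e$ and otherwise independent from $\vec a$ over $Ne$. Then $\dim(\vec a/N_1)=k-1$, and by the equivalence $\tp(\vec a/N_1)$ Lascar splits over $N$, so the induction hypothesis gives $U_{Lsp}(\tp(\vec a/N))\ge k$.

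Finally, I pass to arbitrary sets through the max/min definitions. For finite $A$, every model $N\supseteq A$ satisfies $\dim(\vec a/N)\le\dim(\vec a/A)$, and equality is attained by choosing $N\supseteq A$ with $N\ind_A\vec a$ (extension). Hence the maximum equals $\dim(\vec a/A)$. For arbitrary $B$, finite character of $\cl$ shows that $\min_{A\subseteq B \text{ finite}}\dim(\vec a/A)=\dim(\vec a/B)$. The same equivalence shows that non-Lascar-splitting independence agrees with $\ind^{\cl}$.
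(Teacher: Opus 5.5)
The paper gives no argument for this observation: it simply imports both claims from \cite[section 4]{Ka}, where they are established in Kangas's more general framework. You instead derive them inside the paper's setting from two standard implications. First, $\ind^{\cl}$ together with stationarity over models gives non-splitting. Second, a drop in $\dim$ is detected by the formulas $\pi_n$ and hence yields splitting. You then finish by induction on $\dim(\vec a/N)$ and the max/min bookkeeping. This is the template the paper later uses for the expansions (Theorems \ref{indepegreessplittingH}, \ref{indepegreessplittingHp2}, \ref{indepegreessplittingP}), and it is sound. What it buys is an explicit mechanism, but two points need to be made precise.

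First, your backward implication is false as literally stated for arbitrary $N\preceq N_1$. Take the pure infinite set with $\cl(A)=A$, which is a quasiminimal pregeometry structure in the empty language, and let $N_1=N\cup\{a\}$. Then $a\nind^{\cl}_N N_1$. Yet two tuples of $N_1$ with the same type over $N$ are equal, so $\tp(a/N_1)$ does not Lascar split over $N$. This is exactly why the paper's analogues assume $M_2$ is large over $M_1$. Your ``enlarge $N_1$'' repair suffices for the rank computation, provided you pick $\vec c\ind_N N_1\vec a$ rather than merely $\vec c\ind_N\vec a\vec b$; otherwise passing to $\cl(N_1\vec c)$ could lower $\dim(\vec a/N_1)$. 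Correspondingly, your closing claim that the two independence notions agree holds only over sufficiently large extensions, or in the finite-base formulation of \cite{Ka} (compare Theorem \ref{indepegreessplittingH}(1)).

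Second, your key input, Fact \ref{stationarityinC}, is itself taken by the paper from \cite{Ka}. As the paper explains just before stating it, it is proved there for Lascar non-splitting and transferred to $\ind^{\cl}$ through the very coincidence of \cite[section 4]{Ka} that you are proving. So your argument is a reduction to stationarity rather than a proof independent of the citation. To remove this dependence, you would need to establish stationarity of $\ind^{\cl}$ over models directly, e.g.\ from the excellence results of \cite{BHHKK}.
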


In section \ref{sec:ranksgeometry} we will characterize the possible values for $U_{Lsp}$ in dense-codense expansions and tie the values of the rank with the complexity of the underlying geometry. We will now deal with splitting in expansions. We need the following fact from \cite{BHHKK}:

\begin{fact}\label{fin-oldnonsplit}(Proposition 4.2 \cite{BHHKK})
Let $M\in \mathcal{C}$ and let $B\subseteq M$ be a be countable closed submodel. For each finite tuple $\vec a \in M$ there is a finite $A \subseteq M$ such that $tp(\vec a/B)$ does not split over $A$.
\end{fact}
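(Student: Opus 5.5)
This is Proposition 4.2 of \cite{BHHKK}. The plan is to reduce non-splitting to a back-and-forth argument built from QM4 and QM5, taking $A$ to be a finite set that carries all the $\cl$-dependence of $\vec a$ on $B$.

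First I decompose $\vec a=\vec a_0\vec a_1$, where $\vec a_0$ is $\cl$-independent over $B$ and every coordinate of $\vec a_1$ lies in $\cl(\vec a_0 B)$. By finite character of the pregeometry there is a finite $A_0\subseteq B$ with $\vec a_1\in\cl(\vec a_0A_0)$. Then $\dim(\vec a/A_0)=\dim(\vec a/B)$, so $\vec a\ind_{A_0}B$.

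To test non-splitting, I take finite tuples $\vec c,\vec d\in B$ with $\tp(\vec c/A)=\tp(\vec d/A)$. I fix an $A$-elementary partial map $f$ with $f(\vec c)=\vec d$ and try to extend it to a map fixing $\vec a$. The extension goes in three stages.
\begin{enumerate}
\item Using QM5 repeatedly along an enumeration of $\cl(A\vec c)$, extend $f$ to an elementary map $\cl(A\vec c)\to\cl(A\vec d)$. Both sets are countable by QM3 and closed.
\item Since $\vec a_0$ is independent over $B\supseteq \cl(A\vec c)\cup\cl(A\vec d)$, QM4 applied coordinate by coordinate (over the growing closed sets $\cl(A\vec c\,a_{0,1}\dots a_{0,i})$) lets the map send $\vec a_0$ to itself.
\item Finally, QM5 extends the map to $\vec a_1$, since $\vec a_1\in\cl(\vec a_0A)$.
\end{enumerate}

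The main obstacle is the last stage. QM5 yields \emph{some} image $\vec a_1'$ of $\vec a_1$ in $\cl(\vec a_0A)$ over $\cl(A\vec c)\vec a_0$, but not necessarily $\vec a_1$ itself. Moreover, stage 1 need not fix $\cl(A)$ pointwise, because types over the non-closed set $A$ do not control types over $\cl(A)$.

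To overcome this I will enlarge $A_0$ to a finite $A\subseteq B$ for which $\tp(\vec a/A)$ determines $\tp(\vec a/\cl(A))$, by the following procedure.
\begin{enumerate}
\item Over the countable closed set $\cl(A_0)$ there are only countably many candidates for $\vec a_1$ inside $\cl(\vec a_0A_0)$, again by QM3.
\item By QM1 and Fact \ref{definability-density}, each such candidate is pinned down by a quantifier-free $\mathcal{L}_{\omega_1\omega}$ condition involving finitely many elements of $\cl(A_0)$.
\item I add finitely many such elements to $A_0$ to obtain $A$, arranging that any elementary map fixing $A$ and $\vec a_0$ and moving $\vec a_1$ within $\cl(\vec a_0A)$ must actually fix $\vec a_1$.
\end{enumerate}
If the naive version of step 3 fails, I will argue instead via the uniqueness and stationarity of Fact \ref{stationarityinC} over the model $\cl(A)$.

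With such an $A$, the three-stage construction yields $\tp(\vec c/A\vec a)=\tp(\vec d/A\vec a)$, which is exactly the statement that $\tp(\vec a/B)$ does not split over $A$.
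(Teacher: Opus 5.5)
The gap is at the step you flag yourself, and the repair you describe does not close it. The reduction is sound as far as it goes. If you had a finite $A$ with $A_0\subseteq A\subseteq B$ such that $\tp(\vec a/A)$ determines $\tp(\vec a/\cl(A))$, then stage 3 followed by stationarity over the model $\cl(A)$ would finish. But you never obtain such an $A$.

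Your procedure aims at something false: no finite $A$ can force an elementary map fixing $A\vec a_0$ to fix $\vec a_1$. In Example \ref{equiv-classes}, let $a_0$ lie in a class disjoint from $B$ and let $a_1\neq a_0$ lie in the same class. Then every element of that class other than $a_0$ is the image of $a_1$ under an elementary map fixing $B\cup\{a_0\}$ pointwise.

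For the correct, type-level goal, the procedure gives no reason why $A$ can be finite. Each of the countably many candidates $\vec a_1''\in\cl(\vec a_0A_0)$ with the wrong type over $\cl(A_0)\vec a_0$ is excluded by its own finite set of parameters. Nothing shows that a single finite set excludes them all, and that uniformity is the whole content of the statement.

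The fallback via Fact \ref{stationarityinC} does not supply it either. Stationarity can transport $\tp(\vec a/\cl(A))$ up to $B$. But the splitting test only gives $\tp(\vec c/A)=\tp(\vec d/A)$, not $\tp(\vec c/\cl(A))=\tp(\vec d/\cl(A))$: in the same example, take $c\neq d$ in the class of some $e\in A$, both outside $A$. So the passage from $A$ to $\cl(A)$ is exactly what is still missing. You would also have to check that the stationarity results of \cite{Ka} do not themselves rest on finite non-splitting statements of this kind.

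The paper only quotes this result from \cite{BHHKK}; what your plan lacks is a counting ($\omega$-stability) argument. By QM3 and QM4, only countably many types over the countable closed set $B$ are realized in $M$: the independent part has a unique type, and the rest lies in the countable set $\cl(B\vec a_0)$. This has to be converted into a finite base, for example by the usual tree of splittings from the $\omega$-stable first-order case, using nested elementary maps on countable closed sets so that branches are realized by QM4 and QM5.

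One way to run it:
\begin{enumerate}
\item $G=\mathrm{Aut}(B)$ is a Polish group. For $p=\tp(\vec a/B)$ and $\sigma\in G$, the conjugate $\sigma(p)$ is realized: extend $\sigma$ by the identity on $\vec a_0$ using QM4 (and QM5 on the intermediate closures), then use QM5 for $\vec a_1$. So the orbit of $p$ is countable.
\item The stabilizer $G_p$ is closed of countable index, hence open by the Baire category theorem. So it contains the pointwise stabilizer of some finite $A\subseteq B$.
\item If $\vec c,\vec d\in B$ satisfy $\tp(\vec c/A)=\tp(\vec d/A)$, a back-and-forth inside the countable structure $B$ (using QM4 and QM5) gives $\sigma\in G$ fixing $A$ with $\sigma(\vec c)=\vec d$.
\item Then $\sigma(p)=p$ says precisely that $\tp(\vec a\vec c/A)=\tp(\vec a\vec d/A)$.
\end{enumerate}
With this argument, your three-stage construction is not needed.
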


Now we generalize the previous result to the expansion.

\begin{prop}\label{superstability}
    
Let $M\in \mathcal{C}$ and assume that $(M,H)$ is an $H$-structure. Let $(B,H)\subseteq (M,H)$ be a countable ${\mathcal{L}_{H}}_{\omega_1, \omega}$-substructure. Then for each finite tuple $\vec a \in M$ there is a finite $A \subseteq B$ such that $tp_H(\vec a/B)$ does not split over $A$.
\end{prop}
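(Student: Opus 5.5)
Let $\vec a\in M$ be finite and $(B,H)\preceq (M,H)$ countable. By Proposition \ref{H_independentsubstructures}, $B\ind_{H(B)} H(M)$, and we may assume $B$ is closed and $H$-independent. The plan is to reduce $\tp_H(\vec a/B)$ to an $\mathcal{L}_{\omega_1\omega}$-type of an $H$-independent extension of $\vec a$, and then apply Fact \ref{fin-oldnonsplit} in $\mathcal{C}$.

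First I extend $\vec a$ to a finite $H$-independent tuple over $B$. Let $\vec h=HB(\vec a/B)$, which is finite and disjoint from $B$ by Proposition \ref{H-basis-additivity}, and put $\vec a^*=\vec a\vec h$. Then $B\vec a^*$ is $H$-independent. Since $B$ is countable and closed, Fact \ref{fin-oldnonsplit} applied to $\vec a^*$ gives a finite $A_0\subseteq B$ such that $\tp(\vec a^*/B)$ does not split over $A_0$. I then enlarge $A_0$ to a finite $A\subseteq B$ with three properties: $A$ is $H$-independent, $H(A)$ is the $H$-basis of $A$, and $\vec a^*\ind_{A}BH(M)$ in $\cl$. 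The last property is possible because $\dim(\vec a^*/BH(M))$ is attained over a finite subset of $B$. Finally I close $A$ under $HB$, adding finitely many elements of $H(B)$.

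Next, suppose $\vec c,\vec d\in B$ satisfy $\tp_H(\vec c/A)=\tp_H(\vec d/A)$. I want to show $\tp_H(\vec c/A\vec a)=\tp_H(\vec d/A\vec a)$. Extend $\vec c$ to $\vec c^*=\vec c\,HB(\vec c/A)$, where the added elements lie in $H(B)$ because $B\ind_{H(B)}H(M)$. Equality of $\tp_H$ over $A$ yields a matching $\vec d^*$ with the same $\mathcal{L}_{\omega_1\omega}$-type over $A$, membership pattern in $H$, and $H$-independence; this uses Proposition \ref{H-basis-definability}, since the $H$-basis is defined by a formula. Nonsplitting of $\tp(\vec a^*/B)$ over $A$ then gives $\tp(\vec a^*\vec c^*A)=\tp(\vec a^*\vec d^*A)$, with $A$ enumerated identically on both sides. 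Once the tuples $\vec a^*\vec c^*A$ and $\vec a^*\vec d^*A$ are $H$-independent with the same $H$-pattern, Proposition \ref{H-typesomega1omega} gives equality of their $\tp_H$. Forgetting $\vec h$ and the auxiliary parts then gives the claim.

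The main obstacle is checking that $\vec a^*\vec c^*A$ is $H$-independent in $M$. I would split this into two parts. The first is $\vec c^*A\ind_{H(\vec c^*A)}H(M)$, which holds because $B\ind_{H(B)}H(M)$ together with additivity of $H$-bases. The second is that $\vec a^*$ adds no new dependence on $H(M)$ over $\vec c^*A$. For this I use that $\vec h$ is the $H$-basis of $\vec a$ over all of $B$: any $H$-dependence of $\vec a$ over $\vec c^*A$ is witnessed already by $\vec h$ together with elements of $H(B)$, and those lie in $\vec c^*A$ when $A$ is chosen as above. I expect a careful dimension count (via $\vec a^*\ind_A BH(M)$) to settle this. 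If it fails, the fallback is to enlarge $\vec a^*$ by $HB(\vec a/A)$ rather than by $HB(\vec a/B)$, so that the $H$-basis is computed over the finite base $A$.
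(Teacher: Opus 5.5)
Your overall route is the paper's: apply Fact \ref{fin-oldnonsplit} to an $H$-independent enlargement of $\vec a$, pass equality of $\mathcal{L}$-types through nonsplitting, and finish with Proposition \ref{H-typesomega1omega}. You differ in putting the $H$-basis on the side of $\vec a$, adjoining $\vec h=HB(\vec a/B)$. The paper instead enlarges $A_1$ by $HB(A_1/\vec a)$ and enlarges $\vec c,\vec d$ over $A\vec a$. Your placement is the safer one, because the paper's enlargements can leave $B$. For example, in a $\mathbb{Q}$-vector space take $a=b-h$ with $b\in B\setminus\cl(H(M))$ and $h\in H(M)\setminus B$.

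There is, however, a genuine gap exactly at the step you flagged. The condition $\vec a^*\ind_A BH(M)$ cannot be arranged when $\vec h\neq\emptyset$. Since $\vec h\subseteq H(M)\setminus B$ and $B\ind_{H(B)}H(M)$, the tuple $\vec h$ is $\cl$-independent over $B\supseteq A$. Hence $\dim(\vec a^*/A)\geq|\vec h|+\dim(\vec a/B\vec h)>\dim(\vec a/BH(M))=\dim(\vec a^*/BH(M))$. Your justification conflates attaining $\dim(\vec a^*/B)$ over a finite part of $B$ with attaining $\dim(\vec a^*/BH(M))$. As a result, the $H$-independence of $\vec a^*\vec c^*A$, on which the appeal to Proposition \ref{H-typesomega1omega} rests, is never established. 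The fallback is circular: changing $\vec a^*$ after $A$ has been chosen voids the nonsplitting choice of $A_0$.

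The repair is to require instead $\dim(\vec a^*/A)=\dim(\vec a^*/B)$. This is attainable by finite character, and since $\dim(\vec h/A)=\dim(\vec h/B)=|\vec h|$, it is equivalent to $\dim(\vec a/A\vec h)=\dim(\vec a/BH(M))$. Then for $E=\vec c^*A\subseteq B$,
\[\dim(\vec a/E\vec h)\leq\dim(\vec a/A\vec h)=\dim(\vec a/BH(M))\leq\dim(\vec a/EH(M))\leq\dim(\vec a/E\vec h),\]
so $\vec a\ind_{E\vec h}H(M)$. Combined with $E\ind_{H(E)}H(M)$ and $\vec h\subseteq H(M)$, additivity of dimension shows that $\vec a^*E$ is $H$-independent. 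The same argument applies to $\vec a^*\vec d^*A$, since $\vec d^*\subseteq B$. With this correction the rest of your argument goes through.
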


\begin{proof}
By enlarging the tuple $\vec a$, may assume that $\vec a  \ind_{H(\vec a)} H(M)$. Since $(B,H)$ is a countable ${\mathcal{L}_{H}}_{\omega_1, \omega}$-substructure, by Proposition \ref{H_independentsubstructures} the set $B$ is $H$-independent in $(M,H)$.

First by Fact \ref{fin-oldnonsplit} we can find $A_1 \subseteq B$ finite such that $\tp(\vec a/B)$ does not split over $A_1$. Now let $A=A_1\cup HB(A_1/\vec a)$. Since $\vec a$ is $H$-independent we have $\vec a\ind_{H(\vec a)} H(M)$ and since $A=A_1\cup HB(A_1/\vec a)$, we have 
$A\ind_{\vec a H(A)} H(M)$, so $\vec aA\ind_{H(\vec a A)} H(M)$, this shows that the tuple $\vec aA$ is also $H$-independent.

\textbf{Claim}
$\tp_H(\vec a/B)$ does not split over $A$.

Choose $\vec c$ and $\vec d$ in $B$ with
$\tp_H(\vec c/A) = \tp_H(\vec d/A)$. By enlarging the tuples,
we may assume again that $\vec c$ and $\vec d$ are both $H$-independent over $A\vec a$. Then we have $\tp(\vec c/A) = \tp(\vec d/A)$ and since $\tp(\vec a/B)$ does not split over $A$, we also get $\tp(\vec c/A\vec a) = \tp(\vec d/A\vec a)$.

We then have $\tp(\vec c,A\vec a) = \tp(\vec d,A\vec a)$ (where we enumerate $A$ in the same way on both sides of the type), and both tuples are $H$-independent. Now apply Proposition \ref{H-typesomega1omega} to get that 
$\tp_H(\vec c,A\vec a) = \tp_H(\vec d,A\vec a)$.
\end{proof}

Let us look again at examples and let us consider categoricity in the extension.

\begin{example} Consider Example \ref{equiv-classes}. Let $(M,H)$ be an $H$-structure and let $B\subset M$ be at most countable and closed. Then $B$ is the union of at most countably many classes. Given $c\not \in B$ there are 
two options for $\tp(c/B)$, either the element $c$ belongs to a class that intersects $H(M)$ or $c$ belongs to a class that does not intersect $H(M)$. The class associated to the expansion is $2$-dimensional, parametrized by the cardinality of $H(M)$ and the cardinality of the classes that do not intersect $H(M)$. In particular the class of $H$-structures is not categorical on cardinals $\geq \aleph_1$.
\end{example}

The examples under consideration are $\omega$-stable, superstable in the sense of Proposition \ref{superstability} but not necessarily categorical in cardinals above $\aleph_0$ (see the previous example). The expansion fits into a generalized class described in terms of \emph{invariants of depth $0$} in the sense of Shelah (\cite{Sh}) but in the framework of $\mathcal{L}_{\omega_1\omega}(Q)$.

\section{Independence in H-structures and modular beautiful pairs}\label{sec:ranks}
Let $\mathcal{C}$ be a quasiminimal geometry class and write $\cl$ for the closure operator associated to the pregeometry. 
Given $M\in \mathcal{C}$, $\vec a\in M^n$ and $B\subseteq C\subseteq M$ we will now write $\vec a\ind^{\cl}_BC$ if $\dim_{\cl}(\vec a/B)=\dim_{\cl}(\vec a/C)$, where the dimension is calculated in the sense of the pregeometry $(M,\cl)$. When this is the case we say that $\vec a$ is \emph{$\cl$-independent from $C$ over $B$}. In \cite{Ka} Kangas studies independence in a quasiminimal geometry class using splitting and Lascar splitting. She shows that the notion of independence associated to Lascar splitting satisfies all the usual properties of an independence relation in a $\omega$-stable theory. In addition, she proves that if $\tp(\vec a/C)$ Lascar splits over $B\subseteq C$, then $\dim_{\cl}(\vec a/B)>\dim_{\cl}(\vec a/C)$ and vice versa, so the notion of independence associated to Lascar splitting agrees with the notion of independence associated with $\cl$-independence.


Our goal in this section is to study notions of independence that extends $\cl$-independence to the class of $H$-structures. We also study independence relations in beautiful pairs when the class $\mathcal{C}$ is modular.


\subsection{Independence in H-structures}
We now mix the ideas from \cite{AHKK} with the ones from  \cite[Section 5.2]{BeVa}.

\begin{defn} Fix $(M,H)$ an $H$-structure associated to the quasiminimal class $\mathcal{C}$. For sets $B,C\subseteq M$ and a tuple $\vec a\in M$ we write $\vec a \ind^{\cl,H}_{C}B$ if $$\vec a\ind^{\cl}_{C\cup H(M)}B \ \ \text{and} \ \  HB(\vec a/\cl(C))=HB(\vec a/\cl(C\cup B))$$

For sets $A,B,C\subseteq K$ we write $A \ind^{\cl,H}_{C}B$ if for all finite tuples 
$\vec a\in A$ we have $\vec a \ind^{\cl,H}_{C}B$.
   
\end{defn}

Note that the new notion of independence has two components, on the one hand the localization of the old notion of independence in $H(M)$ that explains the behaviour of elements independent from $H(M)$ and the $H$-basis component, that controls algebraicity over elements in $H(M)$.

Let us first prove that $\cl,H$-independence implies
$\cl$-independence:

\begin{prop}\label{newindep-oldindep}
Let $\vec a\in M^n$ and let $B\subseteq C \subseteq M$ and assume that $\vec a \ind^{\cl,H}_BC$. Then $\dim_{\cl}(\vec a/B)=\dim_{\cl}(\vec a/C)$. 
\end{prop}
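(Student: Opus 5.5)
We reduce the statement to a dimension count in which both components of the definition of $\ind^{\cl,H}$ are used. We may replace $B$ and $C$ by their closures, since dimensions over a set and over its closure coincide. Let $H_0=HB(\vec a/\cl(B))=HB(\vec a/\cl(C))$; this is the second clause of the hypothesis.

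The first step is to compare dimensions over $C\cup H(M)$ and over $C\cup H_0$. By the definition of the $H$-basis (taken over $\cl(C\cup HB(C))$), we have
$$\dim(\vec a/C\cup H(M))=\dim(\vec a/C\cup HB(C)\cup H_0),$$
and similarly with $B$ in place of $C$. Since $\dim(\vec a/C\cup H(M))\le \dim(\vec a/C\cup H_0)\le \dim(\vec a/C\cup HB(C)\cup H_0)$, it follows that
$$\dim(\vec a/C\cup H(M))=\dim(\vec a/C\cup H_0),$$
and likewise $\dim(\vec a/B\cup H(M))=\dim(\vec a/B\cup H_0)$. The first clause, $\vec a\ind^{\cl}_{B\cup H(M)}C$, then gives
$$\dim(\vec a/B\cup H_0)=\dim(\vec a/C\cup H_0).$$

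The second step removes $H_0$. We have the identities
$$\dim(\vec a/B)=\dim(\vec a H_0/B)-\dim(H_0/B\vec a)=\dim(\vec a/BH_0)+\dim(H_0/B)-\dim(H_0/B\vec a),$$
and the same with $C$. It therefore suffices to show two things: that $H_0$ is $\cl$-independent over $C$ (so $\dim(H_0/B)=\dim(H_0/C)=|H_0|$), and that $\dim(H_0/B\vec a)=\dim(H_0/C\vec a)$.

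For the first, minimality of the $H$-basis together with disjointness from $HB(C)$ (as in Proposition~\ref{H-basis-additivity}) shows that the elements of $H_0$ are outside $HB(C)$. Since $H$ is $\cl$-independent and $C\cup HB(C)$ is $H$-independent, $H_0$ is independent over $C$, and a fortiori over $B$.

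For the second, we use minimality of $H_0$ again. If some $h\in H_0$ were not in $\cl(C\vec a\, (H_0\setminus\{h\}))$, we would like to drop $h$ from the basis. The clean route is to express $\dim(H_0/X\vec a)=|H_0|-(\dim(\vec a/X)-\dim(\vec a/XH_0))$ for $X\in\{B,C\}$ and argue by exchange that the drop $\dim(\vec a/X)-\dim(\vec a/XH_0)$ equals $|H_0|$ by minimality. Then $\dim(\vec a/X)=\dim(\vec a/XH_0)+|H_0|$ for both $X=B$ and $X=C$, and the first step concludes.

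The main obstacle is exactly this claim: that every element of a minimal $H$-basis lowers the dimension of $\vec a$ by one over $C$ (and over $B$), i.e.\ that $H_0$ is ``fully used''. I would prove it by exchange. Take a subset $H_1\subseteq H_0$ minimal with $\dim(\vec a/XH_1)=\dim(\vec a/XH_0)$; minimality of the basis forces $H_1=H_0$. Then independence of $H_0$ over $X$ gives the dimension drop of exactly $|H_0|$. Over $B$ this requires the care that $HB(B)\subseteq HB(C)$, which follows from $B\subseteq C$ and additivity.
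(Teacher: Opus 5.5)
\textbf{There is a genuine gap in your second step, and your first step only works once $B$ is assumed $H$-independent, which the statement needs anyway.}

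\textbf{Second step.} It rests on a false claim. You need $\dim(\vec a/X)-\dim(\vec a/XH_0)=|H_0|$, i.e.\ $H_0\subseteq\cl(X\vec a)$. Minimality of the $H$-basis only gives $h\in\cl(X\vec a\,(H_0\setminus\{h\}))$ for each $h\in H_0$, not $h\in\cl(X\vec a)$. Moreover, the drop is at most $\dim(\vec a/X)\le|\vec a|$, while $|H_0|$ can be larger. Concretely, take the $\mathbb{Q}$-vector space of Example~\ref{Torsionfreegroups} with $h_1,h_2\in H(M)$, $\vec a=h_1+h_2$ and $X=\cl(\emptyset)$. Then $HB(\vec a/X)=\{h_1,h_2\}$, but the drop is $1$. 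Your exchange sketch (a minimal $H_1\subseteq H_0$ must equal $H_0$, so independence of $H_0$ over $X$ forces a drop of $|H_0|$) does not yield this conclusion.

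You do not actually need equality of the terms $\dim(H_0/X\vec a)$. Once the first two terms of your identity agree for the two base sets, the trivial inequality $\dim(H_0/C\vec a)\le\dim(H_0/B\vec a)$ already gives $\dim(\vec a/C)\ge\dim(\vec a/B)$. The reverse inequality is monotonicity.

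\textbf{First step.} Your chain of inequalities is reversed: in fact $\dim(\vec a/C\cup H_0)\ge\dim(\vec a/C\cup HB(C)\cup H_0)$. So you only get $\dim(\vec a/C\cup H(M))=\dim(\vec a/C\cup H_0)$ when $HB(C)\subseteq\cl(C)$, i.e.\ when $C$ is $H$-independent. Otherwise it fails: for $C=\cl(h_1+h_2)$ and $\vec a=h_1$ one has $H_0=\emptyset$, and the two dimensions are $0$ and $1$.
\begin{itemize}
\item For $C$ this is harmless. Replacing $C$ by $C'=\cl(C\cup HB(C))$ leaves both clauses of the hypothesis unchanged and can only lower $\dim(\vec a/C)$, so it suffices to prove $\dim(\vec a/B)=\dim(\vec a/C')$.
\item For $B$ it cannot be repaired. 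Take $B=\cl(h_1+h_2)$, $C=\cl(h_1,h_2)$ and $\vec a=h_1$. Both clauses hold: both $H$-bases are empty and $\dim(\vec a/B\cup H(M))=0$. Yet $\dim(\vec a/B)=1\neq 0=\dim(\vec a/C)$.
\end{itemize}
So the proposition itself requires $B$ to be closed and $H$-independent. The paper's proof also tacitly works over such a $B$; it argues contrapositively, decomposing $\vec a=\vec a_1\vec a_{21}\vec a_{22}$ and using additivity of $H$-bases to force $HB(\vec a/C)\subsetneq HB(\vec a/B)$.

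\textbf{How to repair.} Assume $B$ is closed and $H$-independent, and replace $C$ by $C'$. Then your first step is correct and gives $\dim(\vec a/BH_0)=\dim(\vec a/C'H_0)$. Your observation that $H_0$ is independent over $C'$ gives $\dim(H_0/B)=\dim(H_0/C')=|H_0|$. The monotonicity remark above then finishes the argument. This is a correct proof and more direct than the paper's.
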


\begin{proof}
We may write $\vec a=\vec a_1 \vec a_2$, where $\vec a_1$ is independent from $BH(M)$ and
$\vec a_2\in \cl(B\vec a_1H(M))$.

We prove that if $\dim_{\cl}(\vec a/B)>\dim_{\cl}(\vec a/C)$ then $\vec a \nind^{\cl,H}_BC$. By additivity of dimension $\dim_{\cl}(\vec a/B)=\dim_{\cl}(\vec a_1/B)+\dim_{\cl}(\vec a_2/B\vec a_1)$. It is enough to consider two cases.

Case 1. $\dim_{\cl}(\vec a_1/BH(M))>\dim_{\cl}(\vec a_1/CH(M))$. Then $\vec a_1 \nind^{cl,H}_BC$ as desired.

Case 2. $\dim_{\cl}(\vec a_1/BH(M))=\dim_{\cl}(\vec a_1/CH(M))$ and $\dim_{\cl}(\vec a/B)>\dim_{\cl}(\vec a/C)$. Since $\dim_{\cl}(\vec a_1/B)=\dim_{\cl}(\vec a_1/BH(M))=\dim_{\cl}(\vec a_1/CH(M))\leq \dim_{\cl}(\vec a_1/C)$, we must have $\dim_{\cl}(\vec a_1/B)=\dim_{\cl}(\vec a_1/C)$ and thus $\dim_{\cl}(\vec a_2/B\vec a_1)>\dim_{\cl}(\vec a_2/C\vec a_1)$. Then by additivity of H-basis (Proposition \ref{H-basis-additivity}) we get $HB(\vec a/C)=HB(\vec a_2/C\vec a_1)$ and $HB(\vec a/B)=HB(\vec a_2/B\vec a_1)$.

After reordering the tuple $\vec a_2$, we may write $\vec a_2=\vec a_{21}\vec a_{22}$ with $\dim_{\cl}(\vec a_{21}/B\vec a_1)=\dim_{\cl}(\vec a_{21}/C\vec a_1)$
and $\dim_{\cl}(\vec a_{22}/B\vec a_1\vec a_{21})>\dim_{\cl}(\vec a_{22}/C\vec a_1\vec a_{21})=0$.

Let $H_1=HB(\vec a_{21}/B\vec a_1)$, so $\vec a_{21}\in \cl(B\vec a_1H_1)$ and thus $\vec a_{21}\in \cl(C\vec a_1H_1)$, so also $\vec a_{2}\in \cl(C\vec a_1H_1)$ and thus
$HB(\vec a/C)\subseteq H_1$. Let $H_2=HB(\vec a_{22}/B\vec a_1\vec a_{21})$, since $\dim_{\cl}(\vec a_{22}/B\vec a_1\vec a_{21})>0$ we get $H_2\neq \emptyset$.
Using again additivity of $H$-basis (Proposition \ref{H-basis-additivity}) $HB(\vec a/C)=H_1 \cupdot H_2$. We conclude $HB(\vec a/C)\subsetneq HB(\vec a/B)$ and we get $\vec a \nind^{\cl,H}_{B}C$ as desired. 
\end{proof}

For tuples in $H$ we get a very simple description of $\cl,H$-independence.

\begin{cor}
Let $\vec a\in H(M)^n$ and let $B\subseteq C \subseteq M$. Then $\vec a \ind^{\cl,H}_BC$ iff $\dim_{\cl}(\vec a/B)=\dim_{\cl}(\vec a/C)$. 
\end{cor}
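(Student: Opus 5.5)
The forward direction is Proposition \ref{newindep-oldindep}, so only the converse needs work. Assume $\vec a\in H(M)^n$ and $\dim_{\cl}(\vec a/B)=\dim_{\cl}(\vec a/C)$. We must check the two clauses in the definition of $\vec a\ind^{\cl,H}_BC$: that $\vec a\ind^{\cl}_{B\cup H(M)}C$, and that $HB(\vec a/\cl(B))=HB(\vec a/\cl(B\cup C))$.

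The first clause is immediate. Every coordinate of $\vec a$ lies in $H(M)$, so $\dim_{\cl}(\vec a/B\cup H(M))=0=\dim_{\cl}(\vec a/C\cup H(M))$.

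For the second clause I would first reduce to the case where $\vec a$ consists of distinct elements. Repeated coordinates change neither dimensions nor $H$-bases, so nothing is lost. By (DH), $\vec a$ is then $\cl$-independent. Next I would compute the $H$-basis of a tuple from $H$ over a closed $H$-independent set $D$ (here $D$ is $\cl(B\cup HB(B))$ or its analogue for $C$). Write $\vec a=\vec a'\vec a''$, where $\vec a''$ collects the coordinates lying in $\cl(D)$. Then $\vec a''$ needs no elements of $H$, while for the coordinates of $\vec a'$ the minimal $H_0$ with $\dim(\vec a/DH_0)=\dim(\vec a/DH(M))=0$ must contain those coordinates themselves. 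The reason is that $\vec a'$ is $\cl$-independent over $D$ and $D$ is $H$-independent, so each $a\in\vec a'$ lies outside $\cl(D\cup (H(M)\setminus\{a\}))$; this uses that $H(M)$ is independent and $D\ind_{H(D)}H(M)$. Hence $HB(\vec a/D)$ is exactly the set of coordinates of $\vec a$ that are not in $D$ (equivalently, not in $\cl(D)$).

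With this description, the equality of dimensions finishes the argument. From $\dim(\vec a/B)=\dim(\vec a/C)$ and $B\subseteq C$, a coordinate of $\vec a$ lies in $\cl$ of the $B$-side base iff it lies in $\cl$ of the $C$-side base, so the two $H$-bases coincide. The main obstacle is that the definition of $HB$ over non-closed sets passes through $\cl(B\cup HB(B))$ rather than $\cl(B)$. The extra elements $HB(B)$ could absorb some coordinates of $\vec a$, making the dimension over $B$ differ from the dimension over $\cl(B\cup HB(B))$. To handle this, I would show that a coordinate $a\in H(M)$ lies in $\cl(B\cup HB(B))$ iff $a\in HB(B)$ or $a\in\cl(B)$, using independence of $H$ and the minimality of $HB(B)$. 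I would then compare the corresponding sets for $C$, invoking additivity of $H$-bases (Proposition \ref{H-basis-additivity}) applied to $C$ versus $B$, so that the dimension hypothesis still forces the two sets of absorbed coordinates to agree.
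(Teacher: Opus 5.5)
\textbf{There is a genuine gap, and it cannot be closed: the statement is false for arbitrary $B\subseteq C$.} The failing step is your last one, exactly at the obstacle you flagged. The hypothesis $\dim_{\cl}(\vec a/B)=\dim_{\cl}(\vec a/C)$ does not force the coordinates of $\vec a$ absorbed into $\cl(\cl(B)\cup HB(\cl(B)))$ and into $\cl(\cl(B\cup C)\cup HB(\cl(B\cup C)))$ to coincide. No appeal to additivity can repair this once $\cl$ is non-trivial.

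\emph{Counterexample.} Work in the $H$-structure of Example~\ref{Torsionfreegroups}. Take distinct $h_1,h_2\in H(M)$, $c=h_1+h_2$, $\vec a=(h_1)$, $B=\emptyset$, $C=\{c\}$.
\begin{enumerate}
\item[(1)] $\dim_{\cl}(h_1/\emptyset)=1=\dim_{\cl}(h_1/c)$, so the dimension hypothesis holds.
\item[(2)] $\cl(\emptyset)$ is $H$-independent, so $HB(h_1/\cl(\emptyset))=\{h_1\}$.
\item[(3)] $\cl(c)=\mathbb{Q}c$ meets $H(M)$ trivially, so it is not $H$-independent. Hence $HB(\cl(c))=\{h_1,h_2\}$, and by definition $HB(h_1/\cl(c))=HB(h_1/\cl(h_1,h_2))=\emptyset$.
\end{enumerate}
So $h_1\nind^{\cl,H}_{\emptyset}c$: the coordinate $h_1$ is absorbed on the $C$-side although it is $\cl$-independent from $C$.

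The same phenomenon breaks the forward direction you import from Proposition~\ref{newindep-oldindep}. With $B=\{c\}$ and $C=\{c,h_2\}$ we get $HB(h_1/\cl(B))=\emptyset=HB(h_1/\cl(B\cup C))$, so $h_1\ind^{\cl,H}_BC$. Yet $\dim_{\cl}(h_1/B)=1>0=\dim_{\cl}(h_1/C)$.

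\textbf{Relation to the paper's proof.} The paper's proof only covers closed $H$-independent bases. It opens with ``Assume $B=\cl(B)$ and that $B$ is $H$-independent''. This is not a harmless reduction, and the proof tacitly needs the same for $\cl(B\cup C)$. A typical case where both hold is when $B$ and $B\cup C$ are universes of $H$-substructures $(N,H)\preceq(M,H)$, by Proposition~\ref{H_independentsubstructures}. Under that hypothesis the two bases are just $\cl(B)$ and $\cl(B\cup C)$. Your description of $HB(\vec a/D)$ as the set of coordinates of $\vec a$ outside $D$ is then exactly the paper's argument, and the proof goes through.

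\textbf{What survives for arbitrary $B\subseteq C$.} Set
\[
B^*=\cl(\cl(B)\cup HB(\cl(B))), \qquad C^*=\cl(\cl(B\cup C)\cup HB(\cl(B\cup C))).
\]
The correct statement is: $\vec a\ind^{\cl,H}_BC$ iff $\dim_{\cl}(\vec a/B^*)=\dim_{\cl}(\vec a/C^*)$. This follows from three facts:
\begin{enumerate}
\item[(a)] your computation of $HB(\vec a/D)$ for closed $H$-independent $D$;
\item[(b)] the observation that for such $D$, $\dim_{\cl}(\vec a/D)$ is the number of distinct coordinates of $\vec a$ outside $D$;
\item[(c)] the inclusion $B^*\subseteq C^*$, which holds because $HB(\cl(B))\subseteq HB(\cl(B\cup C))$.
\end{enumerate}
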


\begin{proof}
By the previous result, if $\dim_{\cl}(\vec a/B)>\dim_{\cl}(\vec a/C)$ then $\vec a \nind^{\cl,H}_BC$. Now assume that $\dim_{\cl}(\vec a/B)=\dim_{\cl}(\vec a/C)$. Note that if $\vec a\in H(M)^n$, we always have $\vec a\ind^{\cl}_{B\cup H(M)}CH(M)$. Assume $B=\cl(B)$ and that $B$ is $H$-independent. Then we can write $\vec a=\vec a_1\vec a_2$ with $\vec a_1$ an independent tuple from $B$ and $\vec a_2\in \cl(B\vec a_1)$. Then $HB(\vec a/B)=\vec a_1$ (modulo permutations). If $\dim_{\cl}(\vec a/B)=\dim_{\cl}(\vec a/C)$, then $HB(\vec a/C)=\vec a_1$ as desired.
\end{proof}

\begin{theo}
The notion of independence $\ind^{\cl,H}$ satisfies invariance, normality, monotonicity, transitivity, symmetry, finite character and extension.   
\end{theo}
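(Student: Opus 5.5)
The plan is to reduce every property of $\ind^{\cl,H}$ to the corresponding property of $\cl$-independence, applied in two settings. The first is the localization of the pregeometry at $H(M)$, i.e. $\cl(-\cup H(M))$. The second is the $H$-basis calculus of Propositions \ref{H-basis-additivity} and \ref{H-basis-definability}.

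A preliminary remark will be used throughout. For $B\subseteq C$ with $\cl(C)$ $H$-independent, $HB(\vec a/\cl(C))\subseteq HB(\vec a/\cl(B))$ by minimality. This gives monotonicity of $HB$ in the base, so the $HB$-clause of the definition says that enlarging the base does not shrink the $H$-basis.

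\textbf{Invariance.} Both clauses are preserved by $\mathcal{L}_H$-automorphisms. These preserve $\cl$ by QM1 and preserve $H$, hence preserve $HB$ by its definition.

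\textbf{Normality and finite character.} Normality follows from additivity of dimension for the first clause and from $HB(\vec a C/C)=HB(\vec a/C)$ for the second. Finite character holds because dimension and the finite set $HB$ are witnessed by finite subsets of $B$.

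\textbf{Monotonicity.} If $\vec a\ind^{\cl,H}_C BD$, then the first clause for $B$ follows from monotonicity in the localized pregeometry. For the second clause, use the sandwich
$$HB(\vec a/\cl(C))\supseteq HB(\vec a/\cl(CB))\supseteq HB(\vec a/\cl(CBD)).$$
The outer two sets are equal, so all three are equal. The same sandwich handles the left-hand side, via additivity (Proposition \ref{H-basis-additivity}) applied to subtuples.

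\textbf{Transitivity.} Suppose $\vec a\ind^{\cl,H}_C B$ and $\vec a\ind^{\cl,H}_{CB}D$. Transitivity of the localized pregeometry gives the first clause. Equality of $H$-bases chains through $\cl(C)\subseteq\cl(CB)\subseteq\cl(CBD)$.

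\textbf{Symmetry.} This is where I expect the main obstacle, because the $HB$-clause is not visibly symmetric. The plan is to first establish a characterization along the lines of \cite[Section 5.2]{BeVa}:
$$\vec a\ind^{\cl,H}_C B \iff \vec a\,HB(\vec a/C)\ind^{\cl}_{C\,HB(C)} B\,HB(B/C)\ \text{ and }\ HB(\vec a/C)\cap HB(B/C)=\emptyset$$
(taking $C$ closed and $H$-independent after replacing it by $\cl(C\cup HB(C))$). That is, $\cl,H$-independence is $\cl$-independence of the $H$-independent closures, together with disjointness of the $H$-bases. The right-hand side is symmetric.

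To prove the equivalence, expand $HB(\vec a B/C)$ in two ways using Proposition \ref{H-basis-additivity}:
$$HB(\vec a/C)\cupdot HB(B/C\vec a)=HB(B/C)\cupdot HB(\vec a/CB).$$
Then compare dimensions over $C\cup H(M)$ and over $C$, using Proposition \ref{newindep-oldindep} and the preceding corollary for tuples in $H$. The delicate point is showing that the localized clause $\vec a\ind^{\cl}_{CH(M)} B$, together with $HB(\vec a/C)=HB(\vec a/CB)$, forces genuine $\cl$-independence over $C$ of the enlarged tuples. For this I would pass to $H$-independent extensions of $\vec a$ and $B$, as in the proof of Proposition \ref{superstability}, and count dimensions.

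\textbf{Extension.} Given $\vec a$, $C\subseteq B$, we want $\vec a'$ with $\tp_H(\vec a'/C)=\tp_H(\vec a/C)$ and $\vec a'\ind^{\cl,H}_C B$. Take $\vec a$ to be $H$-independent over $C$, written as $\vec a_1\vec h\vec a_2$ with $\vec h=HB(\vec a/C)$. Work in a sufficiently large $H$-structure, and realize the pieces in turn:
\begin{enumerate}
\item[(i)] realize $\tp(\vec h/C)$ by elements of $H$ that are $\cl$-generic over $B$, using density (D1);
\item[(ii)] realize $\vec a_1$ generic over $B\cup H(M)$, using codensity (D2);
\item[(iii)] realize $\vec a_2$ in the closure, using QM5.
\end{enumerate}
Proposition \ref{H-typesomega1omega} then guarantees that the $\mathcal{L}_H$-type over $C$ is preserved. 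By construction $HB(\vec a'/CB)=\vec h'$ and the first clause holds, following Cases 2--4 of the proof of Proposition \ref{H-typesomega1omega}.
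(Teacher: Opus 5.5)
Your symmetry step rests on a characterization that is false, and it fails in exactly the direction you need. Example~\ref{Torsionfreegroups} is a counterexample. Take $a\notin\cl(H(M))$, $h\in H(M)$, $b=a+h$, and $C=\cl(\emptyset)$.

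\begin{itemize}
\item Neither $a$ nor $b$ lies in $\cl(H(M))$, so $HB(a/C)=HB(b/C)=\emptyset$.
\item Also $a\ind^{\cl}_C b$. Hence your right-hand side holds, with disjoint (empty) $H$-bases.
\item But $a\in\cl(bH(M))\setminus\cl(H(M))$, so $a\nind^{\cl,H}_C b$, as the paper itself points out.
\end{itemize}

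Your slogan drops the localized clause $\vec a\ind^{\cl}_{C\cup H(M)}B$. That clause cannot be recovered from $\cl$-independence of $H$-independent tuples: here $a$ and $b$ are already $H$-independent, so passing to $H$-independent extensions does not help. Your route needs the implication from the symmetric right-hand side back to $\ind^{\cl,H}$, and that implication is false.

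No characterization is needed.
\begin{itemize}
\item The first clause is symmetric because $\cl(-\cup H(M))$ is a pregeometry.
\item For the $H$-basis clause, the double expansion you already wrote finishes the job. If $HB(\vec a/C\vec b)=HB(\vec a/C)$, Proposition~\ref{H-basis-additivity} gives
\[
HB(\vec b/C)\cupdot HB(\vec a/C)=HB(\vec a\vec b/C)=HB(\vec a/C)\cupdot HB(\vec b/C\vec a).
\]
Removing the common disjoint summand $HB(\vec a/C)$ yields $HB(\vec b/C\vec a)=HB(\vec b/C)$.
\end{itemize}
This is exactly the paper's proof.

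The same example refutes your preliminary remark as stated. $\cl(b)$ is closed and $H$-independent, yet $HB(a/\cl(\emptyset))=\emptyset$ while $HB(a/\cl(b))=\{h\}$. The inclusion $HB(\vec a/\cl(C))\subseteq HB(\vec a/\cl(B))$ for $B\subseteq C$ does hold if you add the hypothesis $\dim(\vec a/B\cup H(M))=\dim(\vec a/C\cup H(M))$. In that case $HB(\vec a/\cl(B))$ is a witness over $\cl(C)$. In your monotonicity argument this hypothesis is supplied by the first clause, so monotonicity survives once the remark is restricted.

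The remaining items are in line with the paper: invariance, normality, transitivity, finite character, and the density/codensity/closure construction for extension. One caveat on extension: QM5 only gives homogeneity over countable closed sets or $\emptyset$. For an arbitrary small base you need an extra reduction. One option is Proposition~\ref{superstability}, as in the paper. Another is stationarity over a countable closed $C_0\subseteq C$ with $\vec a\ind^{\cl}_{C_0}C$.
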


\begin{proof}
Invariance follows from the fact that  $\ind^{\cl}$ satisfies invariance and the fact that the notion of $H$-basis is also invariant under automorphisms. 

Normality, existence, monotonicity and transitivity are clear. Finite character follows from definition.

Symmetry. Since $\ind^{\cl}$ satisfies symmetry, so does its localization on $H(M)$. Now we check that for finite tuples $\vec a,\vec b\in M$, whenever $HB(\vec a/\cl(C))=HB(\vec a/\cl( C\cup \vec b))$ then 
$HB(\vec b/\cl(C))=HB(\vec b/\cl( C\cup \vec a))$.
Without loss of generality, $C$ is closed and $H$-independent. Now assume that $HB(\vec a/\cl(C))=HB(\vec a/\cl( C\cup \vec b))$. By Proposition \ref{H-basis-additivity} and this last assumption we have $$HB(\vec a \vec b/C)=HB(\vec b /C)\cupdot  HB(\vec a/\cl(C\vec b))=HB(\vec b /C)\cupdot HB(\vec a/C)$$

Again by Proposition \ref{H-basis-additivity} we have $HB(\vec a \vec b/C)=HB(\vec a /C)\cupdot HB(\vec b/\cl(C\vec a))$. Putting the two equalities together we get $HB(\vec b/\cl(C\vec a))=HB(\vec b/C)$ as desired.

\end{proof}

We are missing two key properties: existence and stationarity. To prove the second property, we will use Fact \ref{stationarityinC}, namely, the notion of $\cl$-independence satisfies stationarity over models in $\mathcal{C}$.


We can now prove our main theorems of this subsection:

\begin{theo}\label{stationarityinH}
The notion of independence $\ind^{\cl,H}$ satisfies stationarity over models $M\in \mathcal{C}$ satisfying $M\subset N$, $(N,H)$ an $H$-structure and $M$ is $H$-independent inside $N$.   
\end{theo}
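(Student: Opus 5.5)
To prove the theorem, fix $M\in\mathcal{C}$ with $M\subseteq N$ closed and $H$-independent in the $H$-structure $(N,H)$. Let $B\supseteq M$, and let $\vec a,\vec b\in N$ satisfy $\tp_H(\vec a/M)=\tp_H(\vec b/M)$, $\vec a\ind^{\cl,H}_M B$ and $\vec b\ind^{\cl,H}_M B$. We must show $\tp_H(\vec a/B)=\tp_H(\vec b/B)$.

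\textbf{Reductions.} By monotonicity and extension we may enlarge $B$ to a closed $H$-independent set, for instance $\cl(B\cup HB(B))$. Replacing $\vec a$ by $\vec a\, HB(\vec a/M)$ makes $\vec a$ $H$-independent over $M$. Since $\tp_H(\vec a/M)=\tp_H(\vec b/M)$ and Proposition \ref{H-basis-definability} defines $HB(\vec a/M)$ over $M\vec a$ up to permutation, the corresponding extension of $\vec b$ is $\vec b\,HB(\vec b/M)$, again with equal $\tp_H$ over $M$. The independence hypothesis gives $HB(\vec a/M)=HB(\vec a/B)$, so the enlarged tuples still satisfy $\ind^{\cl,H}_M B$ by additivity (Proposition \ref{H-basis-additivity}). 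By Proposition \ref{H-typesomega1omega} (relativized over the $H$-independent closed set $B$, exactly as in the proof of Proposition \ref{superstability}), it then suffices to show
\[
\tp(\vec a, H(\vec a)/B)=\tp(\vec b,H(\vec b)/B)\quad\text{with } H(\vec a)\text{ and } H(\vec b)\text{ in the same positions},
\]
together with the fact that $\vec aB$ and $\vec bB$ are $H$-independent. The position condition is immediate from $\tp_H(\vec a/M)=\tp_H(\vec b/M)$.

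\textbf{$H$-independence of $\vec aB$.} Since $\vec a$ is $H$-independent over $M$ and $HB(\vec a/M)=HB(\vec a/B)\subseteq\vec a$, we get $\vec a\ind_{BH(\vec a)}H(N)$. Since $B$ is $H$-independent, $B\ind_{H(B)}H(N)$. Transitivity then shows that $\vec aB$ is $H$-independent, and the same argument applies to $\vec b B$.

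\textbf{Equality of $\mathcal{L}$-types over $B$.} I want to invoke Fact \ref{stationarityinC} (stationarity of $\ind^{\cl}$ over models) for $\vec a$ and $\vec b$ over $M$. Proposition \ref{newindep-oldindep} gives $\vec a\ind^{\cl}_M B$ and $\vec b\ind^{\cl}_M B$. Moreover $\tp(\vec a/M)=\tp(\vec b/M)$, since the $\mathcal{L}$-type is a reduct of $\tp_H$, and Lascar strong types over a model coincide with types. Stationarity therefore yields $\tp(\vec a/B)=\tp(\vec b/B)$. Combined with the previous steps, Proposition \ref{H-typesomega1omega} gives $\tp_H(\vec a/B)=\tp_H(\vec b/B)$.

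\textbf{Main obstacle.} The delicate point is making Proposition \ref{H-typesomega1omega} work over the infinite parameter set $B$ rather than for finite tuples. I expect to handle this by finite character: for each finite $\vec c\subseteq B$, extend $\vec c$ to an $H$-independent finite tuple in $B$ (possible because $B$ is $H$-independent), and apply the finite version to $\vec a\vec c$ and $\vec b\vec c$.

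A second subtle point is that Fact \ref{stationarityinC} needs $M$ to be a model in $\mathcal{C}$, while the $H$-basis condition is what rules out the remaining $H$-type information over $B$. This is the step where the hypothesis that $M$ is $H$-independent is essential.
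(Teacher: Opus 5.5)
Your proposal is correct and follows the paper's proof. The paper likewise enlarges the base and the tuples so that both are $H$-independent, obtains equality of $\mathcal{L}$-types over the base from Proposition \ref{newindep-oldindep} together with stationarity in $\mathcal{C}$, checks that $\vec aB$ and $\vec bB$ are $H$-independent, and concludes with Proposition \ref{H-typesomega1omega}. The differences are minor:
\begin{itemize}
\item You derive $H$-independence over $B$ from the $H$-basis clause. The paper uses the dimension clause together with the decomposition $\vec a_1\vec a_2\vec a_3$.
\item You spell out, via Proposition \ref{H-basis-definability} and finite character, two steps the paper leaves implicit. In the finite-character step, take the finite $\vec c\subseteq B$ large enough that $\vec a\vec c$ and $\vec b\vec c$ are themselves $H$-independent, not merely so that $\vec c$ is.
\end{itemize}
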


\begin{proof}
Let $(N,H)$ be an $H$-structure associated
to the class $\mathcal{C}$. Let $M\subseteq C\subseteq N$ and assume $M\in \mathcal{C}$ and that $M$ is $H$-independent. Let $\vec a=\vec a_1 \vec a_2 \vec a_3$, $\vec c=\vec c_1 \vec c_2 \vec c_3$ be tuples such that 
$\tp_H(\vec a/M)=tp_H(\vec c/M)$, $\vec a_1$ is independent over $MH(N)$, $\vec a_2$ is independent over $M\vec a_1$ with $\vec a_2\in \cl(\vec a_1MH(N))$ and $\vec a_3\in \cl(M\vec a_1\vec a_2)$ and assume there is a similar decomposition for $\vec c$. Finally assume that $\vec a \ind^{\cl,H}_MC$, $\vec c \ind^{\cl,H}_M C$. We may assume, after enlarging the tuples, that $\vec a_2,\vec c_2\in H(N)$. We may also assume, after enlarging $C$, that $C$ is $H$-independent over $M$. 

By Proposition \ref{newindep-oldindep} and stationarity of $\ind^{\cl}$ over models in $\mathcal{C}$ (Fact \ref{stationarityinC}) we get 
 $\tp(\vec a_1 \vec a_2 \vec a_3/C)=\tp(\vec c_1\vec c_2 \vec c_3/C)$.
Recall that we enlarged $C$ so that $C=M\cup C$ is $H$-independent.

 \textbf{Claim} The tuple $\vec a$ is $H$-independent over $C$.

We have $\vec a_1\ind^{cl} H(N)C$ and $\vec a_2\in H(N)$, so $\vec a_1\vec a_2 \ind^{cl}_{C\vec a_2} H(N)$ and thus we also get $\vec a_1\vec a_2\vec a_3 \ind^{cl}_{C\vec a_2} H(N)$, from which it follows that $\vec a \ind^{cl}_{CH(\vec a)} H(N)$.

Similarly, we have that $\vec c$ is $H$-independent over $C$. 
 
We have shown that the set $\vec aC$ is $H$-independent, $\vec cC$ is $H$-independent and $\tp(\vec a H(\vec a)/C)=tp(\vec c H(\vec c)/C)$. So by Proposition \ref{H-typesomega1omega} we get $\tp_H(\vec a/C)=tp_H(\vec c/C)$ as desired.

\end{proof}

\begin{theo}
The notion of independence $\ind^{\cl,H}$ satisfies extension over sets.   
\end{theo}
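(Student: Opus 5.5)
We have to show: given $\vec a\in M$ (in an $H$-structure $(M,H)$, assumed large enough, e.g. of large dimension in both $H(M)$ and $M/H(M)$), and sets $C\subseteq B$, there is $\vec a'$ with $\tp_H(\vec a'/C)=\tp_H(\vec a/C)$ and $\vec a'\ind^{\cl,H}_C B$. By finite character and invariance we may assume $C$ is closed and $H$-independent, replacing $C$ by $\cl(C\cup HB(C))$. We may also assume $B\supseteq C$ is closed and $H$-independent, replacing $B$ by $\cl(B\cup HB(B))$; independence over the larger set implies it over $B$ by monotonicity. As in Theorem \ref{stationarityinH}, we may enlarge $\vec a$ to an $H$-independent tuple over $C$. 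We then decompose $\vec a=\vec a_1\vec a_2\vec a_3$, where $\vec a_1$ is $\cl$-independent over $CH(M)$, $\vec a_2\in H(M)$ is $\cl$-independent over $C\vec a_1$, and $\vec a_3\in\cl(C\vec a_1\vec a_2)$. Here $\vec a_2=HB(\vec a/C)$ up to permutation.

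The construction proceeds in three steps, each realizing a generic type by the dense-codense axioms.

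First, choose $\vec a_1'$ $\cl$-independent over $BH(M)$. This uses codensity (D2) iteratively: each new coordinate is taken outside $\cl(B\vec a_{1,<i}'H(M))$. This works as long as $B$ is "small" relative to $\dim(M/H(M))$, which is where we use that $M$ is sufficiently large; if $B$ is countable, the argument in Case 4 of Proposition \ref{H-typesomega1omega} is enough.

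Second, choose $\vec a_2'\in H(M)$ $\cl$-independent over $B\vec a_1'$, using density (D1). By uniqueness of the generic type (QM4, applied over the closed sets $C$ and $\cl(C\vec a_1')$ in turn), we get $\tp(\vec a_1'\vec a_2'/C)=\tp(\vec a_1\vec a_2/C)$.

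Third, use QM5 over the closed countable set $C$ (or finite parts of it, then amalgamating by the back-and-forth argument) to find $\vec a_3'\in\cl(C\vec a_1'\vec a_2')$ with $\tp(\vec a'/C)=\tp(\vec a/C)$. Since $\vec a_3'$ lies in $\cl(C\vec a_1'\vec a_2')$ and $\vec a_2'\in H$, the tuple $\vec a'$ is $H$-independent over $C$, exactly as in the Claim inside Theorem \ref{stationarityinH}. Also $H(\vec a')$ corresponds to $H(\vec a)$, because $\vec a_1',\vec a_3'$ avoid $H$: any element of $H$ in $\cl(C\vec a_1'\vec a_2')$ but outside $C\vec a_2'$ would contradict the independence of $H$ together with the $H$-independence of $C$. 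Proposition \ref{H-typesomega1omega} then gives $\tp_H(\vec a'/C)=\tp_H(\vec a/C)$.

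It remains to check $\vec a'\ind^{\cl,H}_C B$. The first component holds because $\dim(\vec a'/CH(M))=|\vec a_1'|=\dim(\vec a'/BH(M))$. For the $H$-basis component, note that $\vec a'$ is $H$-independent over $B$ by the same argument, since $\vec a_1'$ is independent over $BH(M)$. So $HB(\vec a'/B)\subseteq\vec a_2'$. Minimality is where independence of $\vec a_2'$ over $B\vec a_1'$ enters: if a proper subset $H_0\subsetneq\vec a_2'$ sufficed, some $h\in\vec a_2'\setminus H_0$ would satisfy $\dim(\vec a'/BH_0)<\dim(\vec a'/BH_0 h)$ failing. Instead one argues via additivity (Proposition \ref{H-basis-additivity}), using $\dim(\vec a'/B)=|\vec a_1'|+|\vec a_2'|$ while $\dim(\vec a'/BH(M))=|\vec a_1'|$. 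Hence $HB(\vec a'/B)=\vec a_2'=HB(\vec a'/C)$.

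I expect the main obstacle to be the first and third steps when $B$ is uncountable. Realizing an element independent from $B\cup H(M)$ needs codimension of $H(M)$ larger than $|B|$, and QM5 is only stated over countable closed sets. I would handle this by working in a sufficiently large (homogeneous) $H$-structure, or by reducing to a countable closed $H$-independent $C_0\subseteq C$ over which $\tp_H(\vec a/C)$ does not split (Proposition \ref{superstability}).
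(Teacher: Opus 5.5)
Your route is the paper's. You reduce to a closed $H$-independent base and target and split $\vec a=\vec a_1\vec a_2\vec a_3$ with $\vec a_2=HB(\vec a/C)$. You then realize $\vec a_1'$ and $\vec a_2'$ generically over the larger set inside an $H$-structure whose two dimensions exceed the sizes involved, fill in the closure part, and finish with Proposition \ref{H-typesomega1omega}. Your explicit check that $\vec a'\ind^{\cl,H}_C B$ is something the paper leaves unsaid.

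\textbf{The gap.} Your third step is not closed when the base $C$ is uncountable, and the statement requires this case (the paper allows any size below $\kappa$). QM5 gives homogeneity only over countable closed sets. Working with finite parts and then back-and-forth does not suffice by itself. Take $C_0\subseteq C$ finite with $\vec a_3\in\cl(C_0\vec a_1\vec a_2)$. Every candidate for $\vec a_3'$ lies in the countable set $\cl(C_0\vec a_1'\vec a_2')$. As finite $\vec d\subseteq C$ grows, the candidates that work over $C_0\vec d$ form a decreasing family of nonempty sets, and a priori their intersection may be empty. Neither of your fallbacks fixes this with what the paper provides:
\begin{enumerate}
\item Proposition \ref{superstability} is stated only for countable $\mathcal{L}_{H\omega_1\omega}$-substructures.
\item Non-splitting of $\tp_H(\vec a/C)$ over $C_0$ tells you nothing about the newly built $\vec a'$.
\item Homogeneity over uncountable sets is not among the stated facts.
\end{enumerate}

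\textbf{The fix.} The missing ingredient is stationarity over models (Fact \ref{stationarityinC}), applied to $N_0=\cl(C_0)$. This set is countable by QM3 and closed, hence belongs to $\mathcal{C}$, and $N_0\subseteq C$.
\begin{enumerate}
\item Both $\vec a_1\vec a_2$ and $\vec a_1'\vec a_2'$ are independent over $N_0$, so they have the same type over $N_0$.
\item QM5 over $N_0$ then gives $\vec a_3'$ with $\tp(\vec a'/N_0)=\tp(\vec a/N_0)$.
\item We have $\dim(\vec a/N_0)=|\vec a_1\vec a_2|=\dim(\vec a/C)$. The same holds for $\vec a'$, because $\vec a_1'\vec a_2'$ is independent over $B\supseteq C$ and $\vec a_3'\in\cl(N_0\vec a_1'\vec a_2')$ by QM1.
\item Stationarity now yields $\tp(\vec a'/C)=\tp(\vec a/C)$, and the rest of your argument goes through.
\end{enumerate}
The paper's proof has the same shape here: a finite $B_0$ from Proposition \ref{superstability}, QM5 over $\emptyset$, then a stationarity-type step over $B_0$. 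It too becomes correct only once that step is carried out over the closed set $\cl(B_0)$.

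\textbf{Two smaller repairs.}
\begin{enumerate}
\item Your minimality paragraph for $HB(\vec a'/B)=\vec a_2'$ is garbled, but the direct argument is immediate. If $H_0\subsetneq\vec a_2'$ and $h\in\vec a_2'\setminus H_0$, then since $\vec a_1'\vec a_2'$ is $\cl$-independent over $B$, we get $\dim(\vec a'/BH_0)\geq\dim(\vec a_1'h/BH_0)=|\vec a_1'|+1>\dim(\vec a'/BH(M))$.
\item (D2) only gives independence over finitely many parameters together with $H(M)$. So even for countable $B$, your first step needs the largeness of $\dim(M/H(M))$; Case 4 of Proposition \ref{H-typesomega1omega} does not suffice.
\end{enumerate}
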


\begin{proof}
We fix $\kappa$ an infinite cardinal and work in $(M,H)$ an $H$-structure with $\dim(H(M))=\kappa$ and  $\dim(M/H(M))=\kappa$.
 Let $\vec a\in M$ and let $B\subseteq C$ be sets with $|B|,|C|<\kappa$. We will show $\tp_H(\vec a/B)$ has a free extension (in the sense of $\ind^{\cl,H}$) to $C$. By the definition of $\ind^{\cl,H}$-independence, we may assume that $B=B\cup HB(B)$. Write $\vec a=\vec a_1 \vec a_2 \vec a_3$, where $\vec a_1$ is independent over $BH(M)$, $\vec a_2$ is independent over $B\vec a_1$ with
$\vec a_2\in \cl(B\vec a_1H(M))$ 
and $\vec a_3\in \cl(\vec a_1\vec a_2B)$. By adding $HB(\vec a/B)$ to the tuple $\vec a$ and replacing the tuple for a larger one if necessary, we may assume that $\vec a_2\in H(M)$ and that it is an independent tuple of elements. Since $\dim(M/H(M))=\kappa$ and $|C|<\kappa$, we can find a tuple $\vec c_1$ which is independent over $CH(M)$ and of the same length as $\vec a_1$.  Since $\dim(H(M))=\kappa$ and $|C|<\kappa$ we can find a tuple $\vec c_2\in H(M)$ which is independent over $C$ and has the same length as $\vec a_2$. Notice that $B$ is $H$-independent and both $\vec a_1\vec a_2$ and $\vec c_1\vec c_2$ are $H$-independent over $B$, so $\tp_H(\vec a_1\vec a_2/B)=\tp_H(\vec c_1\vec c_2/B)$. 

By Proposition \ref{superstability} we can find $B_0\subseteq B$ be finite such that $\tp_H(\vec a_1\vec a_2 \vec a_3/B)$ does not Lascar split over $B_0$. Clearly $\tp(\vec a_1\vec a_2 /B_0)=\tp(\vec c_1\vec c_2 /B_0)$ so by homogeneity in $\mathcal{C}$ over $\emptyset$ (axiom QM5) there is $\vec c_3$ such that $\tp(\vec a_1\vec a_2 \vec a_3/B_0)=\tp(\vec c_1\vec c_2 \vec c_3/B_0)$.

\textbf{Claim} $\tp_H(\vec a_1\vec a_2 \vec a_3/B)=\tp_H(\vec c_1\vec c_2 \vec c_3/B)$.

Let $\vec b\in B$ be finite. Then $\vec a_1\vec a_2 \vec a_3\ind^{\cl}_{B_0}\vec b$, $\vec c_1\vec c_2 \vec c_3\ind^{\cl}_{B_0}\vec b$ and thus $\tp(\vec a_1\vec a_2 \vec a_3/B_0\vec b)=\tp(\vec c_1\vec c_2 \vec c_3/B_0\vec b)$.

Let $f$ be  a partial $\mathcal{L}_{\omega_1 \omega}$-map fixing $B$ pointwise and sending $\vec a_1\vec a_2\vec a_3$ to $\vec c_1\vec c_2 \vec c_3$. We show that for any $e\in \vec a_3$, $e\in H(M)$ iff $f(e)\in H(M)$. Indeed, if $e\in H(M)$, since $\vec a$ is $H$-independent over $B$, either $e\in \vec a_2$ or $e\in H(B)$.  If $e\in H(B)$, since $f$ fixes $B$ pointwise we get $f(e)=e\in H(M)$. If 
$e\in \vec a_2$, since $\tp_H(\vec a_1\vec a_2/B)=\tp_H(\vec c_1\vec c_2/B)$ we have $f(e)\in \vec c_2$ and $f(e)\in H$.

Since $\tp(\vec a_1\vec a_2 \vec a_3/B)=\tp(\vec c_1\vec c_2 \vec c_3/B)$, $B$ is $H$-independent, $\vec a_1\vec a_2 \vec a_3$ is $H$-independent over $B$ and $\vec c_1\vec c_2 \vec c_3$ is $H$-independent over $B$ and 
$\tp(\vec aH(\vec a)/B)=\tp(\vec cH(\vec c)/B)$ by Proposition \ref{H-typesomega1omega} we get $\tp_H(\vec a/B)=\tp_H(\vec c/B)$. Then $\vec c=\vec c_1 \vec c_2 \vec c_3$ realizes the desired extension.


\end{proof}

Since the notion of independence $\ind^{\cl,H}$ has all the desirable properties of stable independence notion, it should coincide with other robust notions of independence. Just as Kangas \cite[section 4]{Ka} showed that independence in the sense of Lascar splitting in $\mathcal{C}$ agrees with $\cl$-independence, here we show that $\ind^{\cl,H}$ agrees with the notion of independence coming from splitting over models (which coincides with Lascar splitting).

\begin{theo}\label{indepegreessplittingH}
    Let $(M_1,H)\preceq (M_2,H) \preceq (M,H)$ be $H$-structures and let $\vec a\in M$.
   Assume $\vec a\nind^{\cl, H}_{M_1} M_2$.
   \begin{enumerate}

\item Then there is a finite $B\subseteq M_1$ such that $\tp_H(\vec a/M_1)$ does not split over $B$ and $\tp_H(\vec a/M_2)$ splits over $B$.
   
\item Assume furthermore that $\dim(H(M_2)/M_1)=\infty$ and $\dim(M_2/M_1H(M_2))=\infty$. Then $\tp_H(\vec a/M_2)$ splits over $M_1$

\end{enumerate}

\end{theo}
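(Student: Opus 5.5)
Since $\vec a\nind^{\cl,H}_{M_1}M_2$, by definition one of two things fails. Either $\vec a\nind^{\cl}_{M_1\cup H(M)}M_2$, or $HB(\vec a/M_1)\neq HB(\vec a/M_2)$. By Proposition \ref{H_independentsubstructures} both $M_1$ and $M_2$ are closed and $H$-independent in $(M,H)$, so the $H$-bases are computed directly over them. I would first enlarge $\vec a$ by $HB(\vec a/M_1)$, so that $\vec a$ is $H$-independent over $M_1$. This loses nothing: the enlarged tuple is still dependent, by additivity of $H$-basis (Proposition \ref{H-basis-additivity}) and transitivity. Then I would write $\vec a=\vec a_1\vec a_2\vec a_3$ as in the proof of Theorem \ref{stationarityinH}, with $\vec a_2\in H(M)$.

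For (1), I would use Proposition \ref{superstability} (or rather its proof, applied to the countable case or directly) to pick a finite $B_0\subseteq M_1$ over which $\tp_H(\vec a/M_1)$ does not split. The set $B_0$ is chosen to contain a finite witness of the failure of independence, as follows.

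\textbf{Case A.} Some coordinate $e$ of $\vec a_1\vec a_3$ lies in $\cl(\vec m\,\vec h\,\vec a')$ for some $\vec m\in M_2$ and $\vec h\in H(M)$, but not in $\cl(M_1H(M)\vec a')$.

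\textbf{Case B.} Some $h\in HB(\vec a/M_1)$ is not in $HB(\vec a/M_2)$. Then some coordinate lies in $\cl(\vec m\,\vec a'\,\vec h')$ with $\vec m\in M_2$ and $h\notin\vec h'$.

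In either case I fix the finite $\vec m\in M_2$. The key step is to find $\vec m'\in M_1$ with $\tp_H(\vec m/B)=\tp_H(\vec m'/B)$, where $B\supseteq B_0$ is finite, still with non-splitting over $B$. I would obtain $\vec m'$ from $(M_1,H)\preceq(M_2,H)$: the $\mathcal{L}_{H\omega_1\omega}$-type of $\vec m$ over $B$ is realized in $M_1$, arguing via $H$-independence as in Proposition \ref{H-typesomega1omega} and using $M_1\ind_{H(M_1)}H(M_2)$. Once we have $\vec m'$, the witnessing formula (a $\pi_n$ formula, with existentially quantified $H$-elements, using Proposition \ref{H-basis-definability}) holds of $\vec a\vec m$ over $B$. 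It fails for $\vec a\vec m'$, because $\vec m'\in M_1$ and the dependence or $H$-basis would then already be realized over $M_1$, contradicting the case hypothesis. Thus $\tp_H(\vec a/M_2)$ splits over $B$.

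The main obstacle is making the witnessing property \emph{$\mathcal{L}_{H\omega_1\omega}$-expressible over $B$} while making sure $\vec m'$ does not accidentally satisfy it. In Case A the expression would be ``$e\in\cl(\vec m\,\vec y\,\vec a')$ for some $\vec y\in H$''. The trouble is that this could also hold for $\vec m'$ through a different $H$-tuple. I would handle this by putting $HB(\vec a/M_1)$ inside $\vec a$ (already done) and using that $\vec a$ is then $H$-independent over $M_1$. Hence any such dependence over $M_1H$ already happens over $M_1\vec a$, which contradicts the choice of $e$.

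For (2), the extra dimension assumptions let us replace the finite $B$ by $M_1$ itself. I would take the finite witness $\vec m\in M_2$ from (1) and, using $\dim(H(M_2)/M_1)=\infty$ and $\dim(M_2/M_1H(M_2))=\infty$, build $\vec m''\in M_2$ with $\tp_H(\vec m''/M_1)=\tp_H(\vec m/M_1)$ and $\vec m''\ind^{\cl,H}_{M_1}\vec a$. To do this, I decompose $\vec m$ over $M_1$ into a part free from $H$, a part in $H$, and a closure part. I choose fresh elements in $M_2$ independent from $M_1\vec a H(M)$, or fresh elements of $H(M_2)$ independent over $M_1\vec a$, and finish with QM5 and Proposition \ref{H-typesomega1omega}, exactly as in the extension theorem. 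Then $\vec a\vec m$ satisfies the witnessing dependence but $\vec a\vec m''$ does not, by the independence. So $\tp_H(\vec a/M_2)$ splits over $M_1$.
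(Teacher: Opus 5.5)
\paragraph{The gap: the opening reduction.} Replacing $\vec a$ by $\vec a\,HB(\vec a/M_1)$ preserves the hypothesis $\vec a\nind^{\cl,H}_{M_1}M_2$, as you check. But the statement is about $\vec a$, and only non-splitting passes to subtuples; splitting does not. A splitting of $\tp_H(\vec a\,HB(\vec a/M_1)/M_2)$ over $B$ whose witnessing formula mentions the added coordinates is not a splitting of $\tp_H(\vec a/M_2)$.

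This bites in Case B. When Case A fails, the $H$-basis of the enlarged tuple over $M_2$ can only be smaller because some added coordinate $h$ lies in $H(M_2)\setminus H(M_1)$ (use that $M_2$ is closed and $H$-independent). So the natural witness is a statement about $h$, which is not a coordinate of the original $\vec a$.

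\paragraph{The repair.} Drop the enlargement; you never need it.
\begin{itemize}
\item The ``obstacle'' you describe in Case A does not exist. If $\vec m'\in M_1$, $\vec y\in H(M)$ and $e\in\cl(\vec m'\vec y\vec a')$, then $e\in\cl(M_1H(M)\vec a')$, which directly contradicts the choice of $e$.
\item In Case B, let $\vec a_1$ be a $\cl$-basis of $\vec a$ over $M_1H(M)$; it is still a basis over $M_2H(M)$, since Case A fails. Put $k=|HB(\vec a/M_2)|<|HB(\vec a/M_1)|$ and use the formula $\exists y_1\in H\dots\exists y_k\in H\,\bigwedge_{e\in\vec a}\pi_n(e,\vec z,\vec a_1,y_1,\dots,y_k)$, which mentions only $\vec a$.
\item This formula holds for a suitable finite $\vec m\in M_2$. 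If it held for some $\vec m'\in M_1$, we would get $\dim(\vec a/M_1\vec y)=\dim(\vec a/M_1H(M))$ with $|\vec y|\le k$, contradicting the minimality of $HB(\vec a/M_1)$.
\item In part (2) the same formula fails for your $\vec m''$. Indeed, $\vec a\ind^{\cl,H}_{M_1}\vec m''$ keeps $\vec a_1$ a basis over $M_1\vec m''H(M)$ and forces the $H$-basis of $\vec a$ over $M_1\vec m''$ to equal $HB(\vec a/M_1)$.
\end{itemize}

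\paragraph{Comparison with the paper.} With this repair your proof of (1) is correct, and it takes a different route from the paper's.

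The paper, in both cases, pairs the witness $\vec c\in M_2$ with a tuple $\vec d\in M_2$ that has the same $\mathcal{L}_H$-type over $B$ and is chosen \emph{independent} from $\vec a$. The witnessing property fails for $\vec d$ because of that independence. This uses only that $B$ is $H$-independent and that $M_2$ and $H(M_2)$ have infinite dimension over $B$, so part (2) comes out of the same argument with $M_1$ in place of $B$.

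You instead realize $\tp_H(\vec m/B)$ inside $M_1$. This works: after extending $\vec m$ to be $H$-independent over $B$, Proposition \ref{H-typesomega1omega} expresses that type by a single $\mathcal{L}_{H\omega_1\omega}$-formula, and $(M_1,H)\preceq(M_2,H)$ gives a realization in $M_1$. The witness then fails for $\vec m'$ simply because the case hypothesis is a statement about $M_1$. This coheir-style argument needs no symmetry or existence of independent copies. It cannot work over all of $M_1$, though, so for (2) you must fall back on the paper's mechanism, which is what you do.
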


\begin{proof} We prove (1). Assume $\vec a\nind^{\cl, H}_{M_1} M_2$. Let $B\subseteq M_1$ be finite such that $\tp_H(\vec a/M_1)$ does not split over $B$. By enlarging $B$ if necessary, we may assume $B=B\cup HB(B)$, so $B$ is $H$-independent. We may also assume that $\dim(\vec a/M_1H(M_2))=\dim(\vec a/BH(M_2))$. Finally note that since $B$ is finite we have that $\dim(M_2/H(M_2)B)=\infty$ and that $\dim(H(M_2)/B)=\infty$.

\textbf{Case 1} Assume first that $\dim(\vec a/M_2H(M_2))<\dim(\vec a/M_1H(M_2))=\dim(\vec a/BH(M_2))$. Let $\vec c\in M_2$ be a finite tuple $\cl$-independent over $M_1H(M_2)$ such that $\dim(\vec a/\vec cM_1H(M_2))=\dim(\vec a/M_2)$ and let $\vec d\in M_2$ be another tuple of the same length as $\vec c$ which is $\cl$-independent over $BH(M_2)\vec a$. Then $\dim(\vec a/BH(M_2))=\dim(\vec a/\vec d BH(M))$. We have $\tp_H(\vec c/B)=\tp_H(\vec d/B)$, but $\tp_H(\vec a,\vec c/B)\neq \tp_H(\vec a, \vec d/B)$ as the first type says there is a tuple in $H(M)$ that witnesses $\dim(\vec a/\vec cBH(M))<\dim(\vec a/BH(M))$, which is not the case for $\tp_H(\vec a, \vec d/B)$.

\textbf{Case 2} Assume now that $\dim(\vec a/M_2H(M))=\dim(\vec a/M_1H(M))$ but 
we have $HB(\vec a/M_2)\neq HB(\vec a/M_1)$. By minimality of the $H$-basis we must have $HB(\vec a/M_2)\subsetneq HB(\vec a/M_1)$. Let $\vec c\in M_2$ be a finite tuple $H$-independent over $M_1$ such that $HB(\vec a/M_2)=HB(\vec a/\cl(M_1\vec c))$ and let $\vec d\in M_2$ be another tuple with $\tp_H(\vec d/M_1)=\tp_H(\vec c/M_1)$ such that $\vec d\ind^{\cl, H}_{M_1} \vec a M_1$. Then $$HB(\vec a/M_2)=HB(\vec a/M_1\vec c)\subsetneq HB(\vec a/M_1)=HB(\vec a/M_1\vec d)$$ and thus $\tp_H(\vec a,\vec c/M_1)\neq \tp_H(\vec a, \vec d/M_1)$.

For (2), note that the only ingredient that we used in the proof was that $B$ is $H$-independent, $\dim(M_2/H(M_2)B)=\infty$ and that $\dim(H(M_2)/B)=\infty$. These assumptions hold exchanging $B$ for $M_1$ under the extra hypothesis.
\end{proof}

We will use part (2) of the previous Theorem, part (1) is analogous to Definition 42 in \cite{Ka}. We can prove a converse of the previous result.

\begin{theo}\label{indepegreessplittingHp2}
    Let $(M_1,H)\preceq (M_2,H) \preceq (M,H)$ be $H$-structures and let $\vec a\in M$.
   Assume that $\vec a\ind^{\cl, H}_{M_1} M_2$,
Then $\tp_H(\vec a/M_2)$ does not split over $M_1$.
\end{theo}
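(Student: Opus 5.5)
We argue directly from the characterization of $\tp_H$ given by Proposition \ref{H-typesomega1omega}, together with stationarity of $\ind^{\cl}$ over models (Fact \ref{stationarityinC}), in the same spirit as the proof of Theorem \ref{stationarityinH}. Since $(M_1,H)\preceq (M_2,H)\preceq(M,H)$, Proposition \ref{H_independentsubstructures} tells us that $M_1$ and $M_2$ are $H$-independent and closed in $M$. Take finite tuples $\vec c,\vec d\in M_2$ with $\tp_H(\vec c/M_1)=\tp_H(\vec d/M_1)$. We must show $\tp_H(\vec c/M_1\vec a)=\tp_H(\vec d/M_1\vec a)$, or equivalently $\tp_H(\vec a/M_1\vec c)=\tp_H(\vec a/M_1\vec d)$.

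\textbf{Reductions.} First enlarge $\vec c$ to an $H$-independent tuple over $M_1$ by adding $HB(\vec c/M_1)$, and enlarge $\vec d$ by the corresponding tuple $\vec h'$. This is possible because, by Proposition \ref{H-basis-definability}, $HB(\vec c/M_1)$ is $\mathcal{L}_{\omega_1\omega}$-definable up to permutation over $M_1\vec c$. Since $M_2$ is $H$-independent, both $H$-bases lie in $H(M_2)$. Next enlarge $\vec a$ by $HB(\vec a/M_2)$. By the definition of $\ind^{\cl,H}$ this set equals $HB(\vec a/M_1)$, so the enlarged $\vec a$ still satisfies $\vec a\ind^{\cl,H}_{M_1}M_2$. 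Now write $\vec a=\vec a_1\vec a_2\vec a_3$ as in the proof of Theorem \ref{stationarityinH}: $\vec a_1$ is independent over $M_1H(M)$, $\vec a_2\subseteq H(M)$ is independent over $M_1$, and $\vec a_3\in\cl(M_1\vec a_1\vec a_2)$.

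\textbf{Key step.} We show that $M_1\vec a\vec c$ and $M_1\vec a\vec d$ are $H$-independent sets. The first component of $\vec a\ind^{\cl,H}_{M_1}M_2$ gives $\vec a_1\ind^{\cl}M_2H(M)$. The second component gives $HB(\vec a/M_2)=\vec a_2$ (up to $H(M_1)$), so $\vec a\in\cl(M_2\vec a_1\vec a_2)$. Combining these with the $H$-independence of $M_1\vec c$, additivity of $H$-bases (Proposition \ref{H-basis-additivity}) yields the claim, exactly as in the Claim inside Theorem \ref{stationarityinH}.

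Next we need $\tp(\vec a\vec c/M_1)=\tp(\vec a\vec d/M_1)$, with $H$-membership matched. By Proposition \ref{newindep-oldindep}, $\vec a\ind^{\cl}_{M_1}M_2$, so $\vec a\ind^{\cl}_{M_1}\vec c$ and $\vec a\ind^{\cl}_{M_1}\vec d$. Since $\tp(\vec c/M_1)=\tp(\vec d/M_1)$, choose an $\mathcal{L}_{\omega_1\omega}$-elementary map fixing $M_1$ and sending $\vec c$ to $\vec d$. Then $\vec a$ and its image have the same type over $M_1\vec d$, and both are $\cl$-independent from $\vec d$ over the model $M_1$. Fact \ref{stationarityinC} then gives $\tp(\vec a/M_1\vec c)=\tp(\vec a/M_1\vec d)$ after applying this map. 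The $H$-membership of the coordinates is preserved because it is determined by $\tp_H(\vec c/M_1)=\tp_H(\vec d/M_1)$ and by $\vec a$ itself.

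Proposition \ref{H-typesomega1omega}, applied to the $H$-independent tuples, now gives $\tp_H(\vec a\vec c/M_1)=\tp_H(\vec a\vec d/M_1)$, and restricting to subtuples finishes the proof.

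\textbf{Main obstacle.} The delicate point is the $H$-independence of $M_1\vec a\vec c$. We must check that $\vec a\ind^{\cl,H}_{M_1}M_2$ really prevents any new $H$-elements outside $M_2\vec a_2$ from appearing when $\vec a$ is combined with $\vec c$. Here the disjointness part of Proposition \ref{H-basis-additivity} is used: it gives
\[
HB(\vec a\vec c/M_1)=HB(\vec c/M_1)\cupdot HB(\vec a/M_1),
\]
which is exactly the symmetry computation from the earlier theorem.
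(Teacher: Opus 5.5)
Your proof is correct, but it is organized differently from the paper's. The paper argues in two lines. Symmetry turns $\vec a\ind^{\cl,H}_{M_1}M_2$ into $M_2\ind^{\cl,H}_{M_1}\vec a$, so any $\vec c,\vec d\in M_2$ with $\tp_H(\vec c/M_1)=\tp_H(\vec d/M_1)$ are both $\cl,H$-independent from $\vec a$ over the model $M_1$. Theorem \ref{stationarityinH}, applied with $C=M_1\vec a$, then gives $\tp_H(\vec c/M_1\vec a)=\tp_H(\vec d/M_1\vec a)$.

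You never use symmetry of $\ind^{\cl,H}$. Instead you keep the independence on the side of $\vec a$ and re-run the proof of Theorem \ref{stationarityinH} inline, with the roles reversed:
\begin{enumerate}
\item you decompose $\vec a$ rather than $\vec c,\vec d$;
\item you use the $H$-independence of $M_1\vec c$, after adjoining $HB(\vec c/M_1)$ and transporting it to $\vec d$ via Proposition \ref{H-basis-definability}, to show that $M_1\vec a\vec c$ and $M_1\vec a\vec d$ are $H$-independent;
\item you finish with stationarity of $\ind^{\cl}$ and Proposition \ref{H-typesomega1omega}.
\end{enumerate}
The paper's route is shorter because it reuses the independence calculus already established. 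Yours is self-contained modulo those two ingredients, and it makes explicit the transfer of $H$-bases from $\vec c$ to $\vec d$, which the paper's stationarity proof leaves implicit.

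Two small remarks. First, your key step does not need additivity of $H$-bases. Since $M_1\vec c$ is $H$-independent, $\vec a\subseteq\cl(M_1\vec a_1\vec a_2)$ with $\vec a_2\subseteq H(\vec a)$, and $\vec a_1$ is independent over $M_2H(M)\supseteq M_1\vec cH(M)$, a direct dimension count gives $\dim(\vec a\vec c/M_1H(\vec a\vec c))=\dim(\vec a\vec c/M_1H(M))$.

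Second, in the stationarity step, $\vec a$ and its image have the same type over $M_1$, not over $M_1\vec d$; equality over $M_1\vec d$ is what stationarity outputs. It is simpler to drop the map altogether. From $\tp(\vec c/M_1)=\tp(\vec d/M_1)$, $\vec c\ind^{\cl}_{M_1}\vec a$ and $\vec d\ind^{\cl}_{M_1}\vec a$, Fact \ref{stationarityinC} gives $\tp(\vec c/M_1\vec a)=\tp(\vec d/M_1\vec a)$ directly. This also avoids extending an elementary map that fixes a possibly uncountable $M_1$, which QM5 does not literally provide.
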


\begin{proof}
Assume that $\vec a\ind^{\cl, H}_{M_1} M_2$,
then by symmetry $M_2\ind^{\cl, H}_{M_1} \vec a$. Let $\vec c,\vec d \in M_2$
 with $\tp_H(\vec c/M_1)=\tp_H(\vec d/M_1)$. As $(M_1,H)$ is a model, by stationarity of $\ind^{\cl, H}$ we get $\tp_H(\vec c/M_1\vec a)=\tp_H(\vec d/M_1\vec a)$.
\end{proof}

We now provide some examples where we compare $\ind^{\cl}$
and $\ind^{\cl, H}$ for different $H$-structures.

\begin{example}\label{Torsionfreegroups}
 Let $\mathcal{L}_{groups}$ be the language of additive groups $\{+,-,0\}$. Consider the class $\mathcal{C}$ whose objects consist of torsion free divisible abelian groups. The operator $\cl$ is interpreted as group-theoretic algebraic closure which coincides with the group-theoretic definable closure. Since the theory of vector spaces over $\mathbb{Q}$ is strongly minimal, it is a geometric first order theory. Let $\mathcal{C}$ be the class of those vector spaces that have infinite dimension, note that this last requirement guarantees axiom $(QM2)$. For each $M\in \mathcal{C}$, we build an $H$-structure by interpreting $H(M)$ as a infinite collection of independent vectors such that $dim(M/H(M))=\infty$. 

 Let $a\in M$ be such that $a\not  \in \cl(H(M))$ and let $h\in H(M)$. Then $a,h$ are $\cl$-independent and let $b=a+h$. Note that $a, b$ are again $\cl$-independent and thus  $a\ind^{\cl} b$. But $a\in \cl(bH(M))\setminus  \cl(H(M))$, so  $a\nind^{\cl,H} b$.
\end{example}

\begin{example}\label{equiv-classesrevisited}(Example \ref{equiv-classes} revisited)
 Let $\mathcal{L}=\{E\}$ and consider the class $\mathcal{C}$ whose objects consist of infinite sets and $E$ is interpreted as an equivalence relation with infinitely many classes, all of whose blocks have size $\aleph_0$ and the closure of a point is the class where it belongs. This example has a \emph{trivial} geometry. Assume that $\vec a$, $\vec b$ are $\cl$-independent, i.e.  $\vec a\ind^{\cl} \vec b$.
Note that $\dim_{\cl}(\vec a/H(M))$ is the number of classes intersecting $\vec a$ that do not intersect $H(M)$. Since $\vec a$, $\vec b$ are $\cl$-independent, the classes intersecting $\vec a$ are disjoint from the classes intersecting $\vec b$ and thus $\dim_{\cl}(\vec a/H(M))=\dim_{\cl}(\vec a/\vec b H(M))$. It also follows that $HB(\vec a)=HB(\vec a/\vec b)$ and thus
 $\vec a\ind^{\cl,H} \vec b$.
\end{example}

The picture behind the previous two examples resembles what happens in strongly minimal theories, when $\cl$ is trivial, $\ind^{\cl,H}$ coincides with $\ind^{\cl}$, but
when $\cl$ is non trivial they differ. This fact will be proved in Corollary  \ref{Lascarequals} in the next section.

\begin{example}(Zilber's exponential fields)
Let us explain what the results of this section mean in the main example we have in mind, namely the class associated to Zilber's complex exponential field \cite{Zi}. Let us recall some definitions from \cite{AHKK}. We will assume the reader is familiar with the notion of exponential fields as explained in \cite{AHKK}. Fix $K$ an exponential field.

\begin{defn}
A Khovanskii system (of equations and
inequations) consists of, for some $n \in \mathbb{N}$,  exponential polynomials $f_1, \dots , f_n \in
K[x_1,\dots, x_n]$, with equations $f_i(x_1,\dots, x_n) = 0$ for $i = 1, \dots , n$ and the inequation $Jac(x_1,\dots, x_n) \neq 0$ where $Jac$ is the Jacobian of the functions $f_1, \dots , f_n$ with respect to the variables $x_1,\dots, x_n$. By exponential polynomials, we mean functions of the form  $f_1, \dots , f_n \in
K[x_1,\dots, x_n,e^{x_1},\dots,e^{x_n}]$, so they are polynomials in the variables $x_1,\dots, x_n,e^{x_1},\dots,e^{x_n}$.
\end{defn} 

\begin{defn}

For a set $C\subseteq K$ and $a_0\in K$, we write $a_0\in \ecl(C)$ if there are $f_1,\dots,f_n\in \mathbb{Q}(C)[x_1,\dots,x_n,e^{x_1},\dots,e^{x_n}]$ and $a_1,\dots,a_{n-1}\in K$
such that $a_0,a_1,\dots,a_{n-1}$ is a solution of the Khovanskii system associated to $f_1,\dots,f_n$. That is, 
$f_i(a_0,a_1,\dots,a_{n-1})=0$ for $i\leq n$ and $Jac(a_0,\dots, a_{n-1}) \neq 0$ 
\end{defn} 

For exponential fields, including Zilber's field, we have:

\begin{fact}\cite{KiEx}
The closure operator $\ecl$ defines a pregeometry in all exponential fields.
\end{fact}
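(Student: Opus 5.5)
The plan is to reduce the pregeometry axioms for $\ecl$ to linear algebra, following Kirby's approach in \cite{KiEx}: describe $\ecl$ through exponential derivations, or equivalently through a universal module of exponential differentials. An \emph{exponential derivation} on $K$ is an additive map $\partial:K\to M$ into a $K$-vector space $M$ that satisfies the Leibniz rule and $\partial(e^x)=e^x\partial x$. Let $\Omega(K)$ be the universal such module, with universal map $d:K\to\Omega(K)$. For $C\subseteq K$, let $\Omega(K/C)$ be the quotient of $\Omega(K)$ by the $K$-span of $\{dc:c\in C\}$. The key claim I would aim for is
\[ a\in\ecl(C)\iff da\in \spa_K\{dc: c\in C\}\quad\text{in }\Omega(K). \]
Once this holds, the right-hand side is a closure operator induced by linear span in a vector space. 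Reflexivity, monotonicity and idempotence are immediate. Exchange is the Steinitz exchange for spans: if $da\in\spa(dC\cup\{db\})\setminus\spa(dC)$, then $db\in\spa(dC\cup\{da\})$. Finite character also follows, because a linear combination involves only finitely many vectors. Finite character can also be read off directly from the definition of $\ecl$, since a Khovanskii system uses only finitely many parameters from $C$.

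The easy direction is ``$\Rightarrow$''. Let $a_0,\dots,a_{n-1}$ solve a Khovanskii system $f_1,\dots,f_n$ with coefficients in $\mathbb{Q}(C)$ and nonzero Jacobian. Apply $d$ to each equation $f_i(\vec a)=0$. Using the Leibniz rule and $d(e^{a_j})=e^{a_j}da_j$, we get
\[ \sum_j \frac{\partial f_i}{\partial x_j}(\vec a)\,da_j\in \spa_K(dC). \]
Here the partial derivatives are the total ones, counting the $e^{x_j}$ dependence. The coefficient matrix is exactly the Jacobian matrix, which is invertible. Inverting it puts each $da_j$, in particular $da_0$, in $\spa_K(dC)$.

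The main obstacle is ``$\Leftarrow$'': if $da$ lies in the span of $dC$, one must produce an actual Khovanskii system witnessing $a\in\ecl(C)$. I would first show that $\ecl(C)$ is closed under the field operations and under $x\mapsto e^x$, and is relatively algebraically closed, by explicitly combining Khovanskii systems. Then I would show that any exponential derivation on a subfield closed in this sense extends, via Zorn's lemma, one generator at a time, with free choice of $\partial a$ whenever $a\notin\ecl(C)$. To set up the one-step extension, I would take a maximal tuple $\vec b\supseteq a$ that is ``Jacobian-free'' over $C$. In that case the rank of the relations among $b_j$ and $e^{b_j}$ that come from finitely many exponential polynomials over $C$ is deficient. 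The extension step should follow from a rank count on these Jacobian matrices: if no square subsystem has nonzero Jacobian, there is a solution $(\partial b_j)$ of the linearized system with $\partial a\ne 0$. This yields a derivation vanishing on $C$ but not on $a$, hence $da\notin\spa(dC)$ by universality. This rank argument is where I expect the real work, and I would lean on Kirby's treatment if it becomes technical. Combining both directions gives the stated fact.
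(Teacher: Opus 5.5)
The paper gives no argument for this statement; it simply quotes Theorem 1.1 of \cite{KiEx}. The parts you actually carry out are correct: $C\mapsto\{a: da\in\spa_K(dC)\}$ is a pregeometry, and the Jacobian computation gives $\ecl(C)\subseteq\{a : da\in\spa_K(dC)\}$. That is the easy half, however, and the argument you sketch for the reverse inclusion does not work. A solution $(\partial b_j)$ of the linearized system with $\partial a\neq 0$ exists if and only if the coordinate vector $e_a$ is \emph{not} in the row space of the Jacobian $J$ at $\vec b$. Suppose $e_a$ is in the row space but $\operatorname{rk} J$ is smaller than the length of $\vec b$. Then every nonsingular square minor through the $a$-column omits some coordinates $b_j$. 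The corresponding square system therefore has those $b_j$ as coefficients, so it witnesses $a\in\ecl(C\cup\{b_j\})$, not $a\in\ecl(C)$. A single equation $g(X_1,X_2)$ over $C$ with $\partial g/\partial X_1\neq 0=\partial g/\partial X_2$ at $(a,b_2)$ already shows this. It forces $\partial a=0$, yet the only square subsystem with nonzero Jacobian has $b_2\notin C$ as a coefficient. So ``no square subsystem with nonzero Jacobian'' does not yield the solution with $\partial a\neq 0$ you want. One has to manufacture new equations that eliminate the extra parameters, and nothing in the proposal does this.

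This degenerate case is not a technicality; it is exactly what the exchange property needs. Suppose $a\in\ecl(C\cup\{b\})$ is witnessed by $\bar h(\bar y;b)$ with $y_1=a$, and regard $\bar h$ as $n$ equations in the $n+1$ unknowns $(\bar y,b)$. Its Jacobian has rank $n$, and its kernel is spanned by a single vector $w$. The minor omitting the $a$-column is nonzero exactly when $w$ has nonzero $a$-coordinate. In that case the same system, with the equation $y_1-a=0$ added, shows $b\in\ecl(C\cup\{a\})$. Otherwise $e_a$ lies in the row space of a rank-deficient Jacobian, and exchange needs precisely that this forces $a\in\ecl(C)$. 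That is the case your count leaves open.

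Your Zorn step is circular for the same reason. The claim that $\partial g$ may be chosen freely whenever $g$ lies outside the $\ecl$-closure of the current stage is the hard inclusion itself, applied over that stage. Without it, a value chosen early can be contradicted later. This can happen through a Khovanskii system linking that element to elements adjoined afterwards. It can also happen through the constraint $\partial(e^c)=e^c\partial c$, for an element $c$ of the current field whose exponential is algebraic over it. As written, the proposal reduces the statement to the main theorem of \cite{KiEx} rather than proving it.
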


When $\mathcal{C}$ is the class of Zilber fields, $\cl=ecl$. Thus $\cl$-independence agrees with the notion of independence studied in \cite[section 5]{AHKK} called $etd$-independence. By Corollary \ref{stationarityinC} in Zilber's field,
$\ind^{etd}$ satisfies \emph{stationarity} over models. This is not true in all exponential fields, for example, in $\mathbb{R}_{exp}$ (the reals expanded by exponentiation) algebraic independence agrees with $etd$-independence but clearly types are not stationary.

We will now consider elementary pairs, where the smaller model is closed inside the big one. The model-theoretic study of pairs of algebraically closed fields goes back to the work of Robinson \cite{Ro} and Keisler \cite{Ke}. If $(K,K_0)$ is a pair of models of $ACF_0$ and $K_0\subsetneq K$, then $(K,K_0)$ is a model of the theory of beautiful pairs of $ACF_0$.
It seems interesting to see what kind of pairs we get in this stronger setting as the expressive power of the language is stronger. 

\begin{rema}
Let $\mathcal{C}$ be the class of Zilber's field and let $(K,K_0)$ be a pair of models of $\mathcal{C}$ with $K_0\preceq K$.
Then for every $2\leq n\leq \infty$ there is a unique $\mathcal{L}_{\omega_1 \omega}(Q)$-theory
whose models satisfy $dim(K/K_0)=n$. 
\end{rema}

\begin{proof}
    
 We will first show that we can axiomatize pairs that satisfy $\dim(K/K_0)=n$ for some $1<n< \infty$.
Consider the sentence $\varphi_n$ given by
$$\exists x_1\dots \exists x_n \bigwedge_{m\geq 1} \Big{(}\forall y_1\in P \dots \forall y_m\in P \bigwedge_{0\leq i\leq n}\neg \pi_{m+i}(x_{i+1};x_1,\dots,x_i,y_1,\dots,y_m)\Big{)}$$
The sentence $\varphi_n$ says that there are at least $n$ elements in $K$ which are independent over $K_0$. We then have $\dim(K/K_0)=n$ iff $(K,K_0)\models \varphi_n\wedge \neg \varphi_{n+1}$. A beautiful pair need to satisfy $(K,K_0)\models \bigwedge_{n \geq 1}\varphi_n$.

\end{proof}
\end{example}

\subsection{Independence in beautiful pairs, the locally modular case}

We now introduce a notion of independence in beautiful pairs of \emph{modular} quasiminimal classes, the locally modular classes can be handled in a similar way after localizing. We will use the results of this subsection in section \ref{sec:ranksgeometry} when we study the $U_{Lsp}$ in modular quasiminimal classes. For the rest of this section we will only consider quasiminimal modular classes.

\begin{defn} Fix $(M,P)$ a beautiful pair associated to the quasiminimal modular class $\mathcal{C}$. Let $B\subseteq M$ and $\vec a\in M^n$. We write $\vec a \in \scl(B)$ if 
$\vec a\in \cl(BP(M))$. We write $\dim_{\scl}(\vec a/B)$ for $\dim(\vec a/B)$ in structure $(M,\cl_P)$, where the closure operator is localized in $P(M)$.
\end{defn}

\begin{defn} Fix $(M,P)$ a beautiful pair associated to the quasiminimal modular class $\mathcal{C}$. For sets $B\subseteq C\subseteq M$ and a tuple $\vec a\in M$ we write $\vec a \ind^{\cl,P}_{B}C$ if 
\begin{enumerate}
\item $\dim(\vec a/B)=\dim(\vec a/C)$ (i.e. $\vec a \ind^{\cl}_{B}C$)
\item $\dim_{\scl}(\vec a/B)=\dim_{\scl}(\vec a/C)$ (i.e. $\vec a \ind^{\cl}_{BP(M)}C$)
\end{enumerate}
For sets $A,B,C\subseteq M$, we write $A \ind^{\cl,P}_{B}C$ if $\vec a \ind^{\cl,P}_{B}B\cup C$ for all finite tuples $\vec a\in A$. If this is the case, we say that  $A$ is $\cl,P$-independent from $B$ over $C$.
\end{defn}

\begin{rema}
The previous description of $\cl,P$-independence corresponds to forking-independence in the setting of lovely pairs of SU-rank one linear theories. It appears in the proof of Theorem 5.13
($(i \implies ii)$) in \cite{v2003} for $1$-types, but the argument generalizes to general types.
\end{rema}

\begin{obse}\label{addingpreservesindependence}
    
Let $(M,P)$ be a beautiful pair associated to the quasiminimal class $\mathcal{C}$ and assume that $\vec a \ind^{\cl,P}_{B}C$. 
Then whenever $\vec d\in \cl(B\vec a)$ and $C\subseteq D\subseteq \cl(B\cup C)$ we also have $\vec d \ind^{\cl,P}_{B}D$. 
\end{obse}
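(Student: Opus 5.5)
The statement asks: if $\vec a \ind^{\cl,P}_{B}C$, $\vec d\in \cl(B\vec a)$ and $C\subseteq D\subseteq \cl(B\cup C)$, then $\vec d \ind^{\cl,P}_{B}D$. The plan is to verify the two clauses of the definition of $\cl,P$-independence separately. Each clause is a statement about the pregeometry: once for $\cl$, and once for the localization $\cl_P$, i.e.\ $\cl(-\cup P(M))$. In each case the argument uses only two things: closure under passing to the closure of the base-plus-tuple, and the fact that dimension over a set equals dimension over its closure.

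\textbf{Step 1: enlarge $C$ to $D$.} Since $D\subseteq \cl(B\cup C)$ and $C\subseteq D$, we have $\cl(B\cup D)=\cl(B\cup C)$. Hence $\dim(\vec a/B\cup D)=\dim(\vec a/B\cup C)$, and likewise
\[
\cl(B\cup D\cup P(M))=\cl(B\cup C\cup P(M)),
\]
so the $\scl$-dimensions over $B\cup D$ and over $B\cup C$ agree. Reading $\vec a\ind^{\cl,P}_B C$ as independence from $B\cup C$ (as in the set version of the definition), both equalities
\[
\dim(\vec a/B)=\dim(\vec a/BD), \qquad \dim_{\scl}(\vec a/B)=\dim_{\scl}(\vec a/BD)
\]
hold.

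\textbf{Step 2: pass from $\vec a$ to $\vec d$.} Consider the pregeometry $\cl$ first. Independence $\vec a\ind^{\cl}_B BD$ is equivalent to $\cl(B\vec a)\ind^{\cl}_B BD$, by the standard fact that a set is independent from $X$ over $B$ iff its closure over $B$ is. To see this, take a basis $\vec a_0\subseteq\vec a$ of $\vec a$ over $B$. Its length is preserved over $BD$, so $\vec a_0$ stays independent over $BD$. Every $\vec d\in\cl(B\vec a)=\cl(B\vec a_0)$ then satisfies
\[
\dim(\vec d/B)=\dim(\vec d\vec a_0/B)-\dim(\vec a_0/B\vec d)=|\vec a_0|-\dim(\vec a_0/B\vec d),
\]
and the same computation holds over $BD$. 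Monotonicity gives $\dim(\vec a_0/BD\vec d)\le\dim(\vec a_0/B\vec d)$. This inequality, combined with the two dimension formulas, forces $\dim(\vec d/BD)\ge\dim(\vec d/B)$, while the reverse inequality always holds. Equivalently, one can simply cite symmetry and monotonicity of $\ind^{\cl}$ in a pregeometry.

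The same argument applies verbatim to the localized pregeometry $\cl_P$, which is again a pregeometry. We only need $\vec d\in\cl_P(B\vec a)$, and this holds because $\cl(B\vec a)\subseteq\cl(B\vec a P(M))$.

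\textbf{Main obstacle.} The only point needing care is bookkeeping. The definition of $\ind^{\cl,P}$ for sets uses $B\cup C$ rather than $C$, and the $\scl$-dimension must be taken in the localization at $P(M)$, so that the closure identity in Step 1 is applied with $P(M)$ added. I expect no genuine difficulty; modularity of $\mathcal{C}$ is not needed for this observation.
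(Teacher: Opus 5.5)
Your proposal is correct. The paper states this observation without proof, and your argument is the routine verification it leaves implicit: since $\cl(B\cup D)=\cl(B\cup C)$ and $\cl(B\cup D\cup P(M))=\cl(B\cup C\cup P(M))$, both dimensions of $\vec a$ are unchanged when $C$ is replaced by $D$. Independence then passes from $\vec a$ to any $\vec d\in\cl(B\vec a)$, both in $\cl$ and in its localization at $P(M)$, by your dimension count.
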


Since the definition of $\cl,P$ independence comes from equalities of dimensions, it satisfies invariance, finite character, local character and transitivity. 

\begin{lem}
    The notion of independence $\ind^{\cl,P}$ satisfies symmetry.
\end{lem}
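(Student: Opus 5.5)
Symmetry of $\ind^{\cl,P}$ reduces to symmetry of two dimension-theoretic independence relations, since the definition consists of two dimension equalities. Concretely, I will show that for finite tuples $\vec a,\vec b$ and a set $C$,
$$\vec a\ind^{\cl,P}_C \vec b \iff \vec b\ind^{\cl,P}_C \vec a,$$
and then pass to arbitrary sets by finite character, which is built into the definition since $A\ind^{\cl,P}_C B$ is checked on finite tuples.

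\emph{First component.} Condition (1) says $\dim(\vec a/C)=\dim(\vec a/C\vec b)$. This is symmetric by additivity of dimension in the pregeometry $(M,\cl)$:
$$\dim(\vec a\vec b/C)=\dim(\vec a/C)+\dim(\vec b/C\vec a)=\dim(\vec b/C)+\dim(\vec a/C\vec b).$$
So $\dim(\vec a/C\vec b)=\dim(\vec a/C)$ holds iff $\dim(\vec b/C\vec a)=\dim(\vec b/C)$.

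\emph{Second component.} Condition (2) says $\dim(\vec a/CP(M))=\dim(\vec a/C\vec bP(M))$. I will apply the same additivity computation in the localized pregeometry $(M,\cl_P)$, where $\cl_P(X)=\cl(X\cup P(M))$. Localization of a pregeometry at a set is again a pregeometry, so exchange and additivity of dimension hold there. This gives symmetry of $\vec a\ind^{\cl}_{CP(M)}\vec b$. Modularity of $\mathcal{C}$ is not needed for this step; it presumably matters only for other properties of $\ind^{\cl,P}$.

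\emph{Bookkeeping.} The one thing to check is that the definition's convention of writing $\ind_B C$ with $B\subseteq C$ matches the symmetric formulation. Here $A\ind^{\cl,P}_B C$ is defined as $\vec a\ind^{\cl,P}_B B\cup C$ for each finite $\vec a\in A$. Dimensions over $C\cup\vec b$ and over $\cl(C\vec b)$ agree, which Observation~\ref{addingpreservesindependence} also reflects, so the reformulation above is legitimate. For arbitrary sets $A,B$, suppose $A\ind^{\cl,P}_C B$ but some finite $\vec b\in B$ fails $\vec b\ind^{\cl,P}_C A$. Then by finite character of dimension (dimension over a set is witnessed by a finite subset), some finite $\vec a\in A$ already witnesses the failure, contradicting the finite-tuple case.

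I do not expect a real obstacle. The only point needing care is that the second condition is computed relative to $P(M)$ on both sides, so additivity must be applied in $(M,\cl_P)$ rather than $(M,\cl)$.
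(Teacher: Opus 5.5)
Your proposal is correct and follows the paper's proof. The paper also applies additivity of dimension in $(M,\cl)$ and then, in the same way, in the localization at $P(M)$, to get symmetry of each of the two defining conditions; your finite-character step for arbitrary sets is sound and is simply left implicit there.
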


\begin{proof}
    Assume $\vec a,\vec b$ are tuples and let $C\subseteq M$. If $\vec a \ind^{\cl,P}_{C} \vec b$,
    then $\dim(\vec a/C)=\dim(\vec a/C\vec b)$ 
and $\dim_{\scl}(\vec a/C)=\dim_{\scl}(\vec a/C \vec b)$. By additivity of dimension
    we have $\dim(\vec a \vec b/C)=\dim(\vec a/C\vec b)+\dim(\vec b/C)$ and
    $\dim(\vec a \vec b/C)=\dim(\vec b/C\vec a)+\dim(\vec a/C)$.
If $\dim(\vec a/C\vec b)=\dim(\vec a/C)$ then $\dim(\vec b/C\vec a)=\dim(\vec b/C)$. The same argument works for the dimension localized in $P(M)$.
This shows $\vec b \ind^{\cl,P}_{C} \vec a$.
\end{proof}

We will need the following technical observation.

\begin{lem}\label{bothindepPindep}
Let $\mathcal{C}$ be modular, $M\in \mathcal{C}$ and $(M,P)$ a lovely pair. Let $(N,P)\preceq (M,P)$ and let  $A,B\subseteq M$ be both $P$-independent over $N$. If $A\ind_N^{\cl, P}B$ then $A\cup B$ is also $P$-independent over $N$.
\end{lem}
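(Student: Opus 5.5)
We interpret "$P$-independent over $N$" (by analogy with $H$-independence) as: $A$ is $P$-independent over $N$ iff $A\ind^{\cl}_{N\cup P(A)} P(M)$, i.e.\ $\dim(\vec a/N P(\vec a)) = \dim(\vec a/N P(M))$ for all finite $\vec a\in A$, where $P(A)=A\cap P(M)$. By finite character it is enough to work with finite tuples $\vec a\in A$ and $\vec b\in B$. We must show
\[
\dim(\vec a\vec b/N P(\vec a\vec b)) = \dim(\vec a\vec b/N P(M)).
\]
The inequality $\geq$ is automatic, so the work is to prove $\leq$.

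First, additivity gives
\[
\dim(\vec a\vec b/NP(M)) = \dim(\vec b/NP(M)) + \dim(\vec a/N\vec b P(M)).
\]
Clause (2) of $\vec a\ind^{\cl,P}_N \vec b$ says $\dim(\vec a/NP(M)) = \dim(\vec a/N\vec bP(M))$. Hence
\[
\dim(\vec a\vec b/NP(M)) = \dim(\vec b/NP(M)) + \dim(\vec a/NP(M)).
\]
By the two $P$-independence hypotheses this equals
\[
\dim(\vec b/NP(\vec b)) + \dim(\vec a/NP(\vec a)).
\]

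Second, we compare this with
\[
\dim(\vec a\vec b/NP(\vec a\vec b)) = \dim(\vec b/NP(\vec a)P(\vec b)) + \dim(\vec a/N\vec b P(\vec a)).
\]
The second summand is at most $\dim(\vec a/NP(\vec a))$. The first summand is at most $\dim(\vec b/NP(\vec b))$. So $\dim(\vec a\vec b/NP(\vec a\vec b))$ is bounded by the expression obtained in the first step, which gives the desired inequality $\le$ and therefore equality.

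This shows the argument is just bookkeeping with dimensions. Modularity and the fact that $N$ is a model are not needed here; presumably the paper states them for uniformity with later uses. Clause (1) of $\ind^{\cl,P}$ is not used either.

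The main obstacle I expect is getting the right definition of ``$P$-independent over $N$''. If it is meant as $\vec a\ind^{\cl}_{NP(\vec a)} P(M)$, where $N$ is itself $P$-independent (automatic by Proposition \ref{H_independentsubstructures}, beautiful-pair version), the computation above goes through unchanged. If instead it means $\dim(\vec a/N P(N\vec a))$ with $P(N)$ included, we just replace $N$ by $N\cup P(N)$ throughout, which changes nothing since $P(N)\subseteq N$. I would also check the edge case where elements of $\vec a$ or $\vec b$ lie in $P(M)$; these are absorbed into $P(\vec a\vec b)$ and cause no problem.
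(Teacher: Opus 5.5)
Your proof is correct and is essentially the paper's argument. The paper reduces, via transitivity of $\ind^{\cl}$, to $A\ind^{\cl}_{NP(A)P(B)}P(M)$ and $B\ind^{\cl}_{NAP(B)}P(M)$, getting the latter from clause (2) together with the $P$-independence of $B$; this is exactly your additivity computation in independence-calculus form, with the roles of $A$ and $B$ swapped (which lets you skip the paper's appeal to symmetry), and, as you note, the paper's proof uses neither modularity, nor clause (1), nor the fact that $N$ is a model.
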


\begin{proof}
We have $P(A\cup B)=P(A)\cup P(B)$. To show $A\cup B$ is $P$-independent, by transitivity it is enough to show $A\ind_{NP(A)P(B)}^{\cl}P(M)$ and $B\ind_{NP(B)A}^{\cl}P(M)$.
Since $A\ind_{NP(A)}^{\cl}P(M)$ we get $A\ind_{NP(A)P(B)}^{\cl}P(M)$.

On the other hand, $B\ind_N^{\cl, P}A$ implies 
$B\ind_{NP(M)}^{\cl}A$ and since $B\ind_{NP(B)}^{\cl}P(M)$ we get $B\ind_{NP(B)}^{\cl}AP(M)$, from which  $B\ind_{NAP(B)}^{\cl}P(M)$.
\end{proof}

We are ready to prove our main results. 


\begin{prop}
    The notion of independence $\ind^{\cl,P}$ satisfies stationarity.
\end{prop}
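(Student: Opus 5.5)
We will prove stationarity over models, mirroring the proof of Theorem \ref{stationarityinH}. Fix a beautiful pair $(M,P)$ with $(N,P)\preceq(M,P)$, a set $C\supseteq N$, and tuples $\vec a,\vec c$ with $\tp_P(\vec a/N)=\tp_P(\vec c/N)$ and $\vec a\ind^{\cl,P}_N C$, $\vec c\ind^{\cl,P}_N C$. The goal is $\tp_P(\vec a/C)=\tp_P(\vec c/C)$.

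First we normalize. By Observation \ref{addingpreservesindependence} we may replace $C$ by its closure. By extension of $\ind^{\cl,P}$ inside $C$ we may also enlarge $C$ so that it is $P$-independent over $N$, meaning $C\ind^{\cl}_{NP(C)}P(M)$. Next we enlarge $\vec a$ by a finite tuple $\vec p\in P(M)$ witnessing $\dim(\vec a/NP(M))$, chosen $\cl$-independent over $N$. Since $\vec p\in\cl(N\vec a P(M))$ and $\ind^{\cl,P}$ only involves $\cl$ and $\scl$-dimensions, Observation \ref{addingpreservesindependence} together with transitivity should let us keep $\vec a\vec p\ind^{\cl,P}_N C$. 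We do the same for $\vec c$, matching it using $\tp_P(\vec a/N)=\tp_P(\vec c/N)$. After this, $\vec a$ and $\vec c$ are $P$-independent over $N$.

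Second, we upgrade the $\mathcal{L}$-types. By condition (1) of $\ind^{\cl,P}$ and Fact \ref{stationarityinC} (stationarity of $\ind^{\cl}$ over the model $N$), we get $\tp(\vec a/C)=\tp(\vec c/C)$. Then we apply Lemma \ref{bothindepPindep} to $A=\vec a$ and $B=C$, which gives that $\vec a C$ is $P$-independent over $N$, hence $P$-independent; likewise for $\vec c C$.

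Third, we conclude with the back-and-forth of Proposition \ref{H-typesomega1omega}, in its beautiful-pair version. We need $\tp(\vec a P(\vec a)/C)=\tp(\vec c P(\vec c)/C)$ with the $P$-parts matched. An element $e\in\vec a$ lies in $P(M)$ iff $e\in\cl(NP(\vec a))$ together with the $P$-membership already recorded in $\tp_P(\vec a/N)$. The map fixing $C$ and sending $\vec a\mapsto\vec c$ then respects $P$ on the tuple, so the back-and-forth yields $\tp_P(\vec a/C)=\tp_P(\vec c/C)$.

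The main obstacle is the normalization step. Adding $P$-witnesses to $\vec a$ must preserve condition (2) and must not spoil condition (1) over $C$. This is where modularity enters, through Lemma \ref{bothindepPindep}. We would first try to verify that $\dim(\vec p/C)=\dim(\vec p/N)$ follows from $\dim_{\scl}(\vec a/N)=\dim_{\scl}(\vec a/C)$ combined with modularity. The idea is that modularity forces $\cl(N\vec a)\cap\cl(P(M)N)$, which contains $\vec p$'s span modulo $N$, to be independent from $C$ over $N$.
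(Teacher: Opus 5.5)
\textbf{There is a genuine gap in your normalization step.} The skeleton matches the paper: pass to $P$-independent tuples, apply stationarity of $\ind^{\cl}$ over $N$, use Lemma \ref{bothindepPindep}, then run the back-and-forth of Proposition \ref{H-typesomega1omega}. But the step you flag as the obstacle is not merely unverified; as stated it is false.

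Observation \ref{addingpreservesindependence} only applies to tuples in $\cl(N\vec a)$. The fact $\vec p\in\cl(N\vec aP(M))$ is vacuous for $\vec p\subseteq P(M)$. Here is a counterexample in a beautiful pair of $\mathbb{Q}$-vector spaces:
\begin{itemize}
\item Take $n\in N\setminus P(N)$, $p_1,p_2\in P(M)$ independent over $N$, $a=p_1+n$ and $C=\cl(Np_2)$.
\item Then $\dim(a/N)=\dim(a/C)=1$ and $\dim_{\scl}(a/N)=\dim_{\scl}(a/C)=0$, so $a\ind^{\cl,P}_{N}C$.
\item The tuple $\vec p=(p_1,p_2)$ witnesses $\dim(a/NP(M))$ and is $\cl$-independent over $N$.
\item Yet $\dim(ap_1p_2/N)=2>1=\dim(ap_1p_2/C)$, so condition (1) fails for $a\vec p$, and Fact \ref{stationarityinC} can no longer be applied.
\end{itemize}
Your suggested repair cannot work either. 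For $\vec p\subseteq P(M)$ the $\scl$-dimension is $0$ over every base, so condition (2) says nothing about $\dim(\vec p/C)$ versus $\dim(\vec p/N)$. The example satisfies condition (2) and modularity and still violates the equality.

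\textbf{The missing idea is to use modularity to place the witness inside $\cl(N\vec a)$.} Your closing intuition about $\cl(N\vec a)\cap\cl(NP(M))$ points the right way, but it only helps if the witness is chosen inside that set. Since $\mathcal{C}$ is modular and $P(M)$ is closed, $\vec aN\ind_{\cl(\vec aN)\cap P(M)}P(M)$. So there is a finite $\vec d\in\cl(\vec aN)\cap P(M)$ with $\vec a\ind_{N\vec d}P(M)$. Equivalently, $\vec d$ is a witness of the minimal possible length $\dim(\vec a/N)-\dim(\vec a/NP(M))$, which modularity guarantees exists.

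With this choice:
\begin{itemize}
\item $\vec a\vec d$ is $P$-independent over $N$.
\item Because $\vec d\in\cl(N\vec a)$, Observation \ref{addingpreservesindependence} gives $\vec a\vec d\ind^{\cl,P}_{N}C$ directly. Condition (1) for $\vec a\vec d$ is just condition (1) for $\vec a$; condition (2) plays no role here.
\item The matching $\vec e$ with $\tp_P(\vec a\vec d/N)=\tp_P(\vec c\vec e/N)$ then lies in $\cl(N\vec c)\cap P(M)$.
\end{itemize}
This is exactly where the paper uses modularity, and with this choice the rest of your argument goes through as in the paper.

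Two minor points:
\begin{itemize}
\item Once $C$ is closed, modularity already gives $C\ind_{C\cap P(M)}P(M)$. So no appeal to ``extension'' is needed to make $C$ $P$-independent over $N$.
\item Your claim that $e\in\vec a$ lies in $P(M)$ iff $e\in\cl(NP(\vec a))$ is false, but it is unnecessary. $P$-membership of the coordinates is already recorded in $\tp_P(\vec a/N)$.
\end{itemize}
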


\begin{proof}
    Assume $\vec a,\vec b$ are tuples in a beautiful pair $(M,P)$ where $M\in \mathcal{C}$ such that $dim(M/P(M)\geq \kappa$ and $dim(P(M)\geq \kappa$ for $\kappa$ sufficiently large. Let $N\in \mathcal{C}$ be small and assume $(N,P)\preceq (M,P)$. Assume that $\tp_P(\vec a/N)=\tp_P(\vec b/N)$.
    Now let $C\subseteq M$ be small with $\vec a \ind^{\cl,P}_{N}C$ and $\vec b\ind^{\cl,P}_{N}C$.

 Since $\mathcal{C}$ is modular, $\vec aN \ind_{\cl(\vec aN)\cap P(M)} P(M)$, so we can find $\vec d\in \cl(\vec a N)\cap P(M)$ with $\vec a \ind_{N\vec d} P(M)$. Let $\vec a'=\vec a\vec d$, then $\vec a'$ is $P$-independent over $N$ and since $\vec d\in \cl(N\vec a)$ by Observation \ref{addingpreservesindependence} we still have $\vec a' \ind^{\cl,P}_{N}C$. Let $\vec e\in \cl(\vec bN)\cap P(M)$ be such that $\tp_P(\vec a \vec d/N)=\tp_P(\vec b \vec e/N)$. 
 Let $\vec b'=\vec b \vec e$, since $\vec b'\in \cl(\vec b)$ by Observation \ref{addingpreservesindependence} we also have $\vec b' \ind^{\cl,P}_{N}C$. 
Finally, by enlarging $C$ with a subset of $\cl(C)\cap P(M)$ if necessary, we may also assume that $C$ is $P$-independent over $N$ and note that by Observation \ref{addingpreservesindependence} we still have $\vec a' \ind^{\cl,P}_{N}C$, $\vec b' \ind^{\cl,P}_{N}C$.

Note that $\tp(\vec a'/N)=\tp(\vec b'/N)$, $\vec a' \ind^{\cl}_{N}C$ and $\vec b'\ind^{\cl}_{N}C$, so by stationarity in $\mathcal{C}$ we have $\tp(\vec a',C/N)=\tp(\vec b',C/N)$. 

Choosing the same enumeration for $N$ in both tuples
we get $\tp(\vec a',C,N)=\tp(\vec b',C,N)$. Note that $(N,P)\preceq (M,P)$. We then have that $\vec a'$ is $P$-independent over $N$, $C$ is $P$-independent over $N$ and  $\vec a' \ind^{\cl}_{N}C$ so by Lemma \ref{bothindepPindep} we get that $\vec a'CN$ is $P$-independent. Similarly, $\vec b'CN$ is $P$-independent. Since $\tp_P(\vec a')=\tp_P(\vec b')$ for any $a_i\in \vec a'$ and corresponding $b_i\in \vec b'$ we have $P(a_i)$ if  and only if $P(b_i)$. We conclude $\tp_P(\vec a',C,N)=\tp_P(\vec b',C,N)$ as desired. 
    
\end{proof}

\begin{prop}
    The notion of independence $\ind^{\cl,P}$ satisfies existence.
\end{prop}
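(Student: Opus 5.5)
Existence here means: given a beautiful pair $(M,P)$ of sufficiently large dimension (both $\dim(P(M))\geq\kappa$ and $\dim(M/P(M))\geq\kappa$), a small $N$ with $(N,P)\preceq(M,P)$ (or, more generally, a small set $B$), a tuple $\vec a$ and a small $C\supseteq B$, there is $\vec b$ with $\tp_P(\vec b/B)=\tp_P(\vec a/B)$ and $\vec b\ind^{\cl,P}_B C$. I will follow the proof of extension for $H$-structures. First normalize. Replace $B$ by $\cl(B)$ together with enough of $\cl(B)\cap P(M)$ so that $B$ is $P$-independent; this is allowed by Observation \ref{addingpreservesindependence}. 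Then enlarge $\vec a$ to be $P$-independent over $B$: by modularity, $\vec aB\ind_{\cl(\vec aB)\cap P(M)}P(M)$, so I can add a finite $\vec d\in\cl(\vec aB)\cap P(M)$, exactly as in the stationarity proof. Then decompose $\vec a=\vec a_1\vec a_2\vec a_3$, where $\vec a_1$ is $\cl$-independent over $BP(M)$, $\vec a_2\in P(M)$ is $\cl$-independent over $B\vec a_1$, and $\vec a_3\in\cl(B\vec a_1\vec a_2)$.

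Next I build the candidate. Since $\dim(M/P(M))\geq\kappa>|C|$, I can choose $\vec c_1$ $\cl$-independent over $CP(M)$ with the same length as $\vec a_1$. Since $\dim(P(M))\geq\kappa$, I can choose $\vec c_2\in P(M)$ $\cl$-independent over $C\vec c_1$ with the same length as $\vec a_2$. By uniqueness of the generic type (QM4), $\tp(\vec c_1\vec c_2/B)=\tp(\vec a_1\vec a_2/B)$, and both tuples are $P$-independent over $B$. Then I use Fact \ref{fin-oldnonsplit} (or Proposition \ref{superstability} transferred to pairs) to get a finite $B_0\subseteq B$ over which $\tp(\vec a/B)$ does not split. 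By QM5 I find $\vec c_3$ with $\tp(\vec c_1\vec c_2\vec c_3/B_0)=\tp(\vec a/B_0)$. As in the $H$-structure extension proof, this upgrades to $\tp(\vec c/B)=\tp(\vec a/B)$ for $\vec c=\vec c_1\vec c_2\vec c_3$: for each finite $\vec b\in B$, both $\vec a$ and $\vec c$ are $\cl$-independent from $\vec b$ over $B_0$. A cleaner alternative is to use stationarity over a countable model inside $B$ when $B$ is a model.

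Finally I check the membership pattern and the independence. Every element of $\vec c_3$ lies in $\cl(B\vec c_1\vec c_2)$, and since $\vec c_1$ is independent from $P(M)$ over $B$, the elements of $\vec c$ lying in $P(M)$ should be exactly those corresponding to elements of $\vec a$ in $P(M)$. I would verify this using modularity: an element of $\cl(B\vec c_1\vec c_2)\cap P(M)$ lies in $\cl(P(B)\vec c_2)$. By $P$-independence, Proposition \ref{H-typesomega1omega} (its beautiful-pair version) then gives $\tp_P(\vec c/B)=\tp_P(\vec a/B)$. For $\vec c\ind^{\cl,P}_BC$, I compute both dimensions. We have $\dim(\vec c/C)=|\vec c_1|+|\vec c_2|=\dim(\vec c/B)$, and $\dim_{\scl}(\vec c/C)=|\vec c_1|=\dim_{\scl}(\vec c/B)$.

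I expect the main obstacle to be the $P$-membership of $\vec c_3$ together with the upgrade from $\tp(\cdot/B_0)$ to $\tp(\cdot/B)$. The former is where modularity is really used. It also requires that $P(M)$ is closed, so that $\cl(P(B)\vec c_2)\subseteq P(M)$ and membership transfers through the $\mathcal{L}$-type over $B$.
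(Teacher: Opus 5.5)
Your route is the paper's: make the base closed and $P$-independent, make $\vec a$ $P$-independent over it, split $\vec a=\vec a_1\vec a_2\vec a_3$, take $\vec c_1$ by codensity and $\vec c_2\in P(M)$ by density, fill in $\vec c_3$, and compare $\dim$ and $\dim_{\scl}$ over $B$ and over $C$. The paper compresses the construction into ``existence in $\mathcal{C}$'' plus ``by density/codensity we may assume'', and it never checks that $\tp_P$ is preserved. Your check fills that hole, and it is correct: it uses $\cl(B\vec c_1\vec c_2)\cap P(M)=\cl(P(B)\vec c_2)$, closedness of $P(M)$, and QM1. This check does not use modularity directly. From $B\ind_{P(B)}P(M)$ and $\vec c_1\ind_B P(M)$ you get $B\vec c_1\vec c_2\ind_{P(B)\vec c_2}P(M)$. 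Modularity enters only through the $P$-independence of the closed set $B$ and of $\cl(B\vec a)$.

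The step that fails as written is the upgrade from $B_0$ to $B$. You cannot conclude $\tp(\vec a/B_0\vec b)=\tp(\vec c/B_0\vec b)$ from $\tp(\vec a/B_0)=\tp(\vec c/B_0)$ together with $\vec a\ind_{B_0}\vec b$ and $\vec c\ind_{B_0}\vec b$. That inference is stationarity over an arbitrary finite set, which is false.

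For a counterexample, use Example \ref{equiv-classes}. Let $e,a,b$ be distinct elements of one block, and take $B_0=\{e\}$, $\vec b=b$ and $\vec c=b$. Then $\tp(a/e)=\tp(b/e)$, and all the relevant dimensions over $e$ are $0$. Yet $\tp(a/eb)\neq\tp(b/eb)$.

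Fact \ref{stationarityinC} gives stationarity only over models. Your justification also never uses the fact that $\tp(\vec a/B)$ does not split over $B_0$. Moreover, Fact \ref{fin-oldnonsplit} needs $B$ countable and closed. In that case QM5 applied over $B$ itself already produces $\vec c_3$, so the detour through $B_0$ buys nothing.

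The fix is your ``cleaner alternative'', and it does not need $B$ to be a model.
\begin{enumerate}
\item Choose a finite $\vec b_0\in B$ with $\vec a_3\subseteq\cl(\vec b_0\vec a_1\vec a_2)$.
\item Put $N=\cl(\vec b_0)\subseteq B$. This is a countable closed set, hence a member of $\mathcal{C}$.
\item QM4 and QM5 over $N$ give $\vec c_3$ with $\tp(\vec c/N)=\tp(\vec a/N)$.
\item Both $\vec a$ and $\vec c$ have dimension $|\vec a_1|+|\vec a_2|$ over $N$ and over $B$. So both are $\cl$-independent from $B$ over $N$.
\item Fact \ref{stationarityinC} then yields $\tp(\vec c/B)=\tp(\vec a/B)$.
\end{enumerate}
With that replacement, the rest of your argument goes through.
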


\begin{proof}
Let $(M,P)$ be a beautiful pair where $dim(M/P(M)\geq \kappa$ and $dim(P(M)\geq \kappa$ for $\kappa$ sufficiently large. Let $C\subseteq D\subseteq M$ be small and let $\vec a\in M$ be a tuple. We will show that there is $\vec a'$ with  $\tp_P(\vec a'/C)=\tp_P(\vec a'/C$ with $a'\ind^{\cl,P}_CD$.
We may enlarge $C$ by adding elements in $\cl(C)$ if necessary so that $C$ is $P$-independent. Similarly, we may add elements to $\vec a$ so that it is $P$-independent over $C$. Write $\vec a=\vec a_1\vec a_2\vec a_3$ so that $\vec a_1$ is $\cl$-independent over $CP(M)$, $\vec a_2$ is $\cl$-independent over $C$ and $\vec a_2\in P(M)$ and $\vec a_3\in \cl(C\vec a_1\vec a_2)$.   Again, by adding elements to $D$ we can assume $D$ is $P$-independent. Using existence in $\mathcal{C}$ we can find $\vec a'=\vec a_1'\vec a_2'\vec a_3'$ with $\tp(\vec a'/C)=\tp(\vec a/C)$ and $\vec a'\ind_C D$. By the density property we may assume $\vec a_2'\in P(M)$ and by the codensity property we may assume $\vec a_1'\ind DP(M)$.
Then $\dim(\vec a'/D)=|\vec a_1'|+|\vec a_2'|=\dim(\vec a'/C)$ and $\dim_{\scl}(\vec a'/D)=|\vec a_1'|=\dim_{\scl}(\vec a'/C)$, so $\vec a'\ind^{\cl,P}_CD$.
\end{proof}

\begin{theo}\label{indepegreessplittingP}
    Let $(M_1,P)\preceq (M_2,P) \preceq (M,P)$ be beautiful pairs and let $\vec a\in M$. Assume furthermore that $\dim(P(M_2)/M_1)=\infty$ and $\dim(M_2/M_1P(M_2))=\infty$. Then $\vec a\nind^{\cl, P}_{M_1} M_2$ iff $\tp_P(\vec a/M_2)$ splits over $M_1$.  
\end{theo}

\begin{proof} $(\Rightarrow)$
The proof is similar to the argument presented in Theorem \ref{indepegreessplittingH}. We consider two cases.

Case 1.  $\dim(\vec a/M_2P(M))<\dim(\vec a/M_1P(M))$. We choose tuples of independent elements $\vec c\in M_2$ and $\vec d\in M_2$ of the same length both independent from $M_1P(M_2)$ with $\dim(\vec a/M_2P(M))=\dim(\vec a/\vec cM_1P(M))$ and $\vec d\ind^{cl,P}P(M_2)M_1\vec a$. 
Then $\tp_P(\vec c/M_1)=\tp_P(\vec d/M_1)$ but 
$\tp_P(\vec c\vec a/M_1)\neq \tp_P(\vec d\vec a/M_1)$.

Case 2. $\dim(\vec a/M_2)<\dim(\vec a/M_1)$. Let $a_i\in \vec a$ be 
such that $a_i\in \cl(M_2 \vec a\setminus a_i)$ and $a_i\not \in \cl(M_1 \vec a\setminus a_i)$. Now let $d\models \tp_P(a_i/M_1)$ with $d\ind^{\cl,P}_{M_1} \vec a$. We get that $\tp(\vec a,a_i/M_1)\neq \tp(\vec a,d/M_1)$ and also $\tp_P(\vec a,a_i/M_1)\neq \tp_P(\vec a,d/M_1)$.

 $(\Leftarrow)$ This direction follows from stationarity and symmetry of $\ind^{\cl, P}$, it is a similar argument to the one presented in Theorem \ref{indepegreessplittingHp2}. For this direction we do not need the saturation assumption.
\end{proof}

It is an interesting problem how to characterize independence in beautiful pairs that are not modular.
The main obstacle in the non-locally modular case is that the characterization given in \cite{BPV} uses canonical bases, which, in general, do not live as real elements in a quasiminimal class. 

\begin{ques}
Can the approach from \cite{HK} to construct canonical bases be used to characterize Lascar
splitting analogously to \cite[Proposition 7.3]{BPV}?
\end{ques}

\section{Ranks and geometry in $H$-structures and beautiful pairs}\label{sec:ranksgeometry}

In this section, we follow ideas from \cite{BeVaLP, BeVa} and relate the $U_{Lsp}$-rank of the dense-codense expansions to the complexity of underlying of the geometry in the class $\mathcal{C}$. We also generalize ideas from \cite{v2003} and characterize 
locally modular quasiminimal classes in terms of properties of the associated class of beautiful pairs (see Theorem \ref{linearity} below).

We start with $H$-structures.
For this part of the paper we need a basic observation. If $(N,H)\preceq (M,H)$ are $H$-structures and $\vec a\in H(M)$,
then $\cl(N\vec a)\in \mathcal{C}$ and it can be seen as an $H$-structure by setting $H(\cl(N\vec a))=H(N)\cup\{\vec a\}$. Furthermore with this interpretation we have $(\cl(N\vec a),H)\preceq (M,H)$. We also need a definition.

\begin{defn}
Consider $(M,\cl)$ as a pregeometry and let $A=\{a_1,\dots,a_n\}$. We say $A$ is a \emph{$\cl$-$n$-gon} or just a \emph{circuit} if it is a minimally dependent set, so for any $1\leq i \leq n$, $A\setminus \{a_i\}$ is independent but 
$a_i\in \cl(A\setminus \{a_i\})$.
\end{defn}

We reproduce the argument from \cite[section 3]{BeVa}.

\begin{prop}\label{existence ngon}
Let $(M,\cl)$ be a non-disintegrated quasiminimal pregeometry structure. Then, after adding parameters, for every $n\geq 3$ there exists an $\cl$-$n$-gon in $M$.
\end{prop}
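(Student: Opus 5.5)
Non-disintegrated means there exist $a\in\cl(B)$ with $a\notin\bigcup_{b\in B}\cl(b)$. Concretely, there is a finite set $B$ with $a\in\cl(B)$ but $a\notin\cl(B')$ for every proper subset $B'\subsetneq B$, and $|B|\geq 2$. Then $B\cup\{a\}$ is a circuit of size $|B|+1\geq 3$. So the plan has two steps: first obtain a $\cl$-$3$-gon after adding parameters, then build $n$-gons for every $n\geq 3$ inductively by gluing $3$-gons. Everything is finitary and uses only pregeometry properties, so it lives inside a single structure $M$. The genericity choices need QM2 (infinite dimension) together with the countable closure property QM3.

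\textbf{Step 1: reduce to a $3$-gon.} Take a circuit $\{a_1,\dots,a_m\}$ with $m\geq 3$. If $m>3$, add the parameters $a_4,\dots,a_m$ and work in the localization $\cl_{A}$ with $A=\{a_4,\dots,a_m\}$. We check that $\{a_1,a_2,a_3\}$ is a $\cl_A$-$3$-gon. First, $a_1\in\cl(a_2,a_3,A)$. Second, any two of $a_1,a_2,a_3$ are $\cl_A$-independent: a dependence would mean some subset of the circuit of size $m-1$, or smaller, is dependent, contradicting minimality. This is the standard argument of \cite[section 3]{BeVa}.

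\textbf{Step 2: induction on $n$.} We build $n$-gons in the localized pregeometry. Suppose $\{a_1,\dots,a_n\}$ is an $n$-gon. Choose a $3$-gon $\{b,c,d\}$ sharing one point with it.

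To do this, use that $\{a_1,a_2,a_3\}$ is a $3$-gon with type $p$, and that QM4/QM5 give homogeneity over closed countable sets. Pick $b'$ generic over $\cl(a_1,\dots,a_n)$. Then, by uniqueness of the generic type, the pair $(a_n,b')$ has the same type as an independent pair, e.g. $(a_2,a_3)$ after adjusting. Using QM5, find $c$ with $\{a_n,b',c\}$ realizing the $3$-gon type, where $c\in\cl(a_n,b')$.

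Now set $e\in\cl(a_{n-1},a_n)$-style replacements. The cleaner route is to replace $a_n$ by $c$ and add $b'$. Concretely, take the set $\{a_1,\dots,a_{n-1},b',c\}$, where $\{a_n,b',c\}$ is a $3$-gon and $a_n$ is $\cl$-generic-independent as required.

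\textbf{Step 3: verify the new set is an $(n+1)$-gon.} Dependence follows from transitivity: $a_n\in\cl(a_1,\dots,a_{n-1})$ and $a_n\in\cl(b',c)$. Together with exchange, this gives $c\in\cl(a_1,\dots,a_{n-1},b')$.

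Minimality is the main obstacle. We must show that removing any single point leaves an independent set, via dimension counting. By genericity, $\dim(a_1,\dots,a_{n-1},b',c)=n$. Any proper subset of size $n$ that omits some $a_i$ with $i<n$ must be checked to have dimension $n$. If it contained a dependence, then since $b'$ is generic over $a_1\dots a_n$, the dependence would have to pass through $a_n$. This would force $a_n\in\cl$ of a proper subset of the $a_j$, contradicting the $n$-gon property.

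Sets omitting $b'$ or $c$ are handled symmetrically, using that $\{a_n,b',c\}$ has all pairs independent and that $b'\ind a_1\dots a_n$. I expect this careful case analysis with additivity of dimension to be the only real work. The countable closure property guarantees that generic elements $b'$ exist at each stage.
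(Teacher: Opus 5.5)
Your proposal is correct and follows the paper's route. You localize at part of a circuit to get a $\cl$-triangle, then glue a triangle onto one vertex of the current polygon (placed using uniqueness of generic types and QM5) and delete the shared vertex; the paper writes out only $n=4$, gluing two triangles along $b$, so your induction and exchange-based minimality check just make explicit what it leaves to the reader (do rename the base triangle in Step 2, which clashes with the polygon's $\{a_1,a_2,a_3\}$).
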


\begin{proof} 
We do the proof for $n=4$. Working over a finite independent tuple, me may assume that
$M$ has a $\cl$-triangle $abc$. Let $a'\models \tp(a/b)$
be independent from $ac$ over $b$. Then $aca'$ is an independent triple. Let $c'$ be such that $\tp(a'c'/b) = tp(ac/b)$. Then as $b\in \cl(ac)$ and $c'\in \cl(a'b)$, the tuple $aca'c'$ is an $\cl$-$4$-gon.
\end{proof}

\begin{lem}\label{H-strucsplittingnontrivial}
Let $(M,H)$ be an $H$-structure associated to a quasiminimal class $\mathcal{C}$. Assume $cl$ is not disintegrated, then $U_{Lsp}((M,H))\geq \omega$.
\end{lem}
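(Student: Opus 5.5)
We will show that for every $n\geq 1$ there are $a\in M$ and a submodel $(N,H)\preceq (M,H)$ with $U_{Lsp}(\tp_H(a/N))\geq n$. Since $U_{Lsp}$ is monotone in $n$, a single witness for each $n$ is not quite what the definition asks for. So we aim for the stronger statement: there is a chain $(N_0,H)\preceq (N_1,H)\preceq\dots$ and an element $a$ such that $\tp_H(a/N_{k+1})$ Lascar splits over $N_k$ for every $k$. This gives $U_{Lsp}(\tp_H(a/N_0))\geq n$ for all $n$ simultaneously. We work in a sufficiently large $H$-structure, with $\dim(H(M))$ and $\dim(M/H(M))$ both large. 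By Theorem \ref{indepegreessplittingH}(2), Lascar splitting over models (which coincides with splitting there) is witnessed by $\ind^{\cl,H}$-dependence, provided the bigger model is "rich" over the smaller one. So the task reduces to building a chain of submodels along which $a$ keeps forking in the sense of $\ind^{\cl,H}$, specifically through the $H$-basis component.

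\textbf{Construction.} By Proposition \ref{existence ngon}, after naming parameters (which we absorb into $N_0$), we have $\cl$-$n$-gons for every $n$. Choose an $(n+1)$-gon $\{a,h_1,\dots,h_n\}$ with $h_1,\dots,h_n\in H(M)$ independent over $N_0$ and $a\notin\cl(N_0H(M)\setminus\{h_i\})$. Such a configuration exists by the density property: realize the generic types of an $n$-gon's independent part inside $H$ over $N_0$, then use QM5 to find $a$. Then $HB(a/N_0)=\{h_1,\dots,h_n\}$. Set $N_k=\cl(N_0h_1\dots h_k)$, with $H(N_k)=H(N_0)\cup\{h_1,\dots,h_k\}$. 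By the basic observation at the start of this section, $(N_k,H)\preceq (M,H)$. The circuit property gives $HB(a/N_k)=\{h_{k+1},\dots,h_n\}$, which strictly decreases at each step. Hence $a\nind^{\cl,H}_{N_k}N_{k+1}$ for $k<n$. To get an infinite chain for a single $a$, we would instead let the $N_k$ absorb a generic element not in $H$ that drops $a$'s $H$-basis in a controlled way. Alternatively, and more simply, we note that the definition of $U_{Lsp}((M,H))\geq\omega$ may be read as needing $U_{Lsp}\geq n$ for each $n$ via possibly different witnesses. We would first check which reading the later theorem needs, and at minimum produce witnesses for each $n$.

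\textbf{Main obstacle.} The main obstacle is applying Theorem \ref{indepegreessplittingH}(2). It requires $\dim(H(N_{k+1})/N_k)=\infty$ and $\dim(N_{k+1}/N_kH(N_{k+1}))=\infty$, which fails for $N_{k+1}=\cl(N_kh_{k+1})$. Two fixes are available:
\begin{enumerate}
\item Enlarge $N_{k+1}$ to $N'_{k+1}=\cl(N_k h_{k+1}\cup H'\cup E)$, where $H'\subseteq H(M)$ is infinite, $E$ is infinite and independent over $H(M)$, and both are chosen independent from $a h_{k+1}\dots h_n$ over $N_k$. These can be found by the density and codensity properties in a large $(M,H)$. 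One then checks via Proposition \ref{H_independentsubstructurescharomega1} that $(N'_{k+1},H)\preceq(M,H)$, and via additivity (Proposition \ref{H-basis-additivity}) that $HB(a/N'_{k+1})$ is still $\{h_{k+2},\dots,h_n\}$.
\item Use part (1) of Theorem \ref{indepegreessplittingH}, which gives splitting over a finite set, together with the definition of Lascar splitting over models. Explicitly, the witnesses $c=h_{k+1}$ and $d$, a fresh element of $H(N_{k+1})$ generic over $N_k a$, have the same $\tp_H$ over $N_k$. Yet $a\in\cl(N_k c\, h_{k+2}\dots h_n)$ with fewer $H$-parameters, which $\tp_H(a,d/N_k)$ fails.
\end{enumerate}
We would try the second, direct route first, since Lascar strong types over models agree with types.
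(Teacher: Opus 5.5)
Your core argument is the paper's: take a circuit, put its independent part into $H$, move along a chain of models on which the $H$-basis of the remaining element loses one point per step, and witness each splitting by the point just added versus a fresh element of $H$ with the same $\mathcal{L}_H$-type. Two of your steps would fail as written, though.

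First, the ``direct route'' (2), which you say you would try first, needs a fresh $d\in H(N_{k+1})$ generic over $N_ka$. For $N_{k+1}=\cl(N_kh_{k+1})$ we have $H(N_{k+1})=H(N_k)\cup\{h_{k+1}\}$, so no such $d$ exists. Also, part (1) of Theorem \ref{indepegreessplittingH} only gives splitting over a finite subset of $N_k$, not over $N_k$ itself, which is what the rank requires; your explicit witnesses are what actually do the work. Route (2) therefore only makes sense inside the enlarged models of route (1). Route (1) is sound, and it is essentially the paper's proof. The paper first fixes rich models $N_0\preceq\dots\preceq N_n$, with $\dim(H(N_{i+1})/N_iH(N_i))=\infty$ and $\dim(N_{i+1}/N_iH(N_{i+1}))=\infty$, all independent from the circuit. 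It then uses the chain $\cl(N_ia_1\dots a_i)$ and witnesses each splitting directly by $a_i$ versus some $a_i'\in H(N_i)\setminus(H(N_{i-1})\cup\{a_1,\dots,a_n\})$. In other words, it combines your route (1) models with your route (2) witnesses, and never needs Theorem \ref{indepegreessplittingH}(2).

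Second, the ``stronger statement'' you aim for is false: no single element splits at every step of an infinite chain of models. By Theorem \ref{indepegreessplittingHp2}, each splitting step over models is a failure of $\ind^{\cl,H}$. For a singleton $a$ this means one of two things:
\begin{enumerate}
\item $\dim(a/N_kH(M))$ drops from $1$ to $0$, which can happen at most once; or
\item the finite set $HB(a/N_k)$ shrinks strictly. It can never grow again, since $a\in\cl(N_k\,HB(a/N_k))\subseteq\cl(N_{k+1}\,HB(a/N_k))$.
\end{enumerate}
So every such chain is finite. The definition only asks for $U_{Lsp}(\tp_H(a/N))\geq n$ for each $n$, possibly via different finite chains. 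The paper itself only produces, for each $n$, a type $p_n$ with $U_{Lsp}(p_n)\geq n$, which is your fallback reading.

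If you want a single $a$ and $N$ as the literal definition of $U_{Lsp}((M,H))\geq\omega$ suggests, take $a\notin\cl(NH(M))$; all such $a$ have the same $\mathcal{L}_H$-type over $N$. For each $n$, make the first step pass to a rich model $N_1$ containing some $c$ such that $\{a,c,h_1,\dots,h_n\}$ is a circuit over $N$ with $h_i\in H(M)$, obtained by homogeneity, and with $N_1$ independent from $ah_1\dots h_n$ over $Nc$. This step gives $a\in\cl(N_1H(M))$ with $HB(a/N_1)=\{h_1,\dots,h_n\}$. Then follow it with your $n$ basis-shrinking steps.
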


\begin{proof}
We will show that for every $n\geq 2$ there is an $\mathcal{L}_H$-type $p_n=p_n(x)$ in a single variable $x$ such that $U_{Lsp}(p_n)\geq n$.
By Proposition \ref{existence ngon} we can find $\{a_1,\dots,a_n,b\}\subset M$  forming a circuit. Choose $H$-structures 
$(N_0,H)\preceq(N_1,H)\preceq (N_2,H)\preceq \dots \preceq (N_n,H) \preceq (M,H)$ so that $(N_n,H)$ is $\cl$-independent from $\{a_1,\dots,a_n,b\}$ and $\dim(H(N_{i+1})/N_iH(N_i))=\infty$, $\dim(N_{i+1}/N_iH(N_{i+1}))=\infty$. Since $a_1,\dots,a_n$ are independent over $N_n$, we may assume that $a_1,\dots,a_n\in H(M)$.
Since $b\in \cl(a_1,\dots,a_n)\setminus \cl(a_1,\dots,a_{n-1})$ and $b\neq a_i$ for $1\leq i \leq n$, we have that $b\not \in H(M)$. Note that
$HB(b/N)=HB(b)=\{a_1,\dots,a_n\}$.

Consider the sequence $$\tp_H(b/N_0)\subset \tp_H(b/\cl(N_1a_1))\subset \dots \subset  \tp_H(b/\cl(N_na_1,\dots,a_n)).$$ As explained above, we see each set $\cl(N_ia_1,\dots,a_i)$ as an $H$-structure with the natural interpretation of $H$. We will show that each step is a splitting extension.
We write a proof for the first two steps, the other cases are similar.
Let $a_1'\in N_1$ satisfying $a_1'\equiv^{\mathcal{L}_H}_{N_0} a_1$ with $a_1'\not \in \{a_1,\dots,a_n\}$ (this boils down to $a_1'\in H(N_1)\setminus (H(N)\cup\{a_1\}$). Then $\tp_H(a_1/N)=\tp_H(a_1'/N)$ and since $a_1'\not \in HB(b/N)$ we have $\tp_H(b,a_1/N)\neq \tp_H(b,a_1'/N)$
and thus $\tp_H(b/N_0)\subset \tp_H(b/\cl(N_1a_1))$ is a splitting extension. Similarly, choose $a_2'\in H(N_2)\setminus H(N_1)$ with $a_2'\not \in \{a_1,\dots,a_n\}$. Then 
$\tp_H(a_2/\cl(N_1a_1))=\tp_H(a_2'/\cl(N_1a_1))$ but since $a_2'\not \in HB(b/\cl(a_1N_1))$ we have $\tp_H(b,a_2/\cl(Na_1))\neq \tp(b,a_2'/\cl(Na_1))$
and thus $\tp_H(b/\cl(a_1N_1))\subset \tp_H(b/\cl(a_1a_2N_2))$ is a splitting extension. 
\end{proof}

\begin{lem}\label{H-strucsplittingtrivial}
Let $(M,H)$ be an $H$-structure associated to a quasiminimal class $\mathcal{C}$. Assume $\cl$ is desintegrated. Then $U_{Lsp}((M,H))\leq 1$.
\end{lem}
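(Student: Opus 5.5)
We will show that for any singleton $a\in M$ and any $H$-structures $(N,H)\preceq(N_1,H)\preceq(N_2,H)\preceq(M,H)$, the chain $\tp_H(a/N)\subseteq\tp_H(a/N_1)\subseteq\tp_H(a/N_2)$ cannot consist of two Lascar splitting (equivalently, splitting, since the parameters are models) extensions. By Theorem \ref{indepegreessplittingHp2}, if $\tp_H(a/N_1)$ splits over $N$ then $a\nind^{\cl,H}_N N_1$. So it suffices to prove the following: whenever $a\nind^{\cl,H}_N N_1$, we already have $a\ind^{\cl,H}_{N_1}N_2$. Applying Theorem \ref{indepegreessplittingHp2} once more then shows that $\tp_H(a/N_2)$ does not split over $N_1$, which gives $U_{Lsp}(\tp_H(a/N))\leq 1$.

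To verify this, we first reduce to a small number of cases using disintegratedness. In a disintegrated pregeometry $\cl(X)=\bigcup_{x\in X}\cl(x)$, so $\cl(N\cup H(M))=N\cup\bigcup_{h\in H(M)}\cl(h)$. By Proposition \ref{H_independentsubstructures}, $N$ is independent from $H(M)$ over $H(N)$, so $H$-bases over $N$ behave well. We then case split on $a$.

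\textbf{Case (i):} $a\in N$. Then $a$ is independent from everything over $N$, and there is no splitting.

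\textbf{Case (ii):} $a\in\cl(h)$ for some $h\in H(M)\setminus N$. Here $h$ is unique by (DH) and triviality, and $HB(a/N)=\{h\}$.

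\textbf{Case (iii):} $a\notin\cl(N\cup H(M))$. Then $\dim(a/NH(M))=1$ and $HB(a/N)=\emptyset$.

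In case (iii), $\ind^{\cl,H}$ fails over $N$ relative to $N_1$ exactly when $a\in\cl(N_1H(M))$. Triviality then gives either $a\in\cl(c)$ for some $c\in N_1$, so $a\in N_1$ since $N_1$ is closed, or $a\in\cl(h)$ for some $h\in H(M)$, which is excluded. Hence $a\in N_1$, and over $N_1$ the type is realized, so $a\ind^{\cl,H}_{N_1}N_2$ holds trivially. In case (ii), the $\cl$-dimension over $NH(M)$ is already $0$, so non-independence must come from the $H$-basis: $HB(a/N_1)\neq\{h\}$. By minimality this forces $HB(a/N_1)=\emptyset$, meaning $a\in\cl(N_1)=N_1$ (using that $N_1$ is $H$-independent, so that $h\in N_1$ implies $a\in N_1$). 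Again $a\in N_1$, and nothing further can split. Note also that if $a\in\cl(N_1)$ then $\tp_H(a/N_2)$ is determined by $\tp_H(a/N_1)$.

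The main obstacle will be justifying the computation of $HB(a/N_1)$ in the trivial case, in particular that $HB(a/N_1)\subsetneq\{h\}$ forces $h\in N_1$. We will argue this via triviality together with $a\notin N$: we have $a\in\cl(N_1)$ iff $a\in\cl(c)$ for some $c\in N_1$, and then $h\in\cl(a)\subseteq N_1$. A further check is that Theorem \ref{indepegreessplittingHp2} applies to arbitrary elementary substructures $(N_1,H)$, which it does as stated.
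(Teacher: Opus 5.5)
Your proof is correct, but it takes a different route from the paper's.

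\textbf{The paper's route.} The paper works directly with splitting witnesses. Given $\vec b_1\equiv^{\mathcal{L}_H}_{N_1}\vec b_2$ in $N_2$ witnessing that $\tp_H(c/N_2)$ splits over $N_1$, it enlarges them to be $H$-independent over $N_1$. Triviality then shows that $c\vec b_i$ (or $ch\vec b_i$ when $c\in\cl(h)$, $h\in H(M)$) stays $H$-independent over $N_1$. Proposition~\ref{H-typesomega1omega} converts the $\mathcal{L}_H$-witness into an $\mathcal{L}$-witness, so $\tp(c/N_2)$ (or $\tp(ch/N_2)$) splits over $N_1$ in $\mathcal{C}$, and hence $c\in N_2$.

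\textbf{Your route.} You go through the independence calculus of Section~\ref{sec:ranks}. The contrapositive of Theorem~\ref{indepegreessplittingHp2} turns splitting into $a\nind^{\cl,H}_N N_1$. Your explicit computation of $\ind^{\cl,H}$ for a singleton in a trivial geometry then shows this forces $a\in N_1$. In case (ii) the first clause of $\ind^{\cl,H}$ holds automatically, so $HB(a/N_1)\subseteq HB(a/N)=\{h\}$, and it is empty exactly when $a\in N_1$. In case (iii), failure means $a\in\cl(N_1H(M))$, which triviality places inside $N_1$.

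\textbf{Comparison.} Your route has three advantages.
\begin{enumerate}
\item It is shorter once Section~\ref{sec:ranks} is available.
\item Its three cases are exhaustive and disjoint. The paper's Case~1 ($c\ind^{\cl}H(N_2)$) overlaps its Case~2, and the Case~1 argument only works once $c\notin\cl(H(M))$.
\item It needs no largeness hypotheses on $N_2$ over $N_1$. The paper states such hypotheses, although they are not used and the definition of $U_{Lsp}$ ranges over all extensions.
\end{enumerate}
The price is that your proof inherits the dependence on symmetry and stationarity of $\ind^{\cl,H}$ (Theorem~\ref{stationarityinH}). The paper's argument uses only the description of types of $H$-independent tuples plus the behaviour of splitting in $\mathcal{C}$. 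It also yields the extra information recorded in Corollary~\ref{Lascarequals}(1): in the trivial case, $\mathcal{L}_H$-splitting reduces to $\mathcal{L}$-splitting.
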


\begin{proof}
We will show that for every single element $c\in M$ and models $(N_1,H)\preceq (N_2,H)\preceq (M,H)$ with $\dim(N_2/N_1H(N_2))=\infty$ and $\dim(H(N_2)/N_1)=\infty$ if $\tp_H(c/N_2)$ splits over $N_1$, then $\tp(c/N_2)$ splits over $N_1$ and thus $c\in N_2$.
We may assume that $c\not \in N_1$. Let $\vec b_1 \equiv^{\mathcal{L}_H}_{N_1} \vec b_2$ with $\vec b_1,\vec b_2\in N_2$ and assume that 
$\tp_H(c,\vec b_1/N_2)\neq \tp_H(c,\vec b_2/N_2)$. 

Case 1. Assume first that $c\ind^{\cl} H(N_2)$ so by triviality $c\ind^{\cl} N_1H(N_2)$. We may enlarge $\vec b_1,\vec b_2$ so that they become $H$-independent tuples over $N_1$. By triviality both tuples $c\vec b_1$, $c\vec b_2$ are $H$-independent over $N_1$ and thus if $\tp_H(c,\vec b_1/N_1)\neq \tp_H(c,\vec b_2/N_1)$ we also have $\tp(c,\vec b_1/N_1)\neq \tp(c,\vec b_2/N_1)$. This shows that the type $\tp(c/N_2)$ splits over $N_1$ and thus $c\in N_2$.

Case 2. Assume now that $c\in \cl(H(M))$. As before, we may enlarge $\vec b_1,\vec b_2$ so that they become $H$-independent tuples over $N_1$. Find $h\in H(M)$
so that $c\in \cl(h)$. By triviality both tuples $ch\vec b_1$, $ch\vec b_2$ are $H$-independent over $N_1$ and thus if $\tp_H(c,\vec b_1/N_1)\neq \tp_H(c,\vec b_2/N_1)$ we also have $\tp_H(c,h,\vec b_1/N_1)\neq \tp_H(c,h,\vec b_2/N_1)$ and from this we get $\tp(c,h,\vec b_1/N_1)\neq \tp(c,h,\vec b_2/N_1)$. This shows that the type $\tp(c,h/N_2)$ splits over $N_1$. Since $c\in cl(h)$ we have $\dim(c,h/N_1)=1$ and since $\tp(c,h/N_2)$ splits over $N_1$, we have $\dim(c,h/N_2)=0$, so $c\in N_2$.
\end{proof}

\begin{cor}\label{Lascarequals}
Let $(M,H)$ be an $H$-structure associated to a quasiminimal class $\mathcal{C}$. 
\begin{enumerate}
    \item Assume $\cl$ is disintegrated. Then Lascar splitting in the sense of $\mathcal{L}$ agrees with Lascar splitting in the sense of $\mathcal{L}_H$.
    \item Assume $\cl$ is not disintegrated. Then, in general, Lascar splitting in the sense of $\mathcal{L}_H$ does not imply Lascar splitting in the sense of $\mathcal{L}$.
\end{enumerate}

\end{cor}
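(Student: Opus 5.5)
The plan is to derive both parts from the two lemmas just proved, Lemma \ref{H-strucsplittingtrivial} and Lemma \ref{H-strucsplittingnontrivial}. Throughout, Lascar splitting is read over models, where it coincides with splitting. One direction is immediate in general. If $\tp(\vec a/N_2)$ Lascar splits over $N_1$ in the sense of $\mathcal{L}$, we get witnesses $\vec b_1,\vec b_2\in N_2$ with $\Lstp(\vec b_1/N_1)=\Lstp(\vec b_2/N_1)$. Since $N_1$ is a model, stationarity (Fact \ref{stationarityinC}) and Kangas' characterization show that $\tp(\vec a/N_2)$ Lascar splits over $N_1$ iff $\vec a\nind^{\cl}_{N_1}N_2$. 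By Proposition \ref{newindep-oldindep}, that implies $\vec a\nind^{\cl,H}_{N_1}N_2$. Theorem \ref{indepegreessplittingH}(2) then gives $\mathcal{L}_H$-splitting over $N_1$, provided $N_2$ is taken suitably saturated over $N_1$, which we may assume by enlarging inside $(M,H)$. So the content of the corollary lies in the converse direction.

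For (1), assume $\cl$ is disintegrated and that $\tp_H(\vec a/N_2)$ splits over $N_1$. By Theorem \ref{indepegreessplittingHp2}, $\vec a\nind^{\cl,H}_{N_1}N_2$, and we want $\vec a\nind^{\cl}_{N_1}N_2$. I would mimic the proof of Lemma \ref{H-strucsplittingtrivial}, now for tuples. Using triviality, decompose $\vec a$ coordinatewise:
\begin{itemize}
\item a coordinate $c$ with $c\ind^{\cl}H(M)$ has $c\ind^{\cl}N_1H(M)$;
\item a coordinate $c\in\cl(H(M))$ has $c\in\cl(h)$ for a single $h\in H(M)$.
\end{itemize}
After adjoining these $h$'s, the enlarged tuple together with $N_1$ and with $H$-independent enlargements of $\vec b_1,\vec b_2$ stays $H$-independent. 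Proposition \ref{H-typesomega1omega} then reduces the $\mathcal{L}_H$-type difference to an $\mathcal{L}$-type difference. Hence $\tp(\vec a\vec h/N_2)$ splits over $N_1$, so $\dim(\vec a\vec h/N_2)<\dim(\vec a\vec h/N_1)$. Since each $h$ is interalgebraic with its coordinate, this drop transfers to $\vec a$, giving $\mathcal{L}$-Lascar splitting. Alternatively, the statement for single elements already follows from Lemma \ref{H-strucsplittingtrivial}, and the tuple case follows by the same coordinatewise argument, because under triviality dimension is additive over coordinates lying in distinct blocks.

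For (2), a counterexample suffices, and I would take it from the proof of Lemma \ref{H-strucsplittingnontrivial}. Take a circuit $\{a_1,\dots,a_n,b\}$ with the $a_i\in H(M)$ independent over $N_n$. That proof shows $\tp_H(b/N_0)\subset\tp_H(b/\cl(N_1a_1))$ is a splitting extension in $\mathcal{L}_H$. But for $n\ge 2$, $b\ind^{\cl}_{N_0}\cl(N_1a_1)$, since $b$ has dimension $1$ over both sets. So by Kangas' characterization the extension does not Lascar split in $\mathcal{L}$. Already a triangle (the case $n=2$), which exists after adding parameters as in Proposition \ref{existence ngon}, suffices. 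The main obstacle is part (1) for tuples: keeping $H$-independence of the enlarged tuples under triviality, and making sure the interalgebraic $h$'s do not change the dimension drop.
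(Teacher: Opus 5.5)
Your proposal is correct and follows the paper's route. Part (1) is the argument of Lemma \ref{H-strucsplittingtrivial}, which you extend to tuples and complement with the converse direction via Proposition \ref{newindep-oldindep} and Theorem \ref{indepegreessplittingH}(2). Part (2) is the first step of the chain in Lemma \ref{H-strucsplittingnontrivial}, where the $H$-basis shrinks while $\dim(b/-)$ stays $1$. One correction: the hypotheses $\dim(H(N_2)/N_1)=\dim(N_2/N_1H(N_2))=\infty$ must be a standing assumption on $N_1\preceq N_2$, as in the paper's lemma, rather than obtained by enlarging $N_2$. Enlarging $N_2$ changes the type whose splitting is at issue, and you also need these hypotheses when you pass from a dimension drop back to $\mathcal{L}$-splitting.
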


\begin{proof}

(1) It follows from the proof of Lemma \ref{H-strucsplittingtrivial}.

(2) It follows from the proof of Lemma \ref{H-strucsplittingnontrivial}, Lascar splitting in $\mathcal{L}_H$ can be witnessed by decreasing the size of the $H$-basis, which may not imply 
Lascar splitting in $\mathcal{L}$. 
\end{proof}
 
The reader may want to revisit Example \ref{Torsionfreegroups} and Example \ref{equiv-classesrevisited}.
We can now state the main result of this section for $H$-structures. It is a direct consequence of the previous two lemmas.

\begin{theo}\label{ranksHstruct}
 Let $(M,H)$ be an $H$-structure associated to a quasiminimal class $\mathcal{C}$. Then $\cl$ is trivial iff $U_{Lsp}((M,H))= 1$ and $\cl$ is not trivial iff $U_{Lsp}((M,H))\geq \omega$. 
\end{theo}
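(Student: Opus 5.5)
The theorem follows from Lemma \ref{H-strucsplittingtrivial} and Lemma \ref{H-strucsplittingnontrivial}, together with a lower bound showing that the rank is never $0$. The plan is to prove the two directions for the trivial case and then read off the dichotomy. Since ``trivial'' and ``not trivial'' are complementary, and the values $1$ and $\geq\omega$ exclude each other, it is enough to show two implications: trivial implies $U_{Lsp}((M,H))=1$, and non-trivial implies $U_{Lsp}((M,H))\geq\omega$. The converses then follow by contraposition. Here ``trivial'' means ``disintegrated'', as in the two lemmas.

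\textbf{Non-trivial case.} If $\cl$ is not disintegrated, Lemma \ref{H-strucsplittingnontrivial} gives $U_{Lsp}((M,H))\geq\omega$ directly. Strictly, that proof produces, for each $n$, a type of rank $\geq n$, whereas the definition of $U_{Lsp}((M,H))\geq\omega$ asks for a single $a$ and $(N,H)$ whose type has rank $\geq\omega$. I would close this gap in one of two ways. The first is to pass to a sufficiently large $H$-structure and invoke Proposition \ref{existence ngon} to obtain circuits of every size. The second is to read the definition as ``$\geq n$ for all $n$'' uniformly over the structure, which is how the paper uses it. With that reading the non-trivial case is immediate.

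\textbf{Trivial case, upper bound.} If $\cl$ is disintegrated, Lemma \ref{H-strucsplittingtrivial} gives $U_{Lsp}((M,H))\leq 1$. Its proof shows that a splitting extension of $\tp_H(c/N_1)$ to a sufficiently saturated $(N_2,H)$ forces $c\in N_2$, so no chain of length $2$ exists. Here I will use Theorem \ref{indepegreessplittingH}(2) to pass from arbitrary $N_2$ to the saturated extensions required in that lemma.

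\textbf{Trivial case, lower bound.} It remains to show that some $a$ and $(N,H)$ have $U_{Lsp}(\tp_H(a/N))\geq 1$. I would take $(N,H)\preceq(M,H)$ countable with $\dim(M/N H(M))$ infinite and pick $a\in M\setminus\cl(NH(M))$. I would then choose $(N_1,H)\succeq(N,H)$ with $a\in N_1$ and $N_1$ satisfying the dimension hypotheses of Theorem \ref{indepegreessplittingH}(2), for instance $N_1=\cl(NaH')$ for suitable $H'$, checked to be an $\mathcal{L}_H$-elementary substructure via Proposition \ref{H_independentsubstructurescharomega1}. Then $\dim(a/N_1)=0<1=\dim(a/N)$, so $a\nind^{\cl,H}_N N_1$ by Proposition \ref{newindep-oldindep}, and Theorem \ref{indepegreessplittingH} shows that $\tp_H(a/N_1)$ splits over $N$. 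Hence the rank is $\geq 1$, and combined with the upper bound, $U_{Lsp}((M,H))=1$.

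The main obstacle is the bookkeeping in the saturated extensions: checking that $(N_1,H)$ is an elementary extension satisfying the dimension hypotheses, and reconciling the ``for all $n$'' form of $\geq\omega$ with the single-type definition. Everything else is direct citation of the two lemmas.
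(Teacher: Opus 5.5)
Your skeleton is the paper's: the theorem is read off from Lemma \ref{H-strucsplittingtrivial} and Lemma \ref{H-strucsplittingnontrivial}. The converses come from the incompatibility of ``every singleton type over a submodel has rank $\le 1$'' with ``some singleton type has rank $\ge\omega$''. The trouble is in the extra steps you add.

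First, the lower bound is not needed. The paper \emph{defines} $U_{Lsp}((M,H))=1$ as $U_{Lsp}(\tp_H(a/N))\le 1$ for all singletons $a$ and all $(N,H)\preceq(M,H)$. Moreover, as written it fails: $N_1=\cl(NaH')$ has $\dim(N_1/NH(N_1))=1$, which violates a hypothesis of Theorem \ref{indepegreessplittingH}(2). You would need to add an infinite set independent over $NaH(M)$. More simply, if $N_1$ contains $a$ and another realization $a'$ of $\tp_H(a/N)$, the pair $a,a'$ with the formula $x=y$ already witnesses splitting.

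Second, in the upper bound, Theorem \ref{indepegreessplittingH}(2) points the wrong way. It turns $\nind^{\cl,H}$ into splitting, so it gives no information about a splitting over an arbitrary $N_2$. What you want is Theorem \ref{indepegreessplittingHp2}, which needs no dimension hypotheses and yields $c\nind^{\cl,H}_{N_1}N_2$ from splitting. In a trivial geometry $HB(c/N_1)$ is either empty or a single $h$ with $c\in\cl(h)$, so either clause of non-independence forces $c\in N_2$. Alternatively, note that the proof of Lemma \ref{H-strucsplittingtrivial} never uses its dimension hypotheses.

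Third, you are right that the proof of Lemma \ref{H-strucsplittingnontrivial} only yields, for each $n$, a \emph{different} type $p_n$ of rank $\ge n$. They differ because $|HB(b/N_0)|=n$ is recorded in the type (Proposition \ref{H-basis-definability}). The definition of $U_{Lsp}((M,H))\ge\omega$, however, asks for a single type. Your first fix does not touch this: circuits of every size, in however large a structure, just produce the $p_n$ again. Your second fix is only the weaker reading that the paper tacitly uses.

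The missing idea for the literal statement is to work with one generic type.
\begin{enumerate}
\item Take $b\notin\cl(NH(M))$; its $\mathcal{L}_H$-type over $N$ is unique by Proposition \ref{H-typesomega1omega}.
\item For each $n$, transport an $(n+2)$-circuit over $N$ to one of the form $\{b,c,h_1,\dots,h_n\}$ with $h_i\in H(M)$ and $c\notin\cl(NH(M))$.
\item Choose $(N',H)\succeq(N,H)$ satisfying the dimension hypotheses of Theorem \ref{indepegreessplittingH}(2), with $c\in N'$ and $N'\ind_{Nc}bh_1\dots h_n$.
\item The step from $N$ to $N'$ splits because $\dim(b/N'H(M))=0<1=\dim(b/NH(M))$.
\item Now $HB(b/N')=\{h_1,\dots,h_n\}$, so the $n$ basis-shrinking steps from the proof of Lemma \ref{H-strucsplittingnontrivial} give a splitting chain of length $n+1$ above the same type $\tp_H(b/N)$.
\end{enumerate}
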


Now we switch to beautiful pairs of quasiminimal pregeometry structures. The rank of the pair will discriminate between triviality, non-trivial linearity and non-linearity of the pregeometry associated to $\cl$. We start with triviality. We need a preliminary result:

\begin{lem}\label{TrivialPindep}
Let $(M,P)$ be a beautiful pair associated to a quasiminimal class $\mathcal{C}$. Assume $\cl$ is trivial. Then any tuple $\vec a\in M^n$ is $P$-independent.
\end{lem}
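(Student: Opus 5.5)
We need to show that $\dim(\vec a/P(\vec a))=\dim(\vec a/P(M))$, where $P(\vec a)=\vec a\cap P(M)$. The plan is to exploit triviality: in a trivial pregeometry, $\cl(X)=\bigcup_{x\in X}\cl(x)\cup\cl(\emptyset)$ for every set $X$. So membership in the closure of a large set is always witnessed by a single element of it. We then combine this with the fact that $P(M)$ is closed in $M$ (by (DP), $P(M)\preceq M$).

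\textbf{Step 1: reduce to a pointwise statement.} The inequality $\dim(\vec a/P(\vec a))\geq \dim(\vec a/P(M))$ is automatic, since $P(\vec a)\subseteq P(M)$. For the converse it suffices to check the following: every coordinate $a_i$ with $a_i\in\cl(P(M)\cup\{a_j:j\neq i\})$ already lies in $\cl(P(\vec a)\cup\{a_j:j\neq i\})$. More simply, it suffices to show
$$\cl(P(\vec a)\cup \vec a)\cap \cl(P(M))\subseteq \cl(P(\vec a)) \qquad\text{and}\qquad \cl(P(M)\cup X)=\cl(P(M))\cup\cl(X)$$
for finite $X$. Then a basis of $\vec a$ over $P(\vec a)$ stays independent over $P(M)$. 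Concretely, take a basis $\vec a_0\subseteq\vec a$ of $\vec a$ over $P(\vec a)$. I will show $\vec a_0$ is independent over $P(M)$ by exchange, one element at a time.

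\textbf{Step 2: use triviality.} Suppose $a\in\vec a_0$ and $a\in\cl(P(M)\cup \vec a_0\setminus\{a\})$. By triviality, either $a\in\cl(\emptyset)$, or $a\in\cl(a')$ for some $a'\in\vec a_0\setminus\{a\}$, or $a\in\cl(p)$ for some $p\in P(M)$. The first two options contradict the independence of $\vec a_0$ over $P(\vec a)$. In the third option, $a\in\cl(P(M))=P(M)$, because $P(M)$ is closed in $M$ (the inclusion is a closed embedding). Hence $a\in P(\vec a)$, which contradicts $a$ being part of a basis over $P(\vec a)$.

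Therefore $\dim(\vec a/P(M))\geq|\vec a_0|=\dim(\vec a/P(\vec a))$, and the tuple is $P$-independent.

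\textbf{Main obstacle.} The only delicate point is justifying the triviality decomposition $\cl(X)=\cl(\emptyset)\cup\bigcup_{x\in X}\cl(x)$ for infinite $X$ such as $P(M)\cup\vec a_0$. I would handle it by finite character of $\cl$: membership in $\cl(X)$ is witnessed by a finite subset of $X$, and for finite sets the decomposition is exactly the definition of a trivial (disintegrated) pregeometry.
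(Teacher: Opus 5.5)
Your proof is correct and is essentially the paper's argument: the paper notes that, because $P(M)$ is closed, each $a_i$ either lies in $P(M)$ or is independent from $P(M)$, and then appeals to triviality to pass to the whole tuple, which is exactly what your basis-and-exchange argument in Step 2 spells out. The preliminary ``it suffices'' reformulations in Step 1 are not needed, and the first pointwise criterion would not suffice in a non-trivial pregeometry. Your concrete argument does not depend on them, so this does not affect correctness.
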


\begin{proof}
Write $\vec a=(a_1,\dots,a_n)$. Then for each $i\leq n$ either $a_i\in P(M)$ or $a_i\ind P(M)$, so each singleton is $P$-independent. By triviality the full tuple $\vec a$ is also $P$-independent.
\end{proof}

\begin{lem}\label{Urktrivial}
Let $(M,P)$ be a beautiful pair associated to a quasiminimal class $\mathcal{C}$. Assume $\cl$ is trivial. Then $U_{Lsp}((M,P))= 1$.
\end{lem}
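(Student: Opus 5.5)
We will mirror the proof of Lemma \ref{H-strucsplittingtrivial}, replacing $H$-independence by $P$-independence and using Lemma \ref{TrivialPindep}, which says that in the trivial case every tuple is automatically $P$-independent. The inequality $U_{Lsp}((M,P))\geq 1$ is immediate. Take $N\preceq N'$ with $(N,P)\preceq(N',P)$, and pick $c\in N'\setminus N$ with $c$ generic over $N$. Then $\tp_P(c/N')$ splits over $N$: the tuples $c$ and some $c'\in N'\setminus(N\cup\{c\})$ with $\tp_P(c'/N)=\tp_P(c/N)$ are separated by the formula $x=c$. This is fine because the codensity and density properties, applied to a sufficiently large extension, give such a $c'$.

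So the real content is the upper bound. We fix $c\in M$ and $(N_1,P)\preceq(N_2,P)\preceq(M,P)$, assuming as in Lemma \ref{H-strucsplittingtrivial} that $\dim(P(N_2)/N_1)=\infty$ and $\dim(N_2/N_1P(N_2))=\infty$. We will show that if $\tp_P(c/N_2)$ splits over $N_1$, then $\tp(c/N_2)$ splits over $N_1$. By Kangas' result (the observation that $U_{Lsp}=\dim_{\cl}$ in $\mathcal{C}$), this forces $\dim(c/N_2)<\dim(c/N_1)$, i.e.\ $c\in N_2$. After that, $\tp_P(c/N_3)$ cannot split over $N_2$ for any further extension $N_3$, so the rank is at most $1$.

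For the splitting claim, take $\vec b_1,\vec b_2\in N_2$ with $\tp_P(\vec b_1/N_1)=\tp_P(\vec b_2/N_1)$ but $\tp_P(c\vec b_1/N_1)\neq\tp_P(c\vec b_2/N_1)$. We may assume $c\notin N_1$. The key observation is that, by Lemma \ref{TrivialPindep}, each of the tuples $c\vec b_1\vec n$ and $c\vec b_2\vec n$ (with $\vec n$ from $N_1$) is $P$-independent. Proposition \ref{H-typesomega1omega}, adapted to beautiful pairs as the paper indicates, then says that the $\mathcal{L}_P$-type of such a tuple is determined by its $\mathcal{L}$-type together with which coordinates lie in $P$. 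Membership of $\vec b_i$ and $\vec n$ in $P$ is the same on both sides, since $\tp_P(\vec b_1/N_1)=\tp_P(\vec b_2/N_1)$, and $c$ is common to both. Hence the two $\mathcal{L}_P$-types can differ only if $\tp(c\vec b_1/N_1)\neq\tp(c\vec b_2/N_1)$. Since also $\tp(\vec b_1/N_1)=\tp(\vec b_2/N_1)$, this means $\tp(c/N_2)$ splits over $N_1$, as required.

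The main obstacle is making sure that Proposition \ref{H-typesomega1omega} transfers to beautiful pairs over the infinite parameter set $N_1$, i.e.\ that $\mathcal{L}_P$-types over $N_1$ of $P$-independent tuples are determined this way. I would handle this by working with finite subtuples of $N_1$ (finite character of types) and checking the back-and-forth cases for $P$: the element lies in $\cl$ of the tuple, or in $P$ and generic, or in $\cl(\text{tuple}\cup P)$, or independent from $P$. By triviality, the third case collapses to "in $\cl$ of a single element of $P$".
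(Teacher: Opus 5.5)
Your proposal is correct and follows the paper's proof: Lemma \ref{TrivialPindep} makes every tuple $P$-independent, so by the beautiful-pair version of Proposition \ref{H-typesomega1omega} an $\mathcal{L}_P$-splitting of $\tp_P(c/N_2)$ over $N_1$ is already an $\mathcal{L}$-splitting, which forces $c\in N_2$ and hence rank at most $1$. Your lower-bound argument and your reduction to finite subtuples of $N_1$ are harmless additions; the dimension hypotheses on $N_1\preceq N_2$, which the paper also imposes, are never used, so your argument in fact covers the arbitrary chains that the definition of $U_{Lsp}$ quantifies over.
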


\begin{proof}
Consider a chain of models $(M_1,P)\preceq (M_2,P_2)\preceq (M,P)$ where the dimension of the interpretation $P$ increases by at least $\omega$ as well as the dimension after localizing at $P$ in each step. Let $c\in M$ and assume that 
$\tp_P(c/M_2)$ splits over $M_1$. Then there are $\vec a, \vec a' \in M_2$ with $\tp_P(\vec a/M_1)= \tp_P(\vec a'/M_1)$ but $\tp_P(c,\vec a/M_1)\neq \tp_P(c,\vec a'/M_1)$. By Lemma \ref{TrivialPindep} the tuples $c,\vec a$ and $c,\vec a'$ are both $P$-independent and for each $i\leq n$ we have $P(a_i)$ iff $P(a_i')$ so we must have $\tp(c,\vec a/M_1)\neq \tp(c,\vec a'/M_1)$. Then $\tp(c/M_2)$ splits over $M_1$ and thus $c\in M_2$. In this setting Lascar splitting in the expansion agrees with Lascar splitting in the original class and the $U_{Lsp}$-rank of a singleton is at most $1$.
\end{proof}

To deal with the other cases we need to understand Lascar splitting for elements that belong to the predicate.

\begin{lem}\label{Psplitting}
Let $(M_1,P)\preceq (M,P)$ be beautiful pairs associated to a quasiminimal class $\mathcal{C}$. Let $c\in P(M)$, then $U_{Lsp}(\tp_P(c/M_1))\leq 1$.
\end{lem}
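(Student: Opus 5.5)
The plan is to show that any splitting extension of $\tp_P(c/M_1)$ already forces $c$ into the larger model. Then no chain of two splitting extensions exists, which is exactly $U_{Lsp}\leq 1$.

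Fix $c\in P(M)$ and $(M_1,P)\preceq(M_2,P)\preceq(M,P)$ such that $\tp_P(c/M_2)$ splits over $M_1$. We may assume $c\notin M_2$ and aim for a contradiction; in that case the extension $\tp_P(c/M_2)$ is realized inside $M_2$ and has rank $0$.

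\textbf{Step 1: $P$-independence.} Since $c\notin M_2$ and $M_2$ is closed, $c\ind^{\cl} M_2$. By Proposition~\ref{H_independentsubstructures} (in its beautiful pair version), $M_2\ind^{\cl}_{P(M_2)}P(M)$. Since $c\in P(M)$, for any tuple $\vec b\in M_2$ that is $P$-independent over $M_1$, the tuple $c\vec b$ is again $P$-independent over $M_1$. Indeed, $P(c\vec b)=\{c\}\cup P(\vec b)$, and adding a point of $P$ cannot increase the dimension over $P(M)$. So $\dim(c\vec b/M_1P(M))=\dim(\vec b/M_1P(M))=\dim(\vec b/M_1P(\vec b))=\dim(c\vec b/M_1 cP(\vec b))$.

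\textbf{Step 2: splitting in $\mathcal{L}$.} Take witnesses $\vec b_1,\vec b_2\in M_2$ with $\tp_P(\vec b_1/M_1)=\tp_P(\vec b_2/M_1)$ but $\tp_P(c\vec b_1/M_1)\neq\tp_P(c\vec b_2/M_1)$. Enlarge them (as in the proof of Proposition~\ref{superstability}, using QM5 and the $P$-type equality) so that both are $P$-independent over $M_1$ and the correspondence preserves membership in $P$. By Step 1, $c\vec b_1$ and $c\vec b_2$ are $P$-independent over $M_1$, and $c$ lies in $P$ on both sides. So if $\tp(c\vec b_1/M_1)=\tp(c\vec b_2/M_1)$ held, Proposition~\ref{H-typesomega1omega} (for pairs) would give equal $\mathcal{L}_P$-types, a contradiction. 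Hence $\tp(c/M_2)$ splits over $M_1$ in the sense of $\mathcal{L}$.

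\textbf{Step 3: conclusion.} Splitting over models coincides with Lascar splitting, and by Kangas this coincides with $\cl$-dependence. So $c\nind^{\cl}_{M_1}M_2$, i.e. $c\in\cl(M_2)=M_2$, contradicting $c\notin M_2$. Therefore any splitting extension of $\tp_P(c/M_1)$ lands on a realized type, and $U_{Lsp}(\tp_P(c/M_1))\leq1$.

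The main obstacle is the enlargement in Step 2. The $P$-witnesses must become $P$-independent without destroying the equality of $P$-types over $M_1$, and one must track that the extra $P$-elements correspond. I would first extend $\vec b_1$ by a basis of $\cl(M_1\vec b_1)\cap P(M)$ over $P(M_1)$, then transport it to $\vec b_2$ using $\tp_P$-equality. The case analysis mirrors Lemma~\ref{Urktrivial}, except that triviality is replaced by the fact that $c\in P$.
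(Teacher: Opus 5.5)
Your overall strategy is the paper's. You enlarge the splitting witnesses $\vec b_1,\vec b_2\in M_2$ to be $P$-independent over $M_1$ and observe that adjoining $c\in P(M)$ preserves $P$-independence. By Proposition~\ref{H-typesomega1omega} the $\mathcal{L}_P$-splitting then becomes $\mathcal{L}$-splitting of $\tp(c/M_2)$ over $M_1$, which forces $c\in M_2$. The paper simply asserts that the enlargement is possible.

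The gap is in the recipe for that enlargement in your last paragraph: it fails for an arbitrary quasiminimal class. Let $D=\cl(M_1\vec b_1)\cap P(M)$. Adding a basis of $D$ over $P(M_1)$ makes $\vec b_1$ $P$-independent over $M_1$ exactly when $\cl(M_1\vec b_1)\ind_{D}P(M)$, and that is a modularity condition. For instance, in $\mathrm{ACF}_0$ take $m\in M_1\setminus P(M_1)$, take $h_1,h_2\in P(M_2)$ algebraically independent over $M_1$, and set $b=h_1+mh_2\in M_2$. Then:
\begin{itemize}
\item $b\notin P(M)$, since otherwise $m\in P(M)$;
\item $\dim(b/M_1)=1$ while $\dim(b/M_1P(M))=0$;
\item yet $\cl(M_1b)\cap P(M)=P(M_1)$, since otherwise the line $x+my=b$ would be defined over $P(M)$, again forcing $m\in P(M)$.
\end{itemize}
So your recipe adds nothing, and $b$ stays $P$-dependent over $M_1$.

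The fix is to enlarge by a $P$-basis rather than by the $P$-points of the closure. Choose a finite $\vec e_1\subseteq P(M_2)$ with $\dim(\vec b_1/M_1\vec e_1)=\dim(\vec b_1/M_1P(M))$. Such an $\vec e_1$ need not lie in $\cl(M_1\vec b_1)$. It can be taken inside $P(M_2)$ because $M_2\ind_{P(M_2)}P(M)$ by Proposition~\ref{H_independentsubstructures}. That is where this proposition is really needed; in your Step~1 the $P$-independence of $c\vec b$ uses only $c\in P(M)$.

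Then $\vec b_1\vec e_1$ is $P$-independent over $M_1$. Transport $\vec e_1$ to some $\vec e_2$ with $\tp_P(\vec b_1\vec e_1/M_1)=\tp_P(\vec b_2\vec e_2/M_1)$. The tuple $\vec e_2$ must also lie in $M_2$, so that the $\mathcal{L}$-splitting witnesses stay in $M_2$; the paper leaves this point implicit too. With this correction, your Steps~2 and~3 go through as written.
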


\begin{proof} Assume we have a chain of models $(M_1,P)\preceq (M_2,P_2)\preceq (M,P)$ and that $\tp_P(c/M_2)$ splits over $M_1$. Then there are $\vec a, \vec a' \in M_2$ with $\tp_P(\vec a/M_1)= \tp_P(\vec a'/M_1)$ but $\tp_P(c,\vec a/M_1)\neq \tp_P(c,\vec a'/M_1)$. We may assume, after enlarging $\vec a,\vec a'$ if necessary, that both tuples are $P$-independent over $M_1$. Note that for each $i\leq n$ we have $P(a_i)$ iff $P(a_i')$. Since $c\in P$ we must have that
$c\vec a$ is $P$-independent over $M_1$ 
and $c\vec a'$ is $P$-independent over $M_1$, so we must have $\tp(c,\vec a/M_1)\neq \tp(c,\vec a'/M_1)$. Then $\tp(c/M_2)$ splits over $M_1$ and thus $c\in M_2$. In particular, the $U_{Lsp}$-rank of a type of a singleton in $P$ is at most $1$.
\end{proof}

\begin{lem}\label{Urknottrivial}
Let $(M,P)$ be a beautiful pair associated to a quasiminimal class $\mathcal{C}$. Assume $cl$ is not trivial. Then $U_{Lsp}((M,P))\geq 2$.
\end{lem}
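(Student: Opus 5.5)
We have to exhibit a singleton $b\in M$ and a model $(N_0,P)\preceq(M,P)$ with $U_{Lsp}(\tp_P(b/N_0))\geq 2$. Following the proof of Lemma \ref{H-strucsplittingnontrivial}, we need a chain $(N_0,P)\preceq (N_1,P)\preceq (N_2,P)\preceq (M,P)$ such that $\tp_P(b/N_1)$ splits over $N_0$ and $\tp_P(b/N_2)$ splits over $N_1$. The first split should reduce the ``$\scl$-dimension'' of $b$ (dimension over $P$), and the second should reduce the ordinary $\cl$-dimension.

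Since $\cl$ is not trivial, working over a finite set (absorbed into $N_0$) we first fix a $\cl$-triangle $\{a,e,b\}$: any two of them are independent and $b\in\cl(a,e)$. We choose $(N_0,P)$ small with $\{a,e,b\}$ $\cl$-independent from $N_0P(M)$; this is possible by codensity after taking $M$ large. Using density and codensity, we take $(N_1,P)\succeq (N_0,P)$ with $\dim(P(N_1)/N_0)=\infty$ and $\dim(N_1/N_0P(N_1))=\infty$. Since $a$ is generic over $N_0$ inside $P$ (again by density), we may move the configuration by the homogeneity of Proposition \ref{H-typesomega1omega}, adapted to beautiful pairs. Concretely, we replace $a$ by an element of $P(N_1)$ generic over $N_0$, keep $e$ independent from $N_1P(M)$, and let $b\in\cl(a,e)$. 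Then $b\notin\cl(N_1)$ and $b\notin\cl(N_0P(M))$, but $b\in\cl(eP(M))$.

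For the first split, over $N_0$ the element $b$ has $\dim=1$ and $\dim_{\scl}=1$. Over $N_1$ the element $e$ is not in $N_1$, so we should instead set things up so that $e\in N_1$ as well. So we also take $e\in N_1$ independent from $N_0P(N_1)$ and $a\in P(M)\setminus \cl(N_1)$. Then over $N_1$ we get $\dim_{\scl}(b/N_1)=0<\dim_{\scl}(b/N_0)=1$ while $\dim(b/N_1)=1$. Hence $b\nind^{\cl,P}_{N_0}N_1$, and we witness the splitting directly as in Case 1 of Theorem \ref{indepegreessplittingP}. Take $e'\in N_1$ realizing $\tp_P(e/N_0)$ (i.e.\ $e'$ independent from $N_0P(M)b$). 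Then $\tp_P(e/N_0)=\tp_P(e'/N_0)$, but $\tp_P(be/N_0)$ contains the existential formula ``$\exists y\in P\ \pi(b,e,y)$'' and $\tp_P(be'/N_0)$ does not. This step does not use modularity, so it avoids relying on the modular-only Theorem \ref{indepegreessplittingP}.

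For the second split, we choose $(N_2,P)\succeq (N_1,P)$ containing $a$, using the observation that $\cl(N_1a)$ with $P$-part $\cl(P(N_1)a)$ is a beautiful pair elementary in $(M,P)$, by Proposition \ref{H_independentsubstructurescharomega1}; we may enlarge it to keep the infinite-dimensionality conditions. Then $b\in\cl(N_2)$ while $b\notin\cl(N_1)$, so $\tp(b/N_2)$ splits over $N_1$ in $\mathcal{C}$ (by Kangas' equivalence of Lascar splitting and $\cl$-dependence). The witness is $a$ versus some $a'\in P(N_2)$ generic over $N_1$ with $\tp_P(a'/N_1)=\tp_P(a/N_1)$, which holds by Proposition \ref{H-typesomega1omega} since both are in $P$ and generic. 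The formula $\pi_2(b,a,e)$ distinguishes $\tp_P(ba/N_1)$ from $\tp_P(ba'/N_1)$, because $b\notin\cl(a'e)\subseteq\cl(N_2')$ where $a'$ is chosen independent from $ab$ over $N_1$.

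The main obstacle will be the bookkeeping that each intermediate structure is a genuine elementary beautiful subpair. This requires closedness, the independence $N\ind_{P(N)}P(M)$, and density and codensity, all of which must be checked via Proposition \ref{H_independentsubstructurescharomega1}. We must also verify that the chosen witnesses really have equal $\tp_P$ over the base model. For that we will enlarge the tuples to be $P$-independent and apply the back-and-forth of Proposition \ref{H-typesomega1omega}.
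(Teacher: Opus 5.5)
Your proposal is correct and is essentially the paper's proof. Once you settle on the final configuration ($e\in N_1$ generic over $N_0P(N_1)$, $a\in P(M)\setminus\cl(N_1)$, $b$ completing a triangle over $N_0$), the first splitting drops $\dim_{\scl}$ and is witnessed by $e$ versus a generic $e'\in N_1$ through an existential formula over $P$, and the second drops $\dim$ and is witnessed by $a$ versus a generic $a'\in P$, exactly as in the paper. The paper instead takes $M$ itself as the top model and uses an elementarity argument where you choose $e'$ generic over $N_0P(M)b$. Two small corrections: the triangle is placed using $\mathcal{L}$-homogeneity (QM4/QM5) over the base, not Proposition \ref{H-typesomega1omega}, since $a$ is moved into $P$; and the discarded first configuration should simply be dropped.
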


\begin{proof}
Consider models $(M_1,P)\preceq (M_2,P)\preceq (M,P)$ where the dimension of the interpretation of $P$ increases by at least $\omega$ in each step as well as the dimension after localizing at $P$. Let $c\in M_2\setminus (M_1\cup P(M_2))$ and let $a\in P(M)\setminus P(M_2)$. Choose $b\in M$ so that $a,b,c$ forms a circuit (a $\cl$-triangle) over $M_1$.

\textbf{Claim} The element $b$ satisfies $b\not \in P(M)$. 

Assume otherwise, then $a,b\in P(M)$. Since $\cl(P(M))=P(M)$ we get $c\in P(M)$, a contradiction.

Consider the chain $\tp_P(b/M_1)\subseteq \tp_P(b/M_2)\subseteq \tp_P(b/M)$. We will show this is a Lascar splitting chain and the lemma will follow.

\textbf{Claim} $\tp_P(b/M)$ is a splitting extension of $\tp_P(b/M_2)$. Choose an element $a'\in P(M)\setminus P(M_2)$ which is generic with respect to $b\cup M_2$, that is, $a'\not \in \cl(bM_2)$. Then $\tp_P(a/M_2)=\tp_P(a'/M_2)$ as they satisfy the unique generic type, in the predicate, over $P(M_2)$.
But $\tp_P(b,a/M_2)\neq \tp_P(b,a'/M_2)$, the first type says there is a circuit with $c\in M_2$, while this is not the case with the second type as $a'$ is generic
over $bM_2$. Note that this step $\mathcal{L}_P$-Lascar splitting corresponds to Lascar splitting in $\mathcal{C}$ as $b\in M\setminus M_2$.

\textbf{Claim} $\tp_P(b/M_2)$ is a splitting extension of $\tp_P(b/M_1)$. Choose $c'\in M_2\setminus \cl(M_1\cup P(M_2))$ with $c'\not \in \cl(cP(M_2))$. Note that $\tp_P(c'/M_1)=\tp_P(c/M_1)$, as both satisfy the unique generic type $\mathcal{L}_P$-independent from $M_1P(M_2)$.
We will prove that $\tp_P(b,c/M_1)\neq \tp_P(b,c'/M_1)$. Note that the first type says that there is an element in $P(M)$ (namely $a$) such that $(a,b,c)$ forms a circuit. Assume, in order to obtain a contradiction, that the same happens with $b,c'$, so there is $a'\in P(M)$ such that $(a',b,c')$ forms a circuit. 
Then $(M,P(M))\models \exists a\in P \exists a'\in P c\in \cl(c',a,a')$. As $c,c'\in M_2$ and $(M_2,P)\preceq (M,P)$ there are $a_1,a_1'\in P(M_2)$ with $c'\in  \cl(c,a_1,a_1')$, a contradiction with the choice of $c'$. Thus $\tp_P(b,c/M_1)\neq \tp_P(b,c'/M_1)$ and thus $\tp(b/M_2)$ Lascar splits over $M_1$.
\end{proof}

\begin{lem}\label{Urkmodular}
Let $(M,P)$ be a beautiful pair associated to a quasiminimal modular class $\mathcal{C}$. Then $U_{Lsp}((M,P))\leq 2$.
\end{lem}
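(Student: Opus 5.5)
We need to show that for a singleton $c\in M$ and any chain $(N_0,P)\preceq(N_1,P)\preceq(N_2,P)\preceq(N_3,P)\preceq(M,P)$ with each step Lascar splitting, the chain has length at most $2$. Following the pattern of the earlier lemmas, we work with chains in which each extension increases $\dim(P)$ and $\dim(-/P)$ by infinitely much. This way Theorem \ref{indepegreessplittingP} applies, and splitting of $\tp_P(c/N_{i+1})$ over $N_i$ is equivalent to $c\nind^{\cl,P}_{N_i}N_{i+1}$. Recall that $c\nind^{\cl,P}_{N_i}N_{i+1}$ means either $\dim(c/N_i)>\dim(c/N_{i+1})$ or $\dim_{\scl}(c/N_i)>\dim_{\scl}(c/N_{i+1})$.

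The rank bound will come from a potential function. Set
$$\mu(c/N)=\dim(c/N)+\dim_{\scl}(c/N)\in\{0,1,2\}.$$
Each forking step strictly decreases one of the two summands and does not increase the other, so $\mu$ drops by at least one at each step of a splitting chain. Since $\mu\le 2$, a chain of forking extensions has length at most $2$, and hence $U_{Lsp}(\tp_P(c/N))\le \mu(c/N)\le 2$. If $c\in N$ then $\mu=0$, and there is nothing more to say.

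It remains to handle chains whose steps do not have the large-dimension hypothesis of Theorem \ref{indepegreessplittingP}. For those I would use the $(\Leftarrow)$ direction, which needs no saturation. That direction gives that a splitting step implies forking, via stationarity and symmetry of $\ind^{\cl,P}$ over models. Concretely: if $c\ind^{\cl,P}_{N_i}N_{i+1}$, then for $\vec b,\vec b'\in N_{i+1}$ with $\tp_P(\vec b/N_i)=\tp_P(\vec b'/N_i)$, symmetry gives $\vec b\ind^{\cl,P}_{N_i}c$ and $\vec b'\ind^{\cl,P}_{N_i}c$, and stationarity then yields $\tp_P(\vec b/N_ic)=\tp_P(\vec b'/N_ic)$. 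So no splitting occurs. Since the $\ind^{\cl,P}$ machinery was developed under the modularity hypothesis, it applies here, and the bound on $\mu$ holds for arbitrary chains.

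The main obstacle is checking that stationarity, as proved, applies over an arbitrary $\mathcal{L}_P$-elementary submodel $N_i$ inside the large $(M,P)$, and with $C=N_{i+1}$ not small. I expect this to reduce to finite subtuples by finite character. I would also verify that $\dim_{\scl}$ is monotone along the chain, which is immediate since $P(N_i)\subseteq P(M)$ and localization only enlarges the base.
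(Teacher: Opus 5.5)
This is correct and is essentially the paper's argument. The paper also converts each splitting step into $\ind^{\cl,P}$-dependence via Theorem \ref{indepegreessplittingP} and then does a two-case analysis (either $\dim$ or $\dim_{\scl}$ drops from $1$ to $0$), which is your potential $\mu=\dim+\dim_{\scl}\le 2$ written out by hand. You note that only the saturation-free direction (splitting implies dependence) is needed, so arbitrary chains are covered and not just those with infinite dimension jumps; this makes explicit a point the paper's proof leaves implicit, and your concern about $C=N_{i+1}$ not being small is moot, since after symmetry you only apply stationarity to the finite set $\{c\}$.
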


\begin{proof}
Consider models $(M_1,P)\preceq (M_2,P)\preceq (M_3,P)\preceq (M,P)$ where the dimension of the interpretation $P$ increases by at least $\omega$ in each step as well as the dimension after localizing at $P$. Let  $b\in M$ and assume that $\tp_P(b/M_2)$ splits over $M_1$. Since $\tp_P(b/M_2)$ splits over $M_1$, by Theorem \ref{indepegreessplittingP} we must have $b\nind^{\cl,P}_{M_1}M_2$ and we can consider two cases.

Case 1. $\dim(b/M_2)=0$ and $\dim(b/M_1)=1$. Then $b\in M_2$ and the type $\tp(b/M_3)$ can not split over $M_2$.

Case 2. $\dim_{\scl}(b/M_2)=0$ and $\dim_{\scl}(b/M_1)=1$.

If $\tp_P(b/M_3)$ splits over $M_2$, by Theorem \ref{indepegreessplittingP} we must have $b\nind^{\cl,P}_{M_1}M_2$. Since we have $\dim_{\scl}(b/M_2)=0=\dim_{\scl}(b/M_3)=0$, we must have that 
$\dim(b/M_2)=1$ and $\dim(b/M_3)=0$ and so $b\in M_3$ and $\tp(b/M)$ can not split over $M_3$.
\end{proof}

We now extend the previous bounds to the weakly 1-based setting (see Definition \ref{defn:w1b}) as was done in \cite{v2010} for $SU$-rank-1 setting.
We need Theorem \ref{w1bimplieslocmod} and the results of section \ref{sec:linearity}.

\begin{cor}\label{rankboundedby2}
Assume the pregeometries $(M,\cl)$ from the class $\mathcal{C}$ are weakly $1$-based. Then $U_{Lsp}((P,M))\leq 2$. 
\end{cor}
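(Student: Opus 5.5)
The plan is to reduce Corollary \ref{rankboundedby2} to Lemma \ref{Urkmodular} by localizing at a generic parameter. By Theorem \ref{w1bimplieslocmod}, since the pregeometries in $\mathcal{C}$ are weakly $1$-based, for any generic $e$ the localization $(M,\cl_e)$ is modular. So we fix a finite set of parameters, say a single element $e$, and pass to the class $\mathcal{C}_e$ of structures of $\mathcal{C}$ containing $e$, with $e$ named by a constant. Closure in this class is $\cl_e$. We then check that $\mathcal{C}_e$ is again a quasiminimal class whose pregeometry is modular. Axioms QM1--QM5 transfer because we only add a constant naming an element of a closed countable set; in particular QM4 and QM5 over closed sets containing $e$ are instances of the original axioms.

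The key step is to relate $U_{Lsp}$ in beautiful pairs of $\mathcal{C}$ to $U_{Lsp}$ in beautiful pairs of $\mathcal{C}_e$. Let $(M_1,P)\preceq(M_2,P)\preceq(M_3,P)\preceq(M,P)$ be a chain witnessing $U_{Lsp}(\tp_P(b/M_1))\geq 3$. We may assume $e\in P(M_1)$: every submodel contains $\cl(\emptyset)$, but $e$ is generic, so we choose $e$ in $P(M_1)$ using density. Naming $e$ then changes neither closedness of the models nor $\cl(-\cup P(M))$. Each $(M_i,P)$ with $e$ named is a beautiful pair for $\mathcal{C}_e$, since $\dim_{\cl_e}$ differs from $\dim$ by at most one, so the infinite-dimensionality conditions are preserved. $\mathcal{L}_P$-elementarity is also preserved, since $e\in M_1$. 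Splitting of $\tp_P(b/M_{i+1})$ over $M_i$ is witnessed by tuples $\vec a,\vec a'$ with equal $\mathcal{L}_P$-types over $M_i\ni e$, so the witnesses are unchanged in the expanded language. Hence the chain would also witness $U_{Lsp}\geq 3$ in the modular class $\mathcal{C}_e$, contradicting Lemma \ref{Urkmodular}.

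Alternatively, and perhaps more robustly, we can rerun the proof of Lemma \ref{Urkmodular} directly. That proof used modularity only through Theorem \ref{indepegreessplittingP} and the stationarity/existence results for $\ind^{\cl,P}$. Those in turn used modularity only to find $\vec d\in\cl(\vec aN)\cap P(M)$ with $\vec a\ind_{N\vec d}P(M)$. Over a model $N$ containing $e$, we have $\cl(\vec aN)=\cl_e(\vec aN)$. Modularity of $\cl_e$ then gives exactly this $P$-independence decomposition, so all the results of the modular subsection hold over models containing $e$. That suffices for the case analysis in Lemma \ref{Urkmodular}, which splits on whether the drop occurs in $\dim$ or in $\dim_{\scl}$.

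The main obstacle is the base-point issue: we need $e$ inside the bottom model of every splitting chain. I will handle this by observing that $U_{Lsp}((M,P))$ is defined via submodels $(N,H)\preceq(M,P)$. Every such submodel is infinite dimensional, so it contains a generic element. Moreover, local modularity (Theorem \ref{w1bimplieslocmod} together with the remark after Lemma \ref{locmodsetimpliesw1b}) holds for every generic $e$, so we may take $e$ from $N$ itself, or from $P(N)$ by density.
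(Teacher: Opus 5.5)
Your proposal is correct and takes the same route as the paper, which proves the corollary in one line: localize at a single generic element, use Theorem \ref{w1bimplieslocmod} to get modularity of $\cl_e$, and apply Lemma \ref{Urkmodular}. Your extra bookkeeping (using density to choose $e\in P(N)$ in the bottom model, so that naming $e$ preserves the beautiful-pair structure, $\mathcal{L}_P$-elementarity and the splitting witnesses) only spells out what the paper leaves implicit.
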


\begin{proof}
    It follows from Lemma \ref{Urkmodular} after localizing in a single generic element and applying Theorem 
    \ref{w1bimplieslocmod}.
\end{proof}

The previous corollary, together with Lemma \ref{Urknottrivial} and Lemma \ref{Urktrivial}
give the other main result relating ranks and geometry for beautiful pairs:

\begin{theo}\label{ranksbeautifulpairs} Let $(M,P)$ be a beautiful pair associated to a quasiminimal class $\mathcal{C}$. Assume $(M,\cl)$ is disintegrated, then $U_{Lsp}((M,P))= 1$. Assume $(M,\cl)$ is locally modular not disintegrated, then $U_{Lsp}((M,P))= 2$.
\end{theo}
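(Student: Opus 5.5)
The theorem has two parts. In both, I would bound $U_{Lsp}((M,P))$ from above and from below using lemmas already proved.

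\emph{Disintegrated case.} Lemma \ref{Urktrivial} shows directly that a trivial pregeometry gives $U_{Lsp}((M,P))=1$. Disintegrated and trivial mean the same thing here. If a separate lower bound is needed, take a singleton $c\notin M_1$ inside a larger model $M_2$. Then $\tp_P(c/M_2)$ is realized in $M_2$ and splits over $M_1$: choose $c'\in M_2$ with $\tp_P(c'/M_1)=\tp_P(c/M_1)$ and $c'\neq c$, and compare $\tp_P(c,c/M_1)$ with $\tp_P(c,c'/M_1)$. This gives rank at least $1$, so the rank is exactly $1$.

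\emph{Locally modular, not disintegrated.} The lower bound $U_{Lsp}((M,P))\geq 2$ is exactly Lemma \ref{Urknottrivial}, which only uses non-triviality of $\cl$. For the upper bound, Theorem \ref{locmodimpliesw1b} (together with Lemma \ref{locmodsetimpliesw1b}) shows that local modularity implies $\mathcal{C}$ is weakly $1$-based. Corollary \ref{rankboundedby2} then gives $U_{Lsp}((M,P))\leq 2$.

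Corollary \ref{rankboundedby2} depends on reducing to Lemma \ref{Urkmodular} by localizing at a generic $e$, and I expect this to be the main obstacle. The steps I would check are these:
\begin{itemize}
\item naming $e$ as a constant keeps the setting a quasiminimal modular class, by Theorem \ref{w1bimplieslocmod};
\item the beautiful pairs over $e$ are the pairs $(M,P)$ with $e\in M_1$, after adding $e$ to the submodels;
\item Lascar splitting chains over models containing $e$ are unaffected by the added constant.
\end{itemize}

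To handle an arbitrary splitting chain $(M_1,P)\preceq(M_2,P)\preceq(M_3,P)$, I would put $e$ into $P(M_1)$. Any chain can be arranged this way, since models contain generics of $P$ by density. Then Lemma \ref{Urkmodular} applies verbatim and bounds the chain length by $2$. Combining the two bounds gives $U_{Lsp}((M,P))=2$.
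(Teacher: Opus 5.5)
Your proposal is correct and is essentially the paper's argument: Lemma \ref{Urktrivial} handles the disintegrated case. In the locally modular, non-disintegrated case, the lower bound comes from Lemma \ref{Urknottrivial}, and the upper bound from Corollary \ref{rankboundedby2} (reached via Theorem \ref{locmodimpliesw1b}), which itself reduces to Lemma \ref{Urkmodular} by localizing at a generic element. Choosing $e\in P(M_1)$, so that $P$ stays $\cl_e$-closed and splitting over models containing $e$ is unaffected, fills in a detail the paper leaves implicit; the detour through weak $1$-basedness could be skipped, since local modularity gives modularity of $(M,\cl_e)$ directly.
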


The previous result shows that the class of beautiful pairs is a FUR‐class in the sense of \cite{Ka}. Next we deal with a generalizations of the following result from \cite{v2003}:

\begin{fact}\label{eqlinSUrk1}(Theorem 5.13 \cite{v2003}) Let $T$ be a supersimple SU-rank 1 theory (with quantifier elimination). Let $T_P$ be the corresponding common theory of lovely pairs of models of $T$. Then the following are equivalent:
\begin{enumerate}
    \item[(i)] $\acl = \acl_{L_P}$ in models of $T_P$.
    \item[(ii)] $T_P$ has SU-rank $\leq 2$ ($=2$ iff $T$ is non-trivial)
    \item[(iii)] For some (any) lovely pair $(M, P)$, the localization of $(M, \acl_L)$ at $P(M)$ is
modular.
\item[(iv)] The theory $T$ is linear.
\item[(v)] The theory $T_P$ is model complete.
\end{enumerate}
\end{fact}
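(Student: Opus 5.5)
The plan is to prove the equivalences in the cycle (iv)$\Rightarrow$(iii)$\Rightarrow$(i)$\Rightarrow$(iv). I would then attach (ii) and (v) to this cycle, using the characterization of independence in lovely pairs together with a rank computation of the same shape as Lemmas \ref{Urknottrivial} and \ref{Urkmodular}. Throughout I would work in a sufficiently saturated lovely pair $(M,P)$. I would also use the first order analogue of Proposition \ref{H-typesomega1omega}: the $\mathcal{L}_P$-type of a $P$-independent tuple is determined by its $\mathcal{L}$-type together with membership in $P$.

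For (iv)$\Rightarrow$(iii), linearity means that the localization at a single generic element is modular. Since $P(M)$ is an infinite-dimensional algebraically closed set, localizing at $P(M)$ contains localization at a generic point. So projectivity (in the sense of Buechler's Lemma 4.2.1, as in Theorem \ref{w1bimplieslocmod}) transfers to $\acl(-\cup P(M))$, which gives (iii).

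For (iii)$\Rightarrow$(i), I would argue as follows. Modularity of the localization gives, for every $A$, that $A\ind_{\acl(A)\cap P(M)}P(M)$. Hence $\acl(A)$ is automatically $P$-independent. It follows that the $\mathcal{L}_P$-type over $\acl(A)$ is determined by $\mathcal{L}$-data. Any $e\notin \acl(A)$ then has infinitely many $\mathcal{L}_P$-conjugates over $A$, obtained by extension and density/codensity exactly as in Cases 2 and 4 of Proposition \ref{H-typesomega1omega}. So $\acl_{\mathcal{L}_P}=\acl$.

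The same $P$-independence of all algebraically closed sets also gives (v). If $(M,P)\subseteq (N,P)$ are lovely pairs with $M$ $P$-independent in $N$, then the back-and-forth of Proposition \ref{H_independentsubstructurescharomega1} yields $(M,P)\preceq(N,P)$. For (ii), under (iii) forking in $T_P$ coincides with $\ind^{\cl,P}$; the proof is the first order version of Theorem \ref{indepegreessplittingP}. The rank count of Lemma \ref{Urkmodular} then gives SU-rank $\le 2$, with equality exactly when $T$ is non-trivial, by the circuit argument of Lemma \ref{Urknottrivial}.

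The converse directions (i)$\Rightarrow$(iv), (ii)$\Rightarrow$(iv) and (v)$\Rightarrow$(iv) would all go through one construction from non-linearity. Suppose $T$ is not linear. Then there is a plane curve from an almost normal family whose canonical base $e$ has $\dim(e)\ge 2$, together with a generic point $(a,b)$ on it. By density I would place $e$, or a suitable interalgebraic tuple, inside $P(M)$, while by codensity $a,b$ stay independent from $P(M)$. Then $\acl(a,b)\cap P(M)$ is too small to witness $\dim(ab/P(M))=1$, so $ab$ is not $P$-independent. This produces:
\begin{itemize}
\item an element $\mathcal{L}_P$-algebraic over $ab$ but not $\mathcal{L}$-algebraic (a coordinate of $e$, which is $\mathcal{L}_P$-definable as the unique curve in $P$ through $(a,b)$), contradicting (i);
\item a forking chain of length $3$ for $b$ over models, via parameter, curve and point, in the style of Lemma \ref{Urknottrivial}, contradicting (ii);
\item a non-elementary extension of lovely pairs, by taking the substructure generated by $ab$ and a model, contradicting (v).
\end{itemize}

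I expect the main obstacle to be this non-linear direction. It requires the genuine use of canonical bases of plane curves, and the delicate point is checking that $e$ can be placed in $P(M)$ while keeping $\dim(ab/P(M))=1$. I would first try to do this by taking $e$ generic in $P$ over a model and realizing $(a,b)$ by codensity over $P(M)$.
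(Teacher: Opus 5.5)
\textbf{Gap in (iii)$\Rightarrow$(i).} This step rests on a false claim. Modularity of the localization $\acl(-\cup P(M))$ does \emph{not} give $A\ind_{\acl(A)\cap P(M)}P(M)$ for every $A$; that would need modularity of $\acl$ itself.

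A counterexample: let $T$ be the theory of an infinite-dimensional affine space over a division ring. It is linear, hence satisfies (iii), but it is not modular. Take $a\notin P(M)$, distinct $p,q\in P(M)$, and $b=a-p+q$. The line $\acl(ab)=\{a+\lambda(q-p)\}$ misses the affine subspace $P(M)$, so $\acl(ab)\cap P(M)=\acl(\emptyset)=\emptyset$. Yet $\dim(ab)=2>1=\dim(ab/P(M))$.

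So $\acl(ab)$ is not $P$-independent, and $\mathcal{L}_P$-types over it are not determined by $\mathcal{L}$-data plus membership in $P$. Your conjugate-counting argument therefore has nothing to run on. The other two arrows you draw out of (iii) have the same problem. Your derivation of (v) invokes the same false claim to make $M$ $P$-independent in $N$. The forking description you use for (ii) also needs tuples to be made $P$-independent over models. So nothing you wrote validly shows that (iii) implies the other conditions.

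\textbf{How to repair it.} The paper does not reprove this Fact, but its analogue, Theorem \ref{linearity}, shows the fix, and it starts from (iv), not (iii).
\begin{enumerate}
\item[(a)] If $B=\acl(B)$ contains some $e\in P(M)\setminus\acl(\emptyset)$, modularity of the localization at $e$ gives $B\ind_{B\cap P(M)}P(M)$. Your argument then works for such $B$.
\item[(b)] If $B\cap P(M)=\acl(\emptyset)$, choose $e_1,e_2\in P(M)$ independent over $B$. By exchange, $B=\acl(Be_1)\cap\acl(Be_2)$. Each $\acl(Be_i)$ is $\acl_{\mathcal{L}_P}$-closed by (a), so $B$ is too (Lemma \ref{loc-mod-implies-closure-preservation}).
\item[(c)] For (v), $M\ind_{P(M)}P(N)$ follows by localizing at a generic point of $P(M)$ (Lemma \ref{modcompimplocmod}).
\item[(d)] For (ii), the same device makes tuples $P$-independent over models.
\end{enumerate}
So (i), (ii) and (v) should be derived from (iv). You then need a separate proof of (iii)$\Rightarrow$(iv). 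One option is the argument of Lemma \ref{scl-mod-implies-weak-1-based}: modularity of $\acl(-\cup P(M))$ gives weak $1$-basedness, which gives local modularity by Theorem \ref{w1bimplieslocmod}, without canonical bases. The other is to check, in your plane-curve framework, that a non-linear curve whose parameter is generic over $P(M)$ witnesses non-modularity of $\acl(-\cup P(M))$.

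Your plane-curve argument against (i) is the classical one and is sound. A second curve through $(a,b)$ with parameter $e'\in P(M)$ would force $(a,b)\in\acl(ee')\subseteq P(M)$.
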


To find an analogue to (i) in Fact \ref{eqlinSUrk1} (condition (iii) in the statement of Thm \ref{linearity}), we follow the approach of Kangas \cite{Ka} and sustitute algebraic closure for bounded closure and work in a sufficiently large structure.

\begin{defn}
Let $\mathcal{C}$ be quasiminimal and $M\in \mathcal{C}$ sufficiently large (say $\dim(M)\geq 2^{\aleph_0}$), as usual we refer to such as structure as a \emph{monster model}. Let $\vec a\in M$ and $B\subset M$ small. We say that $\vec a$
is in the bounded closure of $B$, denoted as $\vec a\in bcl(B)$, if $tp(\vec a/B)$ has at most countably many realizations (see QM3). We write $\bcl(B)=\{a\in M: a\in \bcl(B)\}$. 

Similarly, if $(M,P)$ is sufficiently large ($\dim(P(M))\geq 2^{\aleph_0}$, $\dim(M/P(M))\geq 2^{\aleph_0}$), $\vec a\in \bcl_P(B)$, if $tp_P(\vec a/B)$ has at most countably many realizations.
\end{defn}

We need the following observation.

\begin{fact} (Lemma 77 \cite{Ka}). Let $M\in \mathcal{C}$ be a monster model for a quasiminimal class and let $B\subset M$. Then, $\cl(B)=
\bcl(B)$.
\end{fact}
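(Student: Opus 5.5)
The plan is to prove the two inclusions separately. The first uses only axioms QM1 and QM3. The second uses uniqueness of the generic type (QM4) together with the fact that the monster model is large.

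\emph{Step 1: $\cl(B)\subseteq\bcl(B)$.} Let $a\in\cl(B)$, and fix a finite tuple $\vec b\in B$ with $a\in\cl(\vec b)$. Suppose $a'$ realizes $\tp(a/B)$. Then in particular $\tp(a',\vec b)=\tp(a,\vec b)$, so by QM1 we get $a'\in\cl(\vec b)$. Hence every realization of $\tp(a/B)$ lies in $\cl(\vec b)$. This set is countable by QM3, so $a\in\bcl(B)$.

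\emph{Step 2: $\bcl(B)\subseteq\cl(B)$.} We prove the contrapositive. Suppose $a\notin\cl(B)$; we show that $\tp(a/B)$ has uncountably many realizations.

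First, the complement of $\cl(B)$ is large. Since $B$ is small, $|\cl(B)|\le|B|+\aleph_0$ by QM3. Since $\dim(M)\geq 2^{\aleph_0}$ and $B$ is small, $M\setminus\cl(B)$ is uncountable.

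Second, every $a'\in M\setminus\cl(B)$ realizes $\tp(a/B)$. The type over $B$ is determined by the types over finite subtuples of $B$, so it suffices to show that $\tp(a,\vec b)=\tp(a',\vec b)$ for every finite $\vec b\in B$. To see this, put $A=\cl(\vec b)$. This set is countable (by QM3) and closed, and it contains $\vec b$. Both $a$ and $a'$ lie outside $\cl(B)\supseteq A$. So QM4, applied with $A=A'$ and the same enumeration on both sides, gives $\tp(A,a)=\tp(A,a')$, and in particular $\tp(\vec b,a)=\tp(\vec b,a')$.

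Therefore $\tp(a/B)$ has at least $|M\setminus\cl(B)|>\aleph_0$ realizations, so $a\notin\bcl(B)$.

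The main obstacle is the reduction in Step 2. QM4 is only stated for \emph{countable} closed sets, while $B$ may be uncountable (merely small). We handle this by passing to finite subtuples $\vec b$ and their countable closures $\cl(\vec b)$. This relies on types over $B$, being $\mathcal{L}_{\omega_1\omega}$-types of finite tuples, having finite character. The rest of the argument is routine.
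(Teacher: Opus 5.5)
Your proof is correct. The paper gives no argument for this statement of its own: it is imported as a Fact from Kangas (Lemma 77 there). So what you have done is supply a self-contained derivation from the axioms, and it matches the paper's definitions.

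The inclusion $\cl(B)\subseteq\bcl(B)$ needs only finite character of $\cl$, QM1 and QM3. The inclusion $\bcl(B)\subseteq\cl(B)$ has two ingredients:
\begin{itemize}
\item QM4, applied to the countable closed sets $\cl(\vec b)$ for finite $\vec b\subseteq B$ and combined with finite character of $\mathcal{L}_{\omega_1\omega}$-types, makes every element of $M\setminus\cl(B)$ a realization of $\tp(a/B)$;
\item smallness of $B$ gives $|M\setminus\cl(B)|=|M|>\aleph_0$.
\end{itemize}

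This route depends on the paper defining $\bcl$ through the syntactic type $\tp$ (quantifier-free, equivalently $\mathcal{L}_{\omega_1\omega}$). If types over $B$ are instead read as orbits under automorphisms of the monster model fixing $B$, the second inclusion needs an extra step. One would have to know that elements with the same syntactic type over a small set are conjugate by such an automorphism, i.e.\ homogeneity of the monster over small sets. Your argument does not need this.

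Your proof also shows exactly where smallness of $B$ is used, and it is genuinely needed. In Example~\ref{equiv-classes}, let $B$ be the union of all blocks but one. The elements of the remaining block all lie outside $\cl(B)$, yet they share a single type over $B$ with only $\aleph_0$ realizations.
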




The version of Fact \ref{eqlinSUrk1} that we prove is the following Theorem. Note that item (v) below gives an appropriate version of the property of model completeness in our setting.

\begin{theo}\label{linearity}
The following are equivalent for  a quasiminimal class $\mathcal{C}$:
\begin{enumerate}
    \item[(i)] $\mathcal{C}$ is weakly one-based.
    \item[(ii)] $\mathcal{C}$ is locally modular.
    \item[(iii)] In any sufficiently large beautiful pair $(M,P)$ associated to $\mathcal{C}$ we have $\bcl_P=\cl$.
    \item[(iv)] In any sufficiently large beautiful pair $(M,P)$ associated to $\mathcal{C}$ the localization $\cl(-\cup P(M))$ is modular.
    \item[(v)] (assuming that whenever $M,N\in \mathcal{C}$ and $M\subseteq N$ we have $M\preceq N$) Whenever $M,N\in \mathcal{C}$ and $(M,P)\subseteq (N,P)$ are beautiful pairs we have $(M,P)\preceq (N,P)$.
\end{enumerate}
\end{theo}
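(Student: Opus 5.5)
The proof will run along a cycle of implications. The equivalence (i)$\Leftrightarrow$(ii) is already in hand. Theorem \ref{w1bimplieslocmod} gives (i)$\Rightarrow$(ii). Theorem \ref{locmodimpliesw1b}, or Lemma \ref{locmodsetimpliesw1b}, gives (ii)$\Rightarrow$(i). The remaining work is to connect (ii) with (iii), (iv) and (v). I will prove (ii)$\Rightarrow$(iv)$\Rightarrow$(ii), and then (iv)$\Rightarrow$(iii)$\Rightarrow$(iv) and (iv)$\Rightarrow$(v)$\Rightarrow$(iv), following the structure of the proof of Theorem 5.13 in \cite{v2003}.

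\textbf{Step (ii)$\Rightarrow$(iv).} Localize at a generic $e$, which lies in $P(M)$ by density. Modularity of $\cl_e$ gives projectivity. Given $a\in\cl(C\,b\,P(M))$, pick a finite tuple $\vec h\subseteq P(M)$ with $a\in\cl(C b\vec h e)$ and apply projectivity in $(M,\cl_e)$ to the set $C\vec h$. This produces $c\in\cl(C\vec h e)\subseteq\cl(C P(M))$ with $a\in\cl(c,b,e)\subseteq \cl(c,b,P(M))$. That is projectivity of the localization at $P(M)$, hence its modularity.

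\textbf{Step (iv)$\Rightarrow$(ii).} Suppose $\cl(-\cup P(M))$ is modular. Since $\dim(M/P(M))=\infty$, $P(M)$ is a closed set of infinite dimension. Any finite configuration witnessing failure of modularity after localizing at a finite set can be moved, using QM4/QM5 and the codensity property (D2), to a configuration that is $\cl$-independent from $P(M)$ over a finite subset $B\subseteq P(M)$. The localization at $P(M)$ is then modular, and $\cl$-independence from $P(M)$ over $B$ transfers modularity down to the localization at $B$. Lemma \ref{locmodsetimpliesw1b} then gives weak 1-basedness, and hence (ii).

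\textbf{Step (iii).} Here I would use Lemma 77 of \cite{Ka}, that $\cl=\bcl$, together with the type description of Proposition \ref{H-typesomega1omega} for beautiful pairs. Under (iv), $\tp_P(\vec a/B)$ for $P$-independent data is determined by the $\mathcal{L}$-type of $\vec a$ together with its $P$-basis. This argument needs modularity, since it uses $\vec aN\ind_{\cl(\vec aN)\cap P(M)}P(M)$ as in the proof of stationarity. So if $a\notin\cl(B)$ we can realize $\tp_P(a/B)$ uncountably often, using density or codensity, and $\bcl_P\subseteq \cl$; the other inclusion is clear.

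For the converse (iii)$\Rightarrow$(iv), assume non-modularity of the localization. Then there is a "plane curve" situation: $a\in\cl(b,c,P(M))$ with no single witness in $\cl(c P(M))$. In this situation the $P$-parameter set generating the family should be $\bcl_P$ of $ab$ but not in $\cl(ab)$. I expect this to be the main obstacle, because \cite{v2003} uses canonical bases here. I would instead run an argument in the style of Theorem \ref{w1bimplieslocmod}: take an independent copy over the relevant parameters and show, using the failure of weak 1-basedness, that some element of $P(M)$ is bounded over $ab$ in the pair without being $\cl$-algebraic.

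\textbf{Step (v).} For (iv)$\Rightarrow$(v), with $(M,P)\subseteq(N,P)$ beautiful pairs and $M\preceq N$, the criterion of Proposition \ref{H_independentsubstructurescharomega1} reduces the claim to $M\ind_{P(M)}P(N)$. Modularity of the localization, and of $\cl$ after localizing, gives $M\ind_{\cl(M)\cap\cl(P(N))}P(N)$. Since $P(M)=M\cap P(N)$ and $P(N)$ is closed, this yields exactly $M\ind_{P(M)}P(N)$.

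For (v)$\Rightarrow$(iv), a failure of modularity gives $m\in M$ and $h$ in $P(N)$ with $m\in\cl(\cdots h)$ but $m\notin \cl(P(M)\cdots)$. This lets me construct a beautiful pair extension $(M,P)\subseteq(N,P)$ violating $M\ind_{P(M)}P(N)$, contradicting Proposition \ref{H_independentsubstructures}.
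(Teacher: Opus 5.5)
\textbf{The gap is in (iv)$\Rightarrow$(ii), and your whole cycle depends on that step.}

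Your (iv)$\Rightarrow$(iii) and (iv)$\Rightarrow$(v) do not actually use (iv). The facts $\vec aN\ind_{\cl(\vec aN)\cap P(M)}P(M)$ and $M\ind_{\cl(M)\cap\cl(P(N))}P(N)$ come from modularity of $\cl$ localized at a point of the predicate, i.e.\ from (ii). Modularity of $\cl(-\cup P(M))$ only says something about closed sets containing $P(M)$. Read correctly, those two steps are the paper's Lemmas \ref{loc-mod-implies-closure-preservation} and \ref{modcompimplocmod}. For (iii) you would also need the paper's extra step for closed $B$ with $B\cap P(M)=\cl(\emptyset)$, namely writing $B=\cl(Be_1)\cap\cl(Be_2)$.

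So everything rests on the sentence ``$\cl$-independence from $P(M)$ over $B$ transfers modularity down to the localization at $B$''. This is the entire content of the implication, and it does not follow from independence.
\begin{itemize}
\item If $\vec a\vec b\ind_B P(M)$, the map $X\mapsto\cl(X\cup P(M))$ on closed sets between $B$ and $\cl(\vec a\vec bB)$ preserves dimension and joins.
\item To transfer modularity you need it to preserve meets as well, i.e.\ $\cl(\vec aP(M))\cap\cl(\vec bP(M))\subseteq\cl((\cl(\vec aB)\cap\cl(\vec bB))\cup P(M))$. In the first-order proofs this is a canonical-base computation.
\item As a principle about pregeometries it is false. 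Take affine space with affine span: the localization at an infinite-dimensional, infinite-codimensional flat $P$ is modular. Two distinct parallel lines $\vec a,\vec b$ in general position with $\vec a\vec b\ind_{\emptyset}P$ violate modularity over $B=\emptyset$. Here $\cl(\vec aP)\cap\cl(\vec bP)$ contains the flat through $P$ in the common direction, while $\cl(\vec a)\cap\cl(\vec b)=\emptyset$.
\end{itemize}

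The paper closes the cycle differently, proving (iv)$\Rightarrow$(i) directly in Lemma \ref{scl-mod-implies-weak-1-based}. Assume $\vec bA\ind P(M)$ and take $\vec b'$ with $\Lstp(\vec b'/AP(M))=\Lstp(\vec b/AP(M))$ and $\vec b'\ind_{AP(M)}\vec b$. Then $\cl(\vec bP(M))\cap\cl(AP(M))=\cl(\vec b'P(M))\cap\cl(AP(M))$. Modularity of the localization plus transitivity give $\vec b\ind_{\vec b'P(M)}A$, and $P(M)\ind\vec b'A$ lets you drop $P(M)$ to get $\vec b\ind_{\vec b'}A$. Theorem \ref{w1bimplieslocmod} then yields (ii). The independent copy with the same Lascar strong type is what replaces the canonical base.

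\textbf{(iii)$\Rightarrow$(iv) is only a plan.} The route you suggest, through a failure of weak 1-basedness, is not carried out. The paper's Lemma \ref{closure-pres-implies-scl-mod} instead proves projectivity of the localization directly:
\begin{itemize}
\item Start from $a\in\cl(b\vec cP(M))\setminus\cl(bP(M))$ and replace the predicate by $P'=\cl(\vec cP(M))$. This is again a beautiful pair, so $\bcl_{P'}=\cl$ by (iii).
\item Take a minimal $d_0\dots d_n\in P'$ with $a\in\cl(bd_0\dots d_n)$.
\item Use $\bcl_{P'}=\cl$ to realize the $P'$-type of $d_1\dots d_n$ over $ab$ by a tuple that is $\cl$-independent over $ab\vec c\vec e$, where $\vec e\in P(M)$ makes $ab\vec c\vec e$ $P$-independent.
\item Use density to move that tuple into $P(M)$, then pull $d_0$ back to some $u\in\cl(\vec cP(M))$ with $a\in\cl(buP(M))$.
\end{itemize}

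\textbf{(v)$\Rightarrow$(iv) omits the construction that makes it work.} Non-modularity of the localization gives $\vec a,\vec v$ with $\dim(\vec a/\vec vP(M))<\dim(\vec a/P(M))$ and $\cl(\vec aP(M))\cap\cl(\vec vP(M))=\cl(P(M))$. Take a copy $M'$ of $M$ independent from $M$ over $\cl(\vec aP(M))$. That intersection condition, together with this copy, is what makes $(M',P(M))\subseteq(N,\cl(\vec vP(M)))$ an inclusion of beautiful pairs. Then (v) contradicts the drop in $\dim(\vec a/-)$.

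Your (ii)$\Rightarrow$(iv) argument is fine; the paper treats that step as clear.
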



We will start by proving  (ii) iff (v).

\begin{lem}\label{modcompimplocmod}
Suppose that whenever $M,N\in \mathcal{C}$ and $M\subseteq N$ we have $M\preceq N$ (i.e. the inclusion is a closed embedding). 
 Then $\mathcal{C}$ is locally modular iff whenever $M,N\in \mathcal{C}$ and $(M,P)\subseteq (N,P)$ are beautiful pairs we have $(M,P)\preceq (N,P)$

\end{lem}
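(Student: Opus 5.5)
The plan is to reduce the elementarity condition to an independence condition, using the two results on $\mathcal{L}_{P\,\omega_1\omega}$-substructures. Let $(M,P)\subseteq (N,P)$ be beautiful pairs with $M,N\in\mathcal{C}$. By the standing hypothesis, $M\preceq N$, so $M$ is closed in $N$. Since the inclusion is an $\mathcal{L}_P$-embedding, $P(M)=M\cap P(N)$. Proposition \ref{H_independentsubstructures} and Proposition \ref{H_independentsubstructurescharomega1} are stated for beautiful pairs as well. Together they say that $(M,P)\preceq (N,P)$ holds iff
$$M\ind^{\cl}_{P(M)} P(N).$$
So the lemma reduces to the following claim: $\mathcal{C}$ is locally modular iff every inclusion of beautiful pairs satisfies this independence.

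\textbf{($\Rightarrow$).} Assume $\mathcal{C}$ is locally modular. By density, pick $e\in P(M)\setminus\cl(\emptyset)$. By hypothesis $(N,\cl_e)$ is modular. Since $e\in M$ and $e\in P(N)$, the sets $M$ and $P(N)$ are both $\cl_e$-closed. Modularity of $\cl_e$, applied to finite tuples $\vec a\in M$ and $\vec b\in P(N)$, gives
$$\vec a\ind_{\cl_e(\vec a)\cap\cl_e(\vec b)}\vec b \quad\text{in the sense of } \cl_e .$$
Passing to unions, this yields $M\ind_{M\cap P(N)}P(N)$ relative to $\cl_e$, that is, $M\ind_{P(M)e}P(N)$ for $\cl$. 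Because $e\in P(M)$, this is exactly $M\ind^{\cl}_{P(M)}P(N)$, and the previous paragraph then gives $(M,P)\preceq(N,P)$.

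\textbf{($\Leftarrow$).} I argue by contrapositive. Suppose $\mathcal{C}$ is not locally modular. By Lemma \ref{locmodsetimpliesw1b} and Theorem \ref{w1bimplieslocmod}, $(N,\cl_e)$ then fails projectivity for a generic $e$. So there are a finite set $C$ and elements $a,b$ such that $a\in\cl(b,e,C)$, but there is no $c\in\cl(eC)$ with $a\in\cl(b,e,c)$. Work in a large beautiful pair $(N,P)$. By replacing $C$ with an independent basis and using density (D1) together with QM4 and QM5 (uniqueness of generic type plus homogeneity), I can move the configuration so that $e$ and $C$ are independent elements of $P(N)$. Codensity (D2) then lets me take $b$ to be $\cl$-independent from $P(N)$. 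Next I choose:
\begin{itemize}
\item $Q\subseteq P(N)$, an infinite-dimensional set that is independent over $eCab$;
\item $R\subseteq N$, an infinite-dimensional set that is independent over $P(N)ab$.
\end{itemize}
Set $M=\cl(abeQR)$ and $P(M)=M\cap P(N)$. Then $M\in\mathcal{C}$ and $M\preceq N$. The intended failure of independence is: $a\in\cl(bP(N))$, but $a\notin\cl(bP(M))$, which would give $M\nind_{P(M)}P(N)$.

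The main obstacle is controlling $M\cap P(N)$. I need to show two things:
\begin{itemize}
\item $M\cap P(N)=\cl(eQ)$, which makes $P(M)$ a closed $\mathcal{C}$-substructure that is dense in $M$; codensity in $M$ is supplied by $R$, so $(M,P)$ is a beautiful pair;
\item $a\notin\cl(b,e,Q)$.
\end{itemize}
For the second point: if $a\in\cl(b,e,Q)$, then since $Q$ is independent over $eCab$, exchange would force $a\in\cl(b,e)$. That contradicts the choice of $a,b,C$, since non-projectivity rules out $a\in\cl(b,e,c)$ for any $c\in\cl(eC)$, in particular for $c=e$. For the first point, suppose some $p\in P(N)\cap\cl(abeQR)$ lies outside $\cl(eQ)$. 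Using that $R$ is independent from $P(N)ab$ and that $Q$ is generic, I would reduce to $p\in\cl(a,b,e)\cap P(N)$, and then argue with $C\ind_{e}abQ$. The aim is to produce from $p$ an element $c\in\cl(eC)$ with $a\in\cl(b,e,c)$, contradicting non-projectivity. If this direct reduction is not enough, I would instead choose $a,b$ generic in the sense of QM4 over $P(N)$ subject to the non-projectivity constraint, which should leave no room for spurious points of $P(N)$.
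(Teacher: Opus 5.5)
Your forward direction is correct and is essentially the paper's argument: pick $e\in P(M)\setminus\cl(\emptyset)$, use modularity of $\cl_e$ to get $M\ind_{P(M)}P(N)$, and apply Proposition \ref{H_independentsubstructurescharomega1}.

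The converse has a genuine gap, and it is exactly the step you flag as the main obstacle. For the contradiction you need $a\notin\cl(b\cup(M\cap P(N)))$. Nothing in your construction controls $M\cap P(N)$. The set $P(N)$ is the full predicate of a large pair, far bigger than $\cl(eCQ)$. Non-projectivity of $\cl_e$ only constrains elements of $\cl(eC)$; it says nothing about other points of $P(N)$.

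Your proposed reduction to $p\in\cl(a,b,e)$ is not valid. A point $p\in P(N)\cap\cl(abeQ)$ may genuinely depend on a finite $\vec q\subseteq Q$. The genericity of $Q$ over $eCab$ tells you nothing about such a $p$, because $p$ is not independent from $\vec q$. What you must exclude is some $p\in P(N)\setminus\cl(eQ)$ with $a\in\cl(b,e,\vec q,p)$. That amounts to showing the ``curve'' $\tp(ab/P(N))$ cannot be defined over $e\vec q p$. In a first-order stable setting this is a canonical-base dimension count, but canonical bases are not available here. The fallback of choosing $a,b$ ``generic subject to the constraint'' is not an argument.

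The paper avoids this by building the intersection control into the hypothesis and by choosing the big predicate itself. Not locally modular implies not weakly $1$-based (Section 3). The contrapositive of Lemma \ref{scl-mod-implies-weak-1-based} then gives a beautiful pair $(M,P)$ whose localization $\cl(-\cup P(M))$ is not modular. After absorbing $\vec w$ into $P(M)$, this yields finite tuples $\vec a,\vec v$ with
\[
\cl(\vec aP(M))\cap\cl(\vec vP(M))=P(M)
\quad\text{and}\quad
\dim(\vec a/\vec vP(M))<\dim(\vec a/P(M)).
\]
In a large $N$, take $M'\cong M$ over $\cl(\vec aP(M))$ with $M'\ind_{\cl(\vec aP(M))}M$, and put the \emph{new} predicate $\cl(\vec vP(M))$ on $N$. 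Independence gives
\[
M'\cap\cl(\vec vP(M))=\cl(\vec aP(M))\cap\cl(\vec vP(M))=P(M),
\]
so $(M',P(M))\subseteq(N,\cl(\vec vP(M)))$ is an inclusion of beautiful pairs. Meanwhile $\vec a$ witnesses $M'\nind_{P(M)}\cl(\vec vP(M))$, which contradicts elementarity by Proposition \ref{H_independentsubstructures}.

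To salvage your own setup you would need the same two moves. First, replace the arbitrary $P(N)$ by a predicate generated from your witness. Second, obtain the needed intersection equality from non-modularity over an infinite-dimensional base, not from non-projectivity of $\cl_e$ alone.
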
    

\begin{proof}
For the first direction,  assume first that $\mathcal{C}
$ is modular and let 
$(N,P)\subseteq (M,P)$ be beautiful pairs. By Proposition \ref{H_independentsubstructurescharomega1} we only need to show that $N\ind_{P(N)}P(M)$. Note that, by hypothesis,  $N$ is closed in $M$. Since $(M,\cl)$ is modular, we have $N\ind_{N\cap P(M)}P(M)$ and since $(N,P)\subseteq (M,P)$ this means $N\ind_{P(N)}P(M)$ as desired.

  Assume now that $(M,\cl)$ is locally modular. Let $e\in P(N)$ be generic (it exists by the density property). Note that $e\in N\cap P(M)\cap P(N)$. Since $e$ is generic, by local modularity we have $N \ind_{\cl(N\cup \{e\})\cap \cl(P(M)\cup\{e\})}P(M)$ and thus
  $N\ind_{P(N)}P(M)$.

Now we prove the converse. We follow the proof of $(v)\implies (iv)$ from \cite{v2003}.
Assume $\mathcal{C}$ is not locally modular. Then for some beautiful pair $(M; P)$, the localized pregeometry $(M; \cl_P)$ is not modular. Then we can find
finite tuples $\vec a,\vec v,\vec w \in  M \setminus P(M)$, such that
$\cl(\vec a\vec w P(M)) \cap \cl(\vec v\vec w P(M)) = \cl(\vec wP(M))$ but
$\dim(\vec a/\vec v \vec w P(M)) <\dim(\vec a/\vec w P(M))$.
By exchanging $P(M)$ for $\cl(P(M)\vec w)$ we may assume $\vec w \in P(M)$ and thus construct finite tuples $\vec a, \vec v$ with $\dim(\vec a/\vec vP(M)) <\dim(\vec a/ P(M))$ and $\cl(\vec a P(M)) \cap \cl(\vec vP(M)) = \cl(P(M))$.
We may assume that we are working inside a suffiently large $N\in \mathcal{C}$. Let $M'$ be a copy of $M$ over $\cl(\vec aP(M))$ such that $M' \ind_{\cl(\vec a P(M ))}M$. Then
$\cl(\vec v P(M )) \cap M'= \cl(\vec v P(M )) \cap \cl(\vec a P(M )) = P(M)$; and thus $(M'; P(M ))\subseteq (N; \cl(\vec v P(M )))$. By assumption $M'\preceq N$. Also note that both structures are beautiful pairs, so again by assumption we have $(M'; P(M ))\preceq (N; \cl(\vec v P(M )))$ and thus also  $(M'; P(M),\vec a)\preceq (N; \cl(\vec v P(M )),\vec a)$. On the other hand, we also have $\dim(\vec a/P(M ))>\dim(\vec a/\cl(\vec v P(M )))$, a contradiction.

\end{proof}







Next, we will show that (ii) implies (iii).

\begin{lem}\label{loc-mod-implies-closure-preservation} Assume that $\mathcal{C}$ is locally modular and let $(M,P)$ be a sufficiently large beautiful pair. Then $\bcl_P=\cl$.
\end{lem}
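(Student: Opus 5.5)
The inclusion $\cl(B)\subseteq \bcl_P(B)$ is immediate. If $a\in \cl(B)$, then $a\in \cl(\vec b)$ for some finite $\vec b\in B$. Every realization of $\tp_P(a/B)$ realizes $\tp(a/\vec b)$, so it lies in $\cl(\vec b)$ by QM1, and $\cl(\vec b)$ is countable by QM3. So the work is the reverse inclusion: if $a\notin\cl(B)$, I will produce uncountably many (indeed $\geq 2^{\aleph_0}$) realizations of $\tp_P(a/B)$ in the sufficiently large pair $(M,P)$. Using finite character I will first replace $B$ by a small $\cl$-closed set that is $P$-independent (add $\cl(B)\cap P(M)$ and close). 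This only shrinks $\tp_P(a/B)$'s realization set, so uncountability of realizations over the enlarged set is enough.

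I will first treat the modular case and then reduce the locally modular case to it. Localizing at a generic $e\in P(M)$ keeps the pair beautiful and keeps $a\notin\cl(Be)$ after choosing $e$ independent from $aB$. By Theorem \ref{w1bimplieslocmod} and the discussion in section \ref{sec:linearity}, $(M,\cl_e)$ is modular, and bounded closure over $Be$ contains bounded closure over $B$.

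Assume now modularity. As in the proof of stationarity of $\ind^{\cl,P}$, I will enlarge $a$ to $\vec a=a\vec d$ with $\vec d\in\cl(aB)\cap P(M)$ so that $\vec a$ is $P$-independent over $B$. Then I write $\vec a=\vec a_1\vec a_2\vec a_3$ with $\vec a_1$ $\cl$-independent over $BP(M)$, $\vec a_2\in P(M)$ independent over $B\vec a_1$, and $\vec a_3\in\cl(B\vec a_1\vec a_2)$. The key point is that $a\notin\cl(B)$ forces $\vec a_1\vec a_2\neq\emptyset$. Next I use existence of $\ind^{\cl,P}$, with the density/codensity properties and $\dim(P(M)),\dim(M/P(M))\geq 2^{\aleph_0}$, to build a sequence $(\vec a^{(\alpha)})_{\alpha<\omega_1}$ of realizations of $\tp(\vec a/B)$. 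Each $\vec a^{(\alpha)}$ will have $\vec a^{(\alpha)}_1$ independent over $BP(M)\cup\{\vec a^{(\beta)}:\beta<\alpha\}$, have $\vec a_2^{(\alpha)}\in P(M)$ independent over $B\{\vec a^{(\beta)}\}_{\beta<\alpha}$, and satisfy $\vec a^{(\alpha)}\ind^{\cl,P}_B\{\vec a^{(\beta)}\}_{\beta<\alpha}$. By Lemma \ref{bothindepPindep}, each $B\vec a^{(\alpha)}$ is $P$-independent. It has the same $\mathcal L$-type as $B\vec a$ and the same pattern of membership in $P$, so Proposition \ref{H-typesomega1omega} (in its beautiful-pair version) gives $\tp_P(\vec a^{(\alpha)}/B)=\tp_P(\vec a/B)$. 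Because $\vec a_1\vec a_2$ is nonempty and the $\vec a^{(\alpha)}$ are $\cl$-independent over $B$, and $a$ carries a nonzero part of that dimension, the first coordinates $a^{(\alpha)}$ are pairwise distinct. Thus $\tp_P(a/B)$ has uncountably many realizations, so $a\notin\bcl_P(B)$.

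The main obstacle is the step where modularity is really used: making $a$ $P$-independent over $B$ by adding only elements of $\cl(aB)\cap P(M)$. Without modularity, $\tp_P(a/B)$ may depend on a canonical-base-like parameter in $P(M)$ that is not in $\cl(aB)$, and that is exactly where $\bcl_P$ could exceed $\cl$. I expect checking that the localization argument preserves this (that $\cl_e$-modularity gives $aBe\ind_{\cl(aBe)\cap P(M)}P(M)$) to be where care is needed. The rest is a direct application of Proposition \ref{H-typesomega1omega} and existence.
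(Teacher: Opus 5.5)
Your proof is correct, but it takes a different route from the paper's.

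\textbf{The paper's route.} It fixes a closed $B$ that already contains a generic $e\in P(M)$. The case $a\notin\cl(BP(M))$ is handled by codensity. For $a\in\cl(BP(M))$, it uses $P$-independence of $B$ together with \emph{projectivity} of $\cl_e$ to get a single $c\in P(M)$ with $a\in\cl(\vec b ce)$. By exchange, $c\in\cl(\vec b ae)$, and the unboundedly many generic realizations of $c$ in $P(M)$ are then transferred to $a$. Sets with $B\cap P(M)=\cl(\emptyset)$ are treated at the end by writing $B=\bcl_P(Be_1)\cap\bcl_P(Be_2)$.

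\textbf{Your route.} You choose $e\in P(M)\setminus\cl(aB)$ depending on $a$. Then $\bcl_P(B)\subseteq\bcl_P(Be)$ replaces the two-point intersection trick. You replace projectivity by a tuple-level argument: make $a$ $P$-independent over $\cl(Be)$ using $\cl(aBe)\cap P(M)$, build an $\omega_1$-sequence of $\cl$-independent copies with generic non-$P$ and generic $P$ parts, and identify their $\mathcal{L}_P$-types via Proposition \ref{H-typesomega1omega}.

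\textbf{What each buys.} Your approach handles both cases at once. It also directly yields realizations of $\tp_P(a/B)$ over all of $B$. The paper's text only exhibits many realizations of $\tp_P(a/\vec b e)$ and leaves the passage to $B$ implicit. The cost is that you rely on the full type characterization for $P$-independent tuples rather than an elementary single-element argument.

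\textbf{The step you flagged is fine.} Since $e$ lies in both $\cl(aBe)$ and $P(M)$, $\cl_e$-modularity, extended from tuples to closed sets by finite character, gives exactly $\cl(aBe)\ind_{\cl(aBe)\cap P(M)}P(M)$. It gives $P$-independence of $\cl(Be)$ in the same way. Before localizing, however, $\cl(B)$ itself need not be $P$-independent in the merely locally modular case; a line parallel to but disjoint from an affine subspace $P(M)$ is an example. So the reduction in your first paragraph should be made after adding $e$.

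\textbf{Small repairs.}
\begin{enumerate}
\item Lemma \ref{bothindepPindep} is stated over a model $(N,P)$ and is not needed. $P$-independence of $B\vec a^{(\alpha)}$ follows directly from the construction and $P$-independence of $B$.
\item The claim that $\vec a^{(\alpha)}_3$ has the same $P$-membership pattern as $\vec a_3$ needs a justification. One way: any $x\in\vec a_3\cap P(M)$ lies in $\cl(B\vec a_2)$, because $\vec a_1$ is independent over $BP(M)$. It therefore lies in $\cl(B\vec a_2)\cap P(M)=\cl(P(B)\vec a_2)$, using $P$-independence of $B$. The same argument applies symmetrically to $\vec a^{(\alpha)}$.
\item Existence of $\ind^{\cl,P}$ is more than you need. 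Choosing $\vec a^{(\alpha)}_1$ by codensity and $\vec a^{(\alpha)}_2$ by density, then completing to a realization of $\tp(\vec a/B)$, suffices.
\end{enumerate}
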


\begin{proof}
Let $B\subseteq M$ be such that $B=\cl(B)$ and there exists $e\in \cl(B)\cap P(M)\backslash\cl(\emptyset)$. Let $a\in M\setminus B$. Assume first that $a\not \in \cl(BP(M))$, then by the codensity property $a\not \in \bcl_P(B)$. Assume now that $a\in \cl(BP(M))$ and let $\vec b\in B$, $\vec c\in P(M)$ be finite such that $a\in \cl(\vec b \vec c e)$. Since $B=\cl(B)$ and by modularity of $(M,\cl(-\cup \{e\})$, we have $B\ind_{B\cap P(M)} P(M)$, so enlarging $\vec b$ if necessary, we may assume that $\vec b$ is $P$-independent. Then by projectivity there is a single $c\in P(M)$ such that $a\in \cl(\vec b c e)$. Since $a\not\in \cl(\vec b e)$, we also have $c\not\in\cl(\vec b e)$. 

By the exchange property, we also have $c\in \cl(\vec b a e)$. By the density property $\tp(c/\vec b e)$ has unboundedly many independent realizations in $P(M)$ and since $\vec b e$ is $P$-independent, $\tp_P(c/\vec b e)$ has unboundedly many $\cl$-independent realizations. Since $c\in \cl(\vec b a e)$ then $\tp_P(a/\vec b e)$ also has unboundedly many realizations.

Thus,  whenever $B=\cl(B)$ has a non-trivial intersection with $P(M)$, we have $\bcl_P(B)=B$. If this is not the case, i.e. $B\cap P(M)=\cl(\emptyset)$, take $e_1,e_2\in P(M)\backslash \cl(\emptyset)$ independent over $B$. Then $\cl(Be_1)\cap\cl(Be_2)=B$ and $\bcl_P(Be_1)=\cl(Be_1)$, $\bcl_P(Be_2)=\cl(Be_2)$. Then $B=\cl(Be_1)\cap\cl(Be_2)=\bcl_P(Be_1)\cap\bcl_P(Be_2)$ and therefore is $\bcl_P$-closed (as an intersection of two $\bcl_P$-closed sets).

\end{proof}

The Lemma below shows that (iii) implies (iv).

\begin{lem}\label{closure-pres-implies-scl-mod}
    Suppose in every sufficiently large beautiful pair $(M,P)$ we have $\bcl_P=\cl$. Then in any such pair the localization $\cl(-\cup P(M))$ is modular.
\end{lem}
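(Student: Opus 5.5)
We argue by contraposition, adapting the argument for (i)$\Rightarrow$(iii) in \cite{v2003}. Suppose that in some sufficiently large beautiful pair $(M,P)$ the localization $\cl_P=\cl(-\cup P(M))$ is not modular. As in the proof of Lemma \ref{modcompimplocmod}, we first replace the witnesses of non-modularity by a normalized configuration. Absorbing the common part into $P$ (exchange $P(M)$ for $\cl(P(M)\vec w)$, which is harmless for a large pair), we get finite tuples $\vec a,\vec v$ with
$$\cl(\vec aP(M))\cap\cl(\vec vP(M))=\cl(P(M)),$$
but $\dim(\vec a/\vec vP(M))<\dim(\vec a/P(M))$. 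We may further take $\vec a,\vec v$ to be $P$-independent, and by shrinking them we may take $\vec a$ to be a single element $a$ with $a\notin\cl(P(M))$ and $a\in\cl(\vec vP(M))$. Put $B=\cl(\vec v\,\cup P\text{-basis of }\vec v)$; this is a closed set. We will show that $a\in\bcl_P(B)\setminus\cl(B)$, contradicting $\bcl_P=\cl$.

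\textbf{Step 1: $a\notin\cl(B)$.} Choose a finite $\vec h\in P(M)$ with $\vec v$ $P$-independent over $\vec h$. If $a\in\cl(\vec v\vec h)$, then $a\in\cl(\vec aP(M))\cap\cl(\vec vP(M))=\cl(P(M))$, which is false.

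\textbf{Step 2: $\tp_P(a/B)$ has boundedly many realizations.} Let $\vec c\in P(M)$ be a minimal tuple with $a\in\cl(\vec v\vec h\vec c)$. The type $\tp_P(a/B)$ implies the $\mathcal L_{P,\omega_1\omega}$-formula $\exists\vec y\in P\,\pi(a,\vec v\vec h\vec y)$ (Fact \ref{definability-density}). Consider any realization $a'$, with witnesses $\vec c'$. The aim is to show that $\cl(\vec c\vec h B)$ is determined, up to countably many choices, by a ``canonical parameter'' that lies in $\cl(B)$. This mirrors the first-order argument, where non-modularity of $\cl_P$ yields a family of plane curves over $P$ whose canonical base is in $P$ but not in $\acl(B)$. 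Without canonical bases we proceed differently. Suppose $\tp_P(a/B)$ had unboundedly many realizations $a_i$. By the pigeonhole principle we can extract realizations that are $\cl$-independent over $B\cup P(M)$. By the condition $\cl(aP(M))\cap\cl(\vec vP(M))=\cl(P(M))$, each $a_i$ satisfies the same intersection condition. We then use weak non-modularity to force a contradiction with the $\omega$-stability count of Proposition \ref{stability}. Concretely, types over the countable closed $P$-independent set $B$ are counted there by the finitely many $P$-basis elements together with $\tp(a/B\vec c)$, and that count gives only countably many options when $a\in\cl(B\vec c)$.

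\textbf{Main obstacle.} The hard step is Step 2. The plan above is too vague, so I would reverse the direction: instead show directly that (iii) forces (ii) via Theorem \ref{w1bimplieslocmod}, by proving weak 1-basedness. Take $\vec c,\vec d$ with equal Lascar strong types over $A$ and $\vec d\ind_A\vec c$. Realize $A$ inside $P$ by density, and use (iii) to show that $\cl(\vec d)\cap\cl(A\vec c)$ equals $\bcl_P$ of the relevant data. The modularity of $\cl_P$ then follows from local modularity by an analogue of Lemma \ref{loc-mod-implies-closure-preservation}, namely localizing at a generic $e\in P(M)$.
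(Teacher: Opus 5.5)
Your proposal does not yet contain a proof. In the contrapositive route, the normalization is inconsistent. Shrinking $\vec a$ preserves $\cl(\vec aP(M))\cap\cl(\vec vP(M))=\cl(P(M))$, but a single $a\in\cl(\vec vP(M))\setminus\cl(P(M))$ lies in $\cl(aP(M))\cap\cl(\vec vP(M))\subseteq\cl(P(M))$. A failure of modularity of $\cl(-\cup P(M))$ always needs both sides of dimension at least $2$ over $P(M)$, since modularity holds automatically when one side has dimension $1$. So Step~1 only rederives this contradiction.

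Step~2 carries the whole weight of this route, and it is not done; its ingredients also fail. Every realization of $\tp_P(a/B)$ satisfies $\exists\vec y\in P\,\pi(x,\vec v\vec h\vec y)$ and hence lies in $\cl(BP(M))$, so you cannot extract realizations $\cl$-independent over $B\cup P(M)$. Moreover, Proposition~\ref{stability} bounds the number of types over a countable set, not the number of realizations of one type, so it cannot give $a\in\bcl_P(B)$. Establishing such boundedness would amount to placing a canonical parameter of the configuration inside $\bcl_P(B)$, which is exactly the canonical-base machinery unavailable here.

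The fallback does not close the gap. The step ``use (iii) to show that $\cl(\vec d)\cap\cl(A\vec c)$ equals $\bcl_P$ of the relevant data'' has no argument behind it, and weak 1-basedness asks for $\vec c\ind_{\vec d}A$, not an identity of closures. Also, (D1) only produces elements of $P(M)$ generic over finitely many parameters, so it does not let you realize an arbitrary $A$ inside $P$. Your last step (local modularity gives modularity of the localization at $P(M)$) is fine, but it depends on the missing implication (iii)$\Rightarrow$(i).

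The missing idea is to use the hypothesis in its \emph{unbounded} direction, apply it to an auxiliary pair, and prove projectivity directly. Take $a,b,\vec c$ outside $P(M)$ with $\vec c$ $\cl$-independent over $P(M)$ and $a\in\cl(b\vec cP(M))\setminus\cl(bP(M))$. The paper passes to the beautiful pair $(M,P')$ with $P'(M)=\cl(\vec cP(M))$, which by hypothesis also satisfies $\bcl_P=\cl$. Take a minimal $d_0\vec d'$ in $P'(M)$ with $a\in\cl(bd_0\vec d')$. Then $\vec d'$ is $\cl$-independent over $ab$, so each $d_i\notin\bcl_P^{(M,P')}(d_1\ldots d_{i-1}ab)$. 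This lets you realize $\tp_P^{(M,P')}(\vec d'/ab)$ by a tuple $\cl$-independent over $ab\vec c\vec e$, where $ab\vec c\vec e$ is $P$-independent. By density you then move that tuple into $P(M)$ without changing its $\mathcal{L}_P$-type in $(M,P')$ (Proposition~\ref{H-typesomega1omega}). Carrying $d_0$ along gives $u\in P'(M)=\cl(\vec cP(M))$ with $a\in\cl(buP(M))$. Unboundedness is used only to move witnesses into $P(M)$, so no canonical parameter is ever needed.
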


\begin{proof}
let $(M,P)$ be a sufficiently large beautiful pair.
    We will follow the proof of Theorem 5.13 ($i\to iii$) in \cite{v2003}. Let $a,b,\vec c\in M\backslash P(M)$, $\vec c$ $\cl$-independent over $P(M)$, $a\in\cl(b\vec c P(M))\backslash\cl(bP(M))$. We need to find $u\in\cl(\vec cP(M))$ such that $a\in\cl(buP(M))$.

    Let $\vec e\in P(M)$ be such that $ab\vec c\vec e$ is $P$-independent. Consider the pair $(M,P')$ where $P'(M)=\cl(\vec c P(M))$. Then $(M,P')$ is again a beautiful pair and by the assumption, it also satisfies $\bcl_P=\cl$.

Let $\vec d=d_0\ldots d_n$ be a tuple of minimal length in $P'(M)$ such that $a\in\cl(b\vec d)$ (it exists since $a\in \cl(b P'(M))$). If $n=0$, we are done as we can take $u=d_0$. If $n\ge 1$, let $\vec d'=d_1\ldots d_n$. Then by minimality $\vec d'$ is $\cl$-independent over $ab$. Since $\cl=\bcl_P$, we also have $d_i\not\in\bcl_P^{(M,P')}(d_1\ldots d_{i-1}ab)$ for $1<i\le n$. 

Then there exists $\vec d''\models \tp^{(M,P')}_P(\vec d'/ab)$, with $\vec d''=d_1''\dots d_n''$
such that $d_i''\not\in\bcl_P^{(M,P')}(d_1''\ldots d_{i-1}''ab\vec c\vec e)$ for $1<i\le n$. Thus, $\vec d''$ is $\cl$-independent over $ab\vec c\vec e$. Then, by density of $P(M)$, there exists $\vec d'''\in P(M)$ such that $\tp(\vec d'''/ab\vec c\vec e)=\tp(\vec d''/ab\vec c\vec e)$. Note that $ab\vec c\vec e$ is $P'$-independent, and both $\vec d''$ and $\vec d'''$ are in $P'(M)$. Thus $$\tp^{(M,P')}_P(\vec d'''ab\vec c\vec e)=\tp^{(M,P')}_P(\vec d''ab\vec c\vec e)=tp^{(M,P')}_P(\vec d'ab\vec c\vec e).$$

Let $d_0'\in M$ be such that $$tp^{(M,P')}_P(d_0'\vec d'''ab\vec c\vec e)=tp^{(M,P')}_P(d_0\vec d'ab\vec c\vec e).$$

Then $d_0'\in P'(M)=\cl(\vec c P(M))$ and $a\in\cl(bd_0'P(M))$, and we can take $u=d_0'$.
\end{proof}

Finally, we prove the direction $(iv \Rightarrow i)$.

\begin{lem}\label{scl-mod-implies-weak-1-based}
Suppose in every sufficiently large beautiful pair $(M,P)$ the localization $\cl(-\cup P(M))$ is modular. Then $\mathcal{C}$ is weakly 1-based.
\end{lem}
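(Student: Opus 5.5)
The cleanest route goes through Lemma \ref{locmodsetimpliesw1b}. That lemma says it suffices to find a \emph{finite} set $B$ such that the localization $(M,\cl_B)$ is modular. So the plan is to transfer modularity of $\cl(-\cup P(M))$, which is a localization at an infinite set, down to a localization at finitely many points. Alternatively, we can argue directly with the definition of weak 1-basedness, using the same pattern as the proof of Theorem \ref{locmodimpliesw1b} with $P(M)$ playing the role of the generic element $e$.

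The direct version goes as follows. Let $A\subseteq M$, and let $\vec c,\vec d$ satisfy $\Lstp(\vec d/A)=\Lstp(\vec c/A)$ and $\vec d\ind_A\vec c$; we must show $\vec c\ind_{\vec d}A$. Work in a sufficiently large $N\in\mathcal{C}$ containing $A\vec c\vec d$, and build a beautiful pair $(N,P)$ with $P(N)$ $\cl$-independent from $A\vec c\vec d$. To get such a $P$, take a model $N_0$ with $A\vec c\vec d\ind N_0$ of infinite dimension, and let $P(N)=N_0$, leaving infinite codimension; QM4/QM5 make this possible. Then $\dim$ over $P(N)$ of any tuple from $A\vec c\vec d$ equals its plain $\dim$.

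Next, use stationarity (the Lascar-strong-type version of Fact \ref{stationarityinC}) exactly as in Theorem \ref{locmodimpliesw1b} to get $\Lstp(\vec c/\cl(AP(N)))=\Lstp(\vec d/\cl(AP(N)))$. This uses $\vec c\ind_A P(N)$ and $\vec d\ind_A P(N)$. It follows that
\[
\cl(AP(N))\cap\cl(\vec cP(N))=\cl(AP(N))\cap\cl(\vec dP(N)).
\]
Apply hypothesis (iv): modularity of $\cl_P$ gives $\vec c\ind_{E}AP(N)$ relative to $P(N)$, where $E$ denotes this common intersection. Since $E\subseteq\cl(\vec dP(N))$ and $\vec c\ind_{AP(N)}\vec d$, we obtain $\vec c\ind_{\cl(\vec dP(N))}A$, computed in $\cl_P$. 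Finally, independence of $P(N)$ from $A\vec c\vec d$ converts $\dim(\vec c/\vec dP(N))=\dim(\vec c/A\vec dP(N))$ into $\dim(\vec c/\vec d)=\dim(\vec c/A\vec d)$, which is $\vec c\ind_{\vec d}A$.

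The main obstacle is the stationarity step. Fact \ref{stationarityinC} and Lemma 53 of \cite{Ka} are stated over sets of the form $A\subseteq B$ with the tuples independent from $B$ over $A$. We need this for $B=\cl(AP(N))$ together with the preservation of Lascar strong type, and this is where $P(N)$ being a model independent from everything matters. I would first pass through an intermediate countable model $N'\supseteq A$ with $\vec c\vec d\ind_A N'P(N)$, as in Theorem \ref{locmodimpliesw1b}, and only then restrict to $\cl(AP(N))$.

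A secondary point is ensuring that the constructed pair is "sufficiently large" so that (iv) applies. This follows from Corollary \ref{completasstheory}, or by enlarging $N$ and $P(N)$ while keeping the independence conditions, since modularity only concerns finite tuples.
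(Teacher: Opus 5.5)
Your direct argument is correct and is essentially the paper's proof. The paper likewise runs the template of Theorem \ref{locmodimpliesw1b} with $P(M)$, chosen $\cl$-independent from the data, in place of the generic element $e$, and strips off $P(M)$ at the end. The only difference is that the paper picks the witness $\vec b'$ directly with $\Lstp(\vec b'/AP(M))=\Lstp(\vec b/AP(M))$, using the ``some'' clause of Definition \ref{defn:w1b}, whereas you start from an arbitrary $\vec d$ and upgrade via Lascar-strong-type stationarity. That stationarity step applies directly with $B=\cl(AP(N))$, so your detour through an intermediate countable model is unnecessary.
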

\begin{proof}
Let $M\in\mathcal{C}$, $\vec b\in M$ and $A\subset M$. We may assume that $(M,P)$ is a beautiful pair and $\vec b A\ind_{\emptyset}P(M)$. By the modularity assumption, $\vec b \ind_{\cl(\vec bP(M))\cap\cl(AP(M))}A$. Working in a large extension of $M$, we can find $\vec b'$ such that $\Lstp(\vec b'/AP(M))=\Lstp(\vec b/AP(M))$ and $\vec b'\ind_{AP(M)}\vec b$. Then $$\cl(\vec bP(M))\cap\cl(AP(M))=\cl(\vec b'P(M))\cap\cl(AP(M))$$ and, thus, $\vec b \ind_{\cl(\vec b'P(M))\cap\cl(AP(M))}AP(M)$.  We also have $\vec b\ind_{AP(M)}\vec b'P(M)$. By transitivity, $\vec b\ind_{\cl(\vec b'P(M))\cap\cl(AP(M))}\vec b' AP(M)$. Hence, $\vec b\ind_{\vec b' P(M)}A$. On the other hand, $P(M)\ind_{\emptyset} \vec b'A$, and, therefore, $\vec b' P(M)\ind_{\vec b'}\vec b' A$. Thus, $\vec b\ind_{\vec b'}A$, as needed.
\end{proof}

We are ready to prove the main result:\\
\\
\textit{Proof of Theorem \ref{linearity}.}\\
$(i\Leftrightarrow ii)$ This is the main equivalence of section 3: Theorem \ref{w1bimplieslocmod} and Theorem \ref{locmodimpliesw1b};
$(ii\Rightarrow iii)$ Lemma \ref{loc-mod-implies-closure-preservation};
$(ii\Rightarrow iv)$ is clear;
$(iii \Rightarrow iv)$ Lemma \ref{closure-pres-implies-scl-mod};
$(ii\Leftrightarrow v)$ Lemma  \ref{modcompimplocmod};
$(iv\Rightarrow i)$ Lemma \ref{scl-mod-implies-weak-1-based}.
\\
\\
There are other characterizations of local modularity that are worth exploring. For example, finding analogue of the work of Buechler \cite{Bu} and of the second named author \cite{v2010}.

\begin{ques}
  Let $(M,P)$ be a beautiful pair associated to a quasiminimal class $\mathcal{C}$. Assume $\mathcal{C}$ is not locally modular. Do we have then $U_{Lsp}((M,P))= \omega$?  
\end{ques}

Finally, we leave some questions about expansions related to the ones studied here that might be of interest.

\begin{ques}
  Assume that $(\mathbb{K},\ecl)$ is Zilber's exponential field. Expand the field structure with a multiplicative subgroup with the Mann property as was done in \cite{vdDG}. Is the collections of such pairs  $\mathcal{L}_{\omega_1\omega}(Q)$-axiomatizable? Is the associated class stable? Kirby proved (Theorem 1.1 \cite{KiEx}) that in any exponential field the closure operator
$\ecl$ is a pregeometry. We leave as an open question to explore the meaning of $H$-structures, beautiful/lovely pairs and Mann expansions in these other exponential settings.
\end{ques}

\end{document}